	\definecolor{mygreen}{RGB}{106, 168, 79}
	\definecolor{myblue}{RGB}{98, 160, 234}
	\definecolor{myred}{RGB}{246,97,81}
	\definecolor{black}{rgb}{0,0,0}
\newcommand*\linenomathpatch[1]{%
	\expandafter\pretocmd\csname #1\endcsname {\linenomath}{}{}%
	\expandafter\pretocmd\csname #1*\endcsname{\linenomath}{}{}%
	\expandafter\apptocmd\csname end#1\endcsname {\endlinenomath}{}{}%
	\expandafter\apptocmd\csname end#1*\endcsname{\endlinenomath}{}{}%
}
\newcommand*\linenomathpatchAMS[1]{%
	\expandafter\pretocmd\csname #1\endcsname {\linenomathAMS}{}{}%
	\expandafter\pretocmd\csname #1*\endcsname{\linenomathAMS}{}{}%
	\expandafter\apptocmd\csname end#1\endcsname {\endlinenomath}{}{}%
	\expandafter\apptocmd\csname end#1*\endcsname{\endlinenomath}{}{}%
}
\let\linenomathAMS\linenomathWithnumbers
\patchcmd\linenomathAMS{\advance\postdisplaypenalty\linenopenalty}{}{}{}
\let\linenomathAMS\linenomathNonumbers
\theoremstyle{plain}
\newtheorem{theorem}{Theorem}[section]
\crefname{theorem}{Theorem}{Theorems}
\newtheorem{proposition}[theorem]{Proposition}
\crefname{proposition}{Proposition}{Propositions}
\crefname{corollary}{Corollary}{Corollaries}
\newtheorem{lemma}[theorem]{Lemma}
\crefname{lemma}{Lemma}{Lemmas}
\crefname{conjecture}{Conjecture}{Conjectures}
\newtheorem{claim}[theorem]{Claim}
\crefname{claim}{Claim}{Claims}
\crefname{observation}{Observation}{Observations}
\crefname{setup}{Setup}{Setups}
\newtheorem{fact}[theorem]{Fact}
\crefname{fact}{Fact}{Facts}
\crefname{algorithm}{Algorithm}{Algorithms}
\crefname{remark}{Remark}{Remarks}
\crefname{example}{Example}{Examples}
\theoremstyle{definition}
\newtheorem{definition}[theorem]{Definition}
\crefname{definition}{Definition}{Definitions}
\crefname{construction}{Construction}{Constructions}
\crefname{question}{Question}{Questions}
\crefname{problem}{Problem}{Problem}
\numberwithin{equation}{section}
\crefname{section}{Section}{Sections}
\crefname{appendix}{Appendix}{Appendix}
\crefname{figure}{Figure}{Figures}
\newcommand{\rf}[1]{\cref{#1} (\nameref*{#1})}
\def\rfCR#1{\cref{#1}  \\ \small\textup{(}\nameref*{#1}\textup{)}}
\def\rfCRphantom#1{\phantom{\cref{#1}}  \\ \phantom{\small(\nameref*{#1})}}
\newenvironment{proofclaim}[1][Proof of the claim]{\begin{proof}[#1]}{\end{proof}}
\long\def\OLD#1{}
\newcommand{\EXPLANATION}[1]{} 
\let\epsilon\varepsilon
\let\rho\varrho
\newcommand{\vecb}{\mathbf}
\newcommand{\es}{\emptyset}
\newcommand{\eps}{\varepsilon}
\renewcommand{\rho}{\varrho}
\newcommand{\sm}{\setminus}
\renewcommand{\subset}{\subseteq}
\newcommand{\NATS}{\mathbb{N}}
\newcommand{\PP}{\mathbb{P}}
\newcommand\restr[2]{{
		\left.\kern-\nulldelimiterspace 
		#1 
		\vphantom{\big|} 
		\right|_{#2} 
}}
\newcommand{\dist}{\operatorname{d}}
\newcommand{\cA}{\mathcal{A}}
\newcommand{\cB}{\mathcal{B}}
\newcommand{\cC}{\mathcal{C}}
\newcommand{\cF}{\mathcal{F}}
\newcommand{\cM}{\mathcal{M}}
\newcommand{\cP}{\mathcal{P}}
\newcommand{\cQ}{\mathcal{Q}}
\newcommand{\cR}{\mathcal{R}}
\newcommand{\cS}{\mathcal{S}}
\newcommand{\cU}{\mathcal{U}}
\newcommand{\cV}{\mathcal{V}}
\newcommand{\cW}{\mathcal{W}}
\newcommand{\cY}{\mathcal{Y}}
\newcommand{\oH}{{\operatorname H}}
\newcommand{\oG}{{\operatorname G}}
\DeclareMathOperator{\spl}{split}
\pgfplotsset{compat=1.15}
\title{Resilience for Loose Hamilton Cycles}
\date{\today}
\author[Alvarado]{José D. Alvarado}
\author[Kohayakawa]{Yoshiharu Kohayakawa}
\author[Lang]{Richard Lang}
\author[Mota]{Guilherme O. Mota}
\author[Stagni]{Henrique Stagni}
\address[R.~Lang]{Fachbereich Mathematik,
	Universität Hamburg,
	20146 Hamburg, Germany}
\email{richard.lang@uni-hamburg.de}
\address[J. D. Alvarado, Y. Kohayakawa, G. O. Mota,
H. Stagni]{Ins\-ti\-tu\-to de Ma\-te\-m\'a\-ti\-ca e Estat\'{\i}stica,
  Universidade de S\~ao Paulo, Rua do Mat\~ao 1010, 05508--090 S\~ao
  Paulo, Brazil}
\email{\{josealvarado.mat17|yoshi|mota|stagni\}@ime.usp.br}
\thanks{This research was partly supported by DFG (450397222),
  H2020-MSCA (101018431), FAPESP (2021/11020-9, 2018/04876-1,
  2019/13364-7), CNPq (311412/2018-1, 406248/2021-4, 306620/2020-0,
  406248/2021-4) and CAPES (Finance Code 001).  CAPES is the
  Coordena\c c\~ao de Aperfei\c coamento de Pessoal de N\'ivel
  Superior.  CNPq is the National Council for Scientific and
  Technological Development of Brazil.  FAPESP is the S\~ao Paulo
  Research Foundation.}
\begin{document}

\begin{abstract}
	We study the emergence of loose Hamilton cycles in subgraphs of random
	hypergraphs.  Our main result states that the minimum
	$d$-degree threshold for loose Hamiltonicity relative to the
	random $k$-uniform hypergraph $\oH_k(n,p)$ coincides with
	its dense analogue whenever $p \geq n^{- (k-1)/2+o(1)}$.
	The value of $p$ is approximately tight for $d>(k+1)/2$.
	This is particularly interesting because the dense
	threshold itself is {not known beyond the cases when
	$d \geq k-2$.}
\end{abstract}

\footskip=30pt
\maketitle

\vspace{-0.5cm}

\section{Introduction}\label{sec:introduction}
A widely studied topic in combinatorics is the existence of
vertex spanning substructures in graphs and hypergraphs.
Since the corresponding decision problems are in many cases
computational intractable, a large branch of research has
focused on obtaining sufficient and easily verifiable
conditions assuring the existence of these structures.
A classic example of such a result is Dirac's theorem,
which states that every graph on $n\geq 3$ vertices and
minimum degree at least $n/2$ contains a Hamilton cycle.
Since its inception, Dirac's theorem has been generalised in numerous
ways~\cite{Gou14,KO14}.  While the situation is
increasingly well-understood for graphs, many problems in the
setting of hypergraphs remain largely open.

In the past decades much effort has been dedicated to studying which phenomena
of dense settings can be transferred to sparser but well-structured
settings.  The study of combinatorial theorems relative to a
random set has seen much progress in the recent years, which
has gone alongside the development of powerful new methods and
tools.  One of the most fascinating aspects of this field is
that certain dense parameters can be transferred to the sparse
setting, without knowing their precise value, a famous
example being hypergraph Túran densities as shown
independently by Conlon and Gowers~\cite{CG16} and
Schacht~\cite{Sch16}.  For a more detailed exposition of these
results, see the survey of Conlon~\cite{Con14}.

Here we are interested in \emph{transference} results for
spanning substructures.  A commonly studied object in this
line of research is the random graph introduced by Erdős and
Rényi.  We denote by $\oG(n,p)$ the binomial random graph with
$n$ vertices that contains every possible edge independently
with probability $p$.  The study of Hamilton cycles in random graphs
dates back to Pósa~\cite{Pos76} and Korshunov~\cite{Kor76},
who, independently, showed that $ \oG(n,p)$ contains a
Hamilton cycle with high probability\footnote{Meaning with
	probability going to $1$ as $n$ tends to infinity.} (w.h.p.\
for short) provided that~$p$ is somewhat greater than $(\log
	n) /n$.  More precise and stronger results were obtained by
Komlós and Szemerédi~\cite{KS83}, Ajtai, Komlós and
Szemerédi~\cite{AKS85} and Bollobás~\cite{bollobas84}.

Given this, we can ask which subgraphs of $\oG(n,p)$ typically
contain a Hamilton cycle.  Sudakov and Vu~\cite{SV08}
conjectured the following random analogue of Dirac's theorem,
which was proved by Lee and Sudakov~\cite{LS12}: For any $\eps
	> 0$, if $p$ is somewhat greater than $(\log n)/n$, then a
random graph $\oG(n, p)$ typically has the property that every
spanning subgraph with minimum degree at least $(1 +
	\eps)np/2$ has a Hamilton cycle.  Since this work, there has
been a lot of interest in such \emph{resilience} theorems for
other types of spanning or almost-spanning subgraphs (see,
e.g., \cite{ABET20,ABH+16,BCS11,Mon20,SST18}), introducing
many new ideas and techniques.

We study this problem in the setting of hypergraphs.  A
\emph{$k$-uniform hypergraph} (\emph{$k$-graph} for brevity)
$G$ consists of a set of vertices $V(G)$ and a set of edges
$E(G)$ with each edge containing exactly $k$ vertices.\footnote{For convenience, we write \emph{subgraph} instead of \emph{subhypergraph}.}  The \emph{degree}
$\deg(S)$ of a set $S \subset V(G)$ is the number of edges
that contain $S$.  For $1 \leq d \leq k-1$, we denote the
\emph{minimum $d$-degree} by $\delta_d(G)$, which is defined
as the maximum $m$ such that every set of $d$ vertices
(\emph{$d$-set} for short) has degree at least $m$ in $G$.

An intriguing open question in extremal hypergraph theory is to
determine asymptotically optimal minimum degrees conditions
that force hypergraph analogues of Hamilton cycles.  A
\emph{loose cycle} in a $k$-graph is a cyclic sequence of
edges such that each two consecutive edges overlap in exactly
one vertex, and no pair of non-consecutive edges have vertices
in common. {The \emph{order} of a loose cycle is the number of
		used vertices, and its \emph{length} is the number of used edges.}
A loose cycle is \emph{Hamilton} if it spans all
vertices.
\begin{definition}
	The minimum \emph{$d$-degree threshold for loose
		Hamilton cycles} $\mu_d(k)$ is defined as the least $\mu \in
		[0,1]$ such that for every $\gamma>0$ and large enough $n$
	divisible by $k-1$, every $n$-vertex $k$-graph $G$ with
	$\delta_d(G) \geq (\mu + \gamma) \binom{n-d}{k-d}$ contains a
	loose Hamilton cycle.
\end{definition}
For $d=k-1$, the threshold $\mu_d(k)$
was determined by Kühn and Osthus~\cite{KO06} when $k=3$ and in
general independently by Keevash, Kühn, Osthus and
Mycroft~\cite{KKMO11} and by Hàn and Schacht~\cite{HS10}.
{Beyond this, the threshold is also known for $d=k-2$, where it was determined by Buß, Hàn and Schacht~\cite{BHS13} 
for $k=3$ and in general by Bastos, Mota, Schacht,
Schnitzer and Schulenburg~\cite{BMSSS17}.}
For later reference, we note that $\mu_1(k) \geq 2^{-(k-1)}$.
This follows by considering the disjoint union of two cliques of the same order.
Additional lower bounds can be found in the work of Han and Zhao~\cite{HZ16}.
We remark that
other types of cycles have also been studied, and we refer the
reader to a recent survey for a more detailed history
of the problem~\cite{Zha16}.

Returning to the random setting, the binomial random $k$-graph
$\oH_k(n,p)$ on $n$ vertices is defined analogously to
$\oG(n,p)$ and contains every possible edge independently with
probability $p$.
Loose Hamiltonicity in the random setting was investigated by
Dudek, Frieze, Loh and Speiss~\cite{DFL+12}, who showed that
w.h.p. $\oH_k(n,p)$ contains a loose Hamilton cycle provided
that $p$ is somewhat greater than $(\log n)/n^{k-1}$ after preliminary work of Frieze~\cite{Fri10}.
The value of $p$ is asymptotically optimal, since below this
threshold, w.h.p. $\oH_k(n,p)$ contains isolated vertices.
It should be noted that these results have been recovered  recently in a much more general setting~\cite{FKN+21,PP22}.

Given the resilience results in the graph setting, it is
natural to ask whether extensions to the hypergraph setting are possible.
For loose cycles, this question was posed by Frieze~\cite[Problem 58]{Fri19}.
Our main result  gives an essentially optimal answer to this
question for $d > (k+1)/2$, which is in particular interesting
as the threshold $\mu_d(k)$ is not known beyond the cases in
which $d \geq k-2$~\cite{BMSSS17,BHS13,HS10,KKMO11,KMO10}.

\begin{theorem}[Main result]\label{thm:main}
	For every $1 \leq d<k$ and $\gamma >0$ there is a $C >0$
	such that the following holds. If
	$p\geq\max\{n^{-(k-1)/2+\gamma}, Cn^{-(k-d)}\log n\}$ and~$n$ is divisible
	by $k-1$, then w.h.p. $G \sim \oH_k(n,p)$ has the property
	that every spanning subgraph $G' \subset G$ with
	$\delta_d(G') \geq (\mu_d(k)+\gamma) p \binom{n-d}{k-d}$
	contains a loose Hamilton cycle.
\end{theorem}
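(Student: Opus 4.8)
The plan is to combine the absorption method with a transference of the dense minimum $d$-degree threshold $\mu_d(k)$, which has to be used as a black box since its value is unknown. Fix auxiliary constants with $\gamma \gg \eta \gg 1/C > 0$ and abbreviate $m = \mu_d(k)$. The first task is to isolate the pseudorandom properties of the host that the argument needs, and to check that $G \sim \oH_k(n,p)$ has them w.h.p.; from that point on $G$ is fixed and everything is argued uniformly over all admissible subgraphs $G'$. Three kinds of properties are needed: (i) an approximate edge-distribution statement (every union of $\ge \eta n$ vertices spans close to the expected number of edges, and every $d$-set lies in $(1\pm o(1))\,p\binom{n-d}{k-d}$ edges of $G$); (ii) a robust supply of small gadgets — for any two disjoint bounded tuples playing the role of loose-path ends, and for each fixed connecting configuration or vertex-absorbing configuration, the number of copies in $G$ joining those ends is $(1\pm o(1))$ times its expectation and stays positive after deleting any $o(n)$ vertices; and (iii) the absence of isolated $d$-sets. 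The hypothesis $p \ge n^{-(k-1)/2+\gamma}$ is precisely what makes the counts in (ii) concentrated — via Janson/second-moment estimates for the relevant two- and three-edge configurations together with a union bound over ends and deleted sets — while $p \ge C n^{-(k-d)} \log n$ yields (iii) and makes the degree hypothesis on $G'$ meaningful.

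With the host fixed, I would next prove a \emph{connecting lemma} inside $G'$: any bounded collection of vertex-disjoint loose-path ends can be linked, using edges of $G'$ only, into a single loose path that avoids any prescribed set of $o(n)$ vertices. The minimum $d$-degree of $G'$ forces each end to lie in many $G'$-edges, and property (ii) guarantees that enough combinatorially valid extensions are actually present in $G$, so a bounded number of extension steps bridges each consecutive pair of ends; the discipline to observe throughout is to query only configurations whose count in $G$ is robust, so that the adversarial choice of $G'$ cannot destroy too many of them. Using the same two ingredients I would then, for all but $o(n)$ vertices $v$, produce a large family of $v$-absorbing gadgets and, by the standard random-selection argument for absorbing paths, assemble a single absorbing loose path $P_{\mathrm{abs}}$ together with a reservoir $W$ of $\Theta(n)$ vertices disjoint from $V(P_{\mathrm{abs}})$, such that for every $W' \subseteq W$ with $|W'|$ in the correct residue class modulo $k-1$, $P_{\mathrm{abs}}$ can be rerouted into a loose path on $V(P_{\mathrm{abs}}) \cup W'$ with the same ends. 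A short calculation shows one may take $V(P_{\mathrm{abs}})$ and $W$ sufficiently spread that the leftover graph $G'' := G' - V(P_{\mathrm{abs}})$ still satisfies $\delta_d(G'') \ge (m + \gamma/2)\,p\binom{|V(G'')|-d}{k-d}$.

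The heart of the argument is the almost-covering step, where $\mu_d(k)$ enters. Apply a (sparse) regularity lemma to $G''$ and pass to the dense reduced $k$-graph $R$, which inherits minimum $d$-degree above $(m + \gamma/3)\binom{|V(R)|-d}{k-d}$; the defining property of $\mu_d(k)$ then supplies a loose Hamilton cycle in a suitably size-balanced blow-up of $R$, and a sparse counting/embedding lemma — valid in the regime $p \ge n^{-(k-1)/2+\gamma}$ — realises it inside $G''$. Adjusting the cluster sizes so that the vertices left uncovered form a subset of the reservoir, this yields $O(1)$ vertex-disjoint loose paths of $G''$ that together cover exactly $V(G') \setminus V(P_{\mathrm{abs}}) \setminus W'$ for some $W' \subseteq W$ of the right residue. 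Finally the connecting lemma splices these $O(1)$ paths and $P_{\mathrm{abs}}$ into a single loose cycle of $G'$ missing only $W'$, and the absorbing property of $P_{\mathrm{abs}}$ swallows $W'$, producing a loose Hamilton cycle.

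The main obstacle is this transference: one must run the regularity reduction and the sparse counting lemma uniformly over \emph{every} admissible $G'$ in the range $p \ge n^{-(k-1)/2+\gamma}$, and argue that the only obstructions surviving the reduction to $R$ are the dense ones, so that the unknown value $\mu_d(k)$ can genuinely be fed in as a black box. Interlocked with this is the bookkeeping that forces the $o(n)$ vertices left uncovered by the dense-type argument to coincide with a reservoir that $P_{\mathrm{abs}}$ can digest, rather than an arbitrary leftover set — the point that welds the absorption scheme to the transference. Everything else (the pseudorandomness estimates, the connecting lemma, the construction of $P_{\mathrm{abs}}$) should be fairly standard, once the correct list of gadgets has been chosen.
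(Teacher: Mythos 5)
Your high-level skeleton (absorption plus sparse regularity, feeding the unknown $\mu_d(k)$ into the reduced graph as a black box) matches the paper, but the proposal breaks down at exactly the two points that carry the real difficulty. The most serious gap is the absorbing step. First, the ``standard random-selection argument for absorbing paths'' does not work at $p= n^{-(k-1)/2+\gamma}$: an absorber is essentially two edge-disjoint systems of loose paths on the same vertex set, so it has $k$-density about $2/(k-1)$, and the expected number of copies through a fixed root in $G$ is only $n^{O(\gamma)}$ --- nowhere near enough to guarantee every vertex $\Omega(\eta n)$ gadgets in a linear-size selected family. This is precisely why the paper adopts the Ferber--Kwan/Montgomery template framework (\cref{lem:template}, \cref{lem:richness-property}), which needs only \emph{one} absorber per template edge, found greedily while avoiding previously used vertices. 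Second, and more fundamentally, you treat the ``vertex-absorbing configuration'' as a known constant-size gadget whose count in $G$ can be certified by second-moment estimates for ``two- and three-edge configurations''; but since $\mu_d(k)$ is unknown there is no explicit gadget to count, and counts in $G$ say nothing about what survives in the adversarial $G'$. The paper's main new idea is exactly how to manufacture an absorber rooted at a prescribed $(k-1)$-set inside $G'$ using only the existence of loose Hamilton cycles in dense auxiliary graphs, and to embed it at $p\ge n^{-(k-1)/2+\gamma}$ the absorber must have girth at least $K$ so that its $k$-density is at most $2/(k-1)+\gamma$ (\cref{lem:absorber-sparse,lem:absorber-dense,lem:sparse-to-mkdensity}); obtaining such high-girth absorbers in the dense setting in turn requires the strengthened threshold $\mu^*_d(k)=\mu_d(k)$ (\cref{lem:threshold-exploitation}), i.e.\ Hamilton cycles in which a prescribed set is $K$-spread, proved by re-running the whole absorption machinery in the dense setting. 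None of this appears in your proposal, and without it the absorption step cannot be completed. (Relatedly, the claim that $p\ge n^{-(k-1)/2+\gamma}$ is what concentrates two- or three-edge configurations is off: those are countable at much smaller $p$; this exponent is the embedding threshold for the density-$2/(k-1)$ absorber structures.)

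The almost-cover step also contains a step that would fail as stated: you ask a ``sparse counting/embedding lemma'' to realise a loose Hamilton cycle of a blow-up of the reduced graph inside $G''$, i.e.\ an (almost-)spanning structure. The available sparse embedding lemma (\cref{lem:embedding}) only embeds \emph{constant-size} linear hypergraphs, and no sparse blow-up lemma for weak hypergraph regularity exists at this density. The paper instead takes the loose Hamilton cycle supplied by $\mu_d(k)$ in each small block of the reduced graph and builds the long path in $G'$ directly, edge by edge through the sparse regular tuples, using lower-regular auxiliary $2$-graphs and the edge-count statement \cref{lem:count-edges_containing_2-graph} (see the proof of \cref{lem:cover-sparse}). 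Similarly, your connecting lemma cannot be run as greedy extensions certified by robust counts in $G$: the adversary chooses $G'$, and even in the dense case connecting two vertices for $d=1$ needs the structural input $\mu_1(k)\ge 2^{-(k-1)}$ together with Kruskal--Katona (\cref{lem:connection-dense}); the sparse version has to be obtained by transferring this through the regularity/reduced-graph machinery rather than by querying configurations counted in $G$.
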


The value of $p$ in this result is unlikely to be optimal for
the whole range of $d$.  On the other hand,
$p=\Omega(n^{d-k}\log n)$ is a natural lower bound, since
otherwise $\delta_d(G) = 0$ with high probability.  Thus, our
result is approximately tight whenever $d > (k+1)/2$: for
such~$d$, we have $n^{-(k-1)/2+\gamma}\leq Cn^{d-k}\log n$
if~$\gamma$ is small enough, and hence our hypothesis on~$p$
in Theorem~\ref{thm:main} matches the natural lower bound
for~$p$ mentioned above.
Very recently, Petrova and Trujić~\cite{PT22} gave a stronger
bound for the case $(k,d)=(3,1)$, as they proved that for
$k=3$ it suffices to have
$p\geq C \max \left\{n^{-3/2},\, n^{d-3} \right\}\log n$ for
some $C >0$.  However, for all we know, the threshold for the
property described in~\cref{thm:main} could even be of order
$n^{d-k}\log n$ for the whole range of~$d$.

Resilience for Hamiltonicity has also been investigated for other types of
hypercycles.  In particular, Clemens,
Ehrenmüller and Person~\cite{CEP19} studied Hamilton Berge
cycles (which are less restrictive than loose cycles) in
$3$-graphs and Allen, Parczyk and Pfenninger~\cite{APP21}
studied tight Hamilton cycles (which are more restrictive than
loose cycles) for $d=k-1$.  Moreover, Ferber and
Kwan~\cite{FK22} proved an analogous result to \cref{thm:main}
for perfect matchings, which gives approximately tight bounds
for $p$ whenever $d > k/2$.

We also note that Hamiltonicity has been studied in other random graph models such as `random perturbation'~\cite{BFM03,KKS17} and `random robustness'~\cite{KLS14,KMP23,JLS23}.
We return to the latter in \cref{sec:conclusion}.

Our proof is based on the absorption method in combination
with embedding results for the (Weak) Hypergraph Regularity Lemma.
We also benefit from a framework of Ferber and
Kwan~\cite{FK22}, which was introduced to tackle the
resilience problems for matchings.  The main difference to
matchings is that Hamilton cycles come with a notion of
connectivity.  Hence, to prove~\cref{thm:main} we need to
extend this framework with further ideas in essentially every
step.

An important constraint for our proof is that the value of
$\mu_d(k)$ is unknown in almost all cases.  In contrast to the
aforementioned results for
Hamiltonicity~\cite{APP21,CEP19,PT22}, we therefore cannot
rely on any structural insights of past work regarding $\mu_d(k)$.
Our strategy thus uses only the mere existence of loose
Hamilton cycles to extract certain characteristic properties
such as connectivity and covering most vertices and relate them
to the random setting.  Unfortunately, this is not sufficient
in certain critical situations, where multiple Hamilton cycles
have to be combined.  We overcome the issues arising in this
situation by showing that the threshold $\mu_d(k)$ actually
allows us to find a Hamilton cycle with additional properties
such as certain vertices being far apart from each other.
Hence, in order to find a Hamilton cycle in the sparse
setting, we develop in parallel a way to find an `enhanced'
Hamilton cycle in the dense setting.  This process is
illustrated in \cref{fig:diagram}.

\medskip

The rest of the paper is organised as follows.  In the next
section, we present two main lemmas from which
we derive~\cref{thm:main}.
In \cref{sec:preliminaries}, we introduce a series of auxiliary results and machinery that we deploy throughout the proofs.
The rest of the paper, Sections~\ref{sec:connection-lemma-sparse-proof}--\ref{sec:absorber-lemma-dense-proof}, is
dedicated to the proofs of the above mentioned two lemmas.

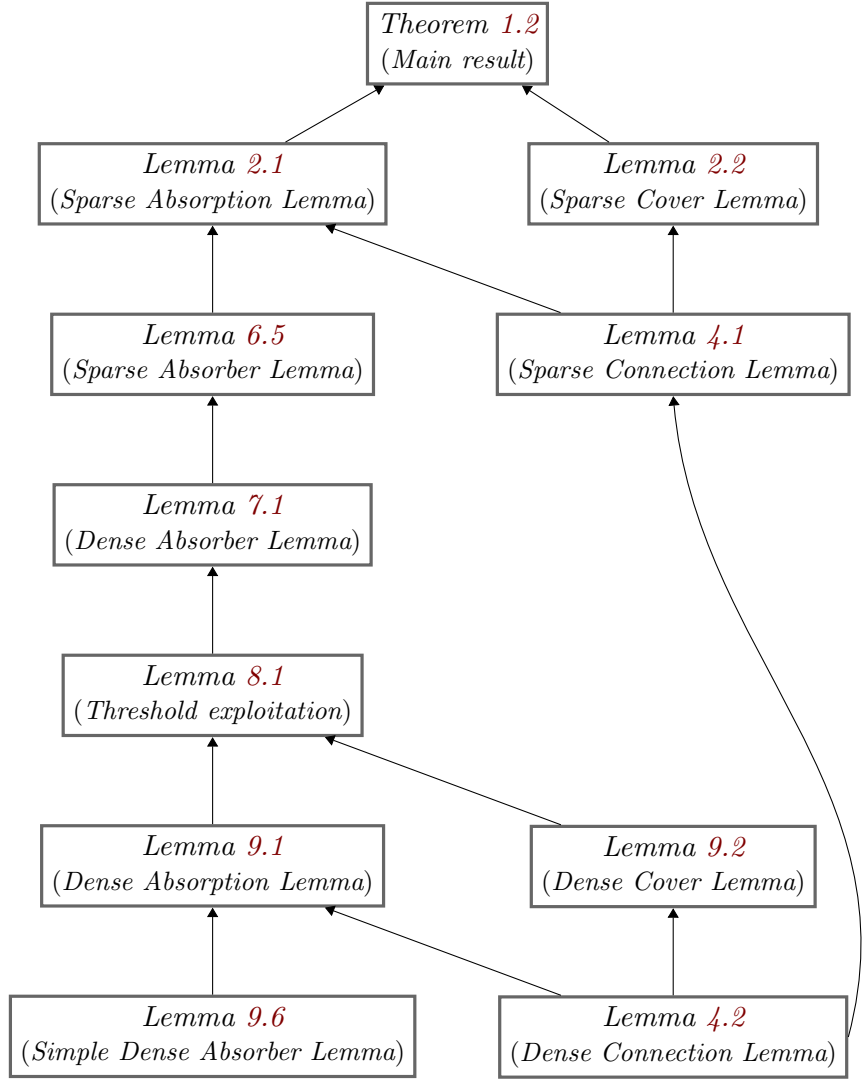
\begin{figure}[h]
	\begin{tikzpicture}[
			node distance = 3em and 2em,
			squarednode/.style = {draw=black!60,
					very thick,
					align=center,
					minimum width=4em,
					minimum height=2em,
					font=\itshape},
			every edge/.style = {draw, -Triangle},       
			every edge quotes/.style = {auto=right,
					font=\small\sffamily}   
		]
		\begin{scope}[every node/.style = {squarednode}]    
			\node   (main)                   {\rfCR{thm:main}};
			\node[draw=white,minimum width=0,minimum height=0] [below=of main]  (phantom-main-bot) {};
			\node   (cover-sparse) [right=of phantom-main-bot]    {\rfCR{lem:cover-sparse}};
			\node   (absorption-sparse) [left=of phantom-main-bot]    {\rfCR{lem:absorption-sparse}};
			\node   (connection-sparse) [below=of cover-sparse]    {\rfCR{lem:connection-sparse}};
			\node   (absorber-sparse) [below=of absorption-sparse]    {\rfCR{lem:absorber-sparse}};
			\node   (absorber-dense) [below=of absorber-sparse]    {\rfCR{lem:absorber-dense}};
			\node   (improved-threshold) [below=of absorber-dense]    {\rfCR{lem:threshold-exploitation}};
			\node   (absorption-dense) [below=of improved-threshold]    {\rfCR{lem:absorption-dense}};
			\node   (absorber-dense-simple) [below=of absorption-dense]    {\rfCR{lem:absorber-dense-simple}};
			\node[draw=white]   (auxtop) [below=of connection-sparse] {\rfCRphantom{thm:main}};
			\node[draw=white]   (auxbot) [below=of auxtop] {\rfCRphantom{thm:main}};
			\node   (cover-dense) [below=of auxbot]    {\rfCR{lem:cover-dense}};
			\node   (connection-dense) [below=of cover-dense]    {\rfCR{lem:connection-dense}};
		\end{scope}
		\draw   (cover-sparse) edge  (main);       
		\draw   (absorption-sparse) edge  (main);       
		\draw   (absorber-sparse) edge  (absorption-sparse);       
		\draw   (connection-sparse) edge  (absorption-sparse);       
		\draw   (connection-sparse) edge  (cover-sparse);       
		\draw   (absorber-dense) edge  (absorber-sparse);       
		\draw   (improved-threshold) edge  (absorber-dense);       
		\draw   (connection-dense.east) edge[bend right=30, in=170]  (connection-sparse.south);       
		\draw   (absorber-dense-simple) edge  (absorption-dense);       
		\draw   (absorption-dense) edge  (improved-threshold);       
		\draw   (cover-dense) edge  (improved-threshold);       
		\draw   (connection-dense) edge  (absorption-dense);       
		\draw   (connection-dense) edge  (cover-dense);       
	\end{tikzpicture}
	\caption{Proof diagram.}
	\label{fig:diagram}
\end{figure}

\section{Proof of \rf{thm:main}}\label{sec:proof-main-result}
We apply the method of absorption to find a Hamilton cycle in
the proof of \cref{thm:main}.  Informally, this technique
separates the argument into two parts.  First we find a
special path $A$ that allows us to integrate any small set of
vertices into a larger path.  Then we cover all but few
vertices with a loose cycle that contains $A$ as a subpath.
We conclude the proof by using the property of $A$.

A \emph{loose path} $P$ in a $k$-graph is a sequence of edges
such that each two consecutive edges overlap in exactly one
vertex, and no pair of non-consecutive edges have vertices in
common.
	{If the context is clear, we simply speak of a \emph{path}.}
The \emph{order} of $P$ is the number of its vertices.
For vertices $u$ and $v$, we say that $P$ is a \emph{loose
	$(u,v)$-path} if $u$ is in the first edge, $v$ is in the
last edge and no other edge contains $u$ or $v$.
For convenience, the constant hierarchies are
expressed in standard $\ll$-notation in the remainder of the
paper.\footnote{To be precise, we write $x \ll y$ to mean
	that for any $y \in (0, 1]$ there exists an $x_0 \in (0,1)$
	such that for all $x \leq x_0$ the subsequent statements
	hold.  Hierarchies with more constants are defined in a
	similar way and are to be read from the right to the left.
		{The first constant in a hierarchy is always assumed to be positive.}
	If we write $1/x$ in a hierarchy, we implicitly
	mean that $x \in \NATS$.  We also occasionally write~$o(1)$
	for a function that tends to~$0$ as~$n\to\infty$.}
Moreover,
given an eventually positive function~$f(n)$ of~$n$, the
expression~$\omega(f(n))$ denotes a function~$g(n)$ such that
$g(n)/f(n)\to\infty$ as~$n\to\infty$.

\begin{lemma}[Sparse Absorption Lemma]\label{lem:absorption-sparse}
	Let $ \eta \ll \alpha \ll 1/k, \,1/d,\,\gamma$ and $1/C \ll 1/k,\gamma$ with
	$k\geq 3$ and $p
		\geq\max\{n^{-(k-1)/2+\gamma},Cn^{-(k-d)}\log n\}$.
	Then w.h.p.\ $G \sim \oH_k(n,p)$ has the following property.

	For any spanning subgraph $G' \subset G$ with
	$\delta_d(G') \geq (\mu_d(k) + \gamma ) p
		\binom{n-d}{k-d}$, there is a set $A \subset V(G')$
	with $|A| \leq \alpha n$ and two vertices $u,v \in A$
	such that for any subset $W \subset V(G')\setminus A$
	with $|W| \leq \eta n$ divisible by $k-1$, the induced
	graph $G'[A \cup W]$ has a loose $(u,v)$-path covering
	$A \cup W$.
\end{lemma}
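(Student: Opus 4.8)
Let me think about how to prove this absorption lemma. The goal is to find an "absorbing path" $A$ — a loose path that can absorb any small set $W$ of leftover vertices. The standard absorption method (Rödl–Ruciński–Szemerédi) builds $A$ from many small "absorbing gadgets," each capable of absorbing one vertex (or a bounded number of vertices).

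The key object is an **absorber for a $(k-1)$-set**: since vertices must be added in batches of $k-1$ (the divisibility condition), I'd want a gadget that can absorb a single $(k-1)$-set $S$. Concretely, a gadget for $S$ would be a short loose path $P_S$ with endpoints $x, y$ such that *both* $P_S$ itself and some "dilated" path on $V(P_S) \cup S$ with the same endpoints $x,y$ exist in $G'$. Then, stringing together many such gadgets (connected by short connecting paths) and ensuring that for a typical $(k-1)$-set there are many vertex-disjoint gadgets available, a random selection of $\alpha n / (\text{gadget size})$ gadgets will, w.h.p., contain enough disjoint absorbers for *every* small $W$ simultaneously.

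Here is the plan, following the Ferber–Kwan framework adapted to cycles:

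\textbf{Step 1 (Absorbing gadgets exist densely).} First I would show that in any $G'$ with $\delta_d(G') \geq (\mu_d(k)+\gamma)p\binom{n-d}{k-d}$, for every $(k-1)$-set $S$, the number of absorbing gadgets for $S$ (of some fixed constant size $t$, with all their edges in $G'$) is at least $c \cdot (pn^{k-1})^{t}$ for some $c = c(k,d,\gamma)>0$. This is where the degree condition is used: one builds the gadget edge by edge, at each step invoking $\delta_d$ to find many extensions, and crucially one must verify that both the "plain" path and the "$S$-absorbing" path can be built on a common vertex set. For $d=k-1$ this is essentially the classical argument; for smaller $d$ one needs the supersaturation/counting version of the minimum-degree condition. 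The threshold $\mu_d(k)$ enters only through the existence of \emph{some} loose structure, so in fact I expect this step to rely on an explicit small-gadget construction valid for \emph{all} $G'$ above the threshold — essentially a robust local version of loose-path-building that does not need to know $\mu_d(k)$.

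\textbf{Step 2 (Transfer to the random host).} Using the container/spread-type argument from \cite{FK22}, transfer Step 1 to $\oH_k(n,p)$: w.h.p.\ every $G' \subset G$ above the degree threshold still has $\geq c'(pn^{k-1})^t$ gadgets per $(k-1)$-set, for a slightly smaller $c'$. This is where $p \geq n^{-(k-1)/2+\gamma}$ is needed — it guarantees enough "room" that the expected gadget count concentrates and survives a union bound over all spanning subgraphs $G'$. The $Cn^{-(k-d)}\log n$ bound ensures $\delta_d(G') > 0$ in the first place.

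\textbf{Step 3 (Random selection and concatenation).} Select a random family $\mathcal F$ of roughly $\alpha n/(k t)$ gadgets. Standard concentration (Chernoff plus a deletion step to remove the few pairs of intersecting gadgets) shows that w.h.p.\ $\mathcal F$ is vertex-disjoint and, for every $(k-1)$-set $S$, contains at least $\eta n/(k-1) + 1$ gadgets that absorb $S$. Then use the Sparse Connection Lemma (\cref{lem:connection-sparse}) to link the gadgets of $\mathcal F$ into a single loose path $A$ with endpoints $u,v$, using a disjoint reservoir of vertices; $|A| \leq \alpha n$ follows since $\mathcal F$ has $O(\alpha n)$ gadgets of constant size and the connectors are short.

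\textbf{Step 4 (Absorption).} Given $W$ with $|W| \leq \eta n$ and $(k-1) \mid |W|$, partition $W$ into $(k-1)$-sets $S_1, \dots, S_m$ with $m \leq \eta n /(k-1)$. Greedily assign each $S_i$ a distinct gadget in $A$ that absorbs it (possible since $\geq m$ disjoint absorbers exist for each $S_i$, by Step 3 — here one needs the counting to be robust enough that "many absorbers for $S_i$" survives even after the previous $S_j$ have used up some gadgets; a slightly wasteful bound of $\eta n$ absorbers per $(k-1)$-set handles this). Swap each chosen gadget for its dilated version; the result is a loose $(u,v)$-path on exactly $V(A) \cup W$.

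\textbf{Main obstacle.} The hard part is \textbf{Step 1–2 combined}: producing absorbing gadgets for loose \emph{cycles} (as opposed to matchings, where \cite{FK22} works) while only knowing the value $\mu_d(k)$ abstractly. For matchings a single gadget is trivial; for loose cycles the gadget must be a genuine path segment whose two endpoints are fixed, and one must simultaneously control a plain path and an $S$-dilated path on the same vertex set — this rigidity is what forces the "further ideas in essentially every step" the authors mention. I expect the actual construction to use a modest explicit gadget whose validity follows from the degree condition alone (possibly via the weak hypergraph regularity lemma to locate a suitable dense spot), with $\mu_d(k)$ only guaranteeing we are not in a degenerate regime; the enhanced-Hamilton-cycle machinery (\cref{lem:threshold-exploitation}) is presumably what is invoked to handle the endpoints-far-apart requirement needed when gadgets are glued.
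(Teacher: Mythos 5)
There is a genuine gap, and it sits at the heart of your plan: Step 3. Your scheme requires that a family $\mathcal F$ of $O(\alpha n)$ constant-size gadgets contains, for \emph{every} $(k-1)$-set $S$, at least $\eta n/(k-1)+1$ gadgets absorbing $S$ (you need this multiplicity for the greedy assignment in Step 4, since the sets $S_1,\dots,S_m$ of a given $W$ compete for distinct gadgets). This is not merely hard to prove in the sparse regime --- it is false. For a constant-size gadget $T$ to absorb $S$, the dilated path must use at least one edge of $G$ all of whose vertices lie in $V(T)\cup S$, i.e.\ an edge meeting $V(T)$; w.h.p.\ every constant-size set meets only $O(pn^{k-1})$ edges of $G$, so each gadget can absorb at most $O(pn^{k-1})$ of the $\Theta(n^{k-1})$ possible $(k-1)$-sets. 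Hence the total number of (gadget, absorbable $S$) incidences available from $\mathcal F$ is $O(\alpha n\cdot pn^{k-1})=O(\alpha p n^{k})=o(n^{k})$ (as $p=o(1)$), while your requirement demands $\Omega(\eta n\cdot n^{k-1})$ incidences. For $k\geq 4$ and $p$ near $n^{-(k-1)/2+\gamma}$ the situation is even worse: all but $o(n^{k-1})$ of the $(k-1)$-sets have \emph{no} absorber in $\mathcal F$ at all, so no smarter (Hall-type) assignment can rescue the random-selection scheme either. This is precisely the obstruction that forces sparse absorption to abandon the classical R\"odl--Ruci\'nski--Szemer\'edi ``many gadgets per set, random selection'' architecture.

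The paper circumvents this by never asking for per-$(k-1)$-set coverage. It uses a Montgomery-type $(k-1,\nu n)$-template $T$ (\cref{lem:template}) whose flexible set is a random linear-sized reservoir $Z$, the richness property that any $W$ with $|W|\leq\eta n$ can be matched into $Z$ by single edges of $G'$ (\cref{lem:richness-property}) --- a per-\emph{vertex} demand, which the degree condition easily supplies --- and absorbers rooted only at the $O(\nu n)$ \emph{edges of the fixed template} (supplied one by one, avoiding previously used vertices, by \cref{lem:absorber-sparse}), all glued with \cref{lem:connection-sparse}. Absorption of an arbitrary $W$ then works by routing $W$ through $Z$ via the matching and letting the template's flexibility compensate for the consumed $Z$-vertices via a perfect matching of $T-V(P)$. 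Your Steps 1--2 do correspond roughly to the paper's \cref{lem:absorber-sparse} (though the hypothesis $p\geq n^{-(k-1)/2+\gamma}$ is used there because the embedded absorber ``book'' has $k$-density about $2/(k-1)$, via \cref{lem:sparse-to-mkdensity,lem:embedding}, not for a union bound over subgraphs $G'$), but without the template/flexible-set mechanism the construction of the absorbing path $A$ and the handling of arbitrary $W$ do not go through.
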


The next lemma allows us to cover most vertices with a single loose path.

\begin{lemma}[Sparse Cover Lemma]\label{lem:cover-sparse}
	Let $1\leq d \leq k-1$ with $k\geq 3$, $\eta >0$, $ \alpha
		\ll 1/k,\,1/d,\,\gamma$ and $1/C \ll 1/k,\gamma$ and $p \geq\max \{Cn^{-(k-d)}\log n,\,
		Cn^{-(k-2)}\log n\}$,  then w.h.p.\ $G \sim \oH_k(n,p)$ has the
	following property.

	For any spanning subgraph $G' \subset G$ with $\delta_d(G')
		\geq (\mu_d(k) + \gamma ) p \binom{n-d}{k-d}$, $Q \subset
		V(G')$ with $|Q| \leq \alpha n$ and $u,v \in V(G')\setminus
		Q$, there
	is a loose $(u,v)$-path $P$ in $G'-Q$ that covers all but
	$\eta n$ vertices of $G' - Q$.
\end{lemma}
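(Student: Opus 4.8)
The plan is to cover most vertices of $G'-Q$ by a long loose path, using the dense threshold $\mu_d(k)$ to guarantee loose Hamilton cycles in suitable auxiliary hypergraphs and transferring these to the sparse setting via the weak hypergraph regularity lemma. Concretely, first I would apply the weak regularity lemma to $G'-Q$ to obtain a regularity partition $V_1,\dots,V_t$ and a reduced cluster hypergraph $R$ whose edges record the $d$-tuples of clusters spanning a dense regular $k$-tuple. A counting/slicing argument shows that, since $\delta_d(G') \geq (\mu_d(k)+\gamma)p\binom{n-d}{k-d}$, after discarding a small fraction of clusters the reduced hypergraph $R$ inherits a minimum $d$-degree of at least $(\mu_d(k)+\gamma/2)\binom{t-d}{k-d}$; here one must be slightly careful because $R$ is a weighted/density-reduced object, but this is a standard reduction. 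By the definition of $\mu_d(k)$, the (dense) reduced hypergraph $R$ then contains a loose Hamilton cycle $\cC_R$.

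Second, I would pull this structure back into $G'-Q$. Each edge of $\cC_R$ corresponds to a dense regular $k$-tuple of clusters, and along the loose cycle consecutive cluster-edges share exactly one cluster. Using a standard embedding/connecting lemma for regular tuples (available in \cref{sec:preliminaries}), one greedily embeds a long loose path segment inside each cluster-edge, routing from the shared cluster of the previous edge to the shared cluster of the next, while keeping the used vertices balanced across clusters so that no cluster is exhausted prematurely. Because the cycle $\cC_R$ visits essentially all clusters and each embedded segment covers all but a tiny constant fraction of the vertices in its clusters, the concatenation yields a single loose path (or cycle, from which one deletes an edge) covering all but at most $\eta n/2$ vertices of $G'-Q$.

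Third, I must incorporate the prescribed endpoints $u,v$. The simplest route is: run the above to get a long loose path $P_0$ in $G' - Q - \{u,v\}$ covering all but $\eta n/3$ of those vertices, then use a connecting lemma (again from the preliminaries, valid because $p \geq C n^{-(k-2)}\log n$ gives enough common neighbourhoods, cf.\ the analogous connecting step used for \cref{lem:absorption-sparse}) to attach a short loose path from $u$ to one end of $P_0$ and from the other end of $P_0$ to $v$. Absorbing the finitely many uncovered ``junction'' vertices into the slack $\eta n$ completes the construction of the loose $(u,v)$-path $P$ covering all but $\eta n$ vertices of $G'-Q$.

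The main obstacle is the transference step: guaranteeing that the sparse random host $G$ is such that \emph{every} sufficiently dense spanning subgraph $G'$ admits the regularity-based embedding, uniformly over the exponentially many choices of $G'$. This is where the hypothesis $p \geq \max\{Cn^{-(k-d)}\log n, Cn^{-(k-2)}\log n\}$ and the Ferber--Kwan-type framework enter: one needs a one-sided version of the sparse regularity method (or a direct counting lemma for loose paths in sparse regular tuples) together with a union-bound argument showing that w.h.p.\ $G$ has the requisite pseudorandom ``inheritance'' properties, so that densities and regularity measured inside $G'$ behave as in the dense case. A secondary technical point is ensuring the embedded segments stay balanced so the loose path does not get stuck, and that the reduced hypergraph's minimum $d$-degree genuinely transfers — both are routine but need care with the $o(1)$ error terms.
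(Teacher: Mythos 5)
There are two genuine gaps. The first is your cleanup step: you claim that since $\delta_d(G')\geq(\mu_d(k)+\gamma)p\binom{n-d}{k-d}$, ``after discarding a small fraction of clusters'' the reduced hypergraph inherits a \emph{minimum} $d$-degree of $(\mu_d(k)+\gamma/2)\binom{t-d}{k-d}$. What the degree-inheritance lemma (\cref{lem:reduced-graph-degree-inheritance}, resp.\ \cref{lem:transfer-cluster-graph-partition}) actually gives is that all but $\sqrt{\eps}\binom{r}{d}$ of the $d$-sets of clusters have large degree, and for $d\geq 2$ you cannot convert this into a genuine minimum-degree condition by deleting few clusters: the exceptional $d$-sets need not be concentrated on a small set of clusters, so no small deletion kills them all. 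Without full minimum $d$-degree you cannot invoke the definition of $\mu_d(k)$ to get a loose Hamilton cycle in the whole reduced graph, so your step 1 collapses for $d\geq2$. The paper's proof avoids exactly this by randomly partitioning the reduced graph into parts of constant size $s$ (divisible by $k-1$) and using \cref{lem:random-subset-degrees} to show that all but a small fraction of parts $U$ satisfy $\delta_d(\cR[U])\geq(\mu_d(k)+\gamma/4)\binom{|U|-d}{k-d}$; the definition of $\mu_d(k)$ is then applied inside each good part separately, and the resulting paths are joined through a reserved random set $C$ via the Sparse Connection Lemma (\cref{lem:connection-sparse}), which is also how $u$ and $v$ are attached.

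The second gap is the pull-back: you appeal to ``a standard embedding/connecting lemma for regular tuples (available in the preliminaries)'' to embed linear-size loose path segments inside each dense regular cluster-edge. No such lemma is available in the sparse setting: the Sparse Embedding Lemma (\cref{lem:embedding}) only embeds linear hypergraphs of \emph{constant} size, and an almost-spanning counting lemma for sparse regular tuples is precisely the nontrivial content here. The paper supplies it by hand: along the loose Hamilton cycle of $\cR[U]$ it defines auxiliary bipartite graphs $H(i,W)$ recording extendability of a partial path, proves they are $\sqrt{\eps}$-lower-regular with threshold density $\tau$ using the random-graph counting property of \cref{lem:count-edges_containing_2-graph} (this is where the hypothesis $p\geq Cn^{-(k-2)}\log n$ is used), and then extends the path greedily edge by edge, always choosing the new degree-$2$ vertex to have large degree in the next auxiliary graph. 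You correctly flag this transference/counting issue as ``the main obstacle,'' but flagging it is not the same as resolving it; as written, the proposal's central embedding step is unsupported, and combined with the minimum-degree issue above the argument does not go through for general $d$.
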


These two lemmas combine easily to a proof of our main result.

\begin{proof}[Proof of \cref{thm:main}]
	The case when $k=2$ is covered by the work of Lee and
	Sudakov~\cite{LS12}.  So in the following we assume that
	$k\geq3$.  Consider $\alpha$ and~$\eta$ with $ \eta \ll \alpha
		\ll 1/k,\,1/d,\,\,\gamma$.  Then w.h.p.\ $G \sim \oH_k(n,p)$
	satisfies the outcomes of
	\cref{lem:absorption-sparse,lem:cover-sparse}.  Now let $G'
		\subset G$ be a spanning subgraph with $\delta_d(G') \geq
		(\mu_d(k) + \gamma ) p \binom{n-d}{k-d}$.  By assumption on
	$G$, we may pick a set $A \subset V(G')$ and two vertices
	$u,v \in A$ with the property described in
	\cref{lem:absorption-sparse}.  Let $Q = A \sm \{u,v\}$.  By
	assumption on $G$, there is a loose $(u,v)$-path $P$ in
	$G'-Q$ that covers all but
	$\eta n$ vertices of
	$G' - Q$.  Let~$W$ be the set of uncovered vertices.  Then
	$|W|\leq\eta n$ and a simple argument shows that~$|W|$ is
	divisible by $k-1$.
	To finish the proof, we use the property of $A$
	to find a loose $(u,v)$-path $P'$ covering $A \cup W$.  It
	follows that $P \cup P'$ is a loose Hamilton cycle.
\end{proof}

The remaining sections are dedicated to the proofs of \cref{lem:absorption-sparse,lem:cover-sparse}.

\section{Preliminaries}\label{sec:preliminaries}
In this section, we introduce a series of tools and technical facts that will be used throughout the rest of the paper.
Much of the exposition closely follows the work of Ferber and Kwan~\cite{FK22}.

\subsection{The Sparse Regularity Lemma}
The Sparse Regularity Lemma allows us to approximately encode
the local edge densities of sparse graphs, provided that its
edges are nowhere too concentrated.  This can be formalised in
terms of regular partitions and upper-uniformity.  For the
following definitions, suppose $\varepsilon,\,\eta>0$, $D>1$ and
$0<p\leq1$.
\begin{itemize}
	\item \textbf{Density:} Given non-empty disjoint vertex sets
	      $X_{1},\dots,X_{k}$ in a $k$-graph $G$, we write
	      $e\left(X_{1},\ldots,X_{k}\right)$ for the number of edges
	      with a vertex in each $X_i$. The \emph{density} $
		      d(X_{1},\ldots,X_{k})$ is defined by
	      \[
		      d\left(X_{1},\ldots,X_{k}\right)=\frac{e\left(X_{1},\ldots,X_{k}\right)}{\left|X_{1}\right|\dots\left|X_{k}\right|}.
	      \]
	\item \textbf{Regular tuple:} A $k$-partite $k$-graph with
	      parts $V_{1},\dots,V_{k}$ is
	      \emph{$\left(\varepsilon,p\right)$-regular} if, for every
	      $X_{1}\subseteq V_{1},\dots,X_{k}\subseteq V_{k}$ with
	      $\left|X_{i}\right|\ge\varepsilon\left|V_{i}\right|$ for
	      $i\in[k]$, we have
	      \[
		      \left|d\left(X_{1},\dots,X_{k}\right)-d\left(V_{1},\dots,V_{k}\right)\right|\le\varepsilon p.
	      \]
	\item \textbf{Regular partition:} A partition $\mathcal{V} =
		      \{V_{1},\dots,V_{t}\}$ of the vertex set of a $k$-graph is
	      said to be \emph{$\left(\varepsilon,p\right)$-regular} if it
	      is an equipartition (meaning that the sizes of the parts
	      differ by at most one), and for all but
	      $\varepsilon
		      \binom {t}{k}$ of the $k$-sets
	      $\left\{V_{i_{1}},\dots,V_{i_{k}}\right\}$ from
	      $\mathcal{V}$, the $k$-partite $k$-graph induced by
	      $V_{i_{1}},\dots,V_{i_{k}}$ in~$G$ is
	      $\left(\varepsilon,p\right)$-regular.
	\item \textbf{Upper-uniformity:} A $k$-graph $G$ is
	      \emph{$\left(\lambda,p,D\right)$-upper-uniform} if for any
	      disjoint subsets of vertices $X_{1},\dots,X_{k}$ each of
	      size at least $\lambda \left|V(G)\right|$, we
	      have $d\left(X_{1},\dots,X_{k}\right)\le Dp$.
\end{itemize}

The (Weak) Sparse (Hypergraph) Regularity Lemma then states
that every upper-uniform hypergraph admits a regular partition
of constant size.

\begin{lemma}[Sparse Regularity Lemma {\cite[Lemma 4.2]{FK22}}]
	\label{lem:sparse-regularity}
	Let $1/n \ll\lambda\ll 1/r_1 \ll
		1/r_0,\,\eps,\,1/k,\,1/D$ and $p\in(0,1]$.
	Then any $\left(\lambda,p,D\right)$-upper-uniform
	$n$-vertex $k$-graph $G$ admits an
	$\left(\varepsilon,p\right)$-regular partition
	$V_{1},\dots,V_{r}$ of its vertex set into $r_{0}\le
		r\le r_1$ parts.
\end{lemma}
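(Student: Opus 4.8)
The statement is quoted verbatim from \cite[Lemma 4.2]{FK22}, so nothing new needs to be proved; I will only outline the standard route, which is the energy-increment argument of Szemerédi adapted to sparse hypergraphs. For a vertex partition $\mathcal{P}=\{W_1,\dots,W_t\}$ of $G$ define the normalised \emph{index}
\[
q(\mathcal{P})=\frac{1}{p^2\binom{t}{k}}\sum_{\{i_1,\dots,i_k\}} d(W_{i_1},\dots,W_{i_k})^2 ,
\]
where the sum runs over the $k$-subsets of $\{1,\dots,t\}$. The role of the upper-uniformity hypothesis is precisely to make this quantity bounded: throughout the argument the number of parts stays at most $r_1\le 1/\lambda$, so (using $1/n\ll\lambda$) every part has size at least $n/r_1\ge\lambda n$, hence $d(W_{i_1},\dots,W_{i_k})\le Dp$ for every relevant cell and therefore $0\le q(\mathcal{P})\le D^2$. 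Without upper-uniformity the densities could be of order $1/p$ and $q$ would not be bounded by an absolute constant; this is the one genuinely sparse ingredient.

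The argument then proceeds in three steps. \emph{Defect Cauchy--Schwarz:} if a $k$-partite cell $(W_{i_1},\dots,W_{i_k})$ fails to be $(\eps,p)$-regular, witnessed by sets $X_j\subseteq W_{i_j}$ with $|X_j|\ge\eps|W_{i_j}|$ and $|d(X_1,\dots,X_k)-d(W_{i_1},\dots,W_{i_k})|>\eps p$, then refining each $W_{i_j}$ into $X_j$ and $W_{i_j}\setminus X_j$ increases the contribution of that cell to the index; the usual convexity computation produces a gain proportional to $(\eps p)^2$, and the factor $p^2$ cancels against the normalisation, leaving a gain of at least some $c(\eps,k)>0$. \emph{Simultaneous refinement:} if $\mathcal{P}$ itself is not $(\eps,p)$-regular then more than $\eps\binom{t}{k}$ of its $k$-cells are irregular, and refining every part at once along all the witnessing sets yields a common refinement $\mathcal{P}'$ with at most $t\cdot 2^{t^{k-1}}$ parts and $q(\mathcal{P}')\ge q(\mathcal{P})+c'(\eps,k)$ for some $c'(\eps,k)>0$. \emph{Iteration:} starting from an arbitrary equipartition into $r_0$ parts, repeatedly apply the refinement step; since $q$ takes values in $[0,D^2]$ this halts after a bounded number of rounds, so the final partition is $(\eps,p)$-regular with a number of parts bounded by a constant $r_1=r_1(r_0,\eps,k,D)$ of tower type --- this is the source of the hierarchy $\lambda\ll 1/r_1\ll 1/r_0,\eps,1/k,1/D$. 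A routine clean-up at the end merges and splits cells to restore an equipartition while only slightly worsening the regularity, so one runs the whole process with $\eps/2$ in place of $\eps$.

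The only real obstacle here is bookkeeping rather than conceptual difficulty: one has to check that $k$-subsets involving repeated indices and the loss incurred when re-equipartitioning contribute negligibly to the index, and --- the sparse subtlety --- one must invoke $(\lambda,p,D)$-upper-uniformity at the right moment so that the index remains bounded independently of $p$ and $n$, which is exactly what lets the refinement loop terminate in a number of steps depending only on $\eps$, $k$ and $D$. All of this is carried out in detail in \cite{FK22}, which we simply cite.
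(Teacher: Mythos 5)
Your proposal matches the paper exactly in approach: the paper does not prove this lemma at all but imports it verbatim from Ferber and Kwan \cite{FK22} (their Lemma~4.2), and you likewise defer to that citation, so nothing further is required. Your accompanying sketch of the standard energy-increment argument — index normalised by $p^2$, boundedness via $(\lambda,p,D)$-upper-uniformity applied to the (equitable, hence large) parts, defect Cauchy--Schwarz refinement and iteration — is a faithful outline of how the cited result is proved.
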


By using the notion of a \emph{reduced graph}, we can track
the parts where a regular partition is (relatively) dense and
regular.

\begin{definition}[Reduced graph]
	Given an $\left(\varepsilon,p\right)$-regular partition
	$V_{1},\dots,V_{r}$ of the vertex set of a $k$-graph $G$,
	the associated \emph{reduced hypergraph} is the $k$-graph whose
	vertices are the clusters $V_{1},\dots,V_{r}$, and we put an
	edge $\left\{ V_{i_{1}},\dots,V_{i_{k}}\right\}$ whenever
	$d\left(V_{i_{1}},\dots,V_{i_{k}}\right)>2\varepsilon p$ and
	the $k$-partite $k$-graph induced by
	$V_{i_{1}},\dots,V_{i_{k}}$ in~$G$ is
	$\left(\varepsilon,p\right)$-regular.
\end{definition}

As it turns out, the reduced graph approximately inherits the degree conditions of the original graph, but in a dense form.
This is a key fact to transition from the dense to the sparse setting.

\begin{lemma}[Degree inheritance {\cite[Lemma 4.4]{FK22}}]
	\label{lem:reduced-graph-degree-inheritance}
	Let $1/n \ll 1/r_1 \ll 1/r_0,\,\eps' \ll \eps \ll
		1/k,\,1/d,\,\delta$
	and $p\in(0,1]$ be given.  Let
	$G$ be an $\left(o(1),p,1+o(1) \right)$-upper-uniform
	$n$-vertex $k$-graph.  Let $G'\subseteq G$ be a spanning
	subgraph in which all but $\eps' n^d$ of the $d$-sets
	of vertices have degree at least $\delta
		\binom{n-d}{k-d}p$. Let $\mathcal{R}$ be the
	$r$-vertex reduced $k$-graph obtained by applying
	\cref{lem:sparse-regularity} to $G'$ with parameters
	$r_{0}$,~$p$ and~$\varepsilon$.  Then all but
	$\sqrt{\varepsilon}\binom{r}{d}$ of the $d$-sets of
	vertices of $\mathcal{R}$ have degree at least $\delta
		\binom{r-d}{k-d}-(4\sqrt \varepsilon+k/r_0) r^{k-d}$.
\end{lemma}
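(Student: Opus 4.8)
The plan is to bound, from above, the number of "bad" $d$-sets of clusters in $\mathcal{R}$ — those whose degree in $\mathcal{R}$ falls below the claimed threshold — by a counting argument that passes from the cluster level down to the vertex level. Fix a $d$-set $\{V_{i_1},\dots,V_{i_d}\}$ of clusters. I want to show that if very few of the vertex $d$-sets $\{x_1,\dots,x_d\}$ with $x_j \in V_{i_j}$ are "bad" in $G'$ (have degree below $\delta\binom{n-d}{k-d}p$), then $\{V_{i_1},\dots,V_{i_d}\}$ has large degree in $\mathcal{R}$. The key is a double-counting of pairs $(\{x_1,\dots,x_d\}, e)$ where $e$ is an edge of $G'$ extending the $d$-set. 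On one side, since all but $\eps' n^d$ of all vertex $d$-sets are good in $G'$, each good $d$-set lying in $V_{i_1}\times\cdots\times V_{i_d}$ contributes at least $\delta\binom{n-d}{k-d}p$ such edges; averaging over the $\approx (n/r)^d$ choices of $(x_1,\dots,x_d)$, the clusters $V_{i_1},\dots,V_{i_d}$ together with their "typical" $(k-d)$-sets of partners carry roughly a $\delta$-fraction of all possible edges across. On the other side, an edge of $G'$ meeting clusters $V_{i_{d+1}},\dots,V_{i_k}$ contributes to the pair count only $(n/r)^d$ times at most (once per $d$-set inside the first $d$ clusters). Comparing the two gives that the number of $(k-d)$-sets of clusters $\{V_{i_{d+1}},\dots,V_{i_k}\}$ for which the $k$-partite graph on $V_{i_1},\dots,V_{i_k}$ has density exceeding $2\eps p$ is at least $\delta\binom{r-d}{k-d}$ minus a small error.

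The second ingredient converts "density above $2\eps p$" into "is an edge of $\mathcal{R}$", which additionally requires $(\eps,p)$-regularity of the $k$-partite graph. Here I invoke \cref{lem:sparse-regularity}: all but $\eps\binom{r}{k}$ of the $k$-sets of clusters induce $(\eps,p)$-regular $k$-partite graphs. By a simple averaging (Markov-type) argument over the $d$-set $\{V_{i_1},\dots,V_{i_d}\}$, all but at most $\sqrt\eps\binom{r}{d}$ of the $d$-sets of clusters have the property that all but $\sqrt\eps\binom{r-d}{k-d}$ of their $(k-d)$-set extensions are regular. For such a "good" $d$-set of clusters, I combine this with the density bound from the previous paragraph: the number of $(k-d)$-set extensions that are both dense (density $>2\eps p$) and regular is at least
\[
\delta\binom{r-d}{k-d} - (\text{error from averaging the $\eps' n^d$ bad vertex $d$-sets}) - \sqrt\eps\binom{r-d}{k-d} - (\text{rounding errors from }|V_i|=n/r\pm1).
\]
Each such extension is, by definition, an edge of $\mathcal{R}$ at $\{V_{i_1},\dots,V_{i_d}\}$, so $\deg_{\mathcal{R}}(\{V_{i_1},\dots,V_{i_d}\}) \geq \delta\binom{r-d}{k-d} - (4\sqrt\eps + k/r_0)r^{k-d}$ once all error terms are collected and bounded by $(4\sqrt\eps + k/r_0)r^{k-d}$ using $\eps' \ll \eps$ and $1/r_1 \ll 1/r_0$.

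A few technical points need care. First, the rounding: since the partition is an equipartition, $|V_i| \in \{\lfloor n/r\rfloor, \lceil n/r\rceil\}$, so products $|V_{i_1}|\cdots|V_{i_d}|$ differ from $(n/r)^d$ by a multiplicative $1+O(k/n)$, which is absorbed into the $o(1)$ and the $k/r_0$ slack. Second, I should control how the $\eps' n^d$ globally bad vertex $d$-sets distribute among the cluster $d$-tuples — most cluster $d$-tuples receive only a $\sqrt{\eps'}(n/r)^d$ (say) fraction of bad vertex $d$-sets, by Markov; the exceptional cluster $d$-sets (at most $\sqrt{\eps'}\binom rd \le \sqrt\eps\binom rd$ of them, using $\eps'\ll\eps$) get folded into the allowed $\sqrt\eps\binom rd$ exceptional set. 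The main obstacle, and the step deserving the most attention, is the first double-counting: making sure the averaging over vertex $d$-sets inside the clusters correctly yields a \emph{lower bound} on the number of dense $(k-d)$-cluster-extensions — one must check that a cluster $(k-d)$-tuple of density at most $2\eps p$ contributes at most $2\eps p \cdot |V_{i_1}|\cdots|V_{i_k}|$ edges, so these low-density tuples cannot account for the $\delta$-fraction of edges guaranteed by the good vertex $d$-sets, forcing many high-density tuples to exist. This is exactly the argument in \cite[Lemma 4.4]{FK22}, and I would follow it essentially verbatim, only checking that the loose-cycle context introduces no new dependence on $d$ beyond what is already tracked in the hierarchy $\eps'\ll\eps\ll 1/k,1/d,\delta$.
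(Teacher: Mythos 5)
A point of reference first: the paper does not prove this lemma at all. It is quoted verbatim from Ferber and Kwan \cite[Lemma 4.4]{FK22}, and the paper only remarks afterwards that it is the special case $h=1$, $\tau=2\eps$ of \cref{lem:transfer-cluster-graph-partition} (also quoted from \cite{FK22}). Your sketch reconstructs the Ferber--Kwan double-counting argument, and at the level of strategy it is the right one: for a fixed $d$-tuple of clusters, lower-bound the number of pairs (vertex $d$-set with one vertex per cluster, edge of $G'$ containing it) via the good $d$-sets, distribute these pairs over the $(k-d)$-tuples of partner clusters, note that a tuple of density at most $2\eps p$ carries at most $2\eps p\prod_j|V_{i_j}|$ crossing edges, dispose of irregular $k$-tuples and of the globally bad vertex $d$-sets by Markov-type averaging (using $\eps'\ll\eps$), and absorb rounding and degenerate edges (two vertices in one cluster) into the $(4\sqrt\eps+k/r_0)r^{k-d}$ slack.

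Two steps, as written, need repair. (i) The multiplicity claim that an edge ``contributes to the pair count only $(n/r)^d$ times at most'' is the wrong bound: a pair requires the edge to \emph{contain} the $d$-set, so a crossing edge of a fixed $k$-tuple of clusters accounts for exactly one pair (its intersection with $V_{i_1},\dots,V_{i_d}$), and at most $\binom{k}{d}$ pairs in any case. If you literally charged each edge $(n/r)^d$ pairs, the comparison would only yield that the number of dense $(k-d)$-tuples is of order $\delta r^{k}/n^{d}$, i.e.\ nothing; your later, correct remark about low-density tuples suggests you intend the right count, but the sketch should say it. (ii) To pass from ``most pairs live in dense tuples'' to ``there are at least $\delta\binom{r-d}{k-d}-o(r^{k-d})$ dense tuples'' you must also cap how many crossing edges a \emph{single} tuple can carry; this is precisely where the $(o(1),p,1+o(1))$-upper-uniformity hypothesis enters (every $k$-tuple of clusters, each of size $n/r\geq n/r_1\gg\lambda n$, spans at most $(1+o(1))p\prod_j|V_{i_j}|$ edges of $G\supseteq G'$), and your sketch never invokes it, although without it a handful of very dense tuples could swallow all the pairs. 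Finally, a small bookkeeping point: your two Markov steps are stated with thresholds that already use up the full budget $\sqrt\eps\binom{r}{d}$ for the regularity step alone, so the exceptional $d$-tuples from the bad vertex $d$-sets must be folded in by slightly loosening a threshold (the slack $4\sqrt\eps\,r^{k-d}$ versus $\sqrt\eps\binom{r-d}{k-d}$ in the degree bound is there exactly for this). With these corrections the argument closes as you describe and coincides with the cited proof.
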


\subsection{Prepartitions and regularity}
The following are two technical versions of
\cref{lem:sparse-regularity}, which allow for prepartitions
and more precise control of the degrees.

\begin{lemma}[Prepartitioning regular partition {\cite[Lemma 4.5]{FK22}}]
	\label{lem:prepartition-regularity}
	Suppose that a $k$-graph $G$ has its vertices partitioned
	into sets $P_{1},\dots,P_{h}$. In the
	$\left(\varepsilon,p\right)$-regular partition
	$V_{1},\dots,V_{r}$ guaranteed by
	\cref{lem:sparse-regularity}, we can assume that all but
	$\varepsilon h r$ of the clusters~$V_{i}$ are contained
	in some~$P_{j}$.
\end{lemma}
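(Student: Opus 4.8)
The plan is to deduce \cref{lem:prepartition-regularity} from \cref{lem:sparse-regularity} by a \emph{refine-and-rebalance} argument. First I would apply \cref{lem:sparse-regularity} to $G$, but with an auxiliary regularity parameter $\mu\ll\eps$ and an auxiliary part-count window $[s_0,s_1]$ satisfying $1/s_1\ll 1/s_0,\,\mu,\,1/k,\,1/D$ and $s_1\le r_1$ — this is legitimate since $\lambda\ll 1/r_1$, hence $\lambda\ll 1/s_1$ as well. This produces a $(\mu,p)$-regular equipartition $\cU=\{U_1,\dots,U_s\}$ with $s_0\le s\le s_1$ and with each cluster of size roughly $m/\eps$, where $m$ is a target size to be fixed below. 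Intersecting $\cU$ with the prepartition yields the \emph{atoms} $U_\ell\cap P_j$ ($\ell\in[s]$, $j\in[h]$); each atom lies in a single $P_j$, and together they cover $V(G)$.

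Now I would turn the atoms into an equipartition. Fix the target size $m$ so that $r:=\lfloor n/m\rfloor$ lies in $[r_0,r_1]$, and discard into an exceptional set $Z$ every atom of size less than $m$. Since each $|U_\ell|$ is of order $m/\eps$ and $U_\ell$ splits into at most $h$ atoms, this discards only a small fraction of the vertices; moreover every surviving atom spans at least (roughly) an $\eps$-fraction of its parent cluster, which — for $\mu$ small enough in terms of $\eps$ — is enough that any $k$-tuple of subsets of $k$ \emph{distinct} surviving atoms inherits $(\eps,p)$-regularity from the corresponding $k$-set of $\cU$. Next, partition each surviving atom into parts of size $m$ or $m+1$ as far as possible, adding the per-atom leftover (of size $<m$) to $Z$, and then split $Z$ into further parts of size $m$ or $m+1$. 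The resulting partition $\cV$ is an equipartition into $r\in[r_0,r_1]$ parts, and its only members not contained in some $P_j$ are the $\le |Z|/m+1$ parts carved out of $Z$; after tuning the constants in the discard step this is at most $\eps h r$ (the case $h=1$ being vacuous, as $P_1=V(G)$).

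It remains to check that $\cV$ is $(\eps,p)$-regular. A $k$-set $\{W_1,\dots,W_k\}$ of parts of $\cV$ can induce an irregular tuple only if (i) some $W_a$ was carved out of $Z$; or (ii) two of the $W_a$ lie inside a common cluster $U_\ell$ of $\cU$; or (iii) the parent clusters of $W_1,\dots,W_k$ are pairwise distinct but form one of the $\le\mu\binom sk$ irregular $k$-sets of $\cU$. Writing $t$ for the maximum number of parts of $\cV$ inside a single cluster of $\cU$ (so $t=O(1/\eps)$ with the above calibration, whence $st=O(r)$), the number of exceptional $k$-sets of type~(iii) is $O(\mu t^k\binom sk)=O(\mu\binom rk)$, and of type~(i) is $O(\eps r)\cdot r^{k-1}=O(\eps\binom rk)$; both are dominated by $\eps\binom rk$ since $\mu\ll\eps$ and $h$ is a constant. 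The bottleneck is type~(ii), whose count is at most $s\binom t2 r^{k-2}=O\big(r^{k-1}/\eps\big)$, and this is at most $\eps\binom rk$ precisely because $r\ge r_0$ is large compared with $1/\eps$, $k$ and $h$. \textbf{The main obstacle is this simultaneous calibration:} the auxiliary partition $\cU$ must be fine enough that few final parts share a parent cluster (controlling~(ii)), the target size $m$ large enough that the discarded-and-leftover set stays a small fraction (controlling~(i)), and $\mu$ small enough for the inheritance step — all while the final count $r$ stays inside the prescribed window $[r_0,r_1]$. Alternatively one can bypass the reduction and re-run the energy-increment proof of \cref{lem:sparse-regularity} directly, starting the refinement from the common refinement of $\{P_1,\dots,P_h\}$ with an auxiliary equipartition; since that argument only ever refines, every part stays inside some $P_j$, and the concluding clean-up to an equipartition discards the $\le\eps h r$ spillover clusters.
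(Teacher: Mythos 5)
There is nothing in the paper to compare against here: \cref{lem:prepartition-regularity} is stated without proof and simply imported from Ferber and Kwan \cite[Lemma~4.5]{FK22}, so the only question is whether your argument stands on its own. In outline it does, and it is a genuinely different route from the one usually taken (and, as you yourself note at the end, the standard proof -- the one behind the citation -- is your ``alternative'': re-run the energy-increment proof of \cref{lem:sparse-regularity} starting from a partition refining $P_1,\dots,P_h$, so that refinement never destroys containment and only the final equalisation step produces the $\leq \eps h r$ spillover clusters). Your primary route instead post-processes the output of \cref{lem:sparse-regularity}: apply it with a finer parameter $\mu\ll\eps$ and a suitable window, intersect with the prepartition, discard small atoms and re-chop to an equipartition, and then account separately for the three sources of irregular $k$-sets. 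This works, and the two non-obvious points are handled correctly: inheritance of $(\eps,p)$-regularity by subsets of distinct parent clusters needs $\mu$ small relative to the relative size $\approx\eps$ of the final parts inside their clusters (you say this), and pairs of final parts sharing a parent cluster are simply counted as bad, which forces $r_0$ (equivalently $s_0$) to be taken large in terms of $1/\eps$, $k$ and $h$ -- legitimate, since $r_1$ in \cref{lem:sparse-regularity} is existential and may depend on $h$. The one place where you should be more careful than ``$O(\eps\binom rk)$ is dominated by $\eps\binom rk$'': the parts carved out of $Z$ can be irregular with \emph{everything}, so $\eps h r$ of them would already give $\approx \eps h k\binom rk$ bad $k$-sets, exceeding the budget; you must run the discard step with an internal parameter of order $\eps/(hk)$ (so that the number of $Z$-parts is well below $\eps r$), which is exactly the calibration you flag but do not carry out. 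With that constant-chasing done, your derivation is a valid alternative to modifying the regularity-lemma proof; what the standard route buys is precisely that none of this bookkeeping (inheritance, same-cluster pairs, $Z$-part accounting) is needed, at the cost of reopening the proof of \cref{lem:sparse-regularity} rather than using it as a black box.
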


\begin{definition}
	Consider a $k$-graph $G$, and let $P_{1},\dots,P_{h}$
	be disjoint sets of vertices. Also, consider an
	$\left(\varepsilon,p\right)$-regular partition
	$V_{1},\dots,V_{r}$ of the vertices of $G$. Then the
	\emph{partitioned reduced graph} $\mathcal{R}$
	with \emph{threshold}~$\tau$ is the $k$-graph defined as follows.
	The vertices of $\mathcal{R}$ are the clusters $V_{i}$ which are
	completely contained in some $P_{j}$, with an edge $\left\{ V_{i_{1}},\dots,V_{i_{k}}\right\} $
	if $d\left(V_{i_{1}},\dots,V_{i_{k}}\right)>\tau p$ and the
	$k$-partite $k$-graph induced by
	$V_{i_{1}},\dots,V_{i_{k}}$ in~$G$ is
	$\left(\varepsilon,p\right)$-regular.
\end{definition}

\begin{lemma}[Sparse Regularity Lemma {\cite[Lemma 4.7]{FK22}}]
	\label{lem:transfer-cluster-graph-partition}
	Let $1/n \ll\lambda\ll 1/r_1\ll
		1/r_0,\,\eps,\,1/k,\,1/d,\,\delta$
	and $p\in(0,1]$.  Let $G$ be an $\left(
		\lambda,p,1+ \lambda \right)$-upper-uniform
	$n$-vertex $k$-graph with a partition
	$P_{1},\dots,P_{h}$ of its vertices into parts of
	sizes $n_{1},\dots,n_{h}$, respectively.  Let~$G'$ be
	a spanning subgraph of~$G$ and let $\mathcal{R}$ be
	the partitioned reduced $r$-vertex $k$-graph with
	threshold $\tau$ obtained by applying
	\cref{lem:sparse-regularity,lem:prepartition-regularity}
	to $G'$ with parameters $r_{0}$, $p$ and
	$\varepsilon$.

	For every $1\leq i\leq h$, let $\mathcal{P}_{i}$ be
	the set of clusters contained in $P_{i}$, and let
	$r_{i}=|\mathcal P_i|$. Also, for every $J\subseteq
		\{1,\dots,h\}$, write $P_J:=\bigcup_{j\in J} P_j$,
	$n_J=|P_J|$, $\mathcal{P}_{J}=\bigcup_{j\in J}
		\mathcal P_j$ and $r_J=|\mathcal{P}_{J}|$. Then the
	following properties hold.

	\begin{enumerate}[\upshape(1)]
		\item\label{it:3.6.1} For each~$i$, we have
		$r_{i}\ge\left(n_{i}/n\right)r-\varepsilon hr$.
		\item\label{it:3.6.2}
		Consider some $1\leq i\leq h$, {$1\leq d'
					\leq d$} and some $J\subseteq \{1,\dots,h\}$,
		and suppose that all but $o(n^{d'})$ of the
		${d'}$-sets of vertices $X\subset P_{i}$
		satisfy
		\[ \deg_{P_{J}}\left(X\right)\ge\delta'
			p\binom{n_{J}-{d'}}{k-{d'}}
		\]
		for some $\delta'\geq\delta$.
		Then, in the reduced graph $\mathcal{R}$, for all
		but at most $\sqrt{\varepsilon}\binom{r}{{d'}}$ of
		the ${d'}$-sets of clusters
		$\mathcal{X}\subset\mathcal{P}_{i}$, we have
		\[
			\deg_{\mathcal{P}_{J}}\left(\mathcal{X}\right)\ge\delta'
			\binom{r_{J}-{d'}}{k-{d'}}-\left(\tau+\varepsilon
			h+\sqrt{\varepsilon}+k/r_{0}\right)r^{k-{d'}}.
		\]
	\end{enumerate}
\end{lemma}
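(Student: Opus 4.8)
The plan is to follow the proof of \cref{lem:reduced-graph-degree-inheritance}, carrying the prepartition $P_1,\dots,P_h$ along. Apply \cref{lem:sparse-regularity,lem:prepartition-regularity} to $G'$ with parameters $r_0,p,\varepsilon$ to obtain the $(\varepsilon,p)$-regular partition $V_1,\dots,V_r$ with $r_0\le r\le r_1$ underlying $\cR$. All clusters have size $\lfloor n/r\rfloor$ or $\lceil n/r\rceil$ (in particular at least $\lambda n$, so upper-uniformity of $G$ applies to any tuple of clusters), and at most $\varepsilon h r$ clusters — the \emph{straddling} ones — fail to be contained in any $P_j$, so the straddling clusters together cover at most $\varepsilon h r\lceil n/r\rceil\le 2\varepsilon h n$ vertices. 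For part~\ref{it:3.6.1}, observe that $P_i$ is covered by the $r_i$ clusters of $\cP_i$ together with parts of at most $\varepsilon h r$ straddling clusters, each of size at most $\lceil n/r\rceil$; hence $n_i\le(r_i+\varepsilon h r)\lceil n/r\rceil$, and rearranging (using $r\le r_1$ and $n$ large) yields $r_i\ge(n_i/n)r-\varepsilon h r$.

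For part~\ref{it:3.6.2}, fix $i$, $d'$, $J$ and $\delta'\ge\delta$; if $r_i<d'$ there is nothing to prove, and we may assume the conclusion is non-vacuous, i.e.\ $\delta'\binom{r_J-d'}{k-d'}>(\tau+\varepsilon h+\sqrt\varepsilon+k/r_0)r^{k-d'}$. A short computation using upper-uniformity of $G$ (sum the assumed degree bound over the non-bad $d'$-sets of $P_i$ and compare with the number of edges of $G$ meeting $P_i$ in $d'$ vertices and $P_J$ in $k-d'$ vertices) shows $\delta'\le 1+o(1)$; together with non-vacuity this forces $r_J=\Omega(r)$ and hence $n_J=\Omega(n)$. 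Now run two averaging steps over the $\binom{r_i}{d'}$ cluster-$d'$-sets $\cX\subset\cP_i$. First, call $X\subset P_i$ with $|X|=d'$ \emph{bad} if $\deg_{P_J}(X)<\delta' p\binom{n_J-d'}{k-d'}$; by hypothesis there are only $o(n^{d'})$ bad sets, and each bad $X$ is a transversal of at most one $\cX\subset\cP_i$, so every $\cX$ has at most $o(n^{d'})$ bad transversals among its $\prod_{V\in\cX}|V|=\Theta(n^{d'})$ transversals; thus each $\cX$ has at least $(1-o(1))\prod_{V\in\cX}|V|$ \emph{good} transversals $X$, those with $\deg_{P_J}(X)\ge\delta' p\binom{n_J-d'}{k-d'}$. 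Second, at most $\varepsilon\binom{r}{k}$ of the $k$-sets of clusters are irregular, and each contains at most $\binom{k}{d'}$ subsets of size $d'$, so $\sum_{\cX\subset\cP_i}\#\{\text{irregular }k\text{-sets}\supseteq\cX\}\le\binom{k}{d'}\varepsilon\binom{r}{k}$; by Markov's inequality and the identity $\binom{k}{d'}\binom{r}{k}=\binom{r}{d'}\binom{r-d'}{k-d'}\le\binom{r}{d'}r^{k-d'}$, all but at most $\sqrt\varepsilon\binom{r}{d'}$ of the $\cX\subset\cP_i$ lie in at most $\sqrt\varepsilon\,r^{k-d'}$ irregular $k$-sets; call these $\cX$ \emph{nice}.

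It remains to bound $\deg_{\cP_J}(\cX)$ from below for a nice $\cX=\{V_{a_1},\dots,V_{a_{d'}}\}$. Summing over its good transversals, $\sum_{X\text{ good}}\deg_{P_J}(X)\ge(1-o(1))\bigl(\prod_j|V_{a_j}|\bigr)\delta' p\binom{n_J-d'}{k-d'}$. On the other hand, $\sum_X\deg_{P_J}(X)$ over all transversals equals, up to the degenerate edges below, the number of edges of $G'$ with one vertex in each $V_{a_j}$ and the other $k-d'$ vertices in $P_J$, which we sort by where those $k-d'$ vertices sit: (i) in $k-d'$ distinct clusters of $\cP_J$ forming with $\cX$ a regular $k$-tuple of density $>\tau p$ — there are at most $\deg_{\cP_J}(\cX)$ of these, each contributing at most $(1+\lambda)p\lceil n/r\rceil^k$ edges; (ii) in distinct clusters forming an irregular $k$-tuple — at most $\sqrt\varepsilon\,r^{k-d'}$ of these by nicety, again at most $(1+\lambda)p\lceil n/r\rceil^k$ edges each; (iii) in distinct clusters forming a regular $k$-tuple of density $\le\tau p$ — at most $\binom{r_J}{k-d'}$ of these, at most $\tau p\lceil n/r\rceil^k$ edges each; and (iv) the \emph{degenerate} edges, where two of the $k-d'$ vertices share a cluster or one lies in a straddling cluster — by splitting any set of size below $\lambda n$ into blocks of size $\lceil\lambda n\rceil$ and applying upper-uniformity, using that straddling clusters cover at most $2\varepsilon h n$ vertices and $\cP_J$ has at most $r$ clusters, these number at most $O_k\bigl((\varepsilon h+1/r_0)\,p\,\lceil n/r\rceil^{d'}\binom{n_J-d'}{k-d'}\bigr)$. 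Combining the four estimates, dividing through by $(1+\lambda)p\lceil n/r\rceil^k$, and using the elementary bound $\binom{n_J-d'}{k-d'}\ge\binom{r_J-d'}{k-d'}\lceil n/r\rceil^{k-d'}$ (valid since $n_J\ge r_J\lfloor n/r\rfloor$ and $n$ is large) together with $\delta'\le 1+o(1)$, $n_J=\Omega(n)$, $r_J=\Omega(r)$ and the constant hierarchy to absorb the lower-order and $k$-dependent factors, we arrive at $\deg_{\cP_J}(\cX)\ge\delta'\binom{r_J-d'}{k-d'}-(\tau+\varepsilon h+\sqrt\varepsilon+k/r_0)r^{k-d'}$, as claimed.

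The crux is step~(iv) and the bookkeeping in the last estimate: one converts the vertex-level degree condition into a cluster-level one while keeping the three unavoidable losses — irregular $k$-tuples (absorbed into the $\sqrt\varepsilon$ term via the Markov step), $k$-tuples below the density threshold $\tau p$ (the $\tau$ term), and degenerate edges meeting a straddling cluster or repeating a cluster (the $\varepsilon h$ and $k/r_0$ terms, via the upper-uniformity/block-splitting computation) — each within the stated error. Running the argument only in the non-vacuous case is exactly what keeps $r_J$ and $n_J$ proportional to their ambient sizes, which in turn keeps the degenerate contribution negligible next to the main term.
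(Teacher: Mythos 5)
A preliminary remark: the paper itself offers no proof of this lemma — it is quoted (with $d$ relaxed to $d'\le d$) from Ferber and Kwan~\cite{FK22} — so there is no in-paper argument to compare against. Your reconstruction follows the same standard route their proof takes: cluster counting for part~(1), and for part~(2) a double count over transversals of a cluster $d'$-set, with the exceptional family produced by a Markov/averaging step over irregular $k$-tuples (your use of $\binom{k}{d'}\binom{r}{k}=\binom{r}{d'}\binom{r-d'}{k-d'}$ is exactly how the two $\sqrt\varepsilon$'s arise) and the remaining losses split into below-threshold tuples, irregular tuples, and degenerate/straddling configurations. The skeleton is correct.

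Two steps, however, are not justified as written. First, upper-uniformity does not give $\delta'\le 1+o(1)$: in your double count each edge can contain up to $\binom{k}{d'}$ relevant $d'$-sets (e.g.\ when $i\in J$), and the binomial ratios contribute further $k$-dependent factors, so the computation only yields $\delta'=O_k(1)$. That weaker bound is in fact all you need — it still forces $r_J=\Omega_{\tau,k}(r)$ in the non-vacuous case, and note that $r_i\ge d'$ already guarantees $n_i\ge\lfloor n/r\rfloor\ge n/(2r_1)$, so there are $\Omega(n^{d'})$ non-bad sets to sum over — but the claim should be corrected. Second, and more seriously, your closing appeal to ``the constant hierarchy to absorb the lower-order and $k$-dependent factors'' is not available: the hierarchy does not place $\varepsilon$ or $1/r_0$ below $1/k$, so a loss of the form $C_k\,\varepsilon h\,r^{k-d'}$ or $C_k\,r^{k-d'}/r_0$ (which is what your crude vertex-level bounds on the straddling and repeated-cluster contributions produce) cannot be squeezed into the stated budget $(\varepsilon h+k/r_0)\,r^{k-d'}$ by smallness of constants. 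To hit the stated error term one has to count these degenerate contributions at the level of unordered cluster-tuples, where $1/(k-d'-1)!$-type savings offset the positional and multiplicity factors, and likewise the multiplicative $1\pm o(1)$ slips must be absorbed there (your inequality $\binom{n_J-d'}{k-d'}\ge\binom{r_J-d'}{k-d'}\lceil n/r\rceil^{k-d'}$ holds only up to a $1-o(1)$ factor since $r_J\lceil n/r\rceil$ may exceed $n_J$, and part~(1) as you rearrange it carries an extra $O(r_1^2/n)$). So the plan is the right one, but the bookkeeping that turns it into the lemma's exact error term — rather than a $C_k$-inflated variant — is the part that is genuinely missing.
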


Note that \cref{lem:reduced-graph-degree-inheritance} is
actually a special case of
\cref{lem:transfer-cluster-graph-partition} (taking $h=1$ and
threshold $\tau = 2\varepsilon$).

\subsection{The Embedding Lemma}
A hypergraph is \emph{linear} if every two edges intersect in
at most one vertex. The following embedding lemma allows us to
embed linear subgraphs of the reduced graph into the original
graph.

\begin{definition}
	Consider a $k$-graph $H$ with vertex set $\left\{
		1,\dots,r\right\} $ and let
	$\mathcal{G}\left(H,n,m,p,\varepsilon\right)$ be the
	collection of all $k$-graphs $G$ obtained in the following
	way. The vertex set of $G$ is a union of pairwise disjoint sets
	$V_{1},\dots, V_{r}$ each of size $n$. For every
	edge $\left\{ i_{1},\dots,i_{k}\right\} \in
		E\left(H\right)$, we add to $G$ an
	$\left(\varepsilon,p\right)$-regular $k$-graph with~$m$
	edges across $V_{i_{1}},\dots,V_{i_{k}}$.  These are the
	only edges of $G$.
\end{definition}

\begin{definition}
	For $G\in\mathcal{G}\left(H,n,m,p,\varepsilon\right)$, let
	$\#_{H}(G)$ be the number of ``canonical copies'' of $H$ in
	$G$, meaning that the copy of every vertex~$i$ from $H$ must
	come from~$V_{i}$.
\end{definition}

\begin{definition}
	\label{def:k-density}
	The $k$-\emph{density} $m_{k}\left(H\right)$ of a $k$-graph
	$H$ with more than $k$ vertices is defined as
	\[
		m_{k}\left(H\right)=\max\left\{
		\frac{e\left(H'\right)-1}{v\left(H'\right)-k}\colon H'\subseteq
		H\text{ with }v\left(H'\right)>k\right\}.
	\]
\end{definition}

The following result appears in the work of Ferber and
Kwan~{\cite[Lemma 4.11]{FK22}} and can be proved using the
methods of Conlon, Gowers, Samotij and Schacht~\cite{CGS14}.

\begin{lemma}[Sparse Embedding Lemma]
	\label{lem:embedding}
	For every linear $k$-graph~$H$ and every $\tau>0$,
	there exist $\varepsilon,\,\zeta>0$ with the following
	property. For every $\kappa>0$, there is $C>0$ such
	that if $p\ge CN^{-1/m_{k}\left(H\right)}$, then with
	probability $1-e^{-\Omega\left(N^{k}p\right)}$ the
	following holds in $G\sim\oH_k(N,p)$. For every
	$n\ge\kappa N$, $m\ge\tau pn^{k}$ and every subgraph
	$G'$ of $G$ in
	$\mathcal{G}\left(H,n,m,p,\varepsilon\right)$, we have
	$\#_{H}(G')>\zeta p^{e(H)} n^{v(H)}$.
\end{lemma}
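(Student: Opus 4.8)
The bound on~$p$ is the threshold at which $\oH_k(N,p)$ acquires a Tur\'an-type property --- every subgraph keeping a positive proportion of its edges already spans a copy of~$H$ --- and the statement is the corresponding \emph{counting} lemma for regular collections, so its proof is a counting argument calibrated to that threshold; this is where the $k$-density $m_k(H)$ comes from. The plan is to adapt the argument of Conlon, Gowers, Samotij and Schacht~\cite{CGS14} for the case $k=2$ (the K{\L}R conjecture) to the $k$-uniform setting, which runs along the same lines; alternatively one can feed a suitable auxiliary hypergraph into the hypergraph container method. Fix $H$; we may assume $H$ is not a matching (otherwise the statement is immediate), so that $m_k(H)\ge 1/(k-1)$, and --- handling connected components separately, since they live in disjoint parts --- that $H$ is connected. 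Given $\tau$, choose $\varepsilon\ll\zeta\ll\tau,1/v(H)$ and then, given $\kappa$, a sufficiently large $C$; throughout, $p\ge CN^{-1/m_k(H)}$.

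Call a subgraph $F$ of the complete $k$-graph on $[N]$ \emph{bad} if, for some choice of pairwise disjoint $V_1,\dots,V_r\subseteq[N]$ of size $n\ge\kappa N$ (with $r=v(H)$) and some $m\ge\tau pn^k$, one has $F\in\mathcal{G}(H,n,m,p,\varepsilon)$ with respect to $V_1,\dots,V_r$ while $\#_H(F)\le\zeta p^{e(H)}n^{v(H)}$. Every bad $F$ has exactly $e(H)m$ edges (with $m=m(F)$), so $\Pr[F\subseteq\oH_k(N,p)]=p^{e(H)m}$, and $e(H)m\ge e(H)\tau\kappa^k pN^k=\Omega(N^kp)$. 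There are at most $(r+1)^N=e^{O(N)}$ ordered partitions of $[N]$ into $r$ parts of linear size, and at most $N^2$ admissible pairs $(n,m)$; since $N^kp\ge CN^{k-1/m_k(H)}$ with $k-1/m_k(H)\ge 1$, these factors are absorbed once $C$ is large. By a union bound and the first moment, the lemma therefore reduces to the deterministic estimate that, for each fixed partition and admissible pair $(n,m)$, the number of bad $F$ is at most $\beta^{e(H)m}p^{-e(H)m}$ for a suitable $\beta=\beta(\tau,H,\zeta)\in(0,1)$ --- equivalently, bad collections must form an $e^{-\Omega(m)}$-fraction of $\mathcal{G}(H,n,m,p,\varepsilon)$, whose size is at most $(e/(\tau p))^{e(H)m}$.

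This counting estimate is the heart of the matter and, I expect, the main obstacle. The natural framework is the \emph{copy hypergraph} $\mathcal{A}$: its vertices are the ``potential edges'' --- the transversal $k$-sets $\{w_{i_1},\dots,w_{i_k}\}$ with $w_{i_j}\in V_{i_j}$, one family for each edge $\{i_1,\dots,i_k\}$ of $H$, so that $|V(\mathcal{A})|=e(H)n^k$ --- and its hyperedges are the canonical copies of $H$, each viewed as the $e(H)$-set of potential edges it uses; thus $\mathcal{A}$ is $e(H)$-uniform with $\Theta(n^{v(H)})$ edges, and a bad $F$ is precisely a sparse set in $\mathcal{A}$ with $m$ vertices in each ``layer''. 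The codegrees of $\mathcal{A}$ --- the number of copies of $H$ through a set of potential edges that spans a fixed $H'\subseteq H$ is $\Theta(n^{v(H)-v(H')})$ --- are of exactly the magnitude for which the balanced-supersaturation and container hypotheses become available when $p\ge CN^{-1/m_k(H)}\ge C\kappa^{1/m_k(H)}n^{-1/m_k(H)}$, and this is where $m_k(H)$ enters. One then wants to combine (i) a sufficiently small family of containers for $\mathcal{A}$, each spanning only a small fraction of the copies of $H$ and together covering every bad collection, with (ii) a sufficiently strong upper bound on the number of members of $\mathcal{G}(H,n,m,p,\varepsilon)$ that can lie inside a single container, so as to reach the bound $\beta^{e(H)m}p^{-e(H)m}$. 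The delicate point is (ii): sitting inside a few-copies container does not by itself make a collection rare, since such a container can be large in every layer while a collection inside it spans few copies when chosen adversarially --- and indeed $(\varepsilon,p)$-regular collections with few copies of $H$ do exist for $p$ in this range --- so one must use crucially that every layer of $F$ is $(\varepsilon,p)$-regular of density at least $\tau p$. Turning this regularity hypothesis into the required count, i.e.\ reconciling the purely sparse, container-theoretic picture with $(\varepsilon,p)$-regularity, is exactly the technical core of~\cite{CGS14}; carrying it out in the $k$-uniform setting, together with verifying the codegree estimates for $\mathcal{A}$ in terms of $m_k(H)$, is where the bulk of the work lies, and the lemma follows once this is done.
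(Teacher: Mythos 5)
The paper gives no proof of \cref{lem:embedding}: it is imported verbatim from Ferber and Kwan~\cite[Lemma 4.11]{FK22}, with the remark that it can be proved by the methods of Conlon, Gowers, Samotij and Schacht~\cite{CGS14}. Your outline --- reducing via a union bound (over partitions and admissible pairs $(n,m)$, absorbable since $k-1/m_k(H)\geq 1$ for linear, non-matching $H$) to an exponential counting bound on bad $(\varepsilon,p)$-regular collections, to be established by container-type arguments on the copy hypergraph --- is precisely the route those references take, so your approach coincides with the paper's; like the paper, you explicitly defer the technical core (the counting-inside-containers step, adapted to the $k$-uniform setting) to~\cite{CGS14} rather than carrying it out.
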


\subsection{Properties of random graphs and subgraphs}

In what follows we prove a simple consequence of the
Chernoff bound and we list a series of technical statements
from the work of Ferber and Kwan~\cite{FK22} that describe
certain properties of random graphs.
For reals $x,y,z$ we write $x = y \pm z$ to mean that $x \in
	[y-z,y+z]$.

\begin{lemma}\label{lem:count-edges_containing_2-graph}
	For every integer $k\geq 3$ and every $0< \lambda$, $\eta<
		1$, there exists $C>0$ such that if {$p \geq
				Cn^{-(k-2)} \log{n}$},
	then w.h.p. $G\in
		\oH^{k}\left(n,p\right)$ has the following property. Let
	$U_1,\ldots,U_k$ be disjoint subsets of~$V(G)$ each of size
	at least
	$\lambda n$. Let $M\subset U_{k-1} \times U_k$ be of size at
	least $ C/(pn^{k-3})$. Then $G$ has $(1 \pm \eta) p
		|M| \prod_{j=1}^{k-2}{|U_j|}$ edges $e = \{v_1,\ldots,
		v_k\}$ with $v_i\in U_i$ for $i\in [k-2]$ and $(v_{k-1},v_k)
		\in M$.
\end{lemma}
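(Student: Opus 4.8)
The plan is to fix the sets $U_1,\dots,U_k$ and the set $M$ first, observe that the quantity to be estimated is then a sum of \emph{independent} Bernoulli random variables whose mean is precisely the claimed target value, apply a Chernoff bound, and finally take a union bound over all admissible choices of $U_1,\dots,U_k$ and $M$.

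Concretely, I would fix pairwise disjoint sets $U_1,\dots,U_k\subseteq V(G)$ with $|U_i|\geq\lambda n$ and a set $M\subseteq U_{k-1}\times U_k$ with $t:=|M|\geq C/(pn^{k-3})$, and let $X$ denote the number of edges $\{v_1,\dots,v_k\}\in E(G)$ with $v_i\in U_i$ for $i\in[k-2]$ and $(v_{k-1},v_k)\in M$. Since the $U_i$ are pairwise disjoint, every such edge determines the pair $(v_{k-1},v_k)\in M$ and the tuple $(v_1,\dots,v_{k-2})$ uniquely, so $X$ is a sum of exactly $t\prod_{j=1}^{k-2}|U_j|$ pairwise distinct edge-indicators; these are mutually independent in $\oH^k(n,p)$. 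Hence $X\sim\Bin\bigl(t\prod_{j=1}^{k-2}|U_j|,\,p\bigr)$ and $\Exp X=\mu$ with $\mu:=p\,t\prod_{j=1}^{k-2}|U_j|$, which is exactly the target value. The multiplicative Chernoff bound then gives $\Prob[\,|X-\mu|\geq\eta\mu\,]\leq 2\exp(-\eta^2\mu/3)$. I would bound $\mu$ from below in two complementary ways: from $|U_j|\geq\lambda n$ and $t\geq C/(pn^{k-3})$ one gets $\mu\geq C\lambda^{k-2}n$, while from $|U_j|\geq\lambda n$ and $p\geq Cn^{-(k-2)}\log n$ one gets $\mu\geq C\lambda^{k-2}t\log n$. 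Averaging these, $\mu\geq\tfrac{1}{2}C\lambda^{k-2}(n+t\log n)$, so for each admissible configuration with $|M|=t$ the probability that $X\neq(1\pm\eta)\mu$ is at most $2\exp\bigl(-\tfrac{\eta^2}{6}C\lambda^{k-2}(n+t\log n)\bigr)$.

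It remains to union bound over all configurations, and this is the step I expect to be the main obstacle, since a priori there are very many sets $M$. There are at most $(k+1)^n$ choices of pairwise disjoint $U_1,\dots,U_k$, and for each of them at most $\binom{n^2}{t}\leq n^{2t}$ choices of $M\subseteq U_{k-1}\times U_k$ with $|M|=t$, where $1\leq t\leq n^2$. The point is that both union-bound factors are absorbed by the Chernoff estimate: the factor $n^{2t}=e^{2t\log n}$ is dominated by the $t\log n$-term in the exponent, and the factor $(k+1)^n$ by the $n$-term, provided $C=C(k,\lambda,\eta)$ is taken large enough that $\tfrac{\eta^2}{6}C\lambda^{k-2}$ exceeds a constant depending only on $k$ (say $2\ln(k+1)+3$, with a harmless extra factor if $\log$ is not the natural logarithm). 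With such a $C$, summing over $1\leq t\leq n^2$ gives a convergent geometric-type series, and summing over all $(k+1)^n$ tuples still leaves a bound of the form $e^{-\Omega(n)}=o(1)$. Hence w.h.p.\ the asserted property holds. (For $k=3$ one has $n^{k-3}=1$, so the hypothesis reads $|M|\geq C/p$ and no edge case is lost; all the inequalities above only need to hold for $n$ large, which is exactly what ``w.h.p.''\ provides.)
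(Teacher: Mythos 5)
Your proposal is correct and follows essentially the same route as the paper's proof: a Chernoff bound for each fixed choice of $U_1,\dots,U_k$ and $M$ (using that the relevant edge-indicators are independent, so the count is binomial with mean exactly $p|M|\prod_{j=1}^{k-2}|U_j|$), followed by a union bound over the $(k+1)^n$ choices of the $U_i$ and the at most $\binom{n^2}{t}$ choices of $M$, with the $t\log n$ part of the exponent (from $p\geq Cn^{-(k-2)}\log n$) absorbing the $\binom{n^2}{t}$ factor and the linear-in-$n$ part (from $|M|\geq C/(pn^{k-3})$) absorbing $(k+1)^n$. The only cosmetic point is that the final sum should run over $t\geq C/(pn^{k-3})$ rather than all $t\geq 1$, since your averaged lower bound on the mean uses that hypothesis, but this is exactly the range the statement concerns, so nothing is lost.
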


\begin{proof}
	Fix $M$ and $U_1,\ldots, U_k$ as in the statement. We denote
	by $Z=Z(M; U_1,\ldots, U_k)$ number of edges $\{v_1,\ldots,
		v_k\} \in E(G)$ with $v_i\in U_i$ for $i\in [k-2]$ and
	$(v_{k-1},v_k) \in M$.  Since $Z$ has binomial distribution,
	by using the Chernoff bound, with probability
	$1- 2 \exp{(-c \, |M| n^{k-2} p )}$ we have
	\[
		Z = (1 \pm \eta) p |M| \prod_{j=1}^{k-2}{|U_j|},
	\]
	where $c := \eta^2 \lambda^{k-2}/3$.
	Then, taking $C := 4 \log{(k+1)} \,  c^{-1}$, by the union bound on the
	choices of $U_1,\ldots, U_k$ and $M$, the probability that
	the property described in the statement fails is at most
	\begin{align*}
		\sum_{(M; U_1,\ldots, U_k)}{  2 \, \exp{(- c \,  |M|
		n^{k-2} p)}} & \leq 2 (k+1)^n \sum_{m \geq C/(pn^{k-3})}{{ n^2 \choose m } \exp{(- c \,  m n^{k-2}p )} } \\
		             & \leq 2 (k+1)^n \sum_{m \geq C/(pn^{k-3})}{\exp{(-m(c n^{k-2} p -  2 \log{n} ))}}          \\
		             & \leq 2 (k+1)^n \sum_{m \geq C/(pn^{k-3})}{\exp{(-m(cn^{k-2}p/2))}}                        \\
		             & \leq 4 (k+1)^n \exp{(- 2\log{(k+1)} \, n )}                                               \\
		             & = o(1),
	\end{align*}
	which concludes the proof.
\end{proof}

\begin{lemma}[Corollary 5.2 in~\cite{FK22}]
	\label{lem:random-upper-uniform}
	Fix $k\geq2$, let $p=\omega(n^{1-k}\log n)$, and consider $G\in \oH^{k}\left(n,p\right)$. Then $G$ is
	$\left(o(1),p,1+o(1)\right)$-upper-uniform with probability at least $1-e^{-\omega(n\log n)}$.
\end{lemma}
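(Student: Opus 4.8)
The plan is a standard first‑moment calculation combined with a tail bound and a union bound over vertex‑set tuples. Write $V=V(G)$ with $|V|=n$. For pairwise disjoint sets $X_{1},\dots,X_{k}\subseteq V$, any edge of $G$ meeting every $X_i$ contains exactly one vertex of each (the sets are disjoint and edges have size $k$), so $e(X_{1},\dots,X_{k})$ is a sum of $N:=|X_{1}|\cdots|X_{k}|$ independent indicators, i.e.\ $e(X_{1},\dots,X_{k})\sim\Bin(N,p)$ with mean $Np$ and hence expected density exactly $p$. So it suffices to produce vanishing functions $\lambda=\lambda(n)\to 0$ and $\delta=\delta(n)\to 0$ such that, with probability at least $1-e^{-\omega(n\log n)}$, \emph{every} choice of pairwise disjoint $X_{1},\dots,X_{k}$ with $\min_i|X_{i}|\ge\lambda n$ satisfies $e(X_{1},\dots,X_{k})\le(1+\delta)pN$; this is exactly $(\lambda,p,1+\delta)$‑upper‑uniformity.

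The execution has three steps. \emph{(i) Counting tuples:} each vertex lies in at most one of $X_{1},\dots,X_{k}$, so labelling vertices by $\{1,\dots,k,\star\}$ gives at most $(k+1)^{n}$ admissible tuples. \emph{(ii) Tail bound for a fixed tuple:} if $\min_i|X_i|\ge\lambda n$ then $N\ge(\lambda n)^{k}$, and the Chernoff upper‑tail bound yields, once $n$ is large enough that $\delta\le 1$,
\[
\Prob\bigl[e(X_{1},\dots,X_{k})\ge(1+\delta)pN\bigr]\le\exp\!\Bigl(-\tfrac{\delta^{2}pN}{3}\Bigr)\le\exp\!\Bigl(-\tfrac{\delta^{2}}{3}\,\lambda^{k}n^{k}p\Bigr),
\]
using monotonicity in $N$ (and if $(1+\delta)p\ge 1$ the event is vacuous, as densities are at most $1$). \emph{(iii) Union bound:} the probability that $G$ fails to be $(\lambda,p,1+\delta)$‑upper‑uniform is at most $(k+1)^{n}\exp(-\tfrac{\delta^{2}}{3}\lambda^{k}n^{k}p)$.

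It then remains to calibrate $\lambda$ and $\delta$. Since $p=\omega(n^{1-k}\log n)$, the function $\omega_{0}(n):=n^{k-1}p/\log n$ tends to infinity, and $\lambda^{k}n^{k}p=\lambda^{k}\,\omega_{0}(n)\,n\log n$. I would take, say, $\lambda=\delta=\omega_{0}(n)^{-1/(3k)}$: both tend to $0$, while $\delta^{2}\lambda^{k}\omega_{0}(n)=\omega_{0}(n)^{\,2/3-2/(3k)}\to\infty$ for every $k\ge 2$. Hence $\tfrac{\delta^{2}}{3}\lambda^{k}n^{k}p=\omega(n\log n)$, which dominates the $n\log(k+1)=O(n)$ loss from the union bound, so the failure probability is $e^{-\omega(n\log n)}$, as required. (One also checks $\lambda n\to\infty$, so the family of admissible tuples is non‑trivial, though this is not needed.)

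There is no real obstacle: the only point requiring care is step (iii), namely choosing the vanishing functions $\lambda$ and the slack $\delta$ in $D=1+\delta$ slowly enough that the single‑tuple failure probability still beats the $(k+1)^{n}$ union‑bound factor, using only the hypothesis $p=\omega(n^{1-k}\log n)$. This forces $\lambda$ and $\delta$ to decay only like a small negative power of $\omega_{0}(n)=n^{k-1}p/\log n$, but since $\omega_{0}(n)\to\infty$ this is always possible, and the resulting $\lambda,\delta$ are genuine $o(1)$ functions.
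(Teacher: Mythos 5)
Your proof is correct, and it is the standard argument: the paper does not prove this lemma itself but imports it from Ferber and Kwan \cite{FK22}, where it is likewise obtained by the same route of a binomial/Chernoff upper-tail bound for each fixed tuple of disjoint sets followed by a union bound over the at most $(k+1)^n$ tuples, with $\lambda$ and $D-1$ chosen to vanish slowly in terms of $n^{k-1}p/\log n$. Your calibration $\lambda=\delta=\omega_0(n)^{-1/(3k)}$ indeed gives exponent $\omega(n\log n)$ while beating the $e^{O(n)}$ union-bound cost, so nothing is missing.
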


\begin{lemma}[Lemma 5.3 in~\cite{FK22}]
	\label{lem:random-degree}
	For every $\alpha>0$ and $k\geq 3$, there is $C>0$
	such that if $p\ge C n^{2-k}$, then w.h.p.\ $G\sim
		\oH^{k}\left(n,p\right)$ has the following
	property. For every vertex $w$ and every set
	$X\subseteq V(G)$ of at most $\alpha n$ vertices,
	there are at most $2\alpha np\binom{n-2}{k-2}$ edges
	in~$G$ containing~$w$ and a vertex
	of~$X\setminus\{w\}$.
\end{lemma}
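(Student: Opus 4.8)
The plan is a union bound over all relevant pairs $(w,X)$, each controlled by a Chernoff estimate, in the same spirit as the proof of \cref{lem:count-edges_containing_2-graph}. I would begin with two harmless reductions. First, since the edge count in the statement depends on $X$ only through $X\setminus\{w\}$ and can only grow when this set is enlarged, it suffices to establish the property simultaneously for every vertex $w$ and every set $X\subseteq V(G)\setminus\{w\}$ of size exactly $\lfloor\alpha n\rfloor$ (for $n$ large this is at least $1$; the case $\lfloor\alpha n\rfloor=0$ is vacuous). Second, we may assume $\alpha\le 1$, since for $\alpha\ge 1$ the bound follows from the case $\alpha=1$: the count never exceeds the total number of edges through $w$, which is already handled there.

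Now fix such a pair $(w,X)$ and let $\cC=\cC(w,X)$ be the family of $k$-sets $e$ with $w\in e$ and $e\cap X\neq\emptyset$. These are exactly the $k$-sets whose membership in $G$ would be counted, so the number of edges of $G$ containing $w$ and a vertex of $X$ is distributed as $\Bin(|\cC|,p)$. Choosing a vertex of $e\cap X$ and then the remaining $k-2$ vertices of $e$ among the other $n-2$ vertices shows $|\cC|\le|X|\binom{n-2}{k-2}\le\alpha n\binom{n-2}{k-2}$, so this Binomial has mean at most $T:=\alpha n p\binom{n-2}{k-2}$. A standard Chernoff bound, in the form $\Prob[\Bin(m,p)\ge 2T]\le e^{-T/3}$ whenever $mp\le T$, then shows that the count exceeds $2\alpha n p\binom{n-2}{k-2}=2T$ with probability at most $e^{-T/3}$.

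Finally I would union bound over the at most $n\cdot 2^{n}$ choices of $(w,X)$. The only place the hypothesis $p\ge Cn^{2-k}$ enters --- and the crux of the whole argument --- is that $\binom{n-2}{k-2}\ge n^{k-2}/\bigl(2(k-2)!\bigr)$ for $n$ large, so $T\ge\alpha C n/\bigl(2(k-2)!\bigr)$ is linear in $n$. Thus the total failure probability is at most $n\cdot 2^{n}\cdot e^{-T/3}$, and choosing $C=C(\alpha,k)$ large enough that $e^{-T/3}\le e^{-n}$ (for instance $C\ge 6(k-2)!/\alpha$) makes this $n(2/e)^{n}=o(1)$, so w.h.p.\ $G$ has the asserted property. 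I do not expect a real obstacle here; the one thing to watch is the balance in the two bounds --- the Binomial mean must stay below the target $T$ so the Chernoff deviation is only by a constant factor, while $T$ must be linear in $n$ to defeat the exponentially many choices of $X$ --- which is precisely what the lower bound on $p$ secures.
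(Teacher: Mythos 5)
Your proof is correct; the paper only cites this statement as Lemma~5.3 of Ferber--Kwan without reproving it, and your Chernoff-plus-union-bound argument is the standard route, identical in spirit to the paper's own proof of \cref{lem:count-edges_containing_2-graph}. (One trivial point: for $\alpha=1$ a set of size exactly $\lfloor\alpha n\rfloor$ inside $V(G)\setminus\{w\}$ need not exist, so take size $\min\{\lfloor\alpha n\rfloor,\,n-1\}$ in your first reduction; nothing else changes.)
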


For vertex sets $S$ and~$X$ in a $k$-graph $G$, let $Z_G(S,X)$ be
the number of edges $e\in E(G)$ that contain~$S$ and have
non-empty intersection with~$X\setminus S$.

\begin{lemma}[Lemma 5.4 in~\cite{FK22}]
	\label{lem:random-degree-strong}
	Fix $\alpha>0$ and positive integers $d< k$. For
	$p = \omega(n^{d-k})$, w.h.p.\
	$G\sim \oH^{k}\left(n,p\right)$ has the following
	property.  For every subset $X\subseteq V(G)$ of size
	$|X|\leq \alpha n$, there are~$o(n^d)$ $d$-sets
	$S\subseteq V(G)$ such that
	$Z_G(S,X)>2\binom k d\alpha np\binom{n-d-1}{k-d-1}$.
\end{lemma}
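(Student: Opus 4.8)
The plan is as follows. Fix $\eps>0$; since ``$o(n^d)$'' means ``eventually below $\eps n^d$ for every fixed $\eps$'', it suffices to show that w.h.p.\ no $X\subseteq V(G)$ with $|X|\le\alpha n$ has $|B(X)|\ge\eps n^d$, where I write $\theta:=2\binom{k}{d}\alpha np\binom{n-d-1}{k-d-1}$ and let $B(X)$ be the family of $d$-sets $S$ with $Z_G(S,X)>\theta$. By monotonicity of $|B(X)|$ in $X$ I may assume $|X|=\lceil\alpha n\rceil$, so there are at most $2^n$ relevant choices of $X$.

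First I would fix $X$ and bound $\Exp|B(X)|$. For a fixed $d$-set $S$ the variable $Z_G(S,X)$ is binomial: it counts the present edges among the $M_{S,X}$ many $k$-sets that contain $S$ and meet $X\sm S$, and $M_{S,X}\le|X\sm S|\binom{n-d-1}{k-d-1}\le\alpha n\binom{n-d-1}{k-d-1}$, so $\Exp Z_G(S,X)\le\theta/(2\binom{k}{d})$. As the threshold $\theta$ is at least $2\binom{k}{d}\ge4$ times this mean, a Chernoff bound gives $\Prob[Z_G(S,X)>\theta]\le(e/(2\binom{k}{d}))^{\theta}=e^{-\Omega(\theta)}$. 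The hypothesis $p=\omega(n^{d-k})$ forces $\theta=\Theta(n^{k-d}p)=\omega(1)$, so this probability is $o(1)$ uniformly in $S$, and hence $\Exp|B(X)|\le\binom{n}{d}e^{-\Omega(\theta)}=o(n^d)$.

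The main point is then to upgrade this, for each fixed $X$, to a tail bound strong enough to survive the union over the up to $2^n$ choices of $X$. Here $|B(X)|$ is a function of the independent edge-indicators of $G$: flipping one edge $e$ changes $Z_G(S,X)$ by at most $1$ for each of the at most $\binom{k}{d}$ sets $S\subseteq e$, so $|B(X)|$ is $\binom{k}{d}$-Lipschitz in these indicators, and the event $\{|B(X)|\ge s\}$ is certified by at most $(\theta+1)s$ edges (it suffices to exhibit $\theta+1$ witnessing edges for each of $s$ sets in $B(X)$). Talagrand's inequality for certifiable Lipschitz functions, together with the fact that the median of $|B(X)|$ is $O(\Exp|B(X)|)=o(n^d)$, then yields
\[
\Prob\big[|B(X)|>\eps n^d\big]\ \le\ \exp\!\Big(-\,\Omega\big(\eps\,n^{d}\big/\big(\tbinom{k}{d}^{2}\theta\big)\big)\Big),
\]
which beats $2^{-n}$ once $\theta=o(n^{d-1})$. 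In the complementary range $\theta=\Omega(n^{d-1})$ the parameter $p$ is already fairly large, and there the per-set tail $e^{-\Omega(\theta)}$ from the previous paragraph is itself strong: once $p$ is large enough that $\theta=\Omega(n)$ with a sufficiently large hidden constant, one gets $\Exp|B(X)|\le\binom{n}{d}e^{-\Omega(\theta)}=e^{-\omega(n)}$, so a crude union bound over all $X$ shows $B(X)=\varnothing$ for every $X$. For $d\ge2$ these two ranges can be arranged to overlap and cover all of $p=\omega(n^{d-k})$, while the residual $d=1$ regime is the sharper statement recorded in \cref{lem:random-degree}.

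I expect the main obstacle to be exactly this tension: when $p$ is only marginally above $n^{d-k}$ the threshold $\theta$ is merely $\omega(1)$, so the per-set estimate $e^{-\Omega(\theta)}$ is far too weak for a naive union bound over the exponentially many sets $X$, and it is only the small-certificate structure of $|B(X)|$ — so Talagrand, or equivalently a moment of slowly growing order — that makes the union feasible. Getting the Talagrand range and the large-$p$ range to dovetail cleanly, and pinning down the $d=1$ boundary, is where I anticipate the argument needing the most care.
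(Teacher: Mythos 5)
First, note that this lemma is not proved in the paper at all: it is quoted verbatim from Ferber and Kwan \cite{FK22} (their Lemma~5.4), so the comparison point is the argument given there, not anything in this manuscript.

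Your proposal has a genuine gap, and it sits exactly where you flagged uncertainty. Your two regimes do not cover the hypothesis $p=\omega(n^{d-k})$ for $d\in\{1,2\}$. For $d=1$ the Talagrand regime is empty (it needs $\theta=o(n^{d-1})=o(1)$, while $\theta=\Theta(\alpha pn^{k-d})=\omega(1)$ always), the crude regime needs $\theta=\Omega(n)$, i.e.\ $p=\Omega(n^{2-k})$, and your fallback to \cref{lem:random-degree} is invalid: that lemma assumes $p\geq Cn^{2-k}$, which is strictly stronger than the present hypothesis $p=\omega(n^{1-k})$, so the whole range $\omega(n^{1-k})\leq p=o(n^{2-k})$ is left unproved. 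For $d=2$ the regimes provably fail to dovetail once $\eps$ is small: with $\theta=\beta n$, Talagrand needs roughly $\eps/\beta$ to exceed $\ln 2$ to beat the $2^n$ union bound, while the crude bound needs $\beta$ itself to exceed an absolute constant; for fixed small $\beta$ and $\eps\to0$ (and the statement must hold for every $\eps$, since that is what $o(n^d)$ means) neither condition holds, leaving a window around $p\asymp n^{3-k}$ uncovered. Only for $d\geq3$ do your two ranges overlap as claimed. The steps you do carry out (the per-set Chernoff bound, the Lipschitz/certifiability verification for $|B(X)|$) are fine, but the union over the $2^n$ sets $X$ is precisely the crux, and concentration of the count $|B(X)|$ is the wrong tool for it at small $d$.

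The cited proof sidesteps this tension entirely, and you could repair your argument the same way: do not try to concentrate $|B(X)|$ per $X$; instead union over pairs $(X,\cA)$, where $\cA$ is a putative family of $\lceil\eps n^d\rceil$ bad $d$-sets. If every $S\in\cA$ has $Z_G(S,X)>\theta$, then $\sum_{S\in\cA}Z_G(S,X)>\lceil\eps n^d\rceil\,\theta$, and since each edge is counted at most $\binom{k}{d}$ times in this sum, the number of \emph{distinct} edges containing some $S\in\cA$ and meeting $X\setminus S$ exceeds $2\Lambda$, where $\Lambda:=\eps\alpha n^{d+1}p\binom{n-d-1}{k-d-1}$. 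But for fixed $(X,\cA)$ this edge count is stochastically dominated by a binomial with mean at most $\Lambda=\Theta_{\eps,\alpha,k}(pn^k)=\omega(n^d)$, so by Chernoff the bad event has probability at most $e^{-\Lambda/3}=e^{-\omega(n^d)}$, which beats the at most $2^{n}\cdot 2^{\binom{n}{d}}=e^{O(n^d)}$ choices of $(X,\cA)$ uniformly in $d\geq1$ and in the entire range $p=\omega(n^{d-k})$ (this is exactly what the hypothesis on $p$ is calibrated for). Trading the count of bad $d$-sets for one aggregated edge count with huge mean is the missing idea; neither Talagrand nor a split into $p$-ranges is needed.
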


\begin{lemma}[Lemma~5.6 in~\cite{FK22}]
	\label{lem:degree-inheritance_induced-subgraphs}
	Suppose $1\leq d<k$, $\alpha\ll1/k,\,\gamma$ and
	$0<\sigma\leq1-\alpha$.
	If $p=\omega(n^{d-k})$, then the
	following holds w.h.p.\ for
	$G\sim \oH^{k}\left(n,p\right)$.

	Let~$G'$ be a spanning subgraph of~$G$ with
	$\delta_d(G')\geq\left(\mu+\gamma\right)p\binom{n-d}{k-d}$,
	and let $Q\subset V(G')$ with $|Q|\leq\alpha n$ be given.
	If~$Y\subset V(G')\setminus Q$ with $|Y| = \sigma n$
	is chosen uniformly at random, then w.h.p.\ all but
	$o(n^d)$ $d$-sets in $V(G')\setminus Q$
	have degree at least
	$\left(\mu+\gamma/2\right)p\binom{\sigma n-d}{k-d}$
	into $Y$.
\end{lemma}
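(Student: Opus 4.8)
The plan is to condition on a typical outcome of $G$, to isolate a family of ``well-behaved'' $d$-sets that captures all but $o(n^d)$ of them, and to show that each such $d$-set has degree into a random~$Y$ concentrated slightly above the target $\theta:=(\mu+\tfrac12\gamma)p\binom{\sigma n-d}{k-d}$. Throughout set $A:=p\binom{n-d}{k-d}=\Theta(pn^{k-d})$; since $p=\omega(n^{d-k})$ we have $A\to\infty$, and for a $d$-set $S$ we write $\deg_{G'}(S,Y)$ for the number of edges of $G'$ containing $S$ with all remaining $k-d$ vertices in~$Y$. First I would record the properties of $G$ used below, all of which hold w.h.p.: by \cref{lem:random-degree-strong} (whose conclusion is uniform over the distinguished $\le\alpha n$-set, hence applies with that set equal to the adversarial~$Q$), all but $o(n^d)$ of the $d$-sets~$S$ satisfy $Z_G(S,Q)\le 2\binom kd\alpha np\binom{n-d-1}{k-d-1}$; a Chernoff-and-union-bound estimate shows that $\deg_G(S)\le KA$ holds for all but $o(n^d)$ of the $d$-sets~$S$ (for a suitable constant $K=K(k,d)$, the expected number of exceptions being $n^d\Pr[\Bin(\binom{n-d}{k-d},p)>KA]=o(n^d)$); and $\deg_G(R)=o(A)$ holds uniformly over all $(d+j)$-sets~$R$ with $1\le j\le k-d-1$ (this last point is the delicate one, discussed in the final paragraph). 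Call a $d$-set \emph{bad} if it is exceptional for \cref{lem:random-degree-strong} or fails $\deg_G(S)\le KA$; there are $o(n^d)$ bad $d$-sets.

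Now fix any spanning $G'\subset G$ with $\delta_d(G')\ge(\mu+\gamma)p\binom{n-d}{k-d}$ and any $Q$ with $|Q|\le\alpha n$, and put $U:=V(G')\setminus Q$, $u:=|U|\in[(1-\alpha)n,n]$. For a $d$-set $S\subset U$ let $L_S:=\{T\in\binom{U\setminus S}{k-d}:S\cup T\in E(G')\}$ be the part of the link of~$S$ that avoids~$Q$. At most $Z_G(S,Q)$ of the $\deg_{G'}(S)$ edges through~$S$ meet~$Q$, and $n\binom{n-d-1}{k-d-1}=(k-d)\tfrac{n}{n-d}\binom{n-d}{k-d}$, so every $S$ that is not bad satisfies
\[
  |L_S|\ \ge\ (\mu+\gamma)p\binom{n-d}{k-d}-2\binom kd\alpha np\binom{n-d-1}{k-d-1}\ \ge\ \Bigl(\mu+\tfrac23\gamma\Bigr)p\binom{n-d}{k-d}
\]
once $\alpha$ is small relative to~$\gamma$ and~$1/k$; call such an~$S$ \emph{good}. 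All but $o(n^d)$ of the $d$-sets of~$U$ are good, and for good~$S$ one also has $|L_S|\le\deg_G(S)\le KA$. Since $Y\subset U$ is disjoint from~$Q$ and $T\cap S=\varnothing$ for every link element~$T$, we have $\deg_{G'}(S,Y)=|\{T\in L_S:T\subseteq Y\}|$, so
\[
  \mathbb{E}_Y\bigl[\deg_{G'}(S,Y)\bigr]=|L_S|\prod_{i=0}^{k-d-1}\frac{\sigma n-i}{u-i}\ \ge\ (1-o(1))\,|L_S|\,\sigma^{k-d}\ \ge\ \theta+\Omega(A),
\]
using $u\le n$, $\binom{n-d}{k-d}\sigma^{k-d}\ge\binom{\sigma n-d}{k-d}$, and that $\mu+\tfrac23\gamma$ beats $\mu+\tfrac12\gamma$ by a positive constant.

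It remains to bound $\Pr_Y[\deg_{G'}(S,Y)<\theta]$ by $o(1)$, uniformly over good~$S$. I would pass to the binomial model by the usual coupling: let $Y_0$ keep each vertex of~$U$ independently with probability $\sigma_0:=\sigma-n^{-1/3}$, coupled so that $Y_0\subseteq Y$ unless $|Y_0|>\sigma n$ --- an event of probability $o(1)$, uniformly in $G',Q,S$ --- so that $\Pr_Y[\deg_{G'}(S,Y)<\theta]\le\Pr[X_S<\theta]+o(1)$ for $X_S:=\sum_{T\in L_S}\ind[T\subseteq Y_0]$. Now $X_S$ is a sum of indicators of increasing events in the independent inclusions $\{v\in Y_0\}$, and its mean $\mu_S=|L_S|\sigma_0^{k-d}$ again satisfies $\theta+\Omega(A)\le\mu_S\le KA$; Janson's inequality gives
\[
  \Pr[X_S<\theta]\ \le\ \exp\!\Bigl(-\tfrac{(\mu_S-\theta)^2}{2(\mu_S+\bar\Delta_S)}\Bigr),\qquad
  \bar\Delta_S:=\sum_{\substack{T\ne T'\in L_S\\ T\cap T'\ne\varnothing}}\sigma_0^{|T\cup T'|}.
\]
Estimating a pair $(T,T')$ with $|T\cap T'|=j$ by choosing $T\in L_S$ ($\le KA$ ways), a $j$-subset $J\subset T$ ($\binom{k-d}{j}$ ways) and then $T'\supseteq J$ ($\le\deg_G(S\cup J)$ ways), the codegree bound yields $\bar\Delta_S\le\sum_{j=1}^{k-d-1}\binom{k-d}{j}\,KA\cdot o(A)=o(A^2)$. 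Combined with $\mu_S-\theta=\Omega(A)$ and $\mu_S=O(A)$, this forces the exponent above to tend to infinity, uniformly over good~$S$, so $\Pr_Y[\deg_{G'}(S,Y)<\theta]=o(1)$ uniformly. Hence $\mathbb{E}_Y\bigl[\#\{S\subset U:\deg_{G'}(S,Y)<\theta\}\bigr]\le n^d\cdot o(1)+o(n^d)=o(n^d)$ (the second term from the non-good $d$-sets), and Markov's inequality in~$Y$ shows that w.h.p.\ all but $o(n^d)$ of the $d$-sets $S\subset U$ satisfy $\deg_{G'}(S,Y)\ge\theta$, which is the conclusion.

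The point needing the most care is the uniform bound $\deg_G(R)=o(A)$ for $(d+j)$-sets and its use in controlling $\bar\Delta_S$. When~$p$ is close to $n^{d-k}$ the link $N_G(S)$ is itself a sparse random hypergraph: the mean $(d+j)$-codegree $p\binom{n-d-j}{k-d-j}=\Theta(An^{-j})$ is $o(1)$, so a naive union bound gives only $\deg_G(R)=O(\log n)$, which is \emph{not} $o(A)$ when $A=o(\log n)$. One instead has to exploit that these means vanish while $A=pn^{k-d}\to\infty$, so that in fact the maximum $(d+j)$-codegree is bounded by a constant depending only on $k,d$ --- hence $o(A)$ --- and then verify that one estimate of this shape is valid simultaneously for all admissible~$p$ (in the dense range the maximum is $O(An^{-j})=o(A)$ instead). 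The bound $\deg_G(S)\le KA$ built into the notion of ``good'' is precisely what stops the adversary from inflating~$|L_S|$, and hence~$\mu_S$, which would likewise break the concentration.
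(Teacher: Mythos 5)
The paper does not actually prove this lemma: it is imported from Ferber and Kwan~\cite{FK22} (their Lemma~5.6 is the case $\alpha=0$), and the paper only remarks that the version with an adversarial set $Q$ follows by rerunning their proof together with \cref{lem:random-degree-strong}. Your write-up is therefore a genuinely self-contained argument, and its skeleton is the natural one and in the same spirit as what the paper points to: fix typical properties of $G$ that are uniform in $Q$ and $G'$ (\cref{lem:random-degree-strong} to discard the $o(n^d)$ sets whose edges cluster on $Q$, the bound $\deg_G(S)\le KA$ for all but $o(n^d)$ sets, codegree control), call the remaining $d$-sets good, lower-bound the $Q$-avoiding link by $(\mu+\tfrac23\gamma)p\binom{n-d}{k-d}$, couple the uniform $\sigma n$-set to a binomial subset, and finish with Janson's lower-tail inequality and Markov over $Y$. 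These steps check out, including the comparison $\sigma^{k-d}\binom{n-d}{k-d}\ge\binom{\sigma n-d}{k-d}$, the adversary-proofness of the typical properties, and the pair count $\bar\Delta_S\le KA\sum_{j\ge1}\binom{k-d}{j}\max_R\deg_G(R)$.

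The one assertion that is wrong as written is exactly the point you flag as delicate: it is \emph{not} true that the maximum $(d+j)$-codegree is $O_{k,d}(1)$ whenever its mean $\lambda_j=An^{-j}$ vanishes. If, say, $\lambda_j=1/\log n$, then among the $\Theta(n^{d+j})$ sets the maximum codegree is of order $\log n/\log\log n$, not constant. The argument is nevertheless salvageable because in that window $A=\lambda_j n^{j}$ is polynomially large, so $O(\log n)=o(A)$ holds anyway; the correct case split is by the size of $A$ rather than by whether $\lambda_j=o(1)$: if $A\le\sqrt n$ then $\lambda_j\le n^{-1/2}$ and a union bound gives a constant bound (at most $2k+2$, say), while if $A\ge\sqrt n$ the standard $O(\lambda_j+\log n)$ bound is $o(A)$. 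Alternatively, and more robustly, you can avoid any uniform max-codegree statement by adding to the definition of a good $d$-set the requirement $\bar\Delta^G_S=o(A^2)$, where $\bar\Delta^G_S$ counts overlapping pairs of edges of $G$ through $S$: since $\mathbb{E}_G\bigl[\bar\Delta^G_S\bigr]=O(A^2/n)$, Markov's inequality shows that all but $o(n^d)$ sets satisfy this w.h.p., and the quantity depends only on $G$, so it is again immune to the choice of $G'$ and $Q$. With either repair your proof is complete.
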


We remark that Lemma~5.6 in Ferber and Kwan~\cite{FK22}
implies \cref{lem:degree-inheritance_induced-subgraphs} above
for $\alpha = 0$.  The lemma above for $\alpha>0$ can be
proved following the proof of their Lemma~5.6, making use of
\cref{lem:random-degree-strong}.

\begin{lemma}[Lemma 3.4 in~\cite{FK22}]
	\label{lem:random-subset-degrees}
	Let $1\leq d<k$ and let $c \ll 1/k$.
	Consider an $n$-vertex $k$-graph
	$G$ where all but $\delta\binom{n}{d}$ of the $d$-sets
	have degree at least $\left(\mu+\eta\right)\binom{n-d}{k-d}$.
	Let~$S$ be a subset of $s\ge 2d$ vertices of~$G$
	chosen uniformly at random.  Then with probability
	at least $1-\binom{s}{d}\left(\delta+e^{-c\eta^{2}s}\right)$,
	the random induced subgraph $G\left[S\right]$ has minimum $d$-degree
	at least $\left(\mu+\eta/2\right)\binom{s-d}{k-d}$.
\end{lemma}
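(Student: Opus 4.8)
The plan is to control, one $d$-set at a time, the probability that its degree in $G[S]$ drops below the target, and then glue these estimates together by a union bound over all $d$-sets of $V(G)$ weighted by the probability of lying inside $S$. First I would dispose of the case $s<k$, where $\binom{s-d}{k-d}=0$ and the conclusion is vacuous, so assume $s\geq k$. Call a $d$-set $T\subseteq V(G)$ \emph{bad} if $\deg_G(T)<(\mu+\eta)\binom{n-d}{k-d}$; by hypothesis there are at most $\delta\binom{n}{d}$ bad $d$-sets. For a fixed $d$-set $T$ that is not bad, condition on the event $T\subseteq S$; then $S':=S\setminus T$ is a uniformly random $(s-d)$-subset of $V(G)\setminus T$, and $\deg_{G[S]}(T)$ equals the number of $(k-d)$-sets $e\subseteq S'$ with $T\cup e\in E(G)$. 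A fixed $(k-d)$-subset of $V(G)\setminus T$ lies in $S'$ with probability $\prod_{i=0}^{k-d-1}\frac{s-d-i}{n-d-i}$, so linearity of expectation together with the elementary identity $\binom{n-d}{k-d}\prod_{i=0}^{k-d-1}\frac{s-d-i}{n-d-i}=\binom{s-d}{k-d}$ gives
\[
\Exp\big[\deg_{G[S]}(T)\,\big|\,T\subseteq S\big]=\deg_G(T)\prod_{i=0}^{k-d-1}\frac{s-d-i}{n-d-i}\ \geq\ (\mu+\eta)\binom{s-d}{k-d}.
\]

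Next I would prove concentration of $\deg_{G[S]}(T)$ around this mean. Reveal the vertices of $S'$ one at a time and let $X_0,\dots,X_{s-d}$ be the resulting Doob martingale for $\deg_{G[S]}(T)$; this is a genuine martingale regardless of the fact that $S'$ is sampled without replacement. Exchanging a single revealed vertex alters $\deg_{G[S]}(T)$ by at most $2\binom{s-d-1}{k-d-1}$, since any vertex of $S'$ lies in at most $\binom{s-d-1}{k-d-1}$ of the $(k-d)$-subsets of $S'$; hence the martingale has bounded differences $2\binom{s-d-1}{k-d-1}$ and Azuma's inequality applies. Writing $\theta:=(\mu+\eta/2)\binom{s-d}{k-d}$ and using $\Exp[\deg_{G[S]}(T)\mid T\subseteq S]-\theta\geq\frac{\eta}{2}\binom{s-d}{k-d}$, this yields
\[
\Pr\!\left[\deg_{G[S]}(T)\leq\theta\,\middle|\,T\subseteq S\right]\leq\exp\!\left(-\frac{\big(\tfrac{\eta}{2}\binom{s-d}{k-d}\big)^2}{2(s-d)\big(2\binom{s-d-1}{k-d-1}\big)^2}\right).
\]
Since $\binom{s-d}{k-d}=\tfrac{s-d}{k-d}\binom{s-d-1}{k-d-1}$ and $s-d\geq s/2$ (because $s\geq 2d$), the exponent is at most $-\tfrac{\eta^2 s}{64(k-d)^2}\leq-\tfrac{\eta^2 s}{64k^2}\leq-c\eta^2 s$, the last inequality holding since $c\ll 1/k$. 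Thus $\Pr[\deg_{G[S]}(T)\leq\theta\mid T\subseteq S]\leq e^{-c\eta^2 s}$ for every non-bad $T$.

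Finally I would assemble the pieces. For any $d$-set $T\subseteq V(G)$ one has $\Pr[T\subseteq S]=\binom{s}{d}/\binom{n}{d}$, and combining the bound just proved for non-bad $T$ with the trivial bound $1$ for the at most $\delta\binom{n}{d}$ bad $T$,
\[
\Pr\big[\delta_d(G[S])<\theta\big]\leq\sum_{T}\Pr[T\subseteq S]\,\Pr\!\left[\deg_{G[S]}(T)\leq\theta\,\middle|\,T\subseteq S\right]\leq\frac{\binom{s}{d}}{\binom{n}{d}}\Big(\delta\binom{n}{d}+\binom{n}{d}e^{-c\eta^2 s}\Big)=\binom{s}{d}\big(\delta+e^{-c\eta^2 s}\big),
\]
where the sum is over all $d$-sets $T$ of $V(G)$ and we used that $\delta_d(G[S])<\theta$ forces $\deg_{G[S]}(T)\leq\theta$ for some $T\subseteq S$. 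This is precisely the claimed bound. The main obstacle is the concentration step: one must justify the bounded-differences estimate for the without-replacement sample $S'$ and check that the difference parameter $\binom{s-d-1}{k-d-1}$ is small enough relative to the scale $\binom{s-d}{k-d}$ of the mean to produce an exponent of order $\eta^2 s/k^2$; the expectation computation and the final union bound are routine.
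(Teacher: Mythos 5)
This lemma is quoted verbatim from Ferber and Kwan \cite{FK22} (their Lemma~3.4) and is not proved in this paper, so there is no internal proof to compare against. Your argument --- union bound over $d$-sets, the exact identity $\binom{n-d}{k-d}\prod_{i=0}^{k-d-1}\frac{s-d-i}{n-d-i}=\binom{s-d}{k-d}$ for the conditional expectation, and Azuma's inequality for the vertex-exposure martingale of the without-replacement sample with difference bound $2\binom{s-d-1}{k-d-1}$, giving an exponent of order $\eta^{2}s/k^{2}\geq c\eta^{2}s$ --- is correct and is the standard proof of this statement; the only step requiring the routine extra care you already flag is that the martingale difference bound for sampling without replacement follows from the usual exchange-coupling argument.
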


\section{Connecting vertices}\label{sec:connection-lemma-sparse-proof}
In the proofs of
\cref{lem:absorption-sparse,lem:cover-sparse}, we need to
connect vertices with a short path of uniform order while
avoiding a few other vertices. This is the purpose of the
following lemma.

\begin{lemma}[Sparse Connection Lemma]\label{lem:connection-sparse}
	Let $k\geq 3$ and suppose $1\leq d<k$,
	$\alpha\ll1/k,\,\gamma$ and {$1/K \ll 1/k,\gamma$}.  Suppose further that
	$0<\nu\leq1-\alpha$ and $\rho\ll1/k,\,\nu$. If
		{$p \geq \max\{ Kn^{-(k-d)}\log n,\,\omega(n^{-(k-2)})\}$,}  then w.h.p.\
	$G \sim \oH_k(n,p)$ has the following property.

	Let $G' \subset G$ be a spanning subgraph with
	$\delta_d(G') \geq (\mu_d(k) + \gamma ) p \binom{n-d}{k-d}$
	and $Q \subset V(G')$ with $|Q| \leq \alpha n$.  Let
	$C \subset V(G')\setminus Q$ be a $\nu n$-set taken
	uniformly at random.  Then with probability at least $2/3$
	the following holds.  For any $R \subset C$ with
	$|R| \leq \rho n$ and distinct
	$u,\,v \in V(G')\setminus (Q\cup R) $, there is a loose
	$(u,v)$-path~$P$ in~$G'$ of order $8(k-1)+1$
	with $V(P)\setminus\{u,v\} \subset C\sm R$.
\end{lemma}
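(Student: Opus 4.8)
The plan is to transfer the problem to the partitioned reduced graph of $G'$ in which $C$ is one of the prepartition classes. We will see that this reduced graph, restricted to the clusters inside $C$ and suitably cleaned, has the property that every cluster has more than half of the clusters in its first loose neighbourhood; this makes it immediate that any two of its clusters are joined by a loose walk of any prescribed constant length, and such a walk lifts to a genuine loose path in $G'$ by a routine regularity argument. Prefixing one edge through $u$ and suffixing one edge through $v$ then produces a loose $(u,v)$-path of length $1+6+1=8$, where the middle length may be any constant $\ge 2$ and we fix $6$ so that the order is exactly $8(k-1)+1$.

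On the side of $G$, I would first fix the w.h.p.\ event that \cref{lem:random-upper-uniform,lem:random-degree,lem:random-degree-strong,lem:degree-inheritance_induced-subgraphs} all hold and that $G$ has maximum pair-degree $\Delta_2(G)=O(pn^{k-2}+\log n)$; the hypotheses $p\ge Kn^{-(k-d)}\log n$ and $p=\omega(n^{-(k-2)})$ provide exactly what each of these requires. Fix $G'$ and $Q$ as in the statement. Call a choice of the random set $C\subset V(G')\setminus Q$ \emph{good} if (i) the conclusion of \cref{lem:degree-inheritance_induced-subgraphs} holds with $Y=C$, so that all but $o(n^d)$ of the $d$-sets in $V(G')\setminus Q$ have degree at least $(\mu_d(k)+\gamma/2)p\binom{\nu n-d}{k-d}$ into $C$, and (ii) every $x\in V(G')\setminus Q$ lies in $\Omega(pn^{k-1})$ edges of $G'$ whose remaining $k-1$ vertices all lie in $C$. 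Event (i) has probability $1-o(1)$ by \cref{lem:degree-inheritance_induced-subgraphs}. For (ii), one notes first that, by $\alpha\ll\gamma$ and \cref{lem:random-degree}, every such $x$ lies in $\Omega(pn^{k-1})$ edges of $G'$ avoiding $Q\setminus\{x\}$, and that each such edge has its remaining vertices inside $C$ with probability $\Omega_{k,\nu}(1)$; since changing a single element of $C$ changes this count by at most $\Delta_2(G)$, McDiarmid's inequality together with a union bound over the at most $n$ choices of $x$ gives (ii) with probability $1-o(1)$. So a random $C$ is good with probability at least $2/3$, and it remains to prove the stated property deterministically on this event. Given $R$ and distinct $u,v$: as $\rho\ll\gamma,\nu$, \cref{lem:random-degree} shows that replacing ``$C$'' by ``$C\setminus R$'' in (i) and (ii) costs at most half of the slack, so both continue to hold in this stronger form.

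Next, choose $\varepsilon,\tau,1/r_0$ small in terms of $\gamma,\nu,k,d$ \emph{only} — this avoids any circularity with the number of clusters $r$ produced by regularity — and apply \cref{lem:transfer-cluster-graph-partition} to $G'$ with prepartition $P_1=C$, $P_2=V(G')\setminus C$, threshold $\tau$ and parameters $r_0,\varepsilon,p$; let $\mathcal R_C$ be the partitioned reduced graph restricted to the $r_C\ge(\nu-o(1))r$ clusters lying inside $C$. By part (2) of that lemma with $i=d'=d$, $J=\{1\}$ and $\delta'=\mu_d(k)+\gamma/2$ (legitimate thanks to (i)), all but $\sqrt\varepsilon\binom{r}{d}$ of the $d$-sets of clusters inside $C$ have $\mathcal R_C$-degree at least $\theta\binom{r_C-d}{k-d}$, where $\theta:=\mu_d(k)+\gamma/4>0$. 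A short averaging over the few bad $d$-sets then shows that all but a fraction $O(\varepsilon^{1/4})$ of the clusters $V$ of $\mathcal R_C$ lie in at most $\varepsilon^{1/4}\binom{r}{d-1}$ bad $d$-sets; for each such ``clean'' $V$ and all but a fraction $O(\varepsilon^{1/4})$ of the clusters $W$ there is a good $d$-set containing both $V$ and $W$, and since a good $d$-set has positive degree it lies in an edge, whence $W\in N_1(V)$. Delete from $\mathcal R_C$ all clusters that are not clean, together with the clusters $V_i$ with $|V_i\cap R|\ge\tfrac12|V_i|$ (a fraction $O(\rho/\nu)$ of $\mathcal R_C$, hence negligible as $\rho\ll\nu$), obtaining a $k$-graph $\mathcal R''$ on $N'=(1-O(\varepsilon^{1/4}))r_C$ clusters in which $|N_1(V)|>N'/2$ for every cluster $V$ — for $d\ge 2$ this is exactly what the averaging gives, and for $d=1$ it follows from $\deg_{\mathcal R''}(V)\ge(\theta-O(\varepsilon^{1/4}))\binom{N'-1}{k-1}$ together with $\mu_1(k)\ge 2^{-(k-1)}$, which forces $|N_1(V)|>2^{-1}N'$. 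Since every first loose neighbourhood in $\mathcal R''$ exceeds $N'/2$, the set of clusters reachable from a fixed cluster by a loose walk of exactly $j\ge 1$ edges always has size greater than $N'/2$, hence meets the first loose neighbourhood of any target cluster; thus any two clusters of $\mathcal R''$ are joined by a loose walk of every constant length $\ge 2$.

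Finally, to enter at $u$ and $v$ and lift: by (ii) and \cref{lem:random-degree} applied to the $O(\varepsilon+\rho)n$ vertices consisting of $R$, the atypical vertices, and the vertices in straddling or deleted clusters, the vertex $u$ lies in $\Omega(pn^{k-1})$ edges of $G'$ whose non-$u$ vertices all lie in $C\setminus R$ and include a vertex $a$ that is typical for some cluster $V_a$ of $\mathcal R''$; fix such an $e_u$, and likewise $e_v$ with a typical $b\in V_b$, chosen so that $e_u\cap e_v=\varnothing$. Route a loose walk of length $6$ from $V_a$ to $V_b$ in $\mathcal R''$ and lift it to a loose $(a,b)$-path $P_{ab}$ of length $6$ in $G'$ one edge at a time: at each step the current connector is typical for the current $(\varepsilon,p)$-regular tuple (by the choice made at the previous step, working backwards from $V_b$ to precompute the acceptable landing sets), the density $>\tau p$ provides many extensions, and we select one whose new connector is typical for the following tuple and avoids $R$ (at most half of each cluster) together with the $O(1)$ previously used vertices; as the walk has constant length, all error terms stay bounded fractions of cluster sizes. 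Then $e_u\cup P_{ab}\cup e_v$ is a loose $(u,v)$-path of order $8(k-1)+1$ with all internal vertices in $C\setminus R$. The only genuinely new point compared with the analogous step for matchings is the loose-walk connectivity of the reduced graph, which — perhaps surprisingly — uses nothing about the value of $\mu_d(k)$ beyond positivity (and, for $d=1$, the elementary bound $\mu_1(k)\ge 2^{-(k-1)}$); the rest of the work is the bookkeeping keeping every estimate robust against the adversarial set $R$ and against the arbitrariness of the endpoints $u,v$, which I expect to be the most delicate part to write out carefully.
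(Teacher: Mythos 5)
Your reduction of the problem to the reduced graph, and the connectivity argument there (greedy for $d\ge 2$, and for $d=1$ the bound $\mu_1(k)\ge 2^{-(k-1)}$ together with a Kruskal--Katona/Lov\'asz shadow estimate, exactly as in the paper's \cref{prop:mini-KK} and \cref{lem:connection-dense}), is sound in spirit. The genuine gap is in the step where you ``lift'' the reduced-graph walk to a loose $(a,b)$-path in $G'$ between two \emph{prescribed} vertices $a$ and $b$. Weak sparse regularity only gives you edges into sets of linear size inside clusters; it says nothing about edges through a single specified vertex. Concretely, at the final step of your forward (or backwards-precomputed) greedy lifting you need an edge of $G'$ containing the current connector $w$ \emph{and} the fixed vertex $b$, with its remaining vertices in prescribed clusters. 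This is a codegree condition on the pair $\{w,b\}$, which is completely uncontrolled: for $d=1$ the adversary may delete \emph{all} edges joining $b$ to any fixed cluster (this costs $b$ only an $O(1/r)$ fraction of its degree), so no choice of final tuple adapted to $b$ is guaranteed to work, and for larger $d$ the link of a single vertex into a specific cluster tuple is likewise below the resolution of \cref{lem:transfer-cluster-graph-partition}. The same issue (in milder form) afflicts your entry point: ``$a$ is typical for some cluster $V_a$'' does not ensure that $a$ has any edges into a tuple that is an edge of $\mathcal R''$, since a single vertex's link can be concentrated on irregular or sparse tuples. Splitting the path and growing from both ends does not help, because joining the two halves again requires an edge (or two intersecting edges) through two specified vertices, and in the sparse subgraph the links of two fixed vertices need not even have intersecting shadows.

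This is precisely the difficulty the paper's proof is organised around, and it is resolved there by a mechanism your proposal lacks: \cref{cla:matchings} extracts linear-sized matchings $M_{x_1},M_{x_2}$ in the link graphs of the two endpoints, the sets $V(M_1),V(M_2)$ are fed into the regularity lemma as \emph{prepartition classes}, the connection is made in the reduced graph between clusters $X_1\subseteq V(M_1)$ and $X_2\subseteq V(M_2)$ (via a random induced subgraph and \cref{lem:connection-dense}), and the resulting reduced path of order $6(k-1)+1$ is embedded by the Sparse Embedding Lemma (\cref{lem:embedding}) as a canonical copy, so its ends land at \emph{arbitrary} vertices $x_1'\in X_1$, $x_2'\in X_2$. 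Because every vertex of $V(M_i)$ carries its own private matching edge to $x_i$, disjoint from the embedded path, the two augmenting edges always exist, giving order $8(k-1)+1$. In other words, the endpoints are made cluster-flexible rather than vertex-specific, which is what replaces the codegree control you would need. To repair your argument you would have to reintroduce something equivalent (linear-sized ``landing sets'' attached to $u$ and $v$ that fill whole prepartition clusters), at which point you are essentially reconstructing the paper's proof; the remaining differences (greedy lifting through regular tuples versus the embedding lemma) are secondary and could be made to work, but they do not circumvent the endpoint problem.
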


Before we get to the details of the proof of
Lemma~\ref{lem:connection-sparse}, let us give an outline of
the argument.

\begin{proof}[Sketch of the proof]
	In this sketch we focus on the special case where $Q = \es$, $C =V(G)$ and $R=\es$, since the general statement follows in a
	quite similar way.

	It is also instructive to verify first that the lemma holds
	in the dense setting when $p=1$.  Indeed,
	in this case $G'$ contains a loose Hamilton cycle, and
	therefore any two vertices $u,\,v \in V(G)$ are trivially
	connected by a loose $(u,v)$-path~$P$.  However, we also
	need to control the order of $P$, which will be $4(k-1)+1$
	when $p=1$. To be precise, what matters in our
	applications of \cref{lem:connection-sparse} is that the
	path $P$ we obtain has the \emph{same} order for any choice
	of vertices $u$ and $v$.  As it turns out, this can easily
	be satisfied in the dense setting: we first note that the
	neighbourhoods of $u$ and $v$ have a substantial
	intersection.  When $d\geq 2$, this is trivial.  When $d=1$,
	this follows from the simple fact that $\mu_1(k) \geq
		2^{-k+1}$.  In either case, this provides enough room to
	construct a loose $(u,v)$-path of order $4(k-1)+1$ when $p=1$.

	The argument in the sparse setting is more involved.
	We apply the Weak Hypergraph Regularity Lemma to find
	a regular partition $\cV$ of $V(G')$ together with a
	reduced dense $k$-graph $\cR$ (whose vertices
	are the clusters of $\cV$) that approximately captures
	the local edge densities of~$G'$.  Crucially, $\cR$
	has almost everywhere $d$-degree at least
	$(\mu_d(k) + \gamma/2)\binom{v(\cR)-d}{k-d}$.  Using a
	prepartition, we may also assume that $u$ has a short
	path to every vertex of a cluster $W_u$, and likewise
	$v$ has a short path to every vertex of a cluster
	$W_v$.  Together with a Hypergraph Embedding Lemma
	this reduces the problem of connecting $u$ and $v$ in
	$G'$ to finding a loose $(W_u,W_v)$-path of order
	$4(k-1)+1$ in $\cR$.  To obtain such a path, we avoid
	$d$-sets of low degree by selecting a subgraph $\cR'
		\subset \cR$ with $W_u,\,W_v \in V(\cR')$ and
	$\delta_d(\cR') \geq (\mu_d(k) + \gamma/4)
		\binom{v(\cR')-d}{k-d}$.  This can be done by choosing
	a random induced subgraph whose order is much smaller
	in comparison to the error in the $d$-degree of $\cR$.  Once such
	an $\cR'$ is obtained, it follows by the above
	argument for the dense setting that $\cR'$ has the
	desired loose $(W_u,W_v)$-path.
\end{proof}

\subsection{Details of the proof of \cref{lem:connection-sparse}}\label{sec:connection-lemma-sparse-proof-appendix}
We begin by stating a result analogous to~\cref{lem:connection-sparse} for the dense setting, whose proof is deferred to \cref{sec:connection-lemma-dense-proof}.

\begin{lemma}[Dense Connection Lemma]\label{lem:connection-dense}
	Let $1/n \ll 1/k,\,1/d,\,\gamma$.
	Suppose that $G$ is an $n$-vertex $k$-graph  with $\delta_d(G) \geq (\mu_d(k)+ \gamma) \binom{n-d}{k-d}$, and let $u,\,v \in V(G)$ be distinct.
	Then there is a loose $(u,v)$-path of order $4(k-1)+1$
	in~$G$.
\end{lemma}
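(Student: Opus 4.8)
The plan is to use the definition of $\mu_d(k)$ to embed a loose Hamilton cycle in a suitably chosen auxiliary $k$-graph, and then read off from it a loose $(u,v)$-path of the prescribed order $4(k-1)+1$. The difficulty is that a loose Hamilton cycle guaranteed by $\mu_d(k)$ has uncontrolled order (it spans the whole vertex set), so to pin down the order of the path we must do the embedding on a small vertex set containing $u$ and $v$.

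First I would show that $u$ and $v$ have many common ``neighbours'' in an appropriate sense. When $d\geq 2$, every $d$-set has positive degree, so in particular there are edges through any given $d$-set; combining edges through $\{u\}\cup T$ and $\{v\}\cup T'$ for suitable small sets gives the needed flexibility. When $d=1$, we use the lower bound $\mu_1(k)\geq 2^{-(k-1)}$, which forces $\delta_1(G) \geq (2^{-(k-1)}+\gamma)\binom{n-1}{k-1}$; a short averaging argument then shows that the link hypergraphs of $u$ and of $v$ (which are $(k-1)$-graphs on $V(G)\setminus\{u,v\}$ with density bounded below) must share an edge, in fact one can find disjoint edges $e_u \ni u$ and $e_v\ni v$ together with additional edges to pad out the path. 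The point of this step is to locate a set $S$ of exactly $4(k-1)+1$ vertices containing $u$ and $v$ such that $G[S]$ already contains a loose $(u,v)$-path on all of $S$ --- or, more robustly, such that we can \emph{extend} a partial path inside a slightly larger random set.

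The cleanest route, and the one I would actually carry out, is the following. Choose $s$ with $4(k-1)+1 \le s \le C(k,d,\gamma)$ and pick a random $s$-subset $S\subseteq V(G)$ with $u,v\in S$; by \cref{lem:random-subset-degrees} (applied with $\delta=0$ since \emph{every} $d$-set has large degree, after first discarding a negligible error), with positive probability $G[S]$ has $\delta_d(G[S]) \ge (\mu_d(k)+\gamma/2)\binom{s-d}{k-d}$. If moreover $s$ is divisible by $k-1$ (choose it so), the definition of $\mu_d(k)$ gives a loose Hamilton cycle $\cC$ in $G[S]$ provided $s$ is large enough in terms of $\gamma$. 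Deleting from $\cC$ the unique shortest loose $(u,v)$-subpath still may not have the right order, so instead I would argue: along $\cC$, the two arcs between $u$ and $v$ have orders summing to roughly $s$, and by varying $s$ over the (constant-sized) admissible window and re-running, or by a local surgery replacing a short stretch of $\cC$ through low-impact vertices, one obtains a loose $(u,v)$-path of order \emph{exactly} $4(k-1)+1$. The honest way to get the exact value is to not go through a Hamilton cycle at all for the final count: use the Hamilton cycle only to extract a loose $(u,v)$-path of order between $4(k-1)+1$ and $s$, then shorten it edge-by-edge (each deletion of a terminal edge drops the order by $k-1$) down to a $(u,v)$-path of order $4(k-1)+1$, using the common-neighbourhood argument of the previous paragraph to re-attach $u$ or $v$ to the truncated path; here one needs $4(k-1)+1$ to be $\equiv 1 \pmod{k-1}$, which it is, so the arithmetic is consistent.

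The main obstacle is controlling the order \emph{exactly}, rather than up to $O(k)$: the threshold $\mu_d(k)$ is a black box that produces a spanning structure, so all length control has to come from the auxiliary combinatorics (common neighbourhoods, truncation surgery) rather than from the Hamiltonicity statement itself. A secondary technical point is the $d=1$ case, where the only leverage is the bound $\mu_1(k)\ge 2^{-(k-1)}$; one must check that the resulting density of the links of $u$ and $v$ is genuinely enough to find two disjoint edges plus the connecting edges needed to assemble a path of order precisely $4(k-1)+1$. Both obstacles are of bounded (constant in $n$) combinatorial complexity, so once the right set $S$ is fixed the rest is a finite case check.
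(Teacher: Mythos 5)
Your plan works for $d\geq 2$ (there the paper itself just greedily builds the four-edge path $u$--$u'$--$w'$--$v'$--$v$ using the large codegree of pairs), but for $d=1$ there is a genuine gap, and it sits exactly at the step your whole truncation surgery relies on. You assert that since $\delta_1(G)\geq(2^{-(k-1)}+\gamma)\binom{n-1}{k-1}$, ``a short averaging argument'' shows the links of $u$ and $v$ share an edge. This is false: the two links each have density just above $2^{-(k-1)}$, and $2\cdot 2^{-(k-1)}<1$ for $k\geq 3$, so no counting argument forces them to intersect. Indeed, in the extremal example behind the bound $\mu_1(k)\geq 2^{-(k-1)}$ (two cliques of equal order), two vertices $u,v$ in different cliques have edge-disjoint links and in fact lie in no common edge at all; any loose $(u,v)$-path must pass through the overlap of the cliques. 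The correct ingredient, which your proposal is missing, is the Kruskal--Katona step (\cref{prop:mini-KK} in the paper): density $2^{-(k-1)}+\gamma$ forces the link of each vertex to \emph{span} more than $n/2$ vertices, so the supports of the links of $u$ and of an intermediate vertex overlap, yielding a pair of \emph{intersecting} (not shared) edges; chaining two such pairs through an auxiliary vertex $w$ gives the four-edge pattern of order $4(k-1)+1$.

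Even with that fix, your re-attachment step is not salvageable as stated for $d=1$: after truncating the long path obtained from the Hamilton cycle in $G[S]$, you must connect $u$ to a \emph{specific} endpoint $x$, and minimum $1$-degree gives no supply of edges through the pair $\{u,x\}$ (there may be none), nor ``many'' realisations of the intersecting-edge pattern that avoid the path's vertices --- the support overlap guaranteed by Kruskal--Katona can be a single vertex. The paper avoids this by never taking a Hamilton cycle here at all: it applies the rooted blow-up lemma (\cref{lem:blow-up-new-simple} with $X=\{u,v\}$) to find a constant-size graph $R$ of high minimum degree whose blow-up $R^\ast(m,\{u,v\})$ sits in $G$ rooted at $u,v$, finds the intersecting-edge pattern inside $R$ via \cref{prop:mini-KK}, and then uses the $m$ copies of each blown-up vertex to realise that pattern as a genuine loose path on distinct vertices. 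Some device of this kind (or a different way to guarantee vertex-disjoint connecting edges for a fixed pair under a $1$-degree hypothesis) is needed to make your truncation route go through; the random-subset and divisibility bookkeeping in your sketch are fine but are not where the difficulty lies.
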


In what follows we prove~\cref{lem:connection-sparse}. For
that, recall that for vertex sets~$S$ and~$X$ in a $k$-graph~$G$, we
let $Z_G(S,X)$ be the number of edges of~$G$ that
contain~$S$ and have a non-empty intersection with~$X\setminus
	S$.  We also use
the fact that $\delta_1(G) / \binom{v(G)-1}{k-1} \geq
	\delta_d(G)/ \binom{v(G)-d}{k-d}$.

\begin{proof}[Proof of \cref{lem:connection-sparse}]
	We introduce the constants required in the proof in
	steps.  Let
	$1 \leq d<k$ and $\alpha\ll1/k,\,\gamma$ be as in
	\cref{lem:degree-inheritance_induced-subgraphs}.  Suppose we
	have~$\nu$ with $0<\nu\leq1-\alpha$.  In what follows, we
	shall apply \cref{lem:degree-inheritance_induced-subgraphs}
	with~$\sigma=\nu$.  Now let
	\begin{align*}
		\frac1{r_0} \ll \frac1s \ll \tau\ll\eta
		\ll \frac1k,\,\frac1d,\,\gamma
	\end{align*}
	be such that in particular $s$ can play the role of $n$ when
	applying Lemma~\ref{lem:connection-dense} with
	parameters~$k$ and~$d$ and~$\gamma$ of
	Lemma~\ref{lem:connection-dense} equal to~$\eta$.
	With~$\tau$ at hand, we apply Lemma~\ref{lem:embedding} to
	obtain~$\epsilon$ and~$\zeta$ that allow us to embed any
	linear hypergraph with at most $6(k-1)+1$ vertices in any
	suitable `$\epsilon$-regular system'.  Finally, we apply
	Lemma~\ref{lem:transfer-cluster-graph-partition} with
	\begin{equation*}
		\frac{1}{n}\ll\lambda \ll \frac1{r_1}
		\ll\frac1{r_0},\,\epsilon,\,\frac1k,\,\frac1d,\,\delta=\mu_d(k)\text{
			and }p.
	\end{equation*}
	We shall apply Lemma~\ref{lem:embedding} with
	$\kappa= \nu/(2r_1)$.
	Finally, let $1/K \ll 1/k,\gamma$ and $p \geq \max\{ Kn^{-(k-d)}\log n,\,\omega(n^{-(k-2)})\}$ such that we can apply \cref{lem:embedding,lem:random-upper-uniform,lem:random-degree,lem:random-degree-strong,lem:degree-inheritance_induced-subgraphs} with $k$-density $1/(k-1)$.
	In the following, we also
	assume that $n$ is large enough to satisfy
	\cref{lem:transfer-cluster-graph-partition} with the
	above constants.

	Note that from the choice of $p$, we may
	apply~\cref{lem:embedding,lem:random-upper-uniform,lem:random-degree,lem:random-degree-strong,lem:degree-inheritance_induced-subgraphs}.
	The next claim contains all properties that we
	require from the random $k$-graph.

	\begin{claim} \label{cla:connection-properties-random-graph}
		W.h.p.\ $G \sim \oH_k(n,p)$ satisfies the following properties.
		\begin{enumerate}[\upshape(1)]
			\item \label{itm:connection-embedding-lemma}
			      For each $\nu n$-set $C \subset V(G)$, the induced $k$-graph $G[C]$ satisfies the conclusion of \cref{lem:embedding} for embedding all graphs $H$ with $v(H)\leq 6(k-1) + 1$ and $m_k(H) \leq 1/(k-1)$.

			\item \label{itm:connection-upper-uniform} For each $\nu n$-set $C \subset V(G)$, the induced $k$-graph $G[C]$ is $(\lambda,p,1+\lambda)$-upper-uniform.

			\item \label{itm:connection-intersection*}
			      For every
			      $\beta \leq  \alpha+ \rho^{1/2}$, $\beta n$-set $X\subset V(G)$ and $w \in V(G)$, there are at most $2 \beta p n^{k-1}$
			      edges in $G$ containing $w$ and a vertex
			      of~$X\setminus\{w\}$.

			\item \label{itm:connection-intersection-tuples} For every
			      $\beta \leq \alpha+ \rho^{1/2}$ and
			      $X \subset V(G)$ with $|X| \leq \beta n$, there are
			      $o(n^d)$ $d$-sets $S$ such that $Z_G(S,X)
				      > 2 {k \choose d}\beta np{n-d-1 \choose k-d-1}$.

			\item \label{itm:connection-degree-subgraph} Let $G'$ be a
			      spanning subgraph of $G$ with $\delta_d(G') \geq
				      (\mu_d(k)+\gamma) p \binom{n-d}{k-d}$ and let $Q \subset
				      V(G')$ with $|Q| \leq \alpha n$ be given.  Let $C \subset V(G')\sm
				      Q$ be a $\nu n$-set chosen uniformly at random.  Then
			      with probability at least $5/6$ all but $o(n^d)$
			      $d$-sets of vertices in $V(G')\setminus Q$
			      have $d$-degree at least
			      $(\mu_d(k)+\gamma/2) p \binom{\nu n-d}{k-d}$ into~$C$.
		\end{enumerate}
	\end{claim}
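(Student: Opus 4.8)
The plan is to establish the five listed properties separately, each being a fairly direct consequence of one of the tools gathered in \cref{sec:preliminaries}; the only step with a global flavour is a union bound over the $\binom{n}{\nu n}\le 2^{n}$ choices of the set~$C$ in parts~\ref{itm:connection-embedding-lemma} and~\ref{itm:connection-upper-uniform}. First I would record the two consequences of the hypothesis on~$p$ that are needed: since $p\ge Kn^{-(k-d)}\log n$ we have $p=\omega(n^{d-k})$, and since $p=\omega(n^{-(k-2)})$ we have $p=\omega(n^{1-k}\log n)$, $p$ exceeds any fixed constant multiple of $n^{2-k}$ and of $n^{-(k-1)}$, and $n^{k}p=\omega(n^{2})$. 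Together these cover every lower bound on~$p$ demanded by \cref{lem:embedding,lem:random-upper-uniform,lem:random-degree,lem:random-degree-strong,lem:degree-inheritance_induced-subgraphs}. I would also use throughout that for a fixed vertex set~$C$ with $|C|=\nu n$, the induced subgraph $G[C]$ is distributed as $\oH_k(\nu n,p)$.

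For part~\ref{itm:connection-embedding-lemma}, recall that there are only finitely many isomorphism types of $k$-graph~$H$ with $v(H)\le 6(k-1)+1$, and that any such~$H$ with $m_k(H)\le 1/(k-1)$ must be linear: a pair of edges meeting in two or more vertices would produce a subgraph~$H'$ on $2k-2>k$ vertices with $(e(H')-1)/(v(H')-k)\ge 1/(k-2)>1/(k-1)$, so that $m_k(H)>1/(k-1)$. Hence \cref{lem:embedding} applies to each such~$H$ with the threshold~$\tau$ and constants~$\epsilon,\zeta$ fixed earlier in the proof and with $\kappa=\nu/(2r_1)$; since $m_k(H)\le 1/(k-1)$ its hypothesis holds for~$n$ large, as $Cn^{-1/m_k(H)}\le Cn^{-(k-1)}\le p$. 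For a fixed $\nu n$-set~$C$, \cref{lem:embedding} then guarantees that $G[C]$ satisfies its conclusion for a given~$H$ with probability at least $1-e^{-\Omega((\nu n)^{k}p)}=1-e^{-\Omega(n^{k}p)}$. A union bound over the at most~$2^{n}$ choices of~$C$ and the finitely many~$H$, using $n^{k}p=\omega(n^{2})$, shows that~\ref{itm:connection-embedding-lemma} fails with probability at most $2^{n+O(1)}e^{-\Omega(n^{k}p)}=o(1)$.

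Part~\ref{itm:connection-upper-uniform} is handled the same way. For a fixed $\nu n$-set~$C$ we have $G[C]\sim\oH_k(\nu n,p)$ with $p=\omega(n^{1-k}\log n)$, so by \cref{lem:random-upper-uniform} the graph $G[C]$ is $(o(1),p,1+o(1))$-upper-uniform with probability at least $1-e^{-\omega(n\log n)}$; for~$n$ large the error terms fall below the fixed constant~$\lambda$, so $G[C]$ is in particular $(\lambda,p,1+\lambda)$-upper-uniform. A union bound over the at most~$2^{n}$ choices of~$C$ finishes this part, since $2^{n}e^{-\omega(n\log n)}=o(1)$. This is where the summand $\omega(n^{-(k-2)})$ in the hypothesis on~$p$ matters: for $d=1$ the bound $Kn^{-(k-d)}\log n=Kn^{1-k}\log n$ is only of order $n^{1-k}\log n$ and would fail to meet the hypothesis of \cref{lem:random-upper-uniform}.

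Part~\ref{itm:connection-intersection*} follows from \cref{lem:random-degree} applied with its parameter~$\alpha$ taken to be the relevant value $\beta\le\alpha+\rho^{1/2}$: this yields at most $2\beta np\binom{n-2}{k-2}$ edges through~$w$ meeting $X\setminus\{w\}$, and $n\binom{n-2}{k-2}\le n^{k-1}$ turns this into the stated bound $2\beta pn^{k-1}$. Likewise part~\ref{itm:connection-intersection-tuples} is \cref{lem:random-degree-strong} with its~$\alpha$ set to $\beta\le\alpha+\rho^{1/2}$, and needs no rewriting. Finally part~\ref{itm:connection-degree-subgraph} is exactly \cref{lem:degree-inheritance_induced-subgraphs} with $\mu=\mu_d(k)$ and $\sigma=\nu$ (legitimate since $\nu\le 1-\alpha$ and $\alpha\ll 1/k,\gamma$), whose `with high probability' conclusion is in particular stronger, for~$n$ large, than the `with probability at least $5/6$' asserted here. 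The only real care needed, as far as I can see, lies in matching the parameters across these applications and in keeping the per-$C$ failure probabilities in the first two parts small enough to survive the union over the $\binom{n}{\nu n}$ sets, which is precisely what the term $\omega(n^{-(k-2)})$ in the hypothesis secures.
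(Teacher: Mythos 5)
Your proposal follows essentially the same route as the paper's own proof: condition on a fixed $\nu n$-set $C$ so that $G[C]\sim\oH_k(\nu n,p)$, take a union bound over the at most $2^n$ choices of $C$ (and the $O(1)$ graphs $H$) for parts (1) and (2) via \cref{lem:embedding} and \cref{lem:random-upper-uniform}, and obtain parts (3)--(5) directly from \cref{lem:random-degree}, \cref{lem:random-degree-strong} and \cref{lem:degree-inheritance_induced-subgraphs}, with $\beta$ playing the role of $\alpha$ and $(C,\nu)$ the roles of $(Y,\sigma)$. The additional checks you spell out (that $m_k(H)\le 1/(k-1)$ forces $H$ to be linear, and that the per-set failure probabilities $e^{-\Omega((\nu n)^k p)}$ and $e^{-\omega(n\log n)}$ beat the $2^n$ union bound) are correct and simply make explicit what the paper leaves implicit.
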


	\begin{proof}
		To begin, observe that for a fixed set $C\subset V(G)$ of
		size $\nu n$, the induced subgraph $G[C]$ has probability
		distribution $\oH_k(\nu n,p)$.  For
		parts~\ref{itm:connection-embedding-lemma}
		and~\ref{itm:connection-upper-uniform}, note that there
		are ${\binom{n}{\nu n} \leq 2^n}$ ways to
		choose~$C$ and at most $O(1)$ ways to choose~$H$. We
		apply~\cref{lem:embedding} (with $\nu n$ playing the role
		of $N$) and \cref{lem:random-upper-uniform} and then we
		take the union bound over all choices for $H$ and $C$.

		Parts~\ref{itm:connection-intersection*}
		and~\ref{itm:connection-intersection-tuples} follow from
		applying~\cref{lem:random-degree,lem:random-degree-strong}, respectively,
		with $\beta$ playing the role of $\alpha$ in each case.
		Finally, part~\ref{itm:connection-degree-subgraph} follows by
		applying~\cref{lem:degree-inheritance_induced-subgraphs}, where $C$ and $\nu$ play the roles of $Y$ and $\sigma$, respectively.
	\end{proof}

	Let us fix a deterministic graph $G$ that satisfies
	the properties stated
	in~\cref{cla:connection-properties-random-graph}.  Let
	$G'$ be a spanning subgraph of $G$ with
	$\delta_d(G') \geq (\mu_d(k) + \gamma ) p
		\binom{n-d}{k-d}$ and let $Q \subset V(G')$ with $|Q| \leq
		\alpha n$ be fixed.  Let $G''=G'-Q$.
	For the next claim, given a vertex $x\in V(G'')$, we write
	$L_{G''}(x)$ for the \emph{link graph} of $x$ in $G''$, which is the $(k-1)$-graph on $V(G'')\sm \{x\}$ with a $(k-1)$-edge $e$ whenever $e \cup \{x\}$ is an edge in $G''$.

	\begin{claim}\label{cla:matchings}

		For any $x \in V(G'')$, there is a matching $M_x \subset E(L_{G''}(x))$ of size $10 k \rho \nu^{-(k-1)} n$.
	\end{claim}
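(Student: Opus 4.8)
\emph{Plan.} The claim is proved by a simple greedy construction. Starting from the empty matching, I repeatedly add to $M_x$ an edge of $L_{G''}(x)$ that is disjoint from all edges chosen so far, and I stop once $|M_x|\ge 10k\rho\nu^{-(k-1)}n$ (a sub-matching of exactly this size may then be taken if desired). To carry this out, I need (a) a lower bound on the number $e(L_{G''}(x))=\deg_{G''}(\{x\})$ of edges of $G''$ through $x$, and (b) an upper bound on how many edges of $L_{G''}(x)$ can be ``blocked'' at any stage, i.e.\ how many meet the current vertex set $V(M_x)$; the latter is exactly what part~\ref{itm:connection-intersection*} of \cref{cla:connection-properties-random-graph} supplies, since an edge of $L_{G''}(x)$ meeting $V(M_x)$ corresponds to an edge of $G''\subseteq G$ through $x$ that meets $V(M_x)\setminus\{x\}$.

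\emph{Lower bound on $\deg_{G''}(\{x\})$.} From $\delta_d(G')\ge(\mu_d(k)+\gamma)p\binom{n-d}{k-d}$ and the fact that $\delta_1(G')/\binom{n-1}{k-1}\ge\delta_d(G')/\binom{n-d}{k-d}$, we get $\deg_{G'}(\{x\})\ge(\mu_d(k)+\gamma)p\binom{n-1}{k-1}$. Every edge of $G'$ through $x$ that is absent from $G''=G'-Q$ must contain a vertex of $Q$ (and $x\notin Q$, as $x\in V(G'')$), so by part~\ref{itm:connection-intersection*} of \cref{cla:connection-properties-random-graph} applied with $w=x$ and $X=Q$ (valid since $|Q|/n\le\alpha\le\alpha+\rho^{1/2}$) there are at most $2\alpha pn^{k-1}$ such edges. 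Since $\mu_d(k)\ge 0$, $\alpha\ll1/k,\gamma$, and $\tfrac{n^{k-1}}{2(k-1)!}\le\binom{n-1}{k-1}\le\tfrac{n^{k-1}}{(k-1)!}$ for large $n$, this gives $e(L_{G''}(x))=\deg_{G''}(\{x\})\ge c_0\,pn^{k-1}$, where $c_0:=\gamma/(4(k-1)!)>0$.

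\emph{Greedy step and conclusion.} Suppose a matching $M\subseteq E(L_{G''}(x))$ with $|M|<10k\rho\nu^{-(k-1)}n$ has been constructed. Then $|V(M)|=(k-1)|M|<10k(k-1)\rho\nu^{-(k-1)}n\le\rho^{1/2}n$, the last inequality holding because $\rho\ll1/k,\nu$; in particular $|V(M)|\le(\alpha+\rho^{1/2})n$. Applying part~\ref{itm:connection-intersection*} of \cref{cla:connection-properties-random-graph} with $w=x$ and $X=V(M)$, at most $2(|V(M)|/n)pn^{k-1}\le 2\rho^{1/2}pn^{k-1}$ edges of $G$ contain $x$ and a vertex of $V(M)$, hence at most $2\rho^{1/2}pn^{k-1}$ edges of $L_{G''}(x)$ meet $V(M)$. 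Since $\rho$ is small relative to $k$ and $\gamma$ (both preceding it in the hierarchy), we have $2\rho^{1/2}<c_0$, so some edge of $L_{G''}(x)$ is disjoint from $V(M)$ and may be added to $M$. Thus the process never halts before $|M_x|\ge 10k\rho\nu^{-(k-1)}n$, which proves the claim. The argument is essentially routine; the only point requiring care is the bookkeeping of scales, namely that $|V(M)|$ stays below $(\alpha+\rho^{1/2})n$ throughout (so part~\ref{itm:connection-intersection*} remains applicable) and that the blocking count $2\rho^{1/2}pn^{k-1}$ stays strictly below $e(L_{G''}(x))$. Both reduce to taking $\rho$ sufficiently small in terms of $k$, $\gamma$ and $\nu$.
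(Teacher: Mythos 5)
Your construction is essentially the paper's argument in greedy form: the paper takes a maximal matching $M$ with fewer than $10k\rho\nu^{-(k-1)}n$ edges and derives a contradiction by comparing $\deg_{G'}(x)\geq(\mu_d(k)+\gamma)p\binom{n-1}{k-1}$ with the number of edges through $x$ meeting $V(M)\cup Q$, bounded via \cref{cla:connection-properties-random-graph}\ref{itm:connection-intersection*}; your greedy extension step is the same comparison. So the approach matches. However, there is one genuine misstep in your constant bookkeeping. In the final inequality you discard $\mu_d(k)$ (``Since $\mu_d(k)\geq0$'') and need $2\rho^{1/2}<c_0=\gamma/(4(k-1)!)$, which you justify by saying that $\gamma$ precedes $\rho$ in the hierarchy. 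It does not: \cref{lem:connection-sparse} only assumes $\rho\ll1/k,\,\nu$, and $\nu$ is an arbitrary constant in $(0,1-\alpha]$ with no relation to $\gamma$ (e.g.\ $\nu=1/2$ and $\gamma$ minuscule is admissible). Under those hypotheses $\rho$ may be far larger than $\gamma^2$, and then $2\rho^{1/2}<\gamma/(4(k-1)!)$ simply fails, so as written your greedy step is not guaranteed to continue.

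The repair is small: do not throw away $\mu_d(k)$. The quantity $\mu_d(k)$ is bounded below by a positive constant depending only on $k$ (for $d=1$ the introduction records $\mu_1(k)\geq2^{-(k-1)}$; for general $d$ the standard space-barrier example --- all $k$-sets meeting a fixed set of $\tfrac{n}{2(k-1)}-1$ vertices --- gives $\mu_d(k)\geq1-\bigl(1-\tfrac{1}{2(k-1)}\bigr)^{k-d}$). Keeping this term, the degree of $x$ in $G''$ is at least $c(k)\,p\,n^{k-1}$ with $c(k)$ depending on $k$ alone (after subtracting the $2\alpha pn^{k-1}$ edges through $Q$, using $\alpha\ll1/k$), and the blocked count $2\rho^{1/2}pn^{k-1}$ is smaller than this using only $\rho\ll1/k,\,\nu$, which is exactly what the hierarchy provides. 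This is also what makes the paper's contradiction argument go through without assuming $\rho\ll\gamma$. With that one-line change your proof is correct and coincides with the paper's.
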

	\begin{proof}
		Suppose that the claim is false, and
		let $M \subset E(L_{G''}(x))$ be a matching of maximum
		size with fewer than $(k-1)10k \rho \nu^{-(k-1)} n$ vertices.
		By~\cref{cla:connection-properties-random-graph}\ref{itm:connection-intersection*}
		with $V(M) \cup Q$ playing the role of $X$ there are at
		most $2k(\alpha + (k-1)10k\rho \nu^{-(k-1)}) pn^{k-1}$ edges
		in $G'\subset G$ containing $x$ and a vertex of $V(M) \cup
			Q$.  On the other hand, $\deg_{G'}(x) \geq (\mu_d(k) +
			\gamma ) p \binom{n-1}{k-1}$, which contradicts the
		maximality of~$M$.
	\end{proof}

	Next, let $C \subset V(G'')$ be a $\nu n$-set taken
	uniformly at random.  For $x\in V(G'')$, denote by $M_x'$
	the edges of $M_x$ that are entirely in $C$.  Using the
	Chernoff bound, the union bound and
	\cref{cla:connection-properties-random-graph}\ref{itm:connection-degree-subgraph},
	each of the following holds with probability at least $5/6$:
	\begin{enumerate}[(i)]
		\item for every $x \in V(G'')$, we have $|M_x'| \geq
			      9k \rho n$,\label{eq:1}
		\item all but $o(n^d)$ $d$-sets of vertices in
		      $V(G')\setminus Q$
		      have $d$-degree at least $(\mu_d(k) +\gamma/2) p
			      \binom{\nu n-d}{k-d}$ into $C$.\label{eq:2}
	\end{enumerate}
	Therefore, \ref{eq:1} and \ref{eq:2} hold with probability
	at least $2/3$. We proceed by fixing such a set $C$.

	Now let $R \subset C$ with $|R| \leq \rho n$ be given and consider
	distinct vertices $x_1,\,x_2 \in V(G')\sm (Q\cup R) $.
	Our goal is to find a loose $(x_1,x_2)$-path $P$ of order
	$8(k-1)+1$ with $V(P)\setminus\{x_1,x_2\}\subset C\sm
		R$.  For $i \in
		\{1,2\}$, let~$M_i$ be obtained from $M_{x_i}'$ by
	deleting all edges that contain vertices in~$R$.  After
	deleting some additional edges if necessary, we may assume
	that $\{x_1\}\cup V(M_{1})$ and $\{x_2\}\cup V(M_{2})$ are
	disjoint and both~$M_1$ and~$M_2$ have cardinality~$3 \rho
		n$.

	By
	\cref{cla:connection-properties-random-graph}\ref{itm:connection-upper-uniform},
	$G[C]$ is $(\lambda,p,1-\lambda)$-upper-uniform.  Thus, we may
	apply~\cref{lem:sparse-regularity} to $G''[C]$ with
	parameters $\eps$, $r_0$ and $r_1$.  Using
	\cref{lem:transfer-cluster-graph-partition}
	with prepartition $W_1 =
		V({M}_1),\,W_2=V({M}_2), \, U = C\sm (W_1\cup W_2
		\cup R\cup\{x_1,x_2\})$ we obtain a reduced $k$-graph $\cR$
	on $r \geq r_0$ vertices.  Let $\cW_{1}$, $\cW_2$, $\cU
		\subset V(\cR)$ be the sets of clusters contained in
	$W_1$, $W_2$ and~$U$, respectively.

	\begin{claim}\label{claim:consequenceRegularity}
		We have that
		\begin{itemize}
			\item all but
			      $\sqrt{\eps}|C|$ vertices
			      $X\in\cW_1 \cup \cW_2$ of $\cR$ satisfy $\deg_{\cU}(X) \geq
				      (\mu_d(k) + \gamma/4) {|\cU| -1 \choose k - 1 }$ and
			\item all but
			      $\sqrt{\eps}|C|^d$ $d$-sets $D
				      \subset \cU$ of $\cR$ satisfy $\deg_{\cU}(D) \geq (\mu_d(k) +
				      \gamma/4) {|\cU| -d \choose k - d }$.
		\end{itemize}
	\end{claim}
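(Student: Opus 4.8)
The plan is to deduce \cref{claim:consequenceRegularity} from \cref{lem:transfer-cluster-graph-partition}\ref{it:3.6.2}, feeding in degree conditions on $G''[C]$; this is exactly the transfer mechanism that the partitioned reduced graph $\cR$ was built to exploit. Write the prepartition that produced $\cR$ as $P_1=W_1=V(M_1)$, $P_2=W_2=V(M_2)$, $P_3=U$ and $P_4=C\sm(W_1\cup W_2\cup U)$ (so $h=4$), with $\cW_1,\cW_2,\cU$ the cluster sets $\cP_1,\cP_2,\cP_3$ respectively. Two size estimates will be used repeatedly. Since $|M_1|=|M_2|=3\rho n$ and each link-edge spans $k-1$ vertices, $|C\sm U|\le|W_1|+|W_2|+|R|+2\le(6k-5)\rho n+2\le\rho^{1/2}n$, the last step by $\rho\ll1/k$; hence $|Q\cup(C\sm U)|\le(\alpha+\rho^{1/2})n$, which is the regime in which parts~\ref{itm:connection-intersection*} and~\ref{itm:connection-intersection-tuples} of \cref{cla:connection-properties-random-graph} apply, and $|U|\ge|C|-\rho^{1/2}n\ge|C|/2$. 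The second estimate together with \cref{lem:transfer-cluster-graph-partition}\ref{it:3.6.1} gives $|\cU|\ge(|U|/|C|)r-\eps hr\ge r/3$ once $\eps$ is small, a fact used at the end to absorb the error term.

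The core of the proof is to verify the hypotheses for \cref{lem:transfer-cluster-graph-partition}\ref{it:3.6.2} with $J=\{3\}$ and threshold parameter $\delta'=\mu_d(k)+\gamma/3\ge\delta=\mu_d(k)$, namely: (a) all but $o(n)$ vertices $x\in W_1\cup W_2$ satisfy $\deg_U(x)\ge\delta'p\binom{|U|-1}{k-1}$ in $G''$; and (b) all but $o(n^d)$ $d$-sets $D\subset U$ satisfy $\deg_U(D)\ge\delta'p\binom{|U|-d}{k-d}$ in $G''$, where $\deg_U(\cdot)$ counts edges all of whose non-specified vertices lie in $U$ (the same whether computed in $G''$ or in $G''[C]$, as $U\subset C$). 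For (b): item~\ref{eq:2}, that is \cref{cla:connection-properties-random-graph}\ref{itm:connection-degree-subgraph} for the fixed $C$, gives that all but $o(n^d)$ $d$-sets $S$ in $V(G')\sm Q$ have at least $(\mu_d(k)+\gamma/2)p\binom{\nu n-d}{k-d}$ edges all of whose non-$S$ vertices lie in $C$; of these, the ones meeting $C\sm U$ number at most $Z_{G''}(S,C\sm U)\le2\binom kd(\alpha+\rho^{1/2})np\binom{n-d-1}{k-d-1}\le(\gamma/6)p\binom{\nu n-d}{k-d}$ for all but $o(n^d)$ more $d$-sets, by \cref{cla:connection-properties-random-graph}\ref{itm:connection-intersection-tuples} applied to the fixed set $C\sm U$; subtracting and using $|U|\le\nu n$ yields (b). For (a) I would run the same two steps one level down: from $\delta_1(G')\ge\delta_d(G')\binom{n-1}{k-1}/\binom{n-d}{k-d}\ge(\mu_d(k)+\gamma)p\binom{n-1}{k-1}$ and \cref{lem:degree-inheritance_induced-subgraphs} applied with $1$ in place of $d$ and $\sigma=\nu$ (indeed, for $p=\omega(n^{-(k-2)})$ one may arrange by a Chernoff bound and a union bound over the $n$ vertices that \emph{every} vertex of $V(G')\sm Q$ has at least $(\mu_d(k)+\gamma/2)p\binom{\nu n-1}{k-1}$ edges into $C$, which is convenient since $W_1\cup W_2$ is small), followed by discarding the edges meeting $C\sm U$, which for a single vertex number at most $2(\alpha+\rho^{1/2})pn^{k-1}\le(\gamma/6)p\binom{n-1}{k-1}$ by \cref{cla:connection-properties-random-graph}\ref{itm:connection-intersection*}.

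Given (a) and (b), applying \cref{lem:transfer-cluster-graph-partition}\ref{it:3.6.2} with $(d',i)\in\{(1,1),(1,2)\}$ yields, for all but $\sqrt\eps\binom r1$ clusters $X$ in each of $\cW_1,\cW_2$, the bound $\deg_{\cU}(X)\ge\delta'\binom{|\cU|-1}{k-1}-(\tau+\eps h+\sqrt\eps+k/r_0)r^{k-1}$, and with $(d',i)=(d,3)$ yields, for all but $\sqrt\eps\binom rd$ $d$-sets $D\subset\cU$, the bound $\deg_{\cU}(D)\ge\delta'\binom{|\cU|-d}{k-d}-(\tau+\eps h+\sqrt\eps+k/r_0)r^{k-d}$. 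Since $r\le|C|$ these exceptional sets lie below $\sqrt\eps|C|$ and $\sqrt\eps|C|^d$; and since $|\cU|\ge r/3$ the quantity $\binom{|\cU|-d'}{k-d'}$ is of order $r^{k-d'}$ with implied constant depending only on $k$, so the choices $\tau\ll\eta\ll1/k,\gamma$, $\eps\ll\tau$ and $1/r_0\ll1/k,\gamma$ make the subtracted error at most $(\gamma/12)\binom{|\cU|-d'}{k-d'}$, leaving $\deg_{\cU}(\cdot)\ge(\mu_d(k)+\gamma/4)\binom{|\cU|-d'}{k-d'}$ in both cases, as claimed. The only delicate part is the degree bookkeeping of the previous paragraph: one must check that passing from $G'$ to $G''$ is harmless, that restricting to the random set $C$ costs at most $\gamma/2$ of the slack, and that deleting $C\sm U$ costs a further $\gamma/6$, all of which reduces to $\alpha$ and $\rho$ being small enough relative to $\gamma$, $1/k$ and $\nu$ for the intersection estimates of \cref{cla:connection-properties-random-graph} to hold with room to spare.
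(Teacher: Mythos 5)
Your argument is essentially the paper's: it verifies the degree hypotheses for $G''[C]$ by writing $\deg_U(\cdot)\ge\deg_C(\cdot)-Z_G(\cdot,C\sm U)$, controls the subtracted term through \cref{cla:connection-properties-random-graph}\ref{itm:connection-intersection*}--\ref{itm:connection-intersection-tuples} together with the degree-into-$C$ property of the fixed set $C$, and then transfers to $\cR$ via \cref{lem:transfer-cluster-graph-partition}\ref{it:3.6.2}, absorbing the $(\tau+\eps h+\sqrt\eps+k/r_0)r^{k-d'}$ error because $|\cU|=\Omega(r)$. The only deviations are cosmetic --- you lump $W_1\cup W_2\cup R$ into one small set where the paper handles the pieces separately, and you spell out the first bullet (which the paper dismisses as ``similar'') via the $d'=1$ case --- though in the bookkeeping you should take $\beta=\rho^{1/2}$ (the actual relative size of $C\sm U$) and compare the subtracted terms against $p\binom{|U|-d'}{k-d'}$ rather than using $\beta=\alpha+\rho^{1/2}$ or comparing against $p\binom{n-1}{k-1}$, so that the estimates close for all admissible $\nu$.
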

	\begin{proof}
		The conclusions follow directly
		from~\cref{lem:transfer-cluster-graph-partition}, once
		we verify the degree hypothesis for $G''[C]$ (as a
		spanning subgraph of $G[C]$).  Since the proofs of both
		statements are similar, we only prove the second one.  It
		is sufficient to show that all but $o(n^d)$ $d$-sets $D$
		in $U$ satisfy $\deg_U(D) \geq
			(\mu_d(k) + \gamma/3) p {|U| -d \choose k - d }$.  Note
		that, $\deg_U(D) \geq \deg_{C}(D) - (Z_G(D,W_1) +
			Z_G(D,W_2) + Z_G(D,R))$.  Therefore, if $\deg_U(D) <
			(\mu_d(k) + \gamma/3) p {|U| -d \choose k - d }$, then
		either
		$\deg_C(D) < (\mu_d(k) + \gamma/2) p {|U| -d \choose k - d }$,
		or $Z_G(D,W_1) > (\gamma/18) p {|U| -d \choose k - d }$,
		or $Z_G(D,W_2) > (\gamma/18) p
				{|U| -d \choose k - d }$ or $Z_G(D,R) > (\gamma/18) p
				{|U| -d \choose k - d }$.  In any case, by
		\cref{cla:connection-properties-random-graph}\ref{itm:connection-intersection-tuples}
		and~\ref{eq:2}, it follows
		that the number of $d$-sets $D \subset U$ which satisfy
		$\deg_U(D) < (\mu_d(k) + \gamma/3) p {|U| -d \choose k -
					d }$ is $o(n^d)$.
	\end{proof}

	Now we consider two vertices in $\cR$: fix $X_i\in
		\cW_i$ with $\deg_{\cU}(X_i) \geq
		(\mu_d(k) + \gamma/4) {|\cU| -1 \choose k - 1 }$ for
	$i\in\{1,2\}$.  We want to find a loose
	$(X_1,X_2)$-path $\cP$ of order $6(k-1) + 1$ in $\cR$
	such that
	$V(\cP)\setminus\{X_1,X_2\}\subset\cU$.
	For that, let
	$S \subset \cU$ be an $s$-set chosen uniformly at
	random. By~\cref{claim:consequenceRegularity}, we
	can apply~\cref{lem:random-subset-degrees} to obtain
	that $\delta_d(\cR[S]) \geq (\mu_d(k) +
		\eta)\binom{s -d}{k-d}$ with probability at least $2/3$.
	Moreover, standard concentration inequalities show that $\deg_{\cR[S]}(X_i) \geq (\mu_d(k) + \eta)
		\binom{s-1}{k-1}$ for $i \in\{1,2\}$ with probability at least $2/3$.
	Fix a set $S$ that satisfies both of these conditions.
	Let $F_1$ and $F_2$ be two disjoint edges in $\cR[S
			\cup \{X_1,X_2\}]$ containing $X_1$ and $X_2$
	respectively.
	Note that such edges can be found due to the large
	degree of the vertices $X_1$ and $X_2$ in
	$\cR[S]$.
	Select vertices $Y_1\in F_1 \sm
		\{X_1\}$ and $Y_2\in F_2 \sm \{X_2\}$, and let $S'$ be
	obtained from~$S$ by deleting the vertices of $F_1
		\cup F_2$ except for~$Y_1$ and~$Y_2$.  This deletion of
	vertices may reduce the minimum degree a little, but
	we are still guaranteed to be able to apply
	Lemma~\ref{lem:connection-dense},
	which tells us that there is a loose $(Y_1,Y_2)$-path
	in $\cR[S']$ of order $4(k-1)+1$.  We then extend this
	path by $F_1$ and $F_2$ to obtain the desired loose
	$(X_1,X_2)$-path~$\cP$.

	Finally, by
	\cref{cla:connection-properties-random-graph}\ref{itm:connection-embedding-lemma},
	we may apply \cref{lem:embedding} to obtain a loose
	$(x'_1,x'_2)$-path $P$  in $G[C]$ with ends
	$x'_1 \in X_1$ and $x'_2\in X_2$ of order $6(k-1) +
		1$ avoiding the set $R$.  This is possible because the $k$-density of~$P$
	is $m_k(P) = 1/(k-1)$.  To finish the proof, we
	augment~$P$ with two edges from $M_1$ and $M_2$ to
	obtain the desired loose $(x_1,x_2)$-path.
\end{proof}

\section{Covering most vertices}
\label{sec:cover-lemma-proof}

This section is dedicated to the proof of \cref{lem:cover-sparse}.
Let us begin with the following outline.

\begin{proof}[Sketch of the proof of \rf{lem:cover-sparse}]
	We begin by reserving a linear sized subset $C \subset V(G')
		\sm Q$ via a random choice.  By
	\cref{lem:connection-sparse}, we can guarantee that a
	constant number of (arbitrary) vertices is pairwise
	connectible
	through loose paths whose inner vertices lie in $C$ and
	avoid~$Q$.

	Next, we cover $V(G') \sm (C \cup Q)$ with a constant number
	of loose paths.  To this end, we apply the Weak Hypergraph
	Regularity Lemma to find a regular partition $\cV$ of
	$V(G')$ together with a reduced dense $k$-graph $\cR$ (whose
	vertices are the clusters of $\cV$) of constant order $r$
	that approximately
	captures the local edge densities of $G'$.  Crucially, most
	$d$-sets of vertices of $\cR$ have $d$-degree at least
	$(\mu_d(k) + \gamma/2)\binom{r-d}{k-d}$.
	To avoid $d$-sets of low degree altogether,
	we find a partition $\cU$ of $\cR$ into parts~$U$ of small
	size such that most parts $U \in \cU$ satisfy
	$\delta_d(\cR[U]) \geq (\mu_d(k) + \gamma/4)
		\binom{|U|-d}{k-d}$.  As~$|U|$ is chosen to be small with
	respect to the fraction of $d$-tuples that fail to have
	large enough $d$-degree, this
	can be done by choosing~$\cU$ randomly (see
	Lemma~\ref{lem:random-subset-degrees}).  Let $\cU' \subset
		\cU$ be the set of parts $U\in\cU$ such that
	$\delta_d(\cR[U])\geq (\mu_d(k) +
		\gamma/4)\binom{|U|-d}{k-d}$.    We may
	assume that the vertices of $Q$ (that have to be avoided)
	are in the parts outside of $\cU'$.  The next task is to
	find in $G'$, for each $U \in \cU'$, a loose path $P_U$ that
	covers most of the clusters of $U$. Then, it will remain to
	connect up the paths~$P_U$ for $U \in \cU'$ into a long loose
	path~$P$ using the reserved vertex set~$C$.

	We may suppose that, with exception of possibly one part,
	every part in the partition~$\cU$ has cardinality~$s$, which
	is chosen to be divisible by $k-1$.  Fix $U \in \cU'$ and
	suppose $|U|=s$.  By the definition of $\mu_d(k)$ and the
	fact that $k-1$ divides $s=|U|$, we have that the induced
	$k$-graph~$\cR[U]$ contains a loose Hamilton cycle~$F$.
	Suppose that the clusters of $U$ are labelled
	$W_1,\dots,W_s$ according to the ordering of~$F$.  To cover
	most vertices of $G'[\bigcup_{1\leq i\leq s} W_i]$, we build
	a path~$P_U$
	edge by edge by selecting for the next edge an edge that
	leaves many possibilities to continue the process.  The
	nature of loose cycles supplies us with a good number of
	possible edge extensions at every stage, whose distribution
	can be captured by a dense ($2$-uniform) regular pair.
	Having observed this, we may extend~$P_U$ without having to
	deal with any major technicalities until it covers most of
	the vertices of~$G'[\bigcup_{1\leq i\leq s}W_i]$.

	Finally, we connect up the paths~$P_U$ for $U \in \cU'$ into
	a long loose path~$P$ using the reserved vertex set~$C$.
	Since the order of~$\cR$ is constant, the
	number of paths to be connected up does not pose any
	problem.  In the same way, we may extend~$P$ to become a
	loose $(u,v)$-path.  This gives the desired almost spanning
	loose path.
\end{proof}

Now we are ready give the complete argument of the proof.

\begin{proof}[{Proof of \cref{lem:cover-sparse}}]
	We define the constants required in the proof in several
	steps.  Given $1\leq d \leq k-1$ with $k\geq 3$ and positive $\eta$
	and~$\gamma$, we introduce constants $\nu,\,  \eps,\, s,\, r_1\, \lambda,\, r_0,
		\,\tau$ with~$s$ divisible by $k-1$ such that
	\begin{align*}
		\eps        & \ll \frac{1}{s},\,\nu \ll \frac{1}{k},\,\gamma
		,\, \eta \,, \text{ and}                                         \\
		\frac{1}{n} & \ll \lambda \ll\frac{1}{r_1} \ll \frac{1}{r_0} \ll
		\eps \ll \tau \ll \frac{1}{k},\,\gamma,\, \delta = \mu_d(k)
	\end{align*}
	and note that the hierarchy of constants
	in~\cref{lem:transfer-cluster-graph-partition} is satisfied.
	Following the quantification
	in~\cref{lem:connection-sparse}, we introduce~$\alpha$ and~$\rho$
	with $\alpha \ll 1/k,\,\gamma$ and  $\rho\ll1/k,\,\nu$.
		{Finally, let $1/K \ll 1/k,\gamma$ and $p \geq \max \{K n^{-(k-d)}\log n,\,  K n^{-(k-2)} \log n\}$ so that we can
			apply~\cref{lem:connection-sparse,lem:random-upper-uniform,lem:count-edges_containing_2-graph}.}
	The following claim contains all properties that we require from the
	random graph.

	\begin{claim} \label{cla:cover-properties-random-graph} W.h.p.~$G \sim
			\oH_k(n,p)$ satisfies the following properties.
		\begin{enumerate}[\upshape(1)]
			\item \label{itm:cover-connection} Let $G' \subset G$ be a
			      spanning subgraph of~$G$ with $\delta_d(G') \geq (\mu_d(k) +
				      \gamma)p\binom{n-d}{k-d}$ and let $Q \subset V(G')$ with $|Q| \leq \alpha
				      n$ be given.
			      Then there is a $\nu n$-set $C \subset V(G')\setminus Q$ with the following property.
			      For any $R \subset C$ with $|R| \leq \rho n$
			      and distinct $u,\,v \in V(G')\sm (Q\cup R) $, there is a loose
			      $(u,v)$-path $P$ of order $8(k-1)+1$ with $V(P)\setminus\{u,v\}
				      \subset C\sm R$.

			\item
			      \label{itm:cover-upper-uniform} $G$ is
			      $(\lambda,p,1+\lambda)$-upper-uniform.

			      \item\label{itm:d-degree-X}
			      For every $X\subset V(G)$ with $|X|\leq(\alpha+\nu)n$, all
			      but~$o(n^d)$ $d$-tuples $D\subset V(G)\setminus X$ have
			      $d$-degree at least
			      $(\mu_d(k)+\gamma/2)p\binom{|V(G)\setminus X|-d}{k-d}$ in~$G-X$.

			\item \label{itm:cover-sparseness} Let $U_1,\dots,U_k$ be disjoint
			      subsets of~$V(G)$ each of size at least ${\eps n/r_1}$. Let $M
				      \subset U_{k-1} \times U_k$ be of size at most $(\tau -
				      \sqrt{\eps})|U_{k-1}||U_k|$.  Then $G$ has at most $(1+ \eps)
				      (\tau - \sqrt{\eps}) p \prod_{j=1}^{k}{|U_j|}$ edges
			      $e=\{v_1,\dots,v_k\}$ with $v_i \in U_i$ for $i \in [k-2]$ and
			      $(v_{k-1},v_k) \in M$.
		\end{enumerate}
	\end{claim}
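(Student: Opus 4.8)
The plan is to verify the four items separately and then intersect the four individually high-probability events; each item follows from a lemma collected in \cref{sec:preliminaries} together with a short positivity observation or a routine union bound.

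Item~\ref{itm:cover-connection} is obtained by applying \cref{lem:connection-sparse} with the constants $\alpha,\gamma,K,\nu,\rho$ fixed above, noting that its hypothesis on~$p$ holds because $p\ge Kn^{-(k-2)}\log n=\omega(n^{-(k-2)})$. On the resulting event, for every admissible $G'$ and every $Q$ with $|Q|\le\alpha n$ a uniformly random $\nu n$-set $C\subset V(G')\setminus Q$ has the stated connectivity property with probability at least $2/3>0$, so such a set~$C$ exists, which is~\ref{itm:cover-connection}. Item~\ref{itm:cover-upper-uniform} is immediate from \cref{lem:random-upper-uniform}, since $p\ge Kn^{-(k-2)}\log n=\omega(n^{1-k}\log n)$ and, for large~$n$, being $(o(1),p,1+o(1))$-upper-uniform implies $(\lambda,p,1+\lambda)$-upper-uniformity.

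For item~\ref{itm:cover-sparseness} I would split on the size of~$M$. Let $C_0$ be the constant produced by \cref{lem:count-edges_containing_2-graph} applied with the present~$k$ and with $\eps/r_1$ and $\eps$ playing the roles of its parameters $\lambda$ and $\eta$, respectively, and assume $K\ge C_0$, so that $p\ge C_0 n^{-(k-2)}\log n$. If $|M|\ge C_0/(pn^{k-3})$, then w.h.p.\ the number of edges $\{v_1,\dots,v_k\}$ with $v_i\in U_i$ for $i\in[k-2]$ and $(v_{k-1},v_k)\in M$ equals $(1\pm\eps)\,p\,|M|\prod_{j=1}^{k-2}|U_j|$, and since $|M|\le(\tau-\sqrt{\eps})|U_{k-1}||U_k|$ this is at most $(1+\eps)(\tau-\sqrt{\eps})\,p\prod_{j=1}^{k}|U_j|$, as required. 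If instead $|M|<C_0/(pn^{k-3})$, I would first record — via a Chernoff bound on $\Bin(\binom{n-2}{k-2},p)$, whose mean has order $K\log n$, and a union bound over the $\binom n2$ pairs — that w.h.p.\ every pair of vertices of~$G$ lies in at most $2p\binom{n-2}{k-2}$ edges; then the count in question is at most $|M|\cdot 2p\binom{n-2}{k-2}=O(C_0n)$, which for large~$n$ is far below the target $(1+\eps)(\tau-\sqrt{\eps})p\prod_{j=1}^{k}|U_j|$, since the latter is at least $(\tau-\sqrt{\eps})(\eps/r_1)^{k}Kn^{2}\log n$ while $O(C_0n)=o(n^2)$.

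Item~\ref{itm:d-degree-X} is the one requiring the most care, since there are up to~$2^{n}$ choices of~$X$ and no polynomially small failure bound could survive a union over them. The plan is to write, for a $d$-set $D\subset V(G)\setminus X$, $\deg_{G-X}(D)=\deg_G(D)-Z_G(D,X)$, and to control each summand by a fact that already carries the ``for all~$X$'' quantifier (or does not mention~$X$ at all). A Chernoff bound on $\deg_G(D)\sim\Bin(\binom{n-d}{k-d},p)$ has error exponent of order $p\binom{n-d}{k-d}=\Omega(K\log n)$, so — after a union bound over the $O(n^{d})$ $d$-sets, harmless once $K$ is large relative to~$k$ and $1/\gamma$ — w.h.p.\ all but $o(n^{d})$ $d$-sets~$D$ satisfy $\deg_G(D)\ge(1-\gamma/8)p\binom{n-d}{k-d}$. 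Next, \cref{lem:random-degree-strong} applied with $\alpha+\nu$ in place of its parameter~$\alpha$ (note $p=\omega(n^{d-k})$) gives that w.h.p., for \emph{every}~$X$ with $|X|\le(\alpha+\nu)n$, all but $o(n^{d})$ $d$-sets~$D$ have $Z_G(D,X)\le 2\binom{k}{d}(\alpha+\nu)\,np\binom{n-d-1}{k-d-1}$; since $np\binom{n-d-1}{k-d-1}=(k-d)(1+o(1))\,p\binom{n-d}{k-d}$ and $\alpha,\nu\ll1/k,\gamma$, the right-hand side is at most $(\gamma/8)p\binom{n-d}{k-d}$. Subtracting, for every such~$X$ all but $o(n^{d})$ $d$-sets $D\subset V(G)\setminus X$ satisfy $\deg_{G-X}(D)\ge(1-\gamma/4)p\binom{n-d}{k-d}\ge(\mu_d(k)+\gamma/2)p\binom{|V(G)\setminus X|-d}{k-d}$, the last inequality using $\binom{|V(G)\setminus X|-d}{k-d}\le\binom{n-d}{k-d}$ together with $\mu_d(k)+3\gamma/4\le1$; the latter we may assume, since otherwise no spanning subgraph meets the degree hypothesis of \cref{lem:cover-sparse} and there is nothing to prove. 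Intersecting the four high-probability events completes the proof, the only genuine point being to keep the ``for all~$X$'' quantifier inside \cref{lem:random-degree-strong} rather than paying a union bound for it.
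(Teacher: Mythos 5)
Your proposal is correct and follows the paper's argument almost exactly: the same four ingredients (\cref{lem:connection-sparse}, \cref{lem:random-upper-uniform}, \cref{lem:random-degree-strong} and \cref{lem:count-edges_containing_2-graph}) handle items \ref{itm:cover-connection}--\ref{itm:cover-sparseness}, and your treatment of \ref{itm:cover-sparseness} in the regime $|M|\ge C_0/(pn^{k-3})$ is the paper's verbatim. The one place you genuinely deviate is the case $|M|<C_0/(pn^{k-3})$: the paper simply pads $M$ to a set $M^{+}$ with $C_0/(pn^{k-3})\le|M^{+}|\le(\tau-\sqrt{\eps})|U_{k-1}||U_k|$ and uses monotonicity of the count $Z(M;U_1,\dots,U_k)\le Z(M^{+};U_1,\dots,U_k)$, which needs no further probabilistic input, whereas you add an extra w.h.p.\ event (every pair of vertices lies in at most $2p\binom{n-2}{k-2}$ edges, a routine Chernoff plus union bound since $p\ge Kn^{-(k-2)}\log n$) and bound the count by $|M|\cdot 2p\binom{n-2}{k-2}=O(n)\ll n^2\log n$; both work, the padding trick is just slightly leaner. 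For item \ref{itm:d-degree-X} the paper only writes that it ``follows from an application of \cref{lem:random-degree-strong}''; your Chernoff-plus-subtraction argument, keeping the for-all-$X$ quantifier inside \cref{lem:random-degree-strong} rather than paying a union bound over $X$, is exactly the intended fleshing-out, and your explicit reduction to the case $\mu_d(k)+3\gamma/4\le 1$ (the complementary case being vacuous for \cref{lem:cover-sparse}) is a harmless precaution that the paper leaves implicit.
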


	\begin{proof}
		Properties~\ref{itm:cover-connection}
		and~\ref{itm:cover-upper-uniform} follow,
		respectively, from
		Lemmas~\ref{lem:connection-sparse}
		and~\ref{lem:random-upper-uniform}.
		Property~\ref{itm:d-degree-X} follows from an
		application of
		Lemma~\ref{lem:random-degree-strong}.
		To show that~\ref{itm:cover-sparseness} holds, we
		start by
		applying \cref{lem:count-edges_containing_2-graph}
		with $\eps/r_1$ and $\eps$ playing the role of
		$\lambda$ and $\eta$ respectively.  Then we know
		that w.h.p.\ $G$ satisfies the conclusion of
		\cref{lem:count-edges_containing_2-graph}, which
		states that for any disjoint subsets
		$U_1,\ldots,U_k$ each of size at least $(\eps/r_1)
			n$ and any $M\subset U_{k-1} \times U_k$ of size at
		least $ C/(pn^{k-3})$, we have $Z(M; U_1,\ldots,U_k)
			= (1 \pm \eps) p |M| \prod_{j=1}^{k-2}{|U_j|}$,
		where we recall that $Z(M; U_1,\ldots,U_k)$ denotes
		the number of edges $e = \{v_1,\ldots, v_k\}$ with
		$v_i\in U_i$ for $i\in [k-2]$ and $(v_{k-1},v_k) \in
			M$.

		Fix disjoints subsets $U_1,\ldots,U_k$ each of size
		at least $(\eps/r_1) n$ and $M\subset U_{k-1} \times
			U_k$ such that $M$ has size at most $(\tau -
			\sqrt{\eps}) |U_{k-1}||U_k|$.
		If~$|M| \geq C/p n^{k-3}$, then from the conclusion
		of \cref{lem:count-edges_containing_2-graph} we
		obtain
		\begin{align*}
			Z(M; U_1,\ldots,U_k) \leq (1+\eps) (\tau - \sqrt{\eps}) p  \prod_{j=1}^{k}{|U_j|},
		\end{align*}
		which finishes the proof in this case. On the other
		hand, if $|M| < C/p n^{k-3}$, then since $1/p
			n^{k-3} \ll n^2$ one can add pairs to~$M$
		to obtain a set $M^{+}\subset U_{k-1}\times U_k $
		such that $C/p n^{k-3} \leq |M^+| \leq (\tau -
			\sqrt{\eps}) |U_{k-1}||U_k|$. Therefore, as before,
		since $Z(M; U_1,\ldots,U_k) \leq Z(M^+;
			U_1,\ldots,U_k)$, we obtain the desired bound
		on~$Z(M; U_1,\ldots,U_k)$, completing the proof of
		our claim.
	\end{proof}

	Let us fix a deterministic graph~$G$ that satisfies
	the properties described
	in~\cref{cla:cover-properties-random-graph}.  Let $G'
		\subset G$ be a spanning subgraph of~$G$ with $\delta_d(G')
		\geq (\mu_d(k) + \gamma ) p \binom{n-d}{k-d}$, and let
	$Q
		\subset V(G')$ with $|Q| \leq \alpha n$ and $u,\,v \in
		V(G')\sm Q$ be given.  We shall find a loose $(u,v)$-path $P
		\subset G'$ in $G'-Q$ that covers all but
	$\eta n$ vertices of $G' - Q$.

	To begin, we reserve a set $C \subset V(G')\setminus
		Q$ of $\nu n$ vertices satisfying the conclusion
	of~\cref{cla:cover-properties-random-graph}\ref{itm:cover-connection}.
	We will use
	$C$ for connections later on.

	By
	\cref{cla:cover-properties-random-graph}\ref{itm:connection-upper-uniform},
	we may apply \cref{lem:sparse-regularity} to $G'$ with
	parameters~$\eps$, $r_0$ and~$r_1$.
	We apply
	\cref{lem:prepartition-regularity,lem:transfer-cluster-graph-partition}
	with threshold~$\tau$ and prepartition $\{C\cup Q,\,V(G)\sm (C
		\cup Q) \}$ to obtain an $r$-vertex graph $\cR$ with
	$r_0 \leq r \leq r_1$.  Denote by~$\cQ$ the set of clusters that
	contain the vertices of $C \cup Q$.  We partition the
	vertex set of $\cR - \cQ$ randomly into parts~$S$ of
	size~$s$ and a `residue' part of size at most~$s$.
	We now invoke
	\cref{cla:cover-properties-random-graph}\ref{itm:d-degree-X}
	with~$X=Q\cup C$ and
	Lemma~\ref{lem:transfer-cluster-graph-partition} to
	note that most $d$-tuples in $\cR-\cQ$ have large
	degree, from which we deduce, by an application of
	Lemma~\ref{lem:random-subset-degrees}, that
	$\delta_d{(\cR[S])} \geq (\mu_d(k) + \gamma/8)
		\binom{s-d}{k-d}$ for all but
	$ (\eta/2)r/s $ parts~$S$ with~$|S|=s$.
	Let~$\cS$ be the set of such parts~$S$.

	\begin{claim}
		For each $S \in \cS$, the induced graph
		$G'\big[\bigcup S\big]$ contains a loose path of
		order at least $(1-2\sqrt{\eps})sn/r$.
	\end{claim}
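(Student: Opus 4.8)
The plan is to push the problem onto the \emph{dense} reduced graph $\cR[S]$ and there invoke the defining property of $\mu_d(k)$. Since $|S|=s$ is divisible by $k-1$, $s$ is large (as $\eps\ll 1/s$), and $S\in\cS$ forces $\delta_d(\cR[S])\ge(\mu_d(k)+\gamma/8)\binom{s-d}{k-d}$, the definition of $\mu_d(k)$ — applied with slack $\gamma/8$ — yields a loose Hamilton cycle $F$ of $\cR[S]$. Label the clusters of $S$ as $W_1,\dots,W_s$ in the cyclic order in which $F$ visits them, so that the edges of $F$ are the $k$-tuples $\{W_{j(k-1)+1},\dots,W_{j(k-1)+k}\}$ (indices mod $s$); each of these, being an edge of the partitioned reduced graph with threshold $\tau$, is an $(\eps,p)$-regular $k$-tuple of $G'$ of density exceeding $\tau p$, and all clusters have nearly equal size, close to $n/r$. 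I would then construct the required loose path $P\subseteq G'[\bigcup S]$ by following $F$ around the clusters $\Theta(n/r)$ times, using on each cluster all but a small reserved fraction of its vertices, so that $P$ ends up with order at least $(1-2\sqrt\eps)sn/r$.

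The heart of the argument is the single-edge extension. Reserving in each $W_i$ a buffer of a small fraction of its vertices that is never used, one builds $P$ edge by edge: having reached a connector vertex $w\in W_i$ along the edge of $F$ just traversed, one extends along the next edge $e$ of $F$ by choosing an edge $\{w,x_1,\dots,x_{k-2},z\}$ of $G'$ whose internal vertices $x_m$ and whose new connector $z$ lie in the still-unused parts of the clusters of $e$, selecting $z$ so that it too still admits many continuations. Here ``the nature of loose cycles'' helps: the distribution of admissible extensions across the two connector clusters of $e$ is governed by a dense (constant-density, $2$-uniform) regular bipartite pair. Indeed, regularity of the $k$-tuple of $e$ forces at least $(\tau-\eps)p$ times the product of the (still large) part sizes many edges of $G'$ with all vertices in the unused parts, whereas \cref{cla:cover-properties-random-graph}\ref{itm:cover-sparseness} forbids that many edges from hanging off a set of connector pairs of density below $\tau-\sqrt\eps$; running the same comparison over arbitrary large subpairs shows the reachability graph between the two connector clusters of $e$ is dense and lower-regular, hence contains an almost-perfect matching by Hall's theorem, which one realises by genuinely vertex-disjoint edges of $G'$ using the abundance of edges in the tuple. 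This supplies, at every stage and in every not-yet-exhausted cluster, a large set of valid next connectors, all but a tiny fraction of which again carry the ``many continuations'' property (by regularity of the $k$-tuples and the remaining parts of \cref{cla:cover-properties-random-graph}). The reserved buffers keep the unused parts of size at least $\eps|W_i|$ throughout, so regularity stays available until a $(1-2\sqrt\eps)$-fraction of every cluster has been used, at which point we stop. (Alternatively: $F$ with one edge removed is a linear $k$-graph of $k$-density $1/(k-1)$, so one may feed the system of $(\eps,p)$-regular tuples of $G'$ indexed by the edges of $F$ into the Sparse Embedding Lemma~\cref{lem:embedding} and iterate it with vertex deletion, extracting $\Theta(n/r)$ vertex-disjoint canonical copies of it and concatenating them through the clusters of the removed edge.)

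The step I expect to be the main obstacle is precisely this extension: guaranteeing that, although $G'$ is sparse, enough admissible extensions persist at \emph{every} stage — in particular that the chosen connectors keep the ``many continuations'' property and that the internal clusters of $F$ are not exhausted prematurely — so that the construction never stalls before a $(1-2\sqrt\eps)$-fraction of each cluster is covered. Weak regularity alone controls edge densities but not the finer structure of individual links, so it is the random-graph input of \cref{cla:cover-properties-random-graph}\ref{itm:cover-sparseness} that prevents the set of admissible extensions from degenerating; turning this into a clean quantitative invariant that survives all $\Theta(n/r)$ windings, together with the bookkeeping of used vertices, is the bulk of the work.
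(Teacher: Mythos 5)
Your proposal is essentially the paper's own argument: take a loose Hamilton cycle in $\cR[S]$ given by the definition of $\mu_d(k)$, introduce the auxiliary bipartite ``reachability'' graphs between consecutive degree-$2$ (connector) clusters, deduce that these are $\sqrt{\eps}$-lower-regular with threshold density $\tau$ by playing the $(\eps,p)$-regularity lower bound of each tuple against \cref{cla:cover-properties-random-graph}\ref{itm:cover-sparseness}, and extend the path greedily, always choosing the next connector among the many vertices that retain degree at least roughly $(\tau-2\sqrt{\eps})$ times the unused part of the following connector cluster, stopping once some cluster is $(1-2\sqrt{\eps})$-exhausted (each winding uses one vertex per cluster, giving the claimed order). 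The Hall's-theorem/matching aside and the parenthetical embedding-lemma variant are superfluous, and the ``quantitative invariant'' you flag as the main obstacle is precisely this maintained degree condition on the current endpoint, so your sketch matches the paper's proof in all essentials.
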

	\begin{proofclaim}[Proof]
		By the definition of $\mu_d(k)$, there is a loose
		Hamilton cycle~$L$ in $\cR[S]$, whose cyclically
		ordered vertex set we denote by $V_1,\dots,V_s$.
		Without loss of generality, we can assume that~$V_1$
		has degree~$2$ in~$L$.  Let
		$T=\{i\in[s]\colon
			i\equiv1\pmod{k-1}\}$,
		and note
		that~$V_i$ has degree~$2$ in~$L$ if and only
		if~$i\in T$.  In the following, all index
		computations are taken modulo $s$.  To construct a
		loose path $P$ of the desired order, we begin with
		some preliminaries.

		Suppose $W \subset V_1 \cup \dots \cup V_s$ is such
		that $|V_i \sm W| \geq 2\sqrt{\eps} |V_i|$ for every $i
			\in [s]$.  Let $V_i' = V_i \sm W$.  For every $i\in T$,
		let $H(i,W)$ be the bipartite graph with parts $V_i'$
		and $V_{i+k-1}'$ that has an edge $v_iv_{i+k-1}$
		whenever there are vertices $v_{i+1} \in V_{i+1}',\,
			\dots ,\, v_{i+k-2} \in V_{i+k-2}'$ such that
		$\{v_i,\dots, v_{i+k-1}\}$ is an edge in $G'$. We
		claim that
		\begin{center}
			$H(i,W)$ is \emph{$\sqrt{\eps}$-lower regular
				with threshold density $\tau$},
		\end{center}
		which means that for any $A\subset V_i'$ with $|A|
			\geq \sqrt{\eps}|V_i'|$ and $B\subset V_{i+k-1}'$ with
		$|B| \geq \sqrt{\eps} |V_{i+k-1}'|$, we
		have $d(A,B) \geq \tau - \sqrt{\eps}$. To see this,
		fix $i\in T$, consider such sets $A$ and $B$ and
		note that $|A| \geq \sqrt{\eps} |V_i'| \geq \eps
			|V_i|$ and $|B| \geq \sqrt{\eps} |V_{i+k-1}'| \geq
			\eps |V_{i+k-1}|$. For the sake of contradiction,
		assume that $d(A,B) < \tau - \sqrt{\eps}$.  By
		\cref{cla:cover-properties-random-graph}\ref{itm:cover-sparseness}
		and the definition of $H(i,W)$, it follows that
		$d(A,V_{i+1}',\dots,V_{i+k-2}',B) \leq (1+\eps) (\tau
			- \sqrt{\eps}) p \leq (\tau - \sqrt{\eps}/2) p$.  On
		the other hand, since the tuple $(V_i, \dots,
			V_{i+k-1})$ is $(\eps,p)$-regular with
		$d(V_i,\dots,V_{i+k-1}) \geq \tau p$, it follows that
		$d(A,V_{i+1}',\dots,V_{i+k-2}',B) \geq (\tau -
			\eps)p$, which is a contradiction.  This proves that
		$H(i,W)$ is indeed $\sqrt{\eps}$-lower regular with
		threshold density~$\tau$.

		Now we find a loose path $P$ step by step as follows. Suppose that
		we have constructed a path $P_\ell$ with vertices
		$v_1,\dots,v_\ell$ where $v_i \in V_i$ for $i \in
			[\ell]$ and $\ell\in T$.  Suppose moreover that
		$v_\ell$ has degree
		at least $(\tau - 2\sqrt{\eps}) |V_{\ell+k-1} \sm
			V(P_{\ell})|$ in $H(\ell,V(P_\ell))$.  If $V(P_\ell)$
		has more than $(1-2\sqrt{\eps})|V_i|$ vertices in some
		cluster $V_i$, we stop.  In this case $P_{\ell}$ has
		the desired order.

		Otherwise, we select a neighbour $v_{\ell+k-1}\in
			V_{\ell+k-1}$ of~$v_\ell$ in $H(\ell,V(P_\ell))$
		that has degree at least $(\tau -
			2\sqrt{\eps}) |V_{\ell+2k-2}\sm V(P_{\ell})|$ in
		$H(\ell+k-1,V(P_{\ell}))$.
		This is possible because, on the one hand, almost all
		vertices in $H(\ell+k-1,V(P_{\ell}))$ have this
		property as $H(\ell+k-1,V(P_{\ell}))$ is
		$\sqrt{\eps}$-lower-regular
		with threshold density $\tau$, and on the other
		hand~$v_\ell$ has a sufficiently large degree in
		$H(\ell,V(P_\ell))$.  By the definition of
		$H(\ell,V(P_{\ell}))$, there are vertices
		$v_{\ell+1} \in V_{\ell+1},\dots,v_{\ell+k-2} \in
			V_{\ell+k-2}$ such that $\{v_\ell\dots v_{\ell+k-1}\}$
		is an edge in
		$G'-(V(P_{\ell})\setminus
			\{v_{\ell}\})$.
		Hence, we can extend~$P_\ell$ to a loose
		path~$P_{\ell+k-1}$ with vertices
		$v_1,\dots,v_{\ell+k-1}$, with~$v_{\ell+k-1}$ having
		degree at least
		$(\tau - 2\sqrt{\eps}) |V_{\ell+2k-2}\sm V(P_{\ell})|$
		in $H(\ell+k-1,V(P_{\ell}))$.
	\end{proofclaim}

	For each $S \in \cS$, apply the above claim to obtain
	a path~$P_S$ covering most of~$S$.
	To finish the proof of \cref{lem:cover-sparse}, we
	connect up these paths to obtain the
	desired loose $(u,v)$-path~$P$ using~$|\cS|+1$ pairwise
	disjoint loose paths of order $8(k-1)+1$ in~$G'[C]$.
	This is is possible, since by
	\cref{cla:cover-properties-random-graph}\ref{itm:cover-connection}
	we may avoid (within~$C$) up to~$\rho n$ vertices~$R$
	in addition to those of~$Q$.
	It follows that~$P$ covers all but
	$(\nu+\eta/2+2\sqrt{\eps})n \leq \eta n$ vertices of
	$G'-Q$.
\end{proof}

\section{Absorbing vertices}
\label{sec:absorption-lemma-sparse-proof}
This section is dedicated to the proof of
\cref{lem:absorption-sparse}.  The basic idea is to combine many small
absorbing structures into a larger one.  We define the former as
follows.

\begin{definition}[Absorber]\label{def:absorber} Let
	$X=\{x_1,\dots,x_{k-1}\}$ be a set of vertices in a $k$-graph $G$.
	A collection $A=\{P_1^j, P_2^j\}_{j \in [q]}$ is an \emph{absorber
		rooted in $X$} if the following hold:
	\begin{itemize}
		\item $P_i^1,\dots,P_i^q$ are pairwise vertex-disjoint loose paths for $i \in [2]$;
		\item $V(\bigcup_{j \in [q]} P_1^j) = V(\bigcup_{j \in [q]} P_2^j)
			      \cup X$ and $X \cap V(\bigcup_{j \in [q]} P_2^j) =\es$;
		\item $P^j_1$ has the same starting and terminal vertices as $P^j_2$
		      for each $j \in [q]$.
	\end{itemize}
	We refer to $\bigcup_{j \in [q]} P_1^j$ as the absorbers
	\emph{active} state and $\bigcup_{j \in [q]} P_2^j$ as its
	\emph{passive} state.  The \emph{vertices of $A$} are the vertices
	in $\bigcup_{j \in [q]} P_2^j$ and we let $V(A)=\bigcup_{j \in [q]}V(P_2^j)$.
	Note that $V(A)\cap X=\emptyset$.
	The \emph{order} of~$A$ is~$|V(A)|$.
\end{definition}

Next, we define templates, which encode the relative position of the
absorbers in our construction.

\begin{definition}[Template]
	An $r$-graph $T$ is an $(r,z)$-\emph{template} if there is a $z$-set
	$Z \subset V(T)$ such that $T-W$ has a perfect matching for any set
	$W \subset Z$ of size less than $z/2$ with $v(T-W)$ divisible by
	$r$.  We call $Z$ the \emph{flexible} set of $T$.
\end{definition}

Templates were introduced by Montgomery~\cite{Mon14} to adjust
the absorption method of Rödl, Rucinski and Szemerédi~\cite{RRS09a} to
the sparse setting.  The next
lemma was derived by Ferber and Kwan~\cite{FK22} from the work of
Montgomery and states that there exist sparse templates.

\begin{lemma}[Lemma 7.3 in~\cite{FK22}] \label{lem:template}
	For $1/z,\,1/L \ll 1/r$, there is an $(r,z)$-template~$T$
	with $v(T),\,e(T) \leq Lz$ and~$v(T)\equiv0\bmod r$.
\end{lemma}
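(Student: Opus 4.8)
The plan is to derive this from Montgomery's construction of \emph{robustly matchable bipartite graphs}~\cite{Mon14}, the combinatorial engine behind template-based absorption, and to transfer it to the $r$-uniform setting. Note first that once $v(T)\equiv 0\bmod r$, the condition ``$v(T-W)$ divisible by $r$'' is the same as ``$r\mid |W|$'', so the target is an $r$-graph $T$ with a flexible set $Z$ of size $z$ such that $T-W$ has a perfect matching for every $W\subseteq Z$ with $|W|<z/2$ and $r\mid|W|$.

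In the first step I would invoke Montgomery's result to obtain, for a suitable $m=\Theta(z)$, a bipartite graph $B$ of bounded maximum degree $\Delta=O(1)$ with parts $X$ and $Y$ of sizes $O(z)$ and a distinguished \emph{flexible} set $Z\subseteq V(B)$ with $|Z|=z$, such that $B-W$ contains a perfect matching of its balanced remainder for \emph{every} admissible $W\subseteq Z$; the point is that this holds for an arbitrary, possibly scattered, subset $W$ across the whole relevant range of sizes, not merely for one fixed value, and this ``robustness under partial deletion'' is exactly what Montgomery's gadget supplies. By padding $B$ with $O(r)$ extra vertices and edges and enlarging $z$ to a convenient multiple of $2r$, one tunes the arithmetic so that the class of admissible $W$ matches the class required of the template after the lift, and so that the lifted object $T$ below has $v(T)\equiv 0\bmod r$.

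In the second step I would turn $B$ into an $r$-graph $T$ by the standard lift: blow up each edge of $B$ into an $r$-edge using a bounded number of fresh private vertices, arranged so that a perfect matching of $T-W$ corresponds to a matching of $B-W$ saturating the appropriate side, with the surplus created by the deletion of $W$ soaked up by the private vertices; then take the flexible set of $T$ to be $Z$. Granting this, $T$ inherits the robust matchability of $B$: $T-W$ has a perfect matching whenever $W\subseteq Z$, $|W|<z/2$ and $r\mid|W|$, which is precisely the defining property of an $(r,z)$-template.

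Finally I would verify the quantitative bounds. Bounded degree of $B$ gives $e(T)=O(z)$, and $v(T)=O(rz)$; since the hierarchy $1/z,\,1/L\ll 1/r$ allows $L$ to be taken large in terms of $r$ and then $z$ large in terms of $L$, both $v(T)$ and $e(T)$ are at most $Lz$. The step I expect to be the main obstacle is the first one together with its interaction with the second: one needs Montgomery's bipartite gadget in a form where the flexible set survives the deletion of \emph{every} admissible subset simultaneously and where its parity bookkeeping can be bent from ``mod $2$'' to ``mod $r$'', and one needs the lift to $r$-graphs to be set up so that the imbalance produced by any such partial deletion is always exactly absorbed by the private-vertex gadgets. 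Keeping $e(T)$ linear in $z$ throughout is what rules out the lazy alternative of letting $T$ be a dense (say near-complete) $r$-graph on $O(z)$ vertices.
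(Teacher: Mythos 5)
First, note that the paper does not prove this lemma at all: it is quoted verbatim as Lemma~7.3 of Ferber and Kwan~\cite{FK22}, who in turn derived it from Montgomery's template construction~\cite{Mon14}. So there is no in-paper argument to match your proposal against; what can be assessed is whether your sketch would actually establish the statement. Your overall route (Montgomery's robustly matchable bipartite gadget, then a lift to $r$-uniform hypergraphs) is the right provenance, but as written the two steps you yourself flag as the obstacle are genuine gaps, not technicalities.

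Concretely: (i) you attribute to Montgomery a gadget in which $B-W$ has a perfect matching for \emph{every} admissible $W$ ``across the whole relevant range of sizes''. Montgomery's robustly matchable bipartite graph gives robustness for deletions of one prescribed size (exactly half of the flexible set), not for all sizes up to $z/2$; upgrading to the full range $|W|<z/2$ with the divisibility constraint $r\mid |W|$ is precisely part of the content of the lemma and cannot be imported as a black box. (ii) The ``standard lift'' you describe --- blowing up each edge of $B$ into an $r$-edge with a bounded number of fresh private vertices --- does not preserve the matching correspondence: a private vertex lies in exactly one hyperedge, so any perfect matching of $T-W$ would be forced to use every such edge, which is impossible. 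The natural repair (blowing up \emph{vertices} of one side into $(k-1)$-sets so that edges need no private vertices) restores the correspondence but then the counting $v(T-W)\equiv 0\bmod r$ together with the matching arithmetic pins $|W|$ to a single value, so the range problem from (i) reappears rather than being ``soaked up by the private-vertex gadgets''. In short, the parity/divisibility bookkeeping from ``exactly half, mod $2$'' to ``any $|W|<z/2$ with $r\mid|W|$'' is asserted rather than carried out, and this is exactly where Ferber and Kwan's derivation from Montgomery's work does real work. The quantitative bounds $v(T),e(T)\leq Lz$ would indeed follow easily from bounded degree once a correct construction is in place, so the missing piece is the construction itself, not the counting.
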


Consider a graph $G' \subset G$ as in the setting of
\cref{lem:absorption-sparse}, where~$G$ is a typical instance of
$\oH_k(n,p)$.  The next lemma states that a typical linear sized set
$Z \subset V(G')$ has the property that any sublinear set
$W \subset Z$ can be matched into $Z$.\footnote{The statement of
	\cref{lem:richness-property} slightly differs from Lemma~7.5
	in~\cite{FK22}, as we state that a random $\nu n$-set~$Z$ has the
	desired property.  This version follows immediately from their
	proof.}

\begin{lemma}[Lemma 7.5 in~\cite{FK22}] \label{lem:richness-property}
	Let $\eta \ll 1/k,\,1/d,\,\gamma,\,\nu$ and
	$p \geq n^{-(k-1)} \log^3{n}$.  Then w.h.p.\ $G \sim \oH_k(n,p)$ has
	the following property. Let $G' \subset G$ be a spanning subgraph
	with $\delta_d(G') \geq \gamma p \binom{n-d}{k-d}$.  Let
	$Z \subset V(G)$ be chosen uniformly at random among all
	$\nu n$-sets.  Then w.h.p.\ for any $W \subseteq
		V(G)\setminus Z$ with
	$|W| \leq \eta n$, there is a matching in~$G'$ covering all vertices
	in~$W$, each edge of which contains one vertex of~$W$ and $k-1$
	vertices of~$Z$.
\end{lemma}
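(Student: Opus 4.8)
Since the footnote explains that the random-$Z$ formulation follows from the proof of Lemma~7.5 in~\cite{FK22}, the plan is to reproduce that argument, whose three ingredients are: a degree transfer to links, a concentration statement for links restricted to $Z$, and a Hall-type criterion for systems of disjoint representatives. First, from $\delta_d(G')\ge\gamma p\binom{n-d}{k-d}$ together with the inequality $\delta_1(G)/\binom{v(G)-1}{k-1}\ge\delta_d(G)/\binom{v(G)-d}{k-d}$, every vertex $w$ has link $L_{G'}(w)$ with at least $\gamma p\binom{n-1}{k-1}$ edges. Second, I would record the standard properties of $G\sim\oH_k(n,p)$ valid for $p\ge n^{-(k-1)}\log^3 n$: all degrees are at most $2p\binom{n-1}{k-1}$ (Chernoff); every codegree of two or more vertices is $O(1+p\binom{n-2}{k-2})$, so each $L_{G'}(w)$ is ``locally sparse'' and therefore contains a matching whose size is the number of its edges divided by a polylogarithmic factor; few $(k-1)$-sets have degree at least $2$ (so links of distinct vertices overlap, as edge sets, only in a negligible family); and the ``robust degree'' estimates in the style of \cref{lem:random-degree,lem:random-degree-strong}, namely that for every $X$ with $|X|\le\alpha n$ almost all (and, when $p\ge Cn^{2-k}$, all) vertices $w$ are incident to at most $2k\alpha np\binom{n-2}{k-2}$ edges meeting $X$.

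Next, conditioning on a typical $G$ and fixing such a $G'$, I would show that w.h.p.\ over the random $\nu n$-set $Z$ the links stay large: for every $w\in V(G)\setminus Z$, the restricted link $L_{G'}(w)[Z]$ has at least $\tfrac12\nu^{k-1}\deg_{G'}(w)\ge\tfrac{\gamma}{2}\nu^{k-1}p\binom{n-1}{k-1}$ edges. This is a concentration argument whose point is that $L_{G'}(w)$ lives on only $O(\deg_{G'}(w))=O(p\binom{n-1}{k-1})$ vertices, so the count is a function of a low-dimensional random object and a bounded-differences inequality applied to those relevant coordinates gives failure probability $n^{-\omega(1)}$, enough to union over $w$. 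Granting this, and given $W\subseteq V(G)\setminus Z$ with $|W|\le\eta n$, a matching as required is exactly a system of disjoint representatives $S_w\in\mathcal{E}_w:=\{S\subseteq Z:S\cup\{w\}\in E(G')\}$, so by the Aharoni--Haxell theorem it suffices that for every nonempty $W'\subseteq W$ the $(k-1)$-graph $\bigcup_{w\in W'}\mathcal{E}_w$ has a matching larger than $(k-1)(|W'|-1)$. For $|W'|$ above a polylogarithmic threshold one bounds its edge count from below by $|W'|\cdot\tfrac{\gamma}{2}\nu^{k-1}p\binom{n-1}{k-1}$ (the overlap family being negligible) and bounds its maximum degree from above by $O(\eta np\binom{n-2}{k-2})$ via the robust-degree estimate; dividing yields a matching of size at least $|W'|\cdot\Omega(\gamma\nu^{k-1}/(k^3\eta))$, which beats $(k-1)|W'|$ precisely because $\eta\ll 1/k,\,\gamma,\,\nu$. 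For sub-polylog values of $|W'|$ one instead uses a direct union bound: a fixed $W'$ violates the criterion only if a fixed matching of size $\Omega(\gamma p\binom{n-1}{k-1}/\Delta)$ inside some $L_{G'}(w_0)$ has fewer than $(k-1)|W'|$ of its edges inside $Z$, an event of probability $e^{-\Omega(\nu^{k-1}\gamma p\binom{n-1}{k-1}/\Delta)}$, small enough — thanks to the $\log^3 n$ factor in the bound on $p$ — to survive the $\binom{n}{|W'|}$ union factor for $|W'|$ up to $\Theta(\log n)$, the intermediate range being covered by interpolating the two estimates. Finally one deletes the vertices of $W$ from $Z$ and produces the matching.

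The main obstacle is exactly this last verification of the Hall/SDR condition uniformly over all $W'$ of all sizes up to $\eta n$, in the sparsest regime $p\approx n^{-(k-1)}\log^3 n$ where every link carries only $\Theta(\log^3 n)$ edges. There the union bound over the $\binom{n}{|W'|}$ candidate sets demands a failure probability super-polynomially small in $n$, while a single link's survival inside $Z$ is only $e^{-\Theta(\mathrm{polylog}\,n)}$; closing this gap forces one to use simultaneously the bounded codegree of $G$ (so that links, and their unions over $W'$, have matchings nearly as large as their edge counts) and the robust-degree estimates (so that no small set annihilates a link), together with the split of the analysis according to the size of $W'$. Everything else is bookkeeping of the kind already carried out in~\cite{FK22}.
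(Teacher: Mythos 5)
First, a point of comparison: the paper does not actually prove \cref{lem:richness-property} — it is imported verbatim (up to the random-$Z$ formulation noted in the footnote) from Lemma~7.5 of~\cite{FK22} — so the only thing to measure your plan against is the cited argument, and your overall skeleton (lower-bounding each link restricted to $Z$, then a Hall/Aharoni--Haxell-type criterion for a system of disjoint representatives) is indeed the right kind of skeleton. The concentration step is fine in spirit (though the right tool is a Janson-type lower-tail bound for the number of link edges falling entirely inside $Z$, using that codegrees are $O(1)$ at this density, rather than bounded differences), and you correctly identify where the difficulty sits.

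The genuine gap is in the verification of the Aharoni--Haxell condition uniformly over all $W'$, i.e.\ exactly the step you flag but do not resolve. Your max-degree bound $O(\eta n p\binom{n-2}{k-2})$ for $\bigcup_{w\in W'}L_{G'}(w)[Z]$ is not available at $p\asymp n^{-(k-1)}\log^3 n$: \cref{lem:random-degree} requires $p\ge Cn^{2-k}$, which fails here, and \cref{lem:random-degree-strong} only controls all but $o(n^d)$ $d$-sets, with the exceptional sets depending on $W'$; moreover the statement itself is false for adversarial $W'$, since a set $W'$ of size only $O(\log^3 n)\ll\eta n$ containing one vertex from each link edge of a fixed $z\in Z$ forces $z$ to have degree $\Theta(p\binom{n-1}{k-1})$ in the union. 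With only the trivial bound $\deg\le 2p\binom{n-1}{k-1}$, your edges-over-max-degree computation yields a matching of size about $|W'|\gamma\nu^{k-1}/(4(k-1))$, which does \emph{not} beat $(k-1)|W'|$ because $\gamma\nu^{k-1}<1$; and your direct union bound over $W'$ (per-set failure probability $e^{-\Omega(\log^3 n)}$ against $\binom{n}{|W'|}$ choices) only covers $|W'|=O(\log^2 n)$, so "interpolating the two estimates" has no content for intermediate and large $|W'|$. The missing idea, which is how the cited proof handles all sizes at once, is to union-bound over the putative blocking configuration itself: if the union of links has no matching of size $(k-1)|W'|$, there is a vertex set $S\subset Z$ of size $O(k^2|W'|)$ (the vertex set of a maximum matching) met by \emph{every} edge of \emph{every} $L_{G'}(w)[Z]$, $w\in W'$; since the minimum-degree hypothesis forces each such $w$ to then have at least $\gamma p\binom{n-1}{k-1}$ of its $G$-edges concentrated on the small set $S$ (plus the complement of $Z$), and for a fixed pair $(W',S)$ these are essentially independent atypical events of probability $e^{-\Omega(\log^3 n)}$ each, the failure probability is $e^{-\Omega(|W'|\log^3 n)}$, which beats the $e^{O(k^2|W'|\log n)}$ enumeration of $(W',S)$ uniformly in $|W'|$. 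Without this (or an equivalent) mechanism, your plan does not close in the sparsest regime the lemma is stated for.
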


Finally, we need the following lemma, which provides us with a single
absorber avoiding a small set of fixed vertices.

\begin{lemma}[Sparse Absorber
		Lemma]\label{lem:absorber-sparse}
	Let~$k\geq3$ and suppose $1/M \ll \alpha \ll
		1/k,\,1/d,\,\gamma$ and $1/C \ll 1/k,\gamma$.
	If $p
		\geq\max\{n^{-(k-1)/2+\gamma},Cn^{-(k-d)}\log n\}$,
	then w.h.p.\ $G \sim \oH_k(n,p)$ has the following property.

	For any spanning subgraph $G' \subset G$ with $\delta_d(G')
		\geq (\mu_d(k) + \gamma ) p \binom{n-d}{k-d}$, any set~$Q$ of
	at most~$\alpha n$ vertices and any $(k-1)$-set~$X$ in $V(G)
		\sm Q$, there is an absorber $A$ in~$G'$ rooted in $X$ that
	avoids~$Q$ and has order at most $M$.
\end{lemma}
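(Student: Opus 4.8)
The plan is to deduce the statement from the dense absorber lemma (\cref{lem:absorber-dense}, established elsewhere in the paper) by the now-familiar route of passing to a reduced graph via the Sparse Regularity Lemma and lifting back via the Sparse Embedding Lemma, exactly as in the proof of \cref{lem:connection-sparse}. The crucial structural observation is that an absorber rooted in $X$, regarded as a $k$-graph $H = \big(\bigcup_{j} P_1^j\big)\cup\big(\bigcup_{j} P_2^j\big)$, is \emph{linear} — two of its edges meet in at most one vertex, provided each pair of paths $P_1^j, P_2^j$ is chosen internally disjoint and the vertices of $X$ occupy well-separated positions — and has $k$-density $m_k(H)\le 2/(k-1)$. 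Indeed, the active and passive states cover essentially the same $O(1)$-element vertex set while the active state has exactly one more edge than the passive one, so the two together consist of about $2P$ edges on about $(k-1)P$ vertices, and a short calculation gives $(e(H')-1)/(v(H')-k)\le 2/(k-1)$ for every subgraph $H'$. This is exactly where the hypothesis $p\ge n^{-(k-1)/2+\gamma}$ enters: \cref{lem:embedding} applies to such blueprints precisely when $p\ge CN^{-1/m_k(H)}$, i.e.\ when $p\gtrsim n^{-(k-1)/2}$, and the extra factor $n^{\gamma}$ swallows the constants coming from the bounded number of clusters.

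Concretely, I would fix constants $1/M\ll 1/r_1\ll\varepsilon\ll 1/r_0\ll\tau\ll\alpha\ll 1/k,\,1/d,\,\gamma$ and $1/C\ll 1/k,\,\gamma$, feed $\tau$ into \cref{lem:embedding} to obtain $\varepsilon,\zeta$ valid for all linear $k$-graphs on at most $M$ vertices with $k$-density at most $2/(k-1)$, and record (in the spirit of \cref{cla:connection-properties-random-graph}) that w.h.p.\ $G\sim\oH_k(n,p)$ enjoys the conclusion of \cref{lem:embedding} on every linear-sized induced subgraph, the upper-uniformity of \cref{lem:random-upper-uniform}, and the intersection and degree-inheritance estimates of \cref{lem:random-degree,lem:random-degree-strong,lem:degree-inheritance_induced-subgraphs,lem:reduced-graph-degree-inheritance}. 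Fixing such a deterministic $G$ together with $G'$, a set $Q$ with $|Q|\le\alpha n$ and $X=\{x_1,\dots,x_{k-1}\}\subseteq V(G)\setminus Q$, I apply \cref{lem:sparse-regularity,lem:transfer-cluster-graph-partition} to $G'-Q$ to obtain a reduced $k$-graph $\cR$ on $r_0\le r\le r_1$ clusters all but $\sqrt{\varepsilon}\binom{r}{d}$ of whose $d$-sets have degree at least $(\mu_d(k)+\gamma/2)\binom{r-d}{k-d}$, and pass via \cref{lem:random-subset-degrees} to a constant-order induced subgraph $\cR'$ with $\delta_d(\cR')\ge(\mu_d(k)+\gamma/4)\binom{s-d}{k-d}$.

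The core step is to produce inside $\cR'$ a \emph{reduced absorber} — a copy of the cluster pattern of the blueprint $H$ — by invoking \cref{lem:absorber-dense}, and then to realise it in $G'$. Since the $k-1$ vertices of $X$ are prescribed and a $(k-1)$-set may have degree $0$ in $G'$, I cannot merely drop $X$ into clusters; instead, for each $i$ I use $\deg_{G'}(x_i)\ge(\mu_d(k)+\gamma)p\binom{n-1}{k-1}$ together with the intersection bounds (only $O\big((\alpha+o(1))pn^{k-1}\big)$ of these edges meet $Q$, the exceptional set of the partition, the clusters outside $\cR'$, or previously chosen vertices) to choose greedily pairwise disjoint connector edges $g_1,\dots,g_{k-1}$ of $G'$ with $x_i\in g_i$ and $g_i\setminus\{x_i\}$ landing in clusters of $\cR'$; these will be the edges of the active state meeting $X$, and I arrange the application of \cref{lem:absorber-dense} so that the root clusters of the reduced absorber are compatible with the clusters met by the $g_i$. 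Expanding the reduced absorber via \cref{lem:embedding} — legitimate because the resulting blueprint is a bounded-size linear $k$-graph with $m_k\le 2/(k-1)$ and every cluster still has $\ge\kappa n$ vertices — and splicing in the $g_i$ gives the active state, while the part of the reduced absorber disjoint from $X$ expands to the passive state; the order is $O(M)$, and the whole construction avoids $Q$.

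The main obstacle is precisely this interfacing of the prescribed set $X$ with the generic reduced absorber. For $d=1$ one cannot even force two vertices of $X$ into edges sharing a common link vertex, so the vertices of $X$ must appear as pairwise non-adjacent ``link'' vertices of the active paths, and one must then design the passive paths to reshuffle the remaining vertices while keeping $H$ linear with $m_k(H)\le 2/(k-1)$; the flexibility to make the $g_i$ pairwise disjoint and compatible with the reduced absorber I would obtain by noting that each $x_i$ has $\Omega(pn^{k-1})$ candidate edges and, when $d=1$, that $\mu_1(k)\ge 2^{-(k-1)}$ forces $|N_{G'}(x_i)|\ge(1/2+\Omega(1))n$, so that the neighbourhoods of distinct $x_i$ overlap linearly — the same mechanism used for \cref{lem:connection-dense}. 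Verifying the bound $m_k\le 2/(k-1)$ for the ``double loose path''-like blueprint is a routine if slightly tedious subgraph count.
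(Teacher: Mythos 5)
Your overall route (dense absorber as a blueprint, Sparse Regularity plus \cref{lem:embedding} to realise it, density $m_k\approx 2/(k-1)$ to justify $p\ge n^{-(k-1)/2+\gamma}$) is the right general frame, but there is a genuine gap at exactly the point you flag as the ``main obstacle'': how the embedded absorber is rooted at the prescribed set $X$. \cref{lem:embedding} only produces \emph{canonical} copies, i.e.\ it controls which cluster each image vertex lies in, never which specific vertex it is. Your plan — greedily pick connector edges $g_i\ni x_i$ with $g_i\setminus\{x_i\}$ in clusters of $\cR'$, then ``arrange'' the root clusters of the reduced absorber to be compatible and ``splice in'' the $g_i$ — does not close this gap: after embedding, the absorber meets those clusters in \emph{some} vertices, not in the endpoints of your $g_i$, and in the sparse regime you cannot then find edges of $G'$ through a prescribed constant-size vertex set (a fixed $(k-1)$-set typically has degree $0$). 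Choosing the $g_i$ after the embedding fails for the same reason. Moreover, even if one patched the connection, the active and passive states must cover \emph{exactly} the same vertex set outside $X$; ad hoc splicing of extra edges or connecting paths destroys this exact-cover property. The paper resolves precisely this with a contraction argument: using \cref{lem:connection-sparse} it builds $\nu n$ candidate loose cycles of fixed order through each $x_i$, contracts each candidate tuple to a single vertex (forming linear-sized clusters $W_i$ of contracted vertices, cf.\ \cref{def:contracted-graph}), embeds in the contracted graph a ``book'' of $\ell=16(k-1)-1$ copies of the $K$-sparse dense absorber rooted at clusters $Y_i\in\cW_i$ (so the canonical copy is free to select \emph{some} candidate cycle, which suffices), and then assembles the final absorber from the expanded sub-absorbers $A_j$ at the cross-sections $Y_j$ together with the two complementary arcs $P_1^i,P_2^i$ of each chosen cycle. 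This assembly, not a single embedded absorber, is what makes the active/passive bookkeeping work; your sketch contains no substitute for it.

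Two secondary inaccuracies: the bound $m_k(H)\le 2/(k-1)$ is not a generic property of absorbers (two edges from the active and passive states may share several vertices, killing both linearity and the density bound); it holds only up to an additive $O(1/K)$ for $K$-sparse absorbers, and for the amalgamated book one needs \cref{lem:sparse-to-mkdensity}. Obtaining the large-girth (hence linear) dense absorber is not something one can ``choose'' by separating vertices — it is the content of \cref{lem:absorber-dense}, which in turn rests on \cref{lem:threshold-exploitation}; citing \cref{lem:absorber-dense} as a black box is fine, but your routine-subgraph-count remark understates what is being used. These points are absorbable given the $n^{\gamma}$ slack; the rooting mechanism is the essential missing idea.
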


Lemma~\ref{lem:absorber-sparse} is proved in
Section~\ref{sec:absorber-lemma-sparse-proof}.  Before we come
to the proof of \cref{lem:absorption-sparse}, let us give an
outline of the argument.

\begin{proof}[Sketch of the proof of \rf{lem:absorption-sparse}]
	Consider $\nu$ and~$M$ with $\eta \ll \nu,\,1/M \ll \alpha$.  We
	begin by reserving a set $Z\subset V(G')$ and vertices $u'\in Z$
	and~$v\notin Z$ such that, for any set~$W\subset
		V(G')\setminus Z$ with $|W| \leq \eta n$,
	we may cover $W'=W\cup\{v\}$ with a loose
	$(u',v)$-path~$P_{W'}$ of order at most~$\sqrt\eta n$
	in $G'[Z\cup W']$.  This is possible by applying
	\cref{lem:connection-sparse} with~$Z$ playing the role
	of~$C$ and \cref{lem:richness-property}.

	Next, we use \cref{lem:template} to find a
	$(k-1 , \nu n)$-template~$T$.
	We assume that~$V(T)\subset V(G')$ and that~$Z$ considered
	above is the flexible set of~$T$.  In the next step, we find
	absorbers rooted in the edges of~$T$.
	More precisely, by \cref{lem:absorber-sparse} there is an
	absorber~$A_e$ in~$G'$ rooted in~$e$ of order at most~$M$ for each
	$e\in E(T)$.  Denote by~$\cP_e$ the collection of loose paths
	of~$A_e$ that do not cover~$e$, that is, the paths
	corresponding to the passive state of~$A_e$.
	We can assume that the collections
	$\cP_e$ are pairwise vertex-disjoint.  This can be guaranteed by
	choosing the above absorbers one after another, avoiding the already
	involved vertices with the help of the set~$Q$ in
	Lemma~\ref{lem:absorber-sparse}.  Finally, fix
	$u\in V(G')\setminus\big(V(T)\cup\{v\}\big)$ arbitrarily and
	integrate the absorbers~$A_e$ ($e\in E(T)$) into a single loose
	$(u,u')$-path~$P_1$ with~$V(P_1)\cap\big(V(T)\cup\{v\}\big)=\{u'\}$
	using \cref{lem:connection-sparse} several times.

	Set $A= V(P_1)\cup V(T)\cup\{v\}$.  We claim that $A$ has the
	properties detailed in
	Lemma~\ref{lem:absorption-sparse}.  We may pick the
	involved constants so that $|A| \leq \alpha n$.  For the absorption property, consider an arbitrary subset $W \subset V(G')\setminus A $
	with $|W| \leq \eta n$ and~$|W|$ divisible by $k-1$.
	We have to show that the
	induced graph $G'[A \cup W]$ has a loose $(u,v)$-path covering
	$A \cup W$.

	To that end, let $W' = W \cup \{v\}$.
	By the choice of $Z$, we
	can find a loose $(u',v)$-path $P_{W'}$ of order at most
	$\sqrt{\eta} n$ in $G'[Z \cup W']$ that covers $W'$.  We now
	concatenate~$P_1$ and $P_{W'}$ to obtain a loose $(u,v)$-path~$P$
	that covers~$W$ and uses at most $|Z|/2$ vertices of~$Z$.  Moreover,
	one can check that $|V(T)\sm V(P)|\equiv0\bmod(k-1)$.  Recall
	that~$T$ is a $(k-1,\nu n)$-{template} with flexible set $Z$.  It
	follows that $T-V(P)$ admits a perfect matching~$\cM$.  We then
	`activate' each absorber $A_e$ with $e\in\cM$ and leave all other
	absorbers in their passive state. For each $e\in\cM$, let $\cP_e'$ be
	the collection of paths of~$A_e$ that cover~$e$. Let~$P'$ be obtained from~$P$ by replacing~$\cP_e$ with~$\cP_e'$ for each
	$e\in\cM$.  It follows that~$P'$ is a loose $(u,v)$-path in
	$G'[A\cup W]$ covering $A\cup W$.
\end{proof}

We now give a complete proof of
Lemma~\ref{lem:absorption-sparse}.  Most of the combinatorial
components of the proof are given in the sketch above, but
there are still some numerical facts to check.

\begin{proof}[Proof of \cref{lem:absorption-sparse}]
	Let $\ell=8(k-1)+1$ and introduce further constants as
	follows:
	let
	\begin{equation*}
		\frac1z,\,\frac1L\ll\frac1{k-1}
	\end{equation*}
	be as in \cref{lem:template} with $r=k-1$.  Let
	\begin{equation*}
		\frac1M\ll\alpha\ll\frac1k,\,\frac1d,\,\gamma
	\end{equation*}
	be as in Lemmas~\ref{lem:connection-sparse}
	and~\ref{lem:absorber-sparse}.  Now let~$\nu$ be such that
	\begin{equation*}
		\nu\ll\alpha,\,\frac1L,\,\frac1M,\,\frac1\ell,
	\end{equation*}
	and let
	\begin{equation*}
		\eta \ll \rho \ll\frac1k,\,\frac1d,\,\gamma,\,\nu
	\end{equation*}
	with $\rho$ as in Lemma~\ref{lem:connection-sparse} and $\eta $ as in Lemma~\ref{lem:richness-property}.
	Finally, let  $1/K \ll 1/k,\gamma$ as in
	Lemma~\ref{lem:connection-sparse} and $p
		\geq\max\{n^{-(k-1)/2+\gamma},Kn^{-(k-d)}\log n\}$, which allows us to apply
	Lemmas~\ref{lem:connection-sparse},
	\ref{lem:richness-property} and~\ref{lem:absorber-sparse}.
	We can now prove the following claim.

	\begin{claim}
		\label{cla:absorption-properties-random-graph} W.h.p.\
		$G \sim \oH_k(n,p)$ satisfies the following property.  For any
		spanning subgraph~$G'$ of~$G$ with
		$\delta_d(G') \geq (\mu_d(k) + \gamma ) p \binom{n-d}{k-d}$ and
		any $Q \subset V(G')$ with $|Q| \leq \alpha n$ the following hold:
		\begin{enumerate}[\upshape(1)]\setcounter{enumi}{-1}
			\item \label{itm:absorption-connection0}
			      Let~$C=V(G')\setminus Q$.  For every pair of distinct
			      vertices $u,\,v\in C$, there is a loose
			      $(u,v)$-path~$P$ of order $\ell=8(k-1)+1$ in~$G'$ with
			      $V(P)\subset C$.
			\item \label{itm:absorption-connection}
			      Let $C \subset V(G')\setminus Q$ be a set of size $\nu n$ taken
			      uniformly at random.
			      Then with probability at least $2/3$ the following holds.
			      For any $R \subset C$ with $|R| \leq \rho n$ and distinct $u,v \in V(G')\setminus (Q\cup R) $,
			      there is a loose $(u,v)$-path $P$ of order
			      $\ell=8(k-1)+1$ in~$G'$ with $V(P)\setminus\{u,v\}
				      \subset C\sm R$.

			\item \label{itm:absorption-richness}
			      Let $Z \subset V(G)$ be chosen uniformly at random among all $\nu n$-sets.
			      Then {with probability at least $2/3$} for any $W \subseteq
				      V(G)\setminus Z$ with $|W| \leq \eta n$, there is a matching in $G'$
			      covering all vertices in $W$, each edge of which contains one
			      vertex of $W$ and $k-1$ vertices of $Z$.

			\item \label{itm:absorption-absorber} For any $(k-1)$-set~$X$ in
			      $V(G) \sm Q$, there is an absorber~$A$ in~$G'$ rooted in
			      $X$ that avoids~$Q$ and has order at most~$M$.\qed
		\end{enumerate}
	\end{claim}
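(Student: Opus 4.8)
The plan is to deduce \cref{cla:absorption-properties-random-graph} directly from the three auxiliary lemmas \cref{lem:connection-sparse}, \cref{lem:richness-property} and \cref{lem:absorber-sparse}, together with the observation that item~\ref{itm:absorption-connection0} is nothing but a degenerate instance of item~\ref{itm:absorption-connection}. First I would confirm that the constants fixed at the start of the proof of \cref{lem:absorption-sparse} are admissible inputs for each of these lemmas. Concretely, the hierarchy $\nu \ll \alpha$, $\rho \ll 1/k,\,\nu$ and $1/K \ll 1/k,\,\gamma$ matches the hypotheses of \cref{lem:connection-sparse} --- note $\nu \leq 1-\alpha$ holds since $\nu \ll \alpha$ --- so that the loose paths produced there have order exactly $8(k-1)+1 = \ell$; the hierarchy $\eta \ll 1/k,\,1/d,\,\gamma,\,\nu$ matches \cref{lem:richness-property}; and $1/M \ll \alpha \ll 1/k,\,1/d,\,\gamma$ together with $1/C \ll 1/k,\,\gamma$ matches \cref{lem:absorber-sparse}. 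One also checks that $p \geq \max\{n^{-(k-1)/2+\gamma},\, Kn^{-(k-d)}\log n\}$ clears every lower bound on $p$ required by these lemmas; this is routine since for $k \geq 3$ one has $n^{-(k-1)/2+\gamma} = \omega\!\left(n^{-(k-1)}\log^3 n\right) = \omega\!\left(n^{-(k-2)}\right)$, while the term $Kn^{-(k-d)}\log n$ (with $K$ large) dominates the $(k-d)$-type thresholds. With all hypotheses in place, I would take the intersection of the three w.h.p.\ events supplied by the lemmas; on this event $G$ enjoys all three conclusions at once.

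It then remains to match the four items of the claim against these conclusions. Items~\ref{itm:absorption-connection}, \ref{itm:absorption-richness} and \ref{itm:absorption-absorber} are, up to writing $8(k-1)+1 = \ell$, verbatim restatements of \cref{lem:connection-sparse}, \cref{lem:richness-property} and \cref{lem:absorber-sparse} respectively; here I would only remark that the inner ``w.h.p.'' over the random set $Z$ in \cref{lem:richness-property} exceeds $2/3$ once $n$ is large. For item~\ref{itm:absorption-connection0}, fix any admissible pair $(G',Q)$ and set $C = V(G')\setminus Q$. By item~\ref{itm:absorption-connection}, a uniformly random $\nu n$-subset of $V(G')\setminus Q$ has, with probability at least $2/3 > 0$, the connection property stated there, so we may fix one such set $C_0 \subseteq V(G')\setminus Q$. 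Applying that property with $R = \emptyset$ shows that any two distinct vertices $u,\,v \in V(G')\setminus Q$ are joined by a loose $(u,v)$-path $P$ of order $\ell$ with $V(P)\setminus\{u,v\} \subset C_0$; since $C_0 \subseteq V(G')\setminus Q$ and $u,\,v \in V(G')\setminus Q$, this yields $V(P) \subset V(G')\setminus Q = C$, which is precisely item~\ref{itm:absorption-connection0}.

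I do not expect any genuine obstacle here: the claim carries essentially no combinatorial content of its own, the one mildly delicate point being the reduction of item~\ref{itm:absorption-connection0} to item~\ref{itm:absorption-connection} through the inclusion $C_0 \subseteq C$. What does require attention is bookkeeping --- keeping the constant hierarchies mutually consistent across the three lemmas, and checking that the ``for any spanning subgraph $G'$, any $Q$'' quantification of the claim is correctly inherited (each lemma already delivers a statement of the form ``w.h.p.\ $G$ has the property that for every admissible $G'$ and $Q$ \dots''). Accordingly I would present the proof as a short verification rather than as a multi-step argument.
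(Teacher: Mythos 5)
Your proposal is correct and takes essentially the same route as the paper: the paper states this claim with a terminal \qed precisely because items (1)--(3) are direct applications of \cref{lem:connection-sparse}, \cref{lem:richness-property} and \cref{lem:absorber-sparse} under the constant hierarchy and lower bound on $p$ fixed just before the claim, and item (0) is obtained exactly as you do, by fixing one good set $C_0$ (which exists since the probability $2/3$ is positive) and applying the connection property with $R=\emptyset$. The only blemish is the chained assertion $n^{-(k-1)/2+\gamma}=\omega\big(n^{-(k-1)}\log^3 n\big)=\omega\big(n^{-(k-2)}\big)$, which should be stated as two separate (individually true) comparisons rather than a chain, since $\omega$ of a smaller function does not formally imply $\omega$ of a larger one.
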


	Fix a graph $G$ that satisfies the properties in
	Claim~\ref{cla:absorption-properties-random-graph} and let~$G'$ be a
	spanning subgraph of~$G$ with minimum $d$-degree
	$\delta_d(G') \geq (\mu_d(k) + \gamma) p {n-d \choose k-d}$.  Our
	goal is to find $A \subset V(G')$ and two vertices $u,v \in A$ that
	satisfy the properties detailed in
	Lemma~\ref{lem:absorption-sparse}.  Note that we may
	assume that~$n$ is large whenever necessary.  We begin
	with the following claim.

	\begin{claim}\label{cla:absorption-path}
		There is a $\nu n$-set $Z\subset V(G')$, a vertex $u'\in Z$ and a
		vertex $v\in V(G')\setminus Z$ with the following property.  For
		every subset $W'\subset V(G)\setminus Z$ with $v\in W'$ of size at most
		${\eta} n+1$, there is a loose $(u',v)$-path~$P_{W'}$ of order at most
		$\sqrt{\eta} n$ in $G'[Z \cup W']$ that covers~$W'$.
	\end{claim}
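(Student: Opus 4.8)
The plan is to realise $Z$ as the flexible set coming from a random choice, take $u'$ and $v$ to be arbitrary once $Z$ is fixed, and then construct $P_{W'}$ for each admissible $W'$ by combining a matching of $W'\sm\{v\}$ into $Z$ (from \cref{lem:richness-property}, i.e.\ \cref{cla:absorption-properties-random-graph}\ref{itm:absorption-richness}) with short connecting paths through $Z$ (from \cref{lem:connection-sparse}, i.e.\ \cref{cla:absorption-properties-random-graph}\ref{itm:absorption-connection}). Concretely, I would pick $Z\subset V(G')$ uniformly at random among all $\nu n$-sets. By \cref{cla:absorption-properties-random-graph}\ref{itm:absorption-connection} (with $Q=\es$) and \ref{itm:absorption-richness}, each of the two relevant events — the \emph{connectivity} property (for every $R\subset Z$ with $|R|\le\rho n$ and every pair of distinct vertices of $V(G')\sm R$ there is a loose path of order $\ell=8(k-1)+1$ joining them with all internal vertices in $Z\sm R$) and the \emph{richness} property (for every $W\subseteq V(G)\sm Z$ with $|W|\le\eta n$ there is a matching of $G'$ covering $W$ with every edge meeting $W$ once and $Z$ in $k-1$ vertices) — holds with probability at least $2/3$, so by a union bound some $Z$ enjoys both. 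I would fix such a $Z$, an arbitrary $u'\in Z$, and an arbitrary $v\in V(G')\sm Z$.

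Given $W'$, set $W=W'\sm\{v\}$ and $m=|W|\le\eta n$. If $m=0$ I would simply take for $P_{W'}$ the loose $(u',v)$-path of order $\ell$ given directly by the connectivity property with $R=\es$. Otherwise I would use the richness property to get a matching $\{e_1,\dots,e_m\}$ with $w_i\in e_i$ and $e_i\sm\{w_i\}\subset Z$ pairwise disjoint; a minor adjustment (applying \cref{lem:richness-property} with $Z\sm\{u'\}$ in place of $Z$) lets me assume $u'\notin\bigcup_i e_i$. In each $e_i$ I would choose two distinct ports $a_i,b_i\in e_i\sm\{w_i\}$ (possible since $k-1\ge2$) and build $P_{W'}$ as the concatenation
\[
	C_0\cup e_1\cup C_1\cup e_2\cup\cdots\cup e_m\cup C_m ,
\]
where $C_0$ is a loose $(u',a_1)$-path, $C_i$ a loose $(b_i,a_{i+1})$-path for $1\le i\le m-1$, and $C_m$ a loose $(b_m,v)$-path, each of order $\ell$, produced one at a time from the connectivity property. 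When producing $C_i$ I would apply that property with $R$ equal to the set of all vertices of $Z$ used so far — namely $u'$, the sets $e_j\sm\{w_j\}$, and the internal vertices of the previously built $C_j$ — minus the two prescribed endpoints of $C_i$. Since the $Z$-vertices ever used number at most $m(k-1)+(m+1)(\ell-2)+1\le 9k\,\eta n\le\rho n$ for large $n$ (this is where the hierarchy $\eta\ll\rho$ enters), this is admissible, and internal vertices of the $C_i$ lie in $Z$, hence automatically avoid $v$ and all $w_j$.

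Finally I would verify that the resulting object is a loose $(u',v)$-path in $G'[Z\cup W']$: consecutive pieces overlap exactly in their prescribed port, every other pair of edges is disjoint by the choice of $R$ together with $u'\notin\bigcup_i e_i$, and $u'$ lies only in the first edge of $C_0$ while $v$ lies only in the last edge of $C_m$; it covers $W'$ since each $w_i\in e_i$ and $v$ is an endpoint; and its order is at most $\ell(m+1)+(k-2)m\le 9k\,\eta n+\ell\le\sqrt\eta\, n$ for large $n$, using $\eta\ll 1/k$. Thus $V(P_{W'})\subset Z\cup W'$, $|V(P_{W'})|\le\sqrt\eta\,n$, and $P_{W'}$ covers $W'$, as required. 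I expect the only real obstacle to be organisational rather than conceptual: keeping the pieces genuinely vertex-disjoint outside their prescribed ports (in particular handling the possibility that $u'$ is swallowed by the matching), and checking that the running avoidance set $R$ never exceeds $\rho n$ — precisely the point at which the separation $\eta\ll\rho$ is used.
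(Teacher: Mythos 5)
Your proposal follows the paper's proof essentially step for step: a random $Z$ is shown by a union bound to satisfy both the connection property and the richness property of \cref{cla:absorption-properties-random-graph}, $u'$ and $v$ are fixed arbitrarily, and for each $W'$ a matching from the richness property is connected up through $Z$ by repeated applications of the connection property, with the avoidance set $R$ keeping the pieces disjoint and the count (at most roughly $2\ell$ vertices per matching edge, in total at most $\sqrt{\eta}\,n\leq\rho n$) guaranteeing admissibility. The only deviations are cosmetic --- you match $W'\setminus\{v\}$ and attach $v$ via the final connector rather than matching all of $W'$, and you make explicit the bookkeeping for a possible collision with $u'$, which the paper glosses over --- so the argument is correct and essentially identical.
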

	\begin{proof}
		Using the union bound, we obtain $Z\subset V(G')$
		satisfying both
		\cref{cla:absorption-properties-random-graph}\ref{itm:absorption-connection}
		with $Z$ and $Q$, where $\es$ plays the role of $C$, and
		\cref{cla:absorption-properties-random-graph}\ref{itm:absorption-richness}.
		Fix any $u'\in Z$ and any $v\in V(G')\setminus Z$.
		Now consider an arbitrary subset $W'\subset
			V(G)\setminus Z$ with $v\in W'$ of size at most
		$\eta n+1$.  We use
		\cref{cla:absorption-properties-random-graph}\ref{itm:absorption-richness}
		to find a matching $\cM$ in $G'[W'\cup Z]$ covering
		all vertices in~$W'$, each edge of which contains
		one vertex of~$W'$ and $k-1$ vertices of~$Z$.  Next,
		we use
		\cref{cla:absorption-properties-random-graph}\ref{itm:absorption-connection}
		to connect up the edges of~$\cM$ to a loose
		$(u',v)$-path in $G'[Z\cup W']$.  This can be done
		by involving at most $2\ell|\cM| \leq \sqrt{\eta}n
			\leq \rho n $ vertices in total.  (Note that the set
		$R$ of
		\cref{cla:absorption-properties-random-graph}\ref{itm:absorption-connection}
		can be used to keep the paths from overlapping in
		the wrong vertices.)
	\end{proof}

	Let~$Z$, $u'$ and~$v$ be as in
	\cref{cla:absorption-path}.  By \cref{lem:template}
	applied with $r=k-1$ and $z=\nu n$, there is a
	$(k-1 , \nu n)$-template $T$ on at most
	$L\nu n\leq(\alpha/3)n$ vertices and at most $L \nu n$
	edges and with~$v(T)\equiv0\bmod(k-1)$.  We
	inject~$V(T)$ into~$V(G')\setminus\{v\}$, mapping the
	flexible set of $T$ onto $Z \subset V(G')$.  For
	convenience, let us identify~$V(T)$ with its image
	under the injection~$V(T)\to V(G')$; thus, in what
	follows, we think of~$V(T)$ as a subset of~$V(G')$
	and~$Z\subset V(T)$ is the flexible set in the
	template~$T$.  Next, we find absorbers rooted in the
	edges of~$T$.  More precisely, by
	\cref{cla:absorption-properties-random-graph}\ref{itm:absorption-absorber}
	there is an absorber~$A_e$ in~$G'$ rooted in~$e$ of
	order at most~$M$ for each $e \in E(T)$.
	Denote by $\cP_e$ the collection of
	loose paths of $A_e$ that do not cover $e$.  We can assume that the
	collections $\cP_e$ are pairwise vertex-disjoint and that they are
	disjoint from~$V(T)$.  This can be guaranteed by choosing the above
	absorbers one after another, avoiding the at most
	$M(e(T)-1)+v(T)\leq2LM \nu n \leq \alpha n$ already involved
	vertices with the help of the set $Q$.

	Next, we integrate these absorbers into a single path.
	Fix an arbitrary vertex
	$u\in V(G) \sm\big(V(T)\cup\bigcup_{e\in
			E(T)}V(A_e)\cup\{v\}\big)$.  Now use
	\cref{cla:absorption-properties-random-graph}\ref{itm:absorption-connection0}
	several times to connect up all paths of all
	collections $\cP_e$ for $e \in E(T)$ to one loose
	$(u,u')$-path $P_1$ of order at most
	$2 \ell Me(T) \leq 2\ell ML \nu n\leq(\alpha/3)n$
	with~$V(P_1)\cap\big(V(T)\cup\{v\}\big)=\{u'\}$.  Note
	that this is possible by choosing the connecting paths
	(of order~$\ell$) one after another, avoiding the at
	most~$\alpha n$ already used vertices (which play the
	part of $Q$ in
	\cref{cla:absorption-properties-random-graph}\ref{itm:absorption-connection0}).

	Set $A= V(P_1)\cup V(T)\cup\{v\}$.  We claim that~$A$ has the
	properties detailed in the statement of
	Lemma~\ref{lem:absorption-sparse}.  Clearly,
	$|A| \leq \alpha n$.  For the second part, consider an arbitrary
	subset $W \subset V(G')\setminus A$ with $|W| \leq \eta n$ divisible
	by $k-1$.  We have to show that the induced graph $G'[A \cup W]$ has
	a loose $(u,v)$-path covering $A \cup W$.

	To this end, let $W' = W \cup \{v\}$.  We now use
	\cref{cla:absorption-path} to find a loose $(u',v)$-path $P_{W'}$ of
	order at most $\sqrt{\eta} n$ in $G'[Z \cup W']$ that covers~$W'$.
	We concatenate~$P_1$ and~$P_{W'}$ to obtain a loose $(u,v)$-path $P$
	that covers~$W$ and uses at most $|Z|/2$ vertices of~$Z$.  {Note
			that
			$V(T)\cap V(P)=V(P_{W'})\setminus(W\cup\{v\})\equiv0\bmod(k-1)$
			because $|W\cup\{v\}|\equiv1\bmod{(k-1)}$ and
			$|V(P_{W'})|\equiv1\bmod(k-1)$.}  It follows that
	$V(T)\setminus V(P)$ is divisible by~$k-1$,
	as~$v(T)\equiv0\bmod(k-1)$.  Recall that $T$ is a
	$(k-1,\nu n)$-{template} with flexible set $Z$.  Hence $T- V(P)$
	admits a perfect matching~$\cM$.  We then `activate' each absorber
	$A_e$ with $e \in\cM$ and leave all other absorbers in their passive state.
	Moreover, for each $e \in\cM$, let $\cP_e'$ be the collection of
	paths of $A_e$ that cover~$e$.  Finally, we obtain a path~$P'$
	from~$P$ by replacing~$\cP_e$ by~$\cP_e'$ for each $e\in\cM$.  It
	follows that~$P'$ is a loose $(u,v)$-path in $G[A\cup W]$ covering
	$A \cup W$.
\end{proof}

\section{Absorbers in the sparse setting}\label{sec:absorber-lemma-sparse-proof}
Here we prove \rf{lem:absorber-sparse}.
The idea of the proof is to find an absorbing structure $A$ in the dense setting and embed it in the random graph via  an embedding lemma for hypergraph regularity.
One issue arising in this approach is that the probability $p$ depends on the `densest spots' of $A$.
So to minimise $p$, we have to find an absorbing structure $A$ whose edges are nowhere too cluttered.
This can be formalised by the following definition, which we
adopt from Ferber and Kwan~\cite{FK22}.

A \emph{Berge cycle} in a (possibly non-uniform) hypergraph is
a sequence of distinct edges $e_1,\dots,e_\ell$ such that
there exist distinct vertices $v_1,\dots,v_\ell$ with $v_i\in
	e_i\cap e_{i+1}$ for all $i$ (where $e_{\ell+1}=e_1$). The
\emph{length} of such a cycle is
its number of edges $\ell$. The
\emph{girth} of a  hypergraph is the length of the shortest
Berge cycle it contains (if the hypergraph  contains no Berge
cycle we say it has infinite girth, or is  \emph{Berge
	acyclic}). We say that a $k$-uniform absorber rooted in a $(k-1)$-set
$X$ is \emph{$K$-sparse} if it has  girth
at least $K$, even after adding the extra edge
$X$.\footnote{Note that adding $X$ as an edge results in an non-uniform hypergraph, but this is in accordance with the definition of the girth.}

The following lemma allows us to find a sparse absorber in the dense setting.
Its proof is deferred to \cref{sec:absorber-lemma-dense-proof}.

\begin{lemma}[Dense Absorber Lemma]\label{lem:absorber-dense}
	Let $1/n \ll\eta\ll1/M \ll \gamma,\, 1/k,\, 1/d,\,1/K$.  Let
	$G$ be an $n$-vertex $k$-graph in which all but $\eta
		n^d$
	$d$-sets have degree at least
	$(\mu_d(k) + \gamma ) \binom{n-d}{k-d}$.  Let $X$ be a
	$(k-1)$-set of vertices each of degree at least
	$(\mu_d(k) + \gamma ) \binom{n-1}{k-1}$.  Then~$G$ contains
	a $K$-sparse absorber~$A$ rooted in $X$ of order at
	most~$M$.
\end{lemma}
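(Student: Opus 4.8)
The plan is to construct the absorber $A$ rooted in $X=\{x_1,\dots,x_{k-1}\}$ by first building a single small $K$-sparse "switch" (a pair of loose paths $(P_1,P_2)$ with the same endpoints such that $V(P_1)=V(P_2)\cup X'$ for a carefully chosen $(k-1)$-set $X'$), and then chaining together enough switches so that the union absorbs exactly the vertices of $X$. More precisely, I would follow the standard absorber construction used for loose cycles: an absorber rooted in $X$ is obtained by concatenating $q$ elementary absorbing gadgets, where each gadget handles one vertex $x_i$ (or a single coordinate of $X$), and the key point is to do this so that the total number of vertices stays below $M$ and no short Berge cycle is ever created (girth $\geq K$, even after adding $X$ as an extra hyperedge).

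First I would describe the elementary gadget. For a single vertex $x$, one wants two short loose paths $P_1^x$ (the active state, which contains $x$) and $P_2^x$ (the passive state, which does not), sharing both endpoints, with $V(P_1^x)=V(P_2^x)\cup\{x\}$. Such a gadget can be built greedily on $O(1)$ new vertices: pick the endpoints, then repeatedly extend both paths one loose edge at a time, at each step choosing the new vertices to lie outside everything used so far \emph{and} outside the common neighbourhoods that would close a short Berge cycle. Because all but $\eta n^d$ of the $d$-sets have degree at least $(\mu_d(k)+\gamma)\binom{n-d}{k-d}$ and each of the $x_i$ has degree at least $(\mu_d(k)+\gamma)\binom{n-1}{k-1}$, at each of the $O(1)$ extension steps there are $\Omega(n^{k-d})$ ways to choose the next edge, while the number of "forbidden" choices (those meeting the $O(1)$ already-used vertices, those through a bad $d$-set, or those creating a Berge cycle of length $<K$ — there are only $O(n^{k-2})$ of the last kind since fixing two vertices on a would-be short cycle leaves $O(n^{k-2})$ edges and $O(1)$ cycle shapes) is $o(n^{k-d})$ provided $\eta$ is small enough and $n$ is large. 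So the greedy choice always succeeds. Then I would iterate over $i=1,\dots,k-1$ (and, if one gadget per vertex does not suffice to root the whole $(k-1)$-set cleanly, a constant number $q$ of gadgets), each time avoiding the constantly-many vertices used by the previous gadgets, and finally link the gadgets (again greedily, with the same Berge-girth avoidance) into the path structure required by \cref{def:absorber}. The total vertex count is a constant depending only on $k$, $d$, $K$, so it is at most $M$ by the hierarchy $1/M\ll\gamma,1/k,1/d,1/K$; and the order of $A$, which counts only $V(\bigcup P_2^j)$, is likewise $\leq M$.

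The verification of $K$-sparseness is where I expect the real bookkeeping. One must check that the final hypergraph $A$ together with the extra edge $X$ has girth at least $K$: every loose path is itself Berge-acyclic, the shared-endpoint structure of a switch creates exactly one "long" Berge cycle whose length we can force to be $\geq K$ by making each path long enough, and the only way a short Berge cycle could appear is through an unlucky coincidence of vertices between two gadgets or through the extra edge $X$ — precisely the events excluded in the greedy step. The main obstacle, then, is not any single estimate but organising the greedy construction so that the Berge-girth constraint and the "avoid $Q$-type" constraint and the bad-$d$-set constraint are all simultaneously respected at every one of the constantly-many steps; once the gadget-by-gadget bookkeeping is set up, each individual step is a routine counting argument of the form "$\Omega(n^{k-d})$ good choices, $o(n^{k-d})$ bad ones." I would also remark that this mirrors the absorber construction of Ferber and Kwan for perfect matchings, the only genuinely new ingredient being that loose \emph{paths} (rather than single edges) must be used as the building blocks so that the gadgets can later be concatenated into the path structure of \cref{def:absorber}, and that the lengths of these paths must be taken $\geq K$ to guarantee the girth bound.
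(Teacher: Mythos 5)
There is a genuine gap at the heart of your construction: the ``elementary gadget'' cannot be produced by the greedy argument you describe. An absorber requires two loose paths $P_1,P_2$ with the \emph{same} endpoints and with $V(P_1)=V(P_2)\cup X$, i.e.\ the passive path must traverse \emph{exactly} the vertices already used by the active path (minus $X$). Greedy path extension, where at each step you pick new vertices ``outside everything used so far'', can never produce such a pair: it only shows that loose paths on \emph{fresh} vertices exist in abundance. To close the switch you would need specific hyperedges inside a particular constant-size vertex set that you have already committed to, and the hypothesis $\delta_d(G)\geq(\mu_d(k)+\gamma)\binom{n-d}{k-d}$ gives no control whatsoever over edges within a fixed $O(1)$-set --- all the more so because $\mu_d(k)$ is unknown and may be small, so no explicit local absorbing configuration can be shown to exist by counting at this density. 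This is precisely the difficulty the paper is organised around: since the threshold is not known, the only usable consequence of the degree condition is the \emph{existence of loose Hamilton cycles} in graphs satisfying it.

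The paper's proof therefore takes a completely different route, none of which appears in your proposal. It first passes to a random constant-size subset containing $X$ (so the degree condition is inherited), then applies the blow-up setup (\cref{lem:blow-up-new}) to obtain a constant-size graph $R\ni X$ with the same degree condition such that $G$ contains a blow-up $R^\ast(2m,X)$ rooted in $X$. Inside $R$ it invokes the definition of $\mu_d(k)$ to get a loose Hamilton cycle $C_2$ of $R-X$, and --- crucially --- the threshold-exploitation lemma (\cref{lem:threshold-exploitation}, $\mu_d^*(k)=\mu_d(k)$) to get a loose Hamilton cycle $C_1$ of $R$ in which $X$ is $K'$-spread and a chosen degree-$2$ vertex of $C_2$ also has degree $2$ in $C_1$. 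Finally, the absorber-allocation lemma (\cref{lem:absorber-allocation}), via high-girth $(m,q)$-double-strips built from large-girth bipartite graphs, converts the pair $(C_1,C_2)$ into a $K$-sparse absorber living inside the blow-up $R^\ast(2m,X)$. Your proposal replaces all of this with a local greedy construction whose basic building block is not known to exist under the stated hypotheses, and it also understates the girth issue: $K$-sparseness in the paper comes from the spreadness of $X$ in $C_1$ together with the explicit girth of the double-strip, not from path lengths alone.
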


The proof of \rf{lem:absorber-sparse} can be found in \cref{sec:absorber-lemma-sparse-proof-appendix}.
In the following, we give an outline of the argument.

\begin{figure}

	\tikzset{every picture/.style={line width=0.75pt}} 

	\tikzset{every picture/.style={line width=0.75pt}} 

	\tikzset{every picture/.style={line width=0.75pt}} 

	\begin{tikzpicture}[x=0.75pt,y=0.75pt,yscale=-1,xscale=1,scale=0.6]

		\draw    (21,159) -- (147.23,27) ;
		\draw    (147.23,27) -- (148,80) ;
		\draw    (148,80) -- (148,148) ;
		\draw    (148,205) -- (148,278) ;
		\draw    (21,159) -- (148,278) ;
		\draw [color=myred  ,draw opacity=1 ][line width=2.25]    (133,98.5) -- (133.95,192.5) ;
		\draw [shift={(134,197.5)}, rotate = 269.42] [fill=myred  ,fill opacity=1 ][line width=0.08]  [draw opacity=0] (14.29,-6.86) -- (0,0) -- (14.29,6.86) -- cycle    ;
		\draw [color=myblue  ,draw opacity=1 ][line width=2.25]    (140.12,222.5) -- (141,260.5) -- (36,158.5) -- (142,45.5) -- (142,72.5) ;
		\draw [shift={(140,217.5)}, rotate = 88.67] [fill=myblue  ,fill opacity=1 ][line width=0.08]  [draw opacity=0] (14.29,-6.86) -- (0,0) -- (14.29,6.86) -- cycle    ;
		\draw  [dash pattern={on 6.75pt off 4.5pt}][line width=2.25]  (367,96.6) .. controls (367,79.7) and (380.7,66) .. (397.6,66) -- (489.4,66) .. controls (506.3,66) and (520,79.7) .. (520,96.6) -- (520,215.9) .. controls (520,232.8) and (506.3,246.5) .. (489.4,246.5) -- (397.6,246.5) .. controls (380.7,246.5) and (367,232.8) .. (367,215.9) -- cycle ;
		\draw [color=myblue  ,draw opacity=1 ][line width=1.5]  [dash pattern={on 5.63pt off 4.5pt}]  (366,131.5) -- (147.23,27.74) -- (366,89.5) ;
		\draw [shift={(250.47,76.71)}, rotate = 25.37] [fill=myblue  ,fill opacity=1 ][line width=0.08]  [draw opacity=0] (11.61,-5.58) -- (0,0) -- (11.61,5.58) -- cycle    ;
		\draw [shift={(263.16,60.47)}, rotate = 195.76] [fill=myblue  ,fill opacity=1 ][line width=0.08]  [draw opacity=0] (11.61,-5.58) -- (0,0) -- (11.61,5.58) -- cycle    ;
		\draw [color=myred  ,draw opacity=1 ][line width=2.25]  [dash pattern={on 6.75pt off 4.5pt}]  (394,83.5) -- (496,130.5) ;
		\draw [color=myred  ,draw opacity=1 ][line width=2.25]  [dash pattern={on 6.75pt off 4.5pt}]  (396,142.5) -- (482,178.5) ;
		\draw [color=myred  ,draw opacity=1 ][line width=2.25]  [dash pattern={on 6.75pt off 4.5pt}]  (400,188.5) -- (476,226.5) ;
		\draw [color=myblue  ,draw opacity=1 ][line width=2.25]  [dash pattern={on 6.75pt off 4.5pt}]  (395,130.5) -- (471,85.5) ;
		\draw [color=myblue  ,draw opacity=1 ][line width=2.25]  [dash pattern={on 6.75pt off 4.5pt}]  (423,175.5) -- (508,116) ;
		\draw [color=myblue  ,draw opacity=1 ][line width=2.25]  [dash pattern={on 6.75pt off 4.5pt}]  (392,228.5) -- (501,193.5) ;

		\draw    (148,148.51) -- (148.97,205) ;
		\draw [color=myred  ,draw opacity=1 ][line width=1.5]  [dash pattern={on 5.63pt off 4.5pt}]  (365,182.5) -- (148,148.51) -- (369,147.5) ;
		\draw [shift={(249.78,164.45)}, rotate = 8.9] [fill=myred  ,fill opacity=1 ][line width=0.08]  [draw opacity=0] (11.61,-5.58) -- (0,0) -- (11.61,5.58) -- cycle    ;
		\draw [shift={(265.3,147.97)}, rotate = 179.74] [fill=myred  ,fill opacity=1 ][line width=0.08]  [draw opacity=0] (11.61,-5.58) -- (0,0) -- (11.61,5.58) -- cycle    ;

		\draw (6,119) node [anchor=north west][inner sep=0.75pt]   [align=left] {$ $};
		\draw (1,116.4) node [anchor=north west][inner sep=0.75pt]  [font=\large]  {$x_{i}$};
		\draw (95,3.9) node [anchor=north west][inner sep=0.75pt]  [font=\large]  {$x_{i}^{j}$};
		\draw (159,64.4) node [anchor=north west][inner sep=0.75pt]  [font=\large]  {$x_{i}^{k-1}$};
		\draw (162,185) node [anchor=north west][inner sep=0.75pt]  [font=\large]  {$x_{i}^{2k-2}$};
		\draw (310,222.4) node [anchor=north west][inner sep=0.75pt]  [font=\Large]  {$A^{j}$};
		\draw (70,130) node [anchor=north
			west][inner sep=0.75pt]
		[font=\Large,color=myred ] {$P_{2}^i$};
		\draw (30,50) node [anchor=north west][inner
			sep=0.75pt] [font=\Large,color=myblue ]
		{$P_{1}^i$};

		\node [draw,fill=black,circle,inner sep=2pt] at (147.23,27) {};
		\node [draw,fill=black,circle,inner sep=2pt] at (148,80) {};
		\node [draw,fill=black,circle,inner sep=2pt] at (21.97,159) {};
		\node [draw,fill=black,circle,inner sep=2pt] at (148,148) {};
		\node [draw,fill=black,circle,inner sep=2pt] at (148,205) {};
		\node [draw,fill=black,circle,inner sep=2pt] at (148,278) {};

	\end{tikzpicture}

	\caption{The absorber appearing in the proof of \cref{lem:absorber-sparse}.
		Note that the path $P_1^i$ and the paths of type $P_1$ of the absorber $A_j$ (dashed) cover all (depicted) vertices, whereas the path $P_2^i$ and the paths of type $P_2$ of $A_j$ (dashed) leaves $x_i$ uncovered.
	}
	\label{fig:complete-absorber}

\end{figure}
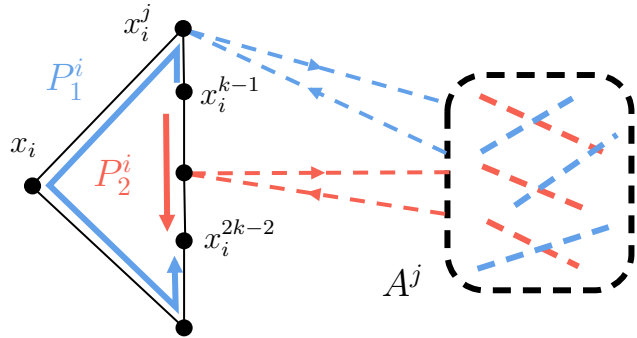

\begin{proof}[Sketch of the proof of \cref{lem:absorber-sparse}]
	Let $X=\{x_1,\dots,x_{k-1}\}$, and suppose that $Q $ is empty for simplicity.
	We begin by finding pairwise vertex-disjoint loose cycles $F_1,\dots,F_{k-1}$ each of order $16(k-1)$ such that $x_i$ is a vertex of degree $2$ in $F_i$.
	This is possible by applying \cref{lem:connection-sparse} twice for each vertex of~$X$.
	For the following argument, it is crucial that each
	of these cycles has the same order.
	We also remark that in the actual proof, we
	choose~$\Omega(n)$ such cycles for each vertex in~$X$,
	as this lets us have more flexibility when targeting certain
	objects with our embeddings.  However, in this sketch, we omit the
	details of this technical argument.

	For $i \in [k-1]$, denote the vertices of $F_i$ by
	$\,v_{i}^1,\dots,v_{i}^\ell,x_i$ along the
	cyclic ordering of~$F_i$, where $\ell = 16(k-1)-1$.
	We define $(k-1)$-sets~$Y_j$ with $j \in [\ell]$ by setting $Y_j=\{v_{1}^j,\dots,v_{k-1}^j\}$.
	Next, we find a set of pairwise disjoint absorbers
	$A_1,\dots,A_\ell$ of constant order where each~$A_j$
	is rooted in~$Y_j$.  This is possible by a
	contraction argument of Ferber and Kwan~\cite{FK22}
	based on weak hypergraph regularity, and the fact that
	we can find~$\Omega(n)$ loose cycles~$F_i$
	containing~$x_i\in X$: intuitively, the fact that we
	have many choices for the~$F_i$ gives us many choices for
	the~$Y_j$, and thus one is able to find
	the~$A_j$ for a suitable choice of the~$F_i$.
	The details of this step can be found in~\cref{sec:absorber-lemma-sparse-proof-appendix}.
	Here, we only note that during this step, we apply \cref{lem:absorber-dense} to find a copy of $A_j$ in a suitable reduced graph, which we then embed into~$G'$ using a sparse embedding lemma.
	Since the absorbers~$A_j$ are suitably sparse, the
	hypothesis $p \geq n^{-(k-1)/2+\gamma}$ is enough to
	let us find the embedding.

	We now describe the absorber rooted in~$X$, which is also illustrated in \cref{fig:complete-absorber}.
	To begin, we define two additional
	loose paths for each $i \in [k-1]$ as follows.  Let
	$P_1^i$ be the loose $(v_{i}^{k-1},v_{i}^{2k-2})$-path
	of order $15(k-1)-1$ with vertex ordering
	\begin{align*}
		v_{i}^{k-1},\, v_{i}^{k-2},\, \dots,\, v_{i}^1,\, x_i,\, v_{i}^\ell,\, v_{i}^{\ell-1},\, \dots, v_{i}^{2k-2}.
	\end{align*}
	Let $P_2^i$ be the loose $(v_{i}^{k-1},v_{i}^{2k-2})$-path consisting of a single edge with vertex ordering
	\begin{align*}
		v_{i}^{k-1},\, v_{i}^{k}, \dots,\,  v_{i}^{2k-2}.
	\end{align*}
	We then set $B_i = \{P_1^i,P_2^i\}$.

	To finish, we claim that the collection
	$A_1,\dots,A_\ell$, $B_1,\dots,B_{k-1}$ is an absorber
	rooted in~$X$.
	Indeed, if $X$ shall be absorbed, we take the paths $P_1^i$ from each of the absorbers $B_i$.
	This covers $X$ and the vertices of each $Y_i$ except for $v_{i}^{k}, \dots, v_{i}^{2k-3}$.
	To cover the latter, we `activate' the absorbers $A_{k}, \dots, A_{2k-3}$.
	All other absorbers~$A_j$ are left in their passive state.

	If, on the other hand, $X$ shall not be absorbed, we take the paths~$P_2^i$ from each of the absorbers~$B_i$.
	This leaves only $X$ and the vertices $v_{i}^{2k-1},\dots,v_{i}^{\ell},v_{i}^{1},\dots,v_{i}^{k-2}$ uncovered.
	To include the latter,  we `activate' the absorbers $A_{2k-1},\dots,A_{\ell},A_{1},\dots, A_{k-2}$.
	All other absorbers~$A_j$ are left in their passive state.
	Since it is clear from the construction that these absorbers consist of paths with shared starting and ending points, this finishes the proof.
\end{proof}

\subsection{Details of the proof}\label{sec:absorber-lemma-sparse-proof-appendix}
Let us now prove~\cref{lem:absorber-sparse} in detail.  Our first
result, Lemma~\ref{lem:sparse-to-mkdensity}, tells us that a
`book'
version of the $K$-sparse absorber of \cref{lem:absorber-dense} has
low~$k$-{density}.  The proof of Lemma~\ref{lem:sparse-to-mkdensity}
is deferred to the end of this section.

\begin{lemma}
	\label{lem:sparse-to-mkdensity}
	Suppose {$1/K \ll 1/T, \,\gamma,\, 1/k$} and let~$H$ be the union of~$T$
	copies of a $K$-sparse $k$-uniform absorber that are all rooted in
	the same $(k-1)$-set but are otherwise disjoint.  Then
	$m_{k}(H) \leq 2/(k-1)+\gamma$.
\end{lemma}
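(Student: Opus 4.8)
The plan is to estimate the ratio $\bigl(e(H')-1\bigr)/\bigl(v(H')-k\bigr)$ uniformly over all subhypergraphs $H'\subseteq H$ with $v(H')>k$. If $e(H')\le1$ this ratio is at most $0$, so from now on I assume $e(H')\ge2$. The starting point is that $H$ is \emph{linear}: since each of the $T$ copies is $K$-sparse, its edge set together with the extra edge $X$ has Berge-girth at least $K\ge3$, so no two of those edges share two or more vertices; in particular no edge of any copy meets $X$ in two or more vertices. As distinct copies meet only inside $X$, any two edges of $H$ meet in at most one vertex. Hence, choosing two distinct edges $f_1,f_2$ of $H'$, we get $v(H')\ge|f_1\cup f_2|=2k-|f_1\cap f_2|\ge2k-1$, i.e.\ $v(H')-k\ge k-1$; this is exactly what rules out trouble from small subgraphs.

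Next I would split $E(H')$ according to which copy each edge belongs to. For a copy $A_t$ contributing $e_t\ge1$ edges of $H'$, let $\widehat H_t$ be the hypergraph whose edge set consists of these $e_t$ edges (each of size $k$) together with the extra edge $X$ (of size $k-1$), on the vertex set $(V(H')\cap V(A_t))\cup X$. Then $\widehat H_t$ is a subhypergraph of $A_t\cup\{X\}$, hence has Berge-girth at least $K$. The key quantitative input is the standard fact that a hypergraph of large Berge-girth is nearly Berge-acyclic: writing the \emph{excess} of a hypergraph $F$ as $\rho(F):=\sum_{e\in F}\bigl(|e|-1\bigr)+c(F)-v(F)$, where $c(F)$ counts connected components, one has $0\le\rho(F)\le c_k\,e(F)/K$ for some constant $c_k$ depending only on $k$. (This is seen by passing to the bipartite incidence graph of $F$, whose cyclomatic number equals $\rho(F)$ and whose girth is at least $2K$, and then using that a graph of girth at least $g$ has cyclomatic number $O(m/g)$, which follows from the usual reduction to the $2$-core and suppression of degree-$2$ vertices.) Applying this to $\widehat H_t$ and using $c(\widehat H_t)\ge1$ gives $v(\widehat H_t)=\sum_e(|e|-1)+c(\widehat H_t)-\rho(\widehat H_t)\ge e_t(k-1)+(k-1)-c_k(e_t+1)/K$. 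On the other hand $X\subseteq V(\widehat H_t)$, so $v(\widehat H_t)=(k-1)+v_t^\circ$ with $v_t^\circ:=|V(H')\cap(V(A_t)\setminus X)|$, and since the sets $V(A_t)\setminus X$ are pairwise disjoint, $\sum_t v_t^\circ\le v(H')$.

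Summing the resulting inequalities $v_t^\circ\ge e_t(k-1)-c_k(e_t+1)/K$ over the at most $T$ contributing copies yields $v(H')\ge(k-1)\,e(H')-c_k\bigl(e(H')+T\bigr)/K$, whence $e(H')\le\frac{v(H')}{k-1}(1+\varepsilon)+\eta'$ for $K$ large, where $\varepsilon,\eta'$ tend to $0$ as $K\to\infty$ with $k,T$ fixed. Plugging this into the ratio and using $v(H')-k\ge k-1$, a short computation gives $\frac{e(H')-1}{v(H')-k}\le\frac{1}{k-1}+\frac{1}{(k-1)^2}+o_K(1)=\frac{k}{(k-1)^2}+o_K(1)$, where $o_K(1)\to0$ as $K\to\infty$. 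Since $\frac{k}{(k-1)^2}\le\frac{2}{k-1}$ for all $k\ge2$, and since the hypothesis $1/K\ll1/T,\gamma,1/k$ lets me choose $K$ large in terms of $k,T,\gamma$, the error term is at most $\gamma$; thus $\frac{e(H')-1}{v(H')-k}\le\frac{2}{k-1}+\gamma$ for every admissible $H'$, and taking the maximum gives $m_k(H)\le\frac2{k-1}+\gamma$.

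The main obstacle is the excess estimate for hypergraphs of large Berge-girth; it is standard and is used in exactly this spirit by Ferber and Kwan~\cite{FK22}, so the remaining work is bookkeeping. The only two points requiring a little care are the introduction of the virtual edge $X$ into each $\widehat H_t$---without it the vertices of $X$ would be overcounted once per copy and the summation would break---and the use of linearity to guarantee $v(H')-k\ge k-1$ whenever $e(H')\ge2$, which is what prevents the additive $-1$ in the numerator from inflating the ratio on small subgraphs.
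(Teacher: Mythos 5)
There is a genuine gap, and it sits at the heart of your argument: the ``standard fact'' that a hypergraph of Berge girth at least $K$ has excess $\rho(F)\le c_k\,e(F)/K$ (equivalently, that a graph of girth $g$ has cyclomatic number $O(m/g)$) is false. Large girth does not force near-acyclicity: a $3$-regular graph on $n$ vertices with girth $\Theta(\log n)$ has cyclomatic number $m-n+1\approx m/3$, which is not $O(m/g)$; the reduction to the $2$-core and suppression of degree-$2$ vertices only shows that the suppressed multigraph has $O(\rho)$ edges, not that the original graph has $\Omega(\rho g)$ edges. The failure is visible directly in the setting of the lemma: take $H'$ to be a single $K$-sparse absorber $A_1$. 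By definition $A_1$ is the union of two systems of loose paths on essentially the same vertex set, so $e(H')\approx 2v(H')/(k-1)$ and hence $\rho(H')=\sum_e(k-1)+c-v=\Theta(v(H'))=\Theta\bigl((k-1)e(H')\bigr)$, even though its Berge girth (with $X$ added) is at least $K$. Consequently your summed inequality $v(H')\ge(k-1)e(H')-c_k(e(H')+T)/K$ is false, and the bound you derive, $m_k(H)\le k/(k-1)^2+o_K(1)$, is strictly smaller than the $k$-density $\approx 2/(k-1)$ actually attained by one absorber when $k\ge3$ --- so the conclusion of your computation contradicts the object you are analysing. The linearity observation and the care with the virtual edge $X$ are fine, but they cannot rescue an approach that tries to extract the factor $2/(k-1)$ from girth alone.

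What the paper does instead is to get the $2$ from structure, not from girth: each absorber is the union of two Berge-acyclic hypergraphs $F_1,F_2$ (the active and passive path systems), and for acyclic $F$ one has $v(F)\ge(k-1)e(F)+1$, whence $d_k(H')\le m_k(F_1)+m_k(F_2)+\tfrac1{v(H')-k}\le\tfrac2{k-1}+\tfrac1{v(H')-k}$ for any cyclic subgraph of one absorber. The girth hypothesis is used only to make the additive corrections negligible: a shortest Berge cycle in $H'$ must put at least $K-1$ edges inside some copy, a minimal Berge cycle of length at least $3$ is loose, so $v(H')\ge(K-1)(k-1)$, and then the ``$-1$'' terms and the $O(T)$ error from amalgamating the copies along $X$ contribute at most $O\bigl(T/((K-2)(k-1))\bigr)\le\gamma$. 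If you want to repair your write-up, replace the false excess bound by this two-acyclic-systems decomposition of each $\widehat H_t$ and keep your bookkeeping over the copies; the rest of your skeleton (splitting $E(H')$ by copy and controlling the gluing at $X$) then matches the paper's proof.
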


Note that, in Lemma~\ref{lem:sparse-to-mkdensity}, we consider~$H$ to
be the $k$-graph with vertex set $X\cup\bigcup_{1\leq j\leq T}V(A_j)$,
where~$X$ is the $(k-1)$-set that is the common root of the otherwise
disjoint $K$-sparse absorbers~$A_j$ ($1\leq j\leq T$).  Furthermore,
the edges of~$H$ are the edges that occur in the paths that define
the absorbers~$A_j$.

One drawback of \rf{lem:embedding} is that we cannot force the
embedded copy of the desired graph to contain a specified
collection of vertices (in particular, we cannot force the
embedding of the absorber of Lemma~\ref{lem:absorber-dense} to
have a specified root set~$X$ in the embedding).  However,
because we are guaranteed to find canonical copies, we can
force specified vertices to belong to specified sets of
vertices.  This fact can be exploited to find an absorber with
a specified root~$X$ in the sparse setting using
Lemma~\ref{lem:embedding}.  We employ an idea introduced by
Ferber and Kwan~\cite{FK22}: we apply \cref{lem:embedding} to
embed a book version of the absorber of
Lemma~\ref{lem:absorber-dense} (as in
Lemma~\ref{lem:sparse-to-mkdensity}).  The embedding is not
quite realized in the original sparse hypergraph, but in
the hypergraph after contracting certain sets of vertices.
We then show that, by
expanding these contracted vertices, we have an absorber with
the desired root~$X$.

\begin{definition}[Contracted graph]\label{def:contracted-graph}
	Let $G$ be a $k$-graph, let $\cF \subset V(G)^\ell$ be a
	family of disjoint $\ell$-tuples, and let $\cP$ be a family
	of disjoint sets $U_1,\dots,U_\ell \subset V(G)$ such that
	the tuples of $\cF$ and the sets in $\cP$ do not share
	vertices.

	Let $G(\cF,\cP)$ be the $k$-graph obtained from
	$G[U_1]\cup\dots\cup G[U_\ell]$ by adding for each $\ell$-tuple
	$\vecb v \in \cF$ a new vertex $w_{\vecb v}$.  Moreover, for
	each $\vecb v = (v_1,\dots,v_\ell) \in \cF$ and
	each $1\leq i\leq\ell$, for each $(k-1)$-set $f \subset U_i$
	with $ f \cup \{v_i\} \in E(G)$, add the edge
	$f \cup \{w_{\vecb v}\}$ to $G(\cF,\cP)$.
\end{definition}

We remark that~$G(\cF,\cP)$ can be obtained from a \textit{certain
	subgraph of}~$G\big[\bigcup\cF\cup\bigcup_{1\leq
			i\leq\ell}U_i\big]$ by collapsing each member~$\vecb v$
of~$\cF$ to a vertex~$w_{\vecb v}$.

\begin{proof}[Proof of \cref{lem:absorber-sparse}]
	We introduce the constants required in the proof in steps.  Let
	$T=\ell=16(k-1)-1$ and $\beta = 1/(3\ell)$.  Let
	$\alpha\ll  1/k,\,\gamma$ following
	Lemmas~\ref{lem:degree-inheritance_induced-subgraphs}
	and~\ref{lem:connection-sparse}.
	We now suppose
	$\nu\ll\alpha,\,\beta,\,1/k$.

	Suppose $1/K\ll\gamma,\,1/T$ as in
	Lemma~\ref{lem:sparse-to-mkdensity} and
	$\eta\ll1/M\ll\gamma,\,1/k,\,1/d,\,1/K$ as in
	Lemma~\ref{lem:absorber-dense}.  We now suppose
	$1/r_0,\,\tau\ll1/k,\,\eta,\,\beta$ such that in particular $r_0/(2\ell)$ can play the role of $n$ when applying Lemma~\ref{lem:absorber-dense} with the above constants.  With~$\tau$ at hand, we apply
	Lemma~\ref{lem:embedding} to obtain~$\epsilon$ and~$\zeta$ that
	allow us to embed any linear hypergraph with at most $M/2+k-1$
	vertices in any suitable `$\epsilon$-regular system'.  Finally, we
	apply Lemma~\ref{lem:transfer-cluster-graph-partition} to obtain
	\begin{equation*}
		\frac{1}{n}\ll\lambda \ll \frac1{r_1}
		\ll\frac1{r_0},\,\epsilon,\,\frac1k,\,\frac1d,\,\delta=\mu_d(k)
	\end{equation*}
	and let
	\begin{equation*}
		\kappa=\frac1{r_1}\big((k-1)\nu+\ell\beta\big)
		=\frac1{r_1}\left((k-1)\nu+\frac13\right).
	\end{equation*}
	We shall apply Lemma~\ref{lem:embedding} with this value
	of~$\kappa$.
	Finally, let $1/D \ll 1/k,\gamma$  and $p \geq\max\{n^{-(k-1)/2+\gamma},Dn^{-(k-d)}\log n\}$ so that we can apply \cref{lem:random-upper-uniform,lem:embedding,lem:connection-sparse,lem:degree-inheritance_induced-subgraphs} with with $k$-density $2/(k-1)+\gamma/2$.
	In the following, we also assume that $n$ is large enough to satisfy \cref{lem:transfer-cluster-graph-partition} with the above constants.

	The next claim summarises the properties that we require from the random graph.

	\begin{claim} \label{cla:absorber-sparse-properties-random-graph}
		W.h.p.\ $G \sim \oH_k(n,p)$ satisfies each of the following
		properties.
		\begin{enumerate}[\upshape(1)]
			\item \label{itm:absorber-upper-uniform} For each $\cF$ and~$\cP$ as
			      in \cref{def:contracted-graph} with $|\cF|=(k-1)\nu n$ and
			      $|U_j|=\beta n$ for every~$1\leq j\leq\ell$, the $k$-graph
			      $G(\cF,\cP)$ is $(\lambda,p,1+\lambda)$-upper-uniform.
			\item \label{itm:absorber-embedding-lemma} Furthermore, $G(\cF,\cP)$
			      satisfies the conclusion of \rf{lem:embedding} for embedding any
			      linear $k$-graph~$H$ with $m_k(H) \leq 2/(k-1) + {\gamma/2}$ on at
			      most~$M/2+k-1$ vertices with parameters~$\epsilon$, $\zeta$, $\tau$
			      and~$\kappa$.
			\item \label{itm:absorber-connection} Let $G'$ be a spanning
			      subgraph of~$G$ with
			      $\delta_d(G') \geq (\mu_d(k) + \gamma ) p \binom{n-d}{k-d}$ and
			      fix $Q \subset V(G')$ with $|Q| \leq 2\alpha n$.  For any distinct
			      $u,v \in V(G')\setminus Q$, there is a loose $(u,v)$-path~$P$ of
			      order $8(k-1)+1$ in $G'-Q$.
			\item \label{itm:absorber-degree-subgraph} Let~$G'$ be a spanning
			      subgraph of~$G$ with
			      $\delta_d(G') \geq (\mu_d(k) + \gamma ) p \binom{n-d}{k-d}$ and
			      fix~$Q\subset V(G')$ with $|Q|\leq2\alpha n$.
			      Let~$U\subseteq V(G')\sm Q$ with $|U|=\beta n$ be chosen uniformly
			      at random.  Then w.h.p.\ all but~$o(n^d)$ of the $d$-sets of
			      vertices in~$V(G') \sm Q$ have $d$-degree at least
			      $(\mu_d(k) +\gamma/2) p \binom{\beta n -d}{ k-d}$
			      into~$U$.
		\end{enumerate}
	\end{claim}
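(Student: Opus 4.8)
The claim packages four high-probability properties, and the plan is to establish each separately and then intersect the four events; the two that require work are \ref{itm:absorber-upper-uniform} and \ref{itm:absorber-embedding-lemma}, which both concern the contracted graph of \cref{def:contracted-graph}.

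Properties \ref{itm:absorber-connection} and \ref{itm:absorber-degree-subgraph} are essentially direct citations. For \ref{itm:absorber-connection} I will apply \cref{lem:connection-sparse} with $2\alpha$ in place of its $\alpha$ (legitimate once our $\alpha$ is taken small enough, and $p\ge n^{-(k-1)/2+\gamma}$ exceeds $\max\{Kn^{-(k-d)}\log n,\,\omega(n^{-(k-2)})\}$ for $k\ge 3$): for each admissible $G'$ and each $Q$ with $|Q|\le 2\alpha n$, a uniformly random $\nu n$-subset $C$ of $V(G')\setminus Q$ enjoys the connectibility property of \cref{lem:connection-sparse} with probability at least $2/3$, so some such $C$ exists; fixing it and taking the excluded set $R$ empty, any two distinct $u,v\in V(G')\setminus Q$ are joined in $G'$ by a loose $(u,v)$-path of order $8(k-1)+1$ with interior in $C\subseteq V(G')\setminus Q$, hence the path lies in $G'-Q$. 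Property \ref{itm:absorber-degree-subgraph} is \cref{lem:degree-inheritance_induced-subgraphs} with $\sigma=\beta$, with $2\alpha$ in place of its $\alpha$, and with $U,\mu_d(k)$ in the roles of $Y,\mu$; the hypothesis $p=\omega(n^{d-k})$ holds since $p\ge Dn^{-(k-d)}\log n$.

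For \ref{itm:absorber-upper-uniform} and \ref{itm:absorber-embedding-lemma} the plan is to transfer \cref{lem:random-upper-uniform} and \cref{lem:embedding} to $G(\cF,\cP)$ by a coupling. Fix $\cF,\cP$ as in \cref{def:contracted-graph} with $|\cF|=(k-1)\nu n$ and $|U_j|=\beta n$, and set $N_0:=|V(G(\cF,\cP))|=(k-1)\nu n+\ell\beta n=\Theta(n)$. The key point is that the potential edges of $G(\cF,\cP)$ — the $k$-subsets of a single $U_i$ and the sets $f\cup\{w_{\vecb v}\}$ with $f$ a $(k-1)$-subset of a single $U_i$ — correspond to pairwise distinct $k$-subsets of $V(G)$ (because the tuples in $\cF$ are pairwise disjoint with distinct entries and the $U_i$ are pairwise disjoint), and each is present in $G(\cF,\cP)$ exactly when the corresponding $k$-set is an edge of $G$. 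Hence the edges of $G(\cF,\cP)$ are independent, each present with probability $p$, so $G(\cF,\cP)$ can be coupled to be a spanning subgraph of some $\widetilde G\sim\oH_k(N_0,p)$ (toss a fresh independent $p$-coin for every remaining $k$-subset). Both upper-uniformity and the conclusion of \cref{lem:embedding} are inherited by spanning subgraphs, so it suffices to control $\widetilde G$, and the resulting event for $G(\cF,\cP)$ is a function of $G$ alone. By \cref{lem:random-upper-uniform}, $\widetilde G$ is $(o(1),p,1+o(1))$-upper-uniform — in particular $(\lambda,p,1+\lambda)$-upper-uniform for large $n$ — with probability $1-e^{-\omega(n\log n)}$, which gives \ref{itm:absorber-upper-uniform} for this $\cF,\cP$. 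For \ref{itm:absorber-embedding-lemma}, \cref{lem:sparse-to-mkdensity} shows that every linear $H$ relevant here (on at most $M/2+k-1$ vertices) has $m_k(H)\le 2/(k-1)+\gamma/2$; there are only finitely many such $H$ up to isomorphism, and since $N_0=\Theta(n)$ the bound $p\ge n^{-(k-1)/2+\gamma}$ gives $p\ge C N_0^{-1/m_k(H)}$ for large $n$, so \cref{lem:embedding} applies to $\widetilde G$ for each such $H$ (with the worst choice of $\epsilon,\zeta,C$) and fails with probability at most $e^{-\Omega(N_0^{k}p)}=e^{-\Omega(n^{(k+1)/2+\gamma})}=e^{-\omega(n\log n)}$ as $k\ge 3$. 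Since there are at most $n^{O(n)}=e^{O(n\log n)}$ choices of $(\cF,\cP)$, a union bound over all $(\cF,\cP)$ — and over the finitely many $H$ for \ref{itm:absorber-embedding-lemma} — preserves both conclusions with probability $1-o(1)$.

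The main obstacle is exactly this transfer: one has to check that $G(\cF,\cP)$ genuinely is a random hypergraph coupled as a spanning subgraph of $\oH_k(N_0,p)$ — which hinges on the distinctness of the $k$-subsets of $V(G)$ underlying its potential edges, so that they are present independently — and then that the per-instance failure probabilities, $e^{-\omega(n\log n)}$ from \cref{lem:random-upper-uniform} and $e^{-\Omega(N_0^{k}p)}$ from \cref{lem:embedding}, still dominate the $e^{O(n\log n)}$-sized union bound over admissible $(\cF,\cP)$. The only genuinely numerical input is that $m_k(H)\le 2/(k-1)+\gamma/2$ together with $N_0=\Theta(n)$ keeps $p\ge n^{-(k-1)/2+\gamma}$ above the embedding threshold $C N_0^{-1/m_k(H)}$ of \cref{lem:embedding}; the hierarchy choices that let \cref{lem:connection-sparse} and \cref{lem:degree-inheritance_induced-subgraphs} absorb $2\alpha$ and $\sigma=\beta$ are routine.
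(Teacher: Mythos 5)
Your proposal matches the paper's proof essentially step for step: parts \ref{itm:absorber-connection} and \ref{itm:absorber-degree-subgraph} are obtained, as in the paper, by citing \cref{lem:connection-sparse} and \cref{lem:degree-inheritance_induced-subgraphs} with $2\alpha$ (and $\sigma=\beta$) in the appropriate roles, while parts \ref{itm:absorber-upper-uniform} and \ref{itm:absorber-embedding-lemma} use the same coupling of $G(\cF,\cP)$ to a binomial random $k$-graph (each potential edge being governed by a distinct $k$-set of $V(G)$) followed by a union bound over the $e^{O(n\log n)}$ choices of $(\cF,\cP)$ against the $e^{-\omega(n\log n)}$ and $e^{-\Omega(N^k p)}$ failure probabilities of \cref{lem:random-upper-uniform} and \cref{lem:embedding}. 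The only difference is presentational: you couple to a fresh $\oH_k(N_0,p)$ on the contracted vertex set and make the inheritance and probability accounting explicit, whereas the paper couples directly to the ambient $G$ and leaves these routine checks implicit.
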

	\begin{proof}
		To begin, we remark that $G (\cF, \cP)$ can be coupled to a subgraph
		of the binomial random $k$-graph $G \sim \oH_k(n,p)$ from which it
		is obtained as a contracted graph.  This is because for each
		possible edge~$e$ of~$G (\cF, \cP)$, there is exactly one $k$-set
		$e'$ whose presence as an edge in~$G$ determines whether or not~$e$
		is in~$G(\cF,\cP)$.

		Furthermore, there are $\exp(O(n\log n))$ ways to choose~$\cF$
		and~$\cP$.  By the choice of~$p$, we can apply
		Lemma~\ref{lem:random-upper-uniform} and take the union
		bound over all possibilities for~$\cF$ and~$\cP$ to
		deduce~\ref{itm:absorber-upper-uniform}.

		To prove~\ref{itm:absorber-embedding-lemma}, it suffices to combine
		the argument above with
		\cref{lem:embedding}.
		Assertion~\ref{itm:absorber-connection} follows from
		Lemma~\ref{lem:connection-sparse}.
		For~\ref{itm:absorber-degree-subgraph}, we
		apply~\cref{lem:degree-inheritance_induced-subgraphs} with $2\alpha,\beta$ playing the role of $\alpha,\sigma$.
	\end{proof}

	Consider a graph~$G$ that satisfies the properties in
	Claim~\ref{cla:absorber-sparse-properties-random-graph}.  We
	regard~$G$ as deterministic graph from now on.  Let~$G'$ be a spanning
	subgraph of~$G$ with
	$\delta_d(G') \geq (\mu_d(k) + \gamma ) p \binom{n-d}{k-d}$.
	Let~$Q\subset V(G')$ be a set of at most~$\alpha n$ vertices, and let
	$X = \{x_1,\dots,x_{k-1}\} \subset V(G) \sm Q$.  We have to show that
	there is an absorber~$A$ rooted in~$X$ that avoids~$Q$ and has order
	at most~$M$.

	\begin{claim}\label{cla:cycle-cover-X}
		For every vertex $x\in X$ we can fix a family~$\cC_x$ of~$\nu n$
		loose cycles in~$G'-Q$ of order $\ell+1=16(k-1)$ each containing~$x$
		as a vertex of degree~$2$ but disjoint otherwise.  Furthermore, we
		may require every cycle in~$\cC_x$ and every cycle in~$\cC_{x'}$ to
		be disjoint for all distinct~$x$ and~$x'$ in~$X$.
	\end{claim}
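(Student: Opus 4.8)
The plan is to build all the families $\cC_x$ by a single greedy process that uses nothing beyond the connectivity already recorded in \cref{cla:absorber-sparse-properties-random-graph}\ref{itm:absorber-connection}: for any $Q'\subset V(G')$ with $|Q'|\le 2\alpha n$ and any two distinct vertices $a,b\in V(G')\setminus Q'$ there is a loose $(a,b)$-path of order $8(k-1)+1$ in $G'-Q'$. The basic gadget I would use for producing one loose cycle of order $16(k-1)$ through a prescribed vertex $x$ is: pick a fresh vertex $y$, apply \cref{cla:absorber-sparse-properties-random-graph}\ref{itm:absorber-connection} with endpoints $x,y$ to get a loose $(x,y)$-path $P$ of order $8(k-1)+1$, and then apply \ref{itm:absorber-connection} again with the same endpoints, this time additionally forbidding the interior vertices $V(P)\setminus\{x,y\}$, to obtain a second loose $(x,y)$-path $P'$ of the same order meeting $P$ only in $\{x,y\}$. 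Laying $P$ and $P'$ end to end produces a loose cycle $C$: since $P$ and $P'$ are loose paths sharing only their endpoints, no two non-consecutive edges of $C$ meet. Its order is $2\bigl(8(k-1)+1\bigr)-2=16(k-1)=\ell+1$, and $x$ lies in exactly two edges of $C$ (the first edge of $P$ and the first edge of $P'$) and in no other, so $x$ has degree $2$ in $C$, exactly as required; moreover $C\subset G'-Q$ whenever $Q$ was among the forbidden vertices.

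Next I would iterate this gadget with careful bookkeeping of forbidden vertices. Keep a global forbidden set $A$, initialised to $Q$, and process the vertices $x\in X$ one at a time. While processing $x$, keep a local forbidden set $U$, initially empty, and run the gadget $\nu n$ times: at step $i$ choose $y_i\in V(G')\setminus\bigl(A\cup U\cup\{x\}\bigr)$, run the two applications of \ref{itm:absorber-connection} with forbidden set $A\cup U$ (and then $A\cup U\cup(V(P_i)\setminus\{x,y_i\})$), set $C_i:=P_i\cup P_i'$, and add $V(C_i)\setminus\{x\}$ to $U$. After the $\nu n$-th cycle, let $\cC_x:=\{C_1,\dots,C_{\nu n}\}$, add $U\cup\{x\}$ to $A$, and move on. The deliberate choice never to put $x$ into a forbidden set while $x$ is being processed makes the cycles of $\cC_x$ meet pairwise only in $x$ (every interior vertex of $C_i$ sits in $U$ when $C_j$, $j>i$, is built, and $x$ occurs in each $C_i$ only as an endpoint of $P_i$ and $P_i'$); dumping $\bigcup_{C\in\cC_x}V(C)$ together with $x$ into $A$ afterwards makes cycles belonging to different vertices of $X$ entirely disjoint.

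The one routine thing to check is that all forbidden sets stay within the budget $2\alpha n$ demanded by \ref{itm:absorber-connection}. Each cycle has order $\ell+1=16(k-1)$, so at all times $|U|\le(\ell+1)\nu n$ and $|A|\le|Q|+(k-1)(\ell+1)\nu n\le\alpha n+16k^2\nu n$; hence every forbidden set passed to \ref{itm:absorber-connection} has size at most $\alpha n+16k^2\nu n+(\ell+1)\le 2\alpha n$ for $n$ large, since $\nu\ll\alpha,\,1/k$. The same bound shows the set $A\cup U\cup\{x\}$ has fewer than $n$ vertices, so the fresh endpoint $y_i$ always exists, and that the $\nu n$ loose cycles all lie in $G'-Q$.

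I do not expect a genuine obstacle in this argument: the only content is getting the accounting right so that both disjointness requirements (within each $\cC_x$ and across distinct $x\in X$) fall out of which vertices are declared forbidden at which stage, and confirming that the glued object is a bona fide loose cycle of the exact order $16(k-1)$ with $x$ of degree $2$. If anything is delicate it is this last point, and it is handled once and for all by the first paragraph's description of the gadget.
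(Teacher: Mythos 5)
Your proposal is correct and is essentially the paper's own argument: the paper likewise builds each cycle by two applications of \cref{cla:absorber-sparse-properties-random-graph}\ref{itm:absorber-connection} (two loose $(x,y)$-paths of order $8(k-1)+1$ glued at their endpoints give a loose cycle of order $16(k-1)$ with $x$ of degree $2$), and then repeats $\nu n$ times per vertex of $X$, enforcing disjointness by adding all previously used vertices to the (extended) forbidden set $Q$, whose size stays below $2\alpha n$ because $\nu\ll\alpha,1/k$. You merely spell out the bookkeeping in more detail than the paper does.
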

	\begin{proof}
		For any~$x\in X$, we can apply
		\cref{cla:absorber-sparse-properties-random-graph}\ref{itm:absorber-connection}
		twice to obtain a cycle of order $\ell+1$ containing~$x$ as a vertex
		of degree~$2$.  We can repeat this $\nu n$ times for each~$x$,
		avoiding all previously selected cycles using the (extended) set~$Q$.
	\end{proof}

	Given families~$\cC_x$ as in \cref{cla:cycle-cover-X}, let us define
	$\cF_1,\dots,\cF_{k-1}\subset V(G')^\ell$ with~$|\cF_i|=\nu n$ for
	every~$i\in[k-1]$ and such that, for every $i \in[k-1]$,
	$\vecb v \in \cF_i$ and
	$\vecb w \in \cF := \bigcup_{i \in [k-1]} \cF_i$ with
	$\vecb w \neq \vecb v$, we have that
	\begin{itemize}
		\item $\vecb v$ and $\vecb w$ are vertex-disjoint;
		\item $\vecb v$ together with $x_i$ appended forms a loose cycle $C_{\vecb v}
			      \subset G'$ along the ordering of $\vecb v$;
		\item the $(k-1)$st vertex and $(2k-2)$nd vertex of $\vecb v$ both
		      have degree~$2$ in~$C_{\vecb v}$.
	\end{itemize}
	We denote by $V(\cF)$ the set of vertices in the tuples in~$\cF$.

	\begin{claim}
		\label{claim:Us}
		There are disjoint $\beta n$-sets
		$U_1,\dots,U_\ell \subset V(G) \sm (Q \cup V(\cF)\cup X)$ such that
		all but $o(n^d)$ $d$-sets of vertices in $G'$ have degree at least
		$(\mu_d(k) + \gamma/2) \binom{\beta n-d}{k-d}$ into each $U_j$.
	\end{claim}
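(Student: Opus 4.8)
The plan is to produce all $\ell$ sets at once, by a single random selection inside $V(G)\sm(Q\cup V(\cF)\cup X)$, and then to verify the $d$-degree condition for each $U_j$ via \cref{cla:absorber-sparse-properties-random-graph}\ref{itm:absorber-degree-subgraph}. First I would bound the set to be avoided. Writing $S=Q\cup V(\cF)\cup X$, we have $|Q|\le\alpha n$, and since $\cF=\bigcup_{i\in[k-1]}\cF_i$ with $|\cF_i|=\nu n$ and every tuple of $\cF$ has $\ell$ coordinates, $|V(\cF)|\le(k-1)\ell\nu n$; as $\ell=16(k-1)-1$ is a constant and $\nu\ll\alpha,\,1/k$, this gives $|V(\cF)\cup X|\le\alpha n$ for $n$ large, and hence $|S|\le2\alpha n$. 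Moreover $\ell\beta n=n/3\le(1-2\alpha)n$, because $\alpha\ll1/k$ and $k\ge3$, so $V(G)\sm S$ is large enough to contain $\ell$ pairwise disjoint $\beta n$-sets (I suppress the routine rounding of $\beta n$).

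Next I would choose $U_1,\dots,U_\ell$ so that $(U_1,\dots,U_\ell)$ is uniformly distributed among all tuples of pairwise disjoint $\beta n$-subsets of $V(G)\sm S$; equivalently, pick a uniformly random $(\ell\beta n)$-subset of $V(G)\sm S$ and partition it uniformly at random into $\ell$ blocks of size $\beta n$. By symmetry, each individual $U_j$ is then a uniformly random $\beta n$-subset of $V(G)\sm S$ (the $U_j$ are not independent, but that will not matter). Since $|S|\le2\alpha n$, I may apply \cref{cla:absorber-sparse-properties-random-graph}\ref{itm:absorber-degree-subgraph} with $S$ in the role of $Q$: for each fixed $j\in[\ell]$, with probability $1-o(1)$ all but $o(n^d)$ of the $d$-sets of vertices of $V(G')\sm S$ have $d$-degree at least $(\mu_d(k)+\gamma/2)p\binom{\beta n-d}{k-d}$ into $U_j$. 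A union bound over the $\ell=O(1)$ values of $j$ shows that with positive probability this holds for every $j$ at once; fixing such an outcome, the $d$-sets failing for some $j$ number at most $\ell\cdot o(n^d)=o(n^d)$, which proves the claim.

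The only delicate point is the choice of the $U_j$. A greedy selection that picks $U_j$ after deleting $U_1,\dots,U_{j-1}$ would enlarge the avoidance set to size $|S|+(j-1)\beta n$, which can exceed the threshold $2\alpha n$ allowed in \cref{cla:absorber-sparse-properties-random-graph}\ref{itm:absorber-degree-subgraph} (recall that $\alpha$ may be far smaller than $\beta=1/(3\ell)$). Selecting all $\ell$ sets simultaneously from $V(G)\sm S$ and relying only on the marginal uniformity of each $U_j$ circumvents this; independence is unnecessary, as the union bound ranges over only finitely many $j$.
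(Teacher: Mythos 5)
Your proposal is correct and is essentially the paper's own argument: choose a uniformly random $(\ell\beta n)$-set inside $V(G)\sm(Q\cup V(\cF)\cup X)$, split it randomly into $\ell$ blocks so that each $U_j$ is marginally a uniform $\beta n$-set, invoke \cref{cla:absorber-sparse-properties-random-graph}\ref{itm:absorber-degree-subgraph} for each $j$, and finish with a union bound over the constantly many $j$. The only difference is that you spell out the size bookkeeping (that $|Q\cup V(\cF)\cup X|\le 2\alpha n$, so the avoidance set fits the hypothesis of that property, and why a greedy one-at-a-time choice would not) which the paper leaves implicit.
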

	\begin{proof}
		Choose a set $U\subset V(G)\sm(Q\cup V(\cF)\cup X)$ of
		cardinality~$\ell\beta n$ uniformly at random and partition~$U$ randomly
		into~$\ell$ parts of cardinality~$\beta n$ each to obtain the~$U_j$
		($1\leq j\leq\ell$).  We claim that this procedure will succeed in
		producing the required~$U_j$ with high probability.  Fix
		$1\leq j\leq\ell$.  Clearly, $U_j$~is a subset
		of~$V(G)\sm(Q\cup V(\cF)\cup X)$ of cardinality~$\beta n$ chosen
		uniformly at random.  By
		\cref{cla:absorber-sparse-properties-random-graph}\ref{itm:absorber-degree-subgraph},
		we know that~$U_j$ satisfies the required degree property with high
		probability.  Since~$\ell$ is a constant, our claim follows from the
		union bound.
	\end{proof}

	Let~$\cP$ be the family of the sets~$U_j$ ($1\leq j\leq\ell$) given by
	Claim~\ref{claim:Us}.  We now consider $G'(\cF,\cP)$ and note that
	$G'(\cF,\cP)$ is a subgraph of~$G(\cF,\cP)$.  For later reference, let
	us call~$W_i$ the set of vertices of~$G'(\cF,\cP)$ obtained by
	contracting the members of~$\cF_i$.  Note that $|W_i|=\nu n$ for every
	$1\leq i\leq\ell$.

	\begin{claim}
		\label{claim:7.7}
		We can pick an $\ell$-tuple
		$\vecb v_i = ( v_{i}^1,\dots, v_{i}^\ell)$ in~$\cF_i$ simultaneously
		for all $i\in[k-1]$ in such a way that the following holds: for each
		$j \in [\ell]$, there is an absorber~$A_j$ with
		$V(A_j)\subset U_j$ rooted in $X^j = \{v_{1}^j,\dots,v_{k-1}^j\}$ of
		order at most $M/(2\ell)$.
	\end{claim}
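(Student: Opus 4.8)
The plan is to find the required absorbers first inside a reduced hypergraph of the contracted graph $G'(\cF,\cP)$ and then lift them back to $G'$ through the Embedding Lemma, exploiting that a canonical copy can be forced to have its root among the contracted vertices. First I would apply \cref{lem:sparse-regularity,lem:prepartition-regularity,lem:transfer-cluster-graph-partition} to the spanning subgraph $G'(\cF,\cP)$ of the host $G(\cF,\cP)$ (which is $(\lambda,p,1+\lambda)$-upper-uniform by \cref{cla:absorber-sparse-properties-random-graph}\ref{itm:absorber-upper-uniform}), with threshold $\tau$, parameters $r_0,\eps,r_1$, and the prepartition $\{W_1,\dots,W_{k-1},U_1,\dots,U_\ell\}$ of $V(G(\cF,\cP))$, obtaining an $r$-vertex reduced graph $\cR$. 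Writing $\cP_{W_i},\cP_{U_j}$ for the clusters inside $W_i,U_j$, we have $|\cP_{W_i}|,|\cP_{U_j}|=\Omega(r)$, with $|\cP_{U_j}|\geq r_0/(2\ell)$ by the choice of constants. Applying property~\ref{it:3.6.2} of \cref{lem:transfer-cluster-graph-partition} with $d'=d$ to each $U_j$ (the hypothesis holding since $G'(\cF,\cP)[U_j]=G'[U_j]$ inherits from \cref{claim:Us} that all but $o(n^d)$ of its $d$-sets have degree at least $(\mu_d(k)+\gamma/2)p\binom{\beta n-d}{k-d}$) shows that all but $\sqrt\eps\binom{r}{d}$ of the $d$-sets of $\cP_{U_j}$ have degree at least $(\mu_d(k)+\gamma/4)\binom{|\cP_{U_j}|-d}{k-d}$ inside $\cP_{U_j}$. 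Applying it with $d'=1$ to each pair $(W_i,U_j)$ — using that the degree into $U_j$ of a contracted vertex $w_{\vecb v}$ with $\vecb v\in\cF_i$ equals the $1$-degree into $U_j$ in $G'$ of the $j$-th coordinate of $\vecb v$, and that all but $o(n)$ vertices of $G'$ have such $1$-degree at least $(\mu_d(k)+\gamma/2)p\binom{\beta n-1}{k-1}$ (by \cref{lem:degree-inheritance_induced-subgraphs} with $d=1$, legitimate since $\delta_1(G')\geq(\mu_d(k)+\gamma)p\binom{n-1}{k-1}$) — shows that all but $\sqrt\eps r$ clusters of $\cP_{W_i}$ have $1$-degree at least $(\mu_d(k)+\gamma/4)\binom{|\cP_{U_j}|-1}{k-1}$ into $\cP_{U_j}$.

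Next I would fix, for each $i\in[k-1]$, a cluster $\cW_i\in\cP_{W_i}$ that is good in the last sense for \emph{every} $j\in[\ell]$ (possible since only $\ell\sqrt\eps r<|\cP_{W_i}|$ clusters fail), and for each $j$ apply \cref{lem:absorber-dense} to $\cR_j:=\cR[\cP_{U_j}\cup\{\cW_1,\dots,\cW_{k-1}\}]$, which has at least $r_0/(2\ell)$ vertices, all but at most $\eta\,(v(\cR_j))^d$ of whose $d$-sets (and each of whose roots $\cW_i$) have the required degree; with the lemma's ``$M$'' set to $M/(2\ell)$ and ``$\gamma$'' to $\gamma/8$ this produces a $K$-sparse absorber $\cA_j\subseteq\cR_j$ rooted in $\{\cW_1,\dots,\cW_{k-1}\}$, of order at most $M/(2\ell)$, with passive vertices in $\cP_{U_j}$. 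Let $H:=\bigcup_{j\in[\ell]}\cA_j\subseteq\cR$ be the book obtained by gluing these along $\{\cW_1,\dots,\cW_{k-1}\}$. Because each $\cA_j$ is $K$-sparse, no edge of any $\cA_j$ meets $\{\cW_1,\dots,\cW_{k-1}\}$ in two vertices (such an edge together with the edge $\{\cW_1,\dots,\cW_{k-1}\}$ would form a Berge $2$-cycle), so $H$ is linear; moreover $v(H)\leq\ell\cdot\tfrac{M}{2\ell}+(k-1)=\tfrac{M}{2}+k-1$ and $m_k(H)\leq 2/(k-1)+\gamma/2$ by \cref{lem:sparse-to-mkdensity} with $T=\ell$.

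Finally I would lift $H$ into $G'(\cF,\cP)$. The copy of $H$ already sitting in $\cR$ (root vertex $i\mapsto\cW_i$, page-$j$ vertices to distinct clusters of $\cP_{U_j}$) realises, for each edge of $H$, an $(\eps,p)$-regular $k$-partite tuple of clusters of density exceeding $\tau p$; after trimming the clusters to a common size $n$ this exhibits the relevant part of $G'(\cF,\cP)\subseteq G(\cF,\cP)$ as a member of $\mathcal{G}(H,n,m,p,\eps)$ with $n\geq\kappa N$ and $m\geq\tau pn^k$, so \cref{cla:absorber-sparse-properties-random-graph}\ref{itm:absorber-embedding-lemma} (i.e.\ \cref{lem:embedding}) yields a canonical copy of $H$ in $G'(\cF,\cP)$. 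There the root vertex $i$ is mapped to a contracted vertex $w_{\vecb v_i}$ with $\vecb v_i=(v_i^1,\dots,v_i^\ell)\in\cF_i$: these are the tuples we take. Every page-$j$ edge of the copy contains at most one contracted vertex (\cref{def:contracted-graph}), and replacing that vertex $w_{\vecb v_i}$ by $v_i^j$ turns it into an edge of $G'$ inside $U_j\cup\{v_1^j,\dots,v_{k-1}^j\}$; carrying this out over all page-$j$ edges produces a copy $A_j\subseteq G'$ of $\cA_j$ — hence an absorber — rooted in $X^j=\{v_1^j,\dots,v_{k-1}^j\}$, with $V(A_j)\subset U_j$ (the passive vertices are untouched) and of order at most $M/(2\ell)$, which is exactly what the claim requires.

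I expect the crux to lie in the interaction of two features rather than in any isolated estimate. The glued book $H$ must be a \emph{linear} hypergraph of $k$-density strictly below $2/(k-1)$ — this is precisely what $K$-sparseness buys through \cref{lem:sparse-to-mkdensity}, and it is indispensable, since $p\geq n^{-(k-1)/2+\gamma}$ is only barely enough to run the Embedding Lemma for such $H$ — while, dually, one must verify with care that the canonical copy supplied by the Embedding Lemma, whose root is pinned to the contracted vertices, genuinely expands into absorbers with the prescribed root sets $X^j$; the attendant bookkeeping (each book edge has at most one contracted vertex, passive vertices stay in $U_j$, the coordinates $v_i^j$ are distinct and disjoint from $U_j$) is routine but must hold uniformly over all $\ell=16(k-1)-1$ pages.
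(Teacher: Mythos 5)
Your proposal is correct and follows essentially the same route as the paper's proof: regularise the contracted graph $G'(\cF,\cP)$ with the prepartition $W_1,\dots,W_{k-1},U_1,\dots,U_\ell$, transfer degrees via \cref{lem:transfer-cluster-graph-partition} (with $d'=1$ for the roots), pick good root clusters, build $K$-sparse absorbers in the reduced graph with \cref{lem:absorber-dense}, glue them into a book whose $k$-density is controlled by \cref{lem:sparse-to-mkdensity}, embed a canonical copy via \cref{lem:embedding}, and expand the contracted root vertices to recover the absorbers $A_j$ rooted in $X^j$ inside the $U_j$. The only differences are presentational: you spell out the $d'=d$ degree check inside the $\cU_j$ and the linearity of the book, which the paper leaves implicit.
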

	\begin{proof}
		Note first that~$G'(\cF,\cP)$ is endowed with the partition
		\begin{equation}
			\label{eq:h_partition}
			W_1,\dots,W_{k-1},U_1,\dots,U_\ell
		\end{equation}
		of its vertex set.  We now invoke
		Lemma~\ref{lem:transfer-cluster-graph-partition} to obtain an
		$\epsilon$-regular partition $V_1,\dots,V_r$ with
		$r_0\leq r\leq r_1$, with most of the~$V_i$ contained in some set in
		the partition~\eqref{eq:h_partition}.  Moreover, we define the
		corresponding reduced graph~$\cR$ using as the `threshold parameter'
		the constant~$\tau$ defined earlier.  Let $\cU_j$
		$(1\leq j\leq\ell)$ and~$\cW_i$ ($1\leq i \leq k-1$) be the sets of
		clusters contained in each $U_j$ and~$W_i$ respectively.

		We now claim that almost all the clusters in~$\cW_i$ have degree at
		least $(\mu_d(k) + \gamma/4) \binom{|\cU_j|}{k-1}$ into each~$\cU_j$
		in~$\cR$.  To check this claim, recall first that the~$U_j$ are such
		that all but $o(n^d)$ $d$-sets of vertices of~$G'$ have degree at
		least $(\mu_d(k)+\gamma/2)\binom{\beta n-d}{k-d}$ into~$U_j$.
		It follows that all but~$o(n)$ vertices of~$G'$ have degree
		$(\mu_d(k)+\gamma/3)\binom{\beta n-1}{k-a}$ into~$U_j$.  Our claim
		now follows from
		Lemma~\ref{lem:transfer-cluster-graph-partition}\ref{it:3.6.2}
		applied with~$d'=1$.

		Note that, because of our claim above, we can fix clusters
		$Y_i\in\cW_i$ ($1\leq i \leq k-1$) that satisfy the degree property above
		for all $\cU_j$ ($1 \leq j \leq \ell$) simultaneously.  Let
		$\cY = \{Y_1,\dots,Y_{k-1}\}$.

		Note that $\cR[\cU_j \cup \cY]$ satisfies the assumptions of
		\cref{lem:absorber-dense} for each $j \in [\ell]$.  Hence we may
		find a $K$-sparse absorber~$\cA_j$ rooted in~$\cY$ of order at most
		$M/(2\ell)$ with $V(\cA_j) \subset \cU_j$.  Let
		$\cA = \bigcup \cA_j$.  Note that~$\cA$ is a `book' of the absorbers
		of \cref{lem:absorber-dense} with $\ell$~`pages' and `spine'~$\cY$.
		It follows by \cref{lem:sparse-to-mkdensity} that
		$m_k(\cA) \leq 2/(k-1)+{\gamma/2}$.  Note also that~$\cA$ has at most
		$M/2+k-1$ vertices.  By
		\cref{cla:absorber-sparse-properties-random-graph}\ref{itm:absorber-embedding-lemma}
		we can find a canonical copy~$A$ of~$\cA$ in~$G'(\cF,\cP)$.  For
		$i \in [k-1]$, let $w_i\in Y_i$ be the vertex of~$A$ that
		corresponds to the vertex~$Y_i$ of~$\cA$.  By definition,
		$w_i=w_{\vecb v_i}$ for some
		$\vecb v_i = ( v_{i}^1,\dots, v_{i}^\ell) \in \cF_i$.

		We now proceed to find the required absorbers~$A_j$.  Fix
		$j\in[\ell]$ and consider~$\cA_j$.  For every cluster
		$Z\in V(\cA_j)\subset U_j$, there is a vertex~$z\in Z$ contained
		in~$A$.  Let~$T_j$ be the set of vertices $v_{i}^j$ ($1\leq i \leq k-1$)
		and such vertices~$z$.  Then, by the definition of the contracted
		graph~$G'(\cF,\cP)$, one sees that the graph~$G'[T_j]$ contains a
		copy of an absorber~$A_j$ rooted in
		$X^j = \{v_{1}^j,\dots,v_{k-1}^j\}$ with~$V(A_j)\subset U_j$.
	\end{proof}

	We now construct the required absorber~$A$ rooted in~$X$ of order at
	most~$M$ using the~$A_j$ and~$\vecb v_i=(v_{i}^1,\dots,v_{i}^\ell)$
	from Claim~\ref{claim:7.7}.  Most of~$A$ is
	$\bigcup_{j \in [\ell]} A_j$; we need only incorporate two paths for
	each $\vecb v_i$ ($1\leq i \leq k-1 $).  The final absorber~$A$ will be such
	that
	\begin{align*}
		V(A)=\bigcup_{j\in[\ell]}V(A_j)\cup X^j
		= \bigcup_{j\in[\ell]}V(A_j) \cup \bigcup_{i\in[k-1]}\{v_{i}^1,\dots,v_{i}^\ell\}.
	\end{align*}
	Next, we describe describe how $A$ works.

	First note that for each $j \in [\ell]$ we may use $A_j$ to either
	integrate or not integrate $X^j=\{v_{1}^j,\dots,v_{k-1}^j\}$.  Now, if
	$X$ is to be covered, then we {activate} $A_j$ to absorb $X^j$ for
	$k \leq j \leq 2k-3$.  We do not activate any other~$A_j$.  To include
	the remaining vertices, we then take for each $i \in [k-1]$ the loose
	path, called $P_1^i$ in the sketch of the proof, that runs from the $(k-1)$st vertex of~$\vecb v_i$ backwards
	to~$x_i$ and then to the $(2k-2)$nd vertex of~$\vecb v_i$, namely
	\begin{align*}
		v_{i}^{k-1},\, v_{i}^{k-2},\, \dots,\, v_{i}^1,\, x_i,\, v_{i}^\ell,\, v_{i}^{\ell-1},\, \dots, v_{i}^{2k-2}.
	\end{align*}
	Conversely, if~$X$ should be left uncovered, we activate~$A_j$ to
	absorb~$X^j$ for $1 \leq j \leq k-2$ and $2k-1 \leq j \leq \ell$.  We
	do not activate any other~$A_j$.\footnote{The absorbers $A_{k-1}$ and
		$A_{2k-2}$ are not used at all and exist only for notational
		convenience.}  To include the remaining vertices, we take for each
	$i \in [k-1]$ the loose path with one edge that runs from the
	$(k-1)$st vertex of~$\vecb v_i$ to the $(2k-2)$nd vertex
	of~$\vecb v_i$, namely $v_{i}^{k-1},\dots, v_{i}^{2k-2}$.
	For an illustration, see \cref{fig:complete-absorber}.

	The discussion above shows that~$A$ is indeed an absorber rooted
	in~$X$.  Moreover, since the order of the~$A_j$ is at
	most~$M/(2\ell)$, we see that $A$~has order at most
	$M/2+(k-1)\ell\leq M$, as required.
\end{proof}

\subsection{Proof of Lemma~\ref{lem:sparse-to-mkdensity}}
\label{sec:proof-lemma-sparse-to-mk}
We shall see that Lemma~\ref{lem:sparse-to-mkdensity} holds because
(\textit{a})~a $K$-sparse absorber has maximum $k$-density not much
bigger than~$2/(k-1)$ and (\textit{b})~when we `amalgamate' such
absorbers along their root set forming a `book', the maximum
$k$-density increases by a controlled amount.
For convenience, let us set
\[
	d_{k}\left(H\right)=
	\frac{e\left(H\right)-1}{v\left(H\right)-k}
\]
for a graph $H$ with $v(H) > k$.
Hence, we can write
\[
	m_{k}\left(H\right)=\max\left\{
	d_k(H')\colon H'\subseteq
	H\text{ with }v\left(H'\right)>k\right\}.
\]

We need the following observation, whose easy proof we omit.
\begin{fact}\label{prop:acyclic}
	The following hold.
	\begin{enumerate}[\upshape (a)]
		\item \label{itm:acyclic-a} Every Berge acyclic $k$-graph $F$ satisfies $v(F)\geq(k-1)e(F)+1$.
		\item \label{itm:acyclic-b} Every minimal $k$-uniform Berge cycle $C$ of length at least $3$ is a loose cycle. In particular, we have $v(C) = (k-1) \, e(C)$.
	\end{enumerate}

\end{fact}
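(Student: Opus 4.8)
The plan is to handle the two parts separately, each by an elementary look at the incidence structure.

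For part~(a), I would work with the bipartite \emph{incidence graph} $B$ whose vertex classes are $V(F)$ and $E(F)$, with $v\in V(F)$ adjacent to $e\in E(F)$ exactly when $v\in e$. The crucial observation is that a cycle $e_1 v_1 e_2 v_2 \cdots e_\ell v_\ell e_1$ of length $2\ell$ in $B$ is the same thing as a Berge cycle $e_1,\dots,e_\ell$ of $F$ with witness vertices $v_1,\dots,v_\ell$ (the $e_i$'s and the $v_i$'s are automatically distinct, being the vertices of a cycle), and conversely. Hence $F$ is Berge acyclic if and only if $B$ is a forest. Since $F$ is $k$-uniform, $B$ has exactly $k\,e(F)$ edges and $v(F)+e(F)$ vertices, and a forest on $v(F)+e(F)\ge 1$ vertices has at most $v(F)+e(F)-1$ edges; therefore $k\,e(F)\le v(F)+e(F)-1$, i.e.\ $v(F)\ge(k-1)e(F)+1$. (We may assume $F$ is nonempty, i.e.\ $v(F)\ge1$.)

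For part~(b), write $C$ for the minimal Berge cycle, with edges $e_1,\dots,e_\ell$ (indices read mod $\ell$), distinct witnesses $v_1,\dots,v_\ell$ with $v_i\in e_i\cap e_{i+1}$, and $V(C)=\bigcup_i e_i$; minimality means that no proper nonempty subset of $\{e_1,\dots,e_\ell\}$ forms a Berge cycle. The first step is to show $e_i\cap e_{i+1}=\{v_i\}$ for each $i$: a second vertex $w\in(e_i\cap e_{i+1})\sm\{v_i\}$ would make $e_i,e_{i+1}$ a Berge cycle of length $2<\ell$, contradicting minimality. The second step is that non-consecutive edges are disjoint: if $w\in e_i\cap e_j$ with $e_i,e_j$ non-consecutive, then the two arcs of $C$ joining $e_i$ to $e_j$ carry disjoint witness sets $\{v_i,\dots,v_{j-1}\}$ and $\{v_j,\dots,v_{i-1}\}$ whose union is all of $\{v_1,\dots,v_\ell\}$, so $w$ avoids every witness on at least one of the arcs; closing that arc with $w$ yields a Berge cycle of length at least $2$ and less than $\ell$ (both arcs are proper since $e_i,e_j$ are non-consecutive), once more contradicting minimality.

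From these two facts it follows that every vertex of $C$ lies in exactly one or exactly two edges — a vertex in three distinct edges would force three pairwise-consecutive indices, hence $\ell=3$ and $v_1=v_2$ — and that the degree-two vertices are precisely $v_1,\dots,v_\ell$. Counting incidences gives $\ell k=\sum_i|e_i|=d_1+2\ell$, where $d_1$ is the number of degree-one vertices, so $v(C)=d_1+\ell=(k-1)\ell=(k-1)e(C)$; and the intersection pattern established above is exactly the definition of a loose cycle. I expect the only delicate point to be the bookkeeping in the non-consecutive step — checking that the arc closed off by $w$ is a genuine Berge cycle (distinct edges, distinct witnesses) of length strictly between $1$ and $\ell$ — and the observation that the two arcs' witness sets partition $\{v_1,\dots,v_\ell\}$ is precisely what makes this go through cleanly.
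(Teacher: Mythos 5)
Your proof is correct: the incidence-bipartite-graph/forest count gives part~(a), and your minimality analysis (consecutive edges meet in exactly the witness vertex, non-consecutive edges are disjoint via the two-arc argument, no vertex of degree three, then count incidences) gives part~(b), including the delicate point that the witness sets of the two arcs partition $\{v_1,\dots,v_\ell\}$ so that at least one arc can be closed with $w$ into a genuine shorter Berge cycle. The paper explicitly omits the proof of this fact (``whose easy proof we omit''), so there is nothing to compare against; your argument is the natural one and fills that gap, with only the trivial caveat that part~(a) requires $F$ to have at least one vertex, which you note.
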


Fact~\ref{prop:acyclic} has the following consequence.

\begin{proposition}
	\label{claim:m_k_abs}
	The following hold.
	\begin{enumerate}[\upshape(i)]
		\item\label{claim:acy_mk} If~$F$ is a Berge acyclic $k$-graph, then
		$m_k(F)\leq1/(k-1)$.
		\item\label{claim:abs_mk} If~$A$ is a $K$-sparse $k$-uniform
		absorber with~$K\geq3$, then ${m_k(A)}\leq2/(k-1)+1/(K(k-1)-k)$.
	\end{enumerate}

\end{proposition}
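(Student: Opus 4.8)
The plan is to deduce both parts of \cref{claim:m_k_abs} directly from \cref{prop:acyclic}, using in addition one simple structural observation: a $K$-sparse absorber is, viewed as a $k$-graph, the union of two Berge acyclic sub-hypergraphs. Throughout, when estimating $m_k(\cdot)=\max d_k(\cdot)$ I will freely assume the maximising sub-hypergraph has no isolated vertices, since deleting an isolated vertex leaves $e$ unchanged while decreasing $v$, hence increasing $d_k$.

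Part~\ref{claim:acy_mk} is almost immediate. Let $F$ be Berge acyclic and let $F'\subseteq F$ with $v(F')>k$ and no isolated vertices achieve $m_k(F)$. Then $e(F')\ge 2$ (a single edge forces $v(F')=k$), and $F'$ is again Berge acyclic, so \cref{prop:acyclic}\ref{itm:acyclic-a} gives $v(F')\ge (k-1)e(F')+1$, i.e.\ $v(F')-k\ge (k-1)(e(F')-1)$, whence
\[
	m_k(F)=\frac{e(F')-1}{v(F')-k}\le\frac1{k-1}.
\]

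For Part~\ref{claim:abs_mk}, write $A=\{P_1^j,P_2^j\}_{j\in[q]}$ and set $L_i=\bigcup_{j\in[q]}P_i^j$ for $i\in[2]$. By \cref{def:absorber} each $L_i$ is a vertex-disjoint union of loose paths, hence Berge acyclic, and as $k$-graphs $A=L_1\cup L_2$. Put $N=K(k-1)-k$ and $\beta=\tfrac2{k-1}+\tfrac1N$; I must show $d_k(A')\le\beta$ for every $A'\subseteq A$ with $v(A')>k$, which I may take to have no isolated vertices. For $i\in[2]$ let $A'_i$ be the sub-hypergraph of $A'$ with edge set $E(A')\cap E(L_i)$. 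If $A'_1=\es$ or $A'_2=\es$, or if $A'$ happens to be Berge acyclic, then $A'$ is Berge acyclic and Part~\ref{claim:acy_mk} applied to $A'$ gives $d_k(A')\le m_k(A')\le\tfrac1{k-1}\le\beta$. So assume both $A'_i$ are non-empty and $A'$ contains a Berge cycle. Each $A'_i$ is Berge acyclic, so \cref{prop:acyclic}\ref{itm:acyclic-a} gives $v(A'_i)\ge (k-1)e(A'_i)+1$, hence $e(A'_i)\le\tfrac{v(A'_i)-1}{k-1}\le\tfrac{v(A')-1}{k-1}$; summing over $i$ and using $e(A')\le e(A'_1)+e(A'_2)$ gives $e(A')\le \tfrac{2(v(A')-1)}{k-1}$. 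Now fix a minimal Berge cycle $C\subseteq A'$: since $A$ is $K$-sparse, $e(C)\ge K$, and by \cref{prop:acyclic}\ref{itm:acyclic-b} $C$ is a loose cycle with $v(C)=(k-1)e(C)\ge (k-1)K$; as the vertices of $C$ all lie in edges of $A'$, this forces $v(A')-k\ge (k-1)K-k=N$. Combining the two bounds,
\[
	d_k(A')=\frac{e(A')-1}{v(A')-k}\le\frac{\tfrac{2(v(A')-1)}{k-1}-1}{v(A')-k}=\frac{2\bigl(v(A')-k\bigr)+(k-1)}{(k-1)\bigl(v(A')-k\bigr)}=\frac2{k-1}+\frac1{v(A')-k}\le\frac2{k-1}+\frac1N=\beta,
\]
and taking the maximum over $A'$ yields $m_k(A)\le\beta$.

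There is no genuine obstacle in this argument; the points requiring (minor) care are the reduction to sub-hypergraphs without isolated vertices, the verification straight from \cref{def:absorber} that $A=L_1\cup L_2$ with both $L_i$ Berge acyclic (so that each $A'_i$ is Berge acyclic and the arboricity-type bound $e(A')\le \tfrac{2(v(A')-1)}{k-1}$ applies), and the elementary identity $\tfrac{2(v-1)/(k-1)-1}{v-k}=\tfrac2{k-1}+\tfrac1{v-k}$ used in the last display. The role of the girth hypothesis is exactly to supply, through \cref{prop:acyclic}\ref{itm:acyclic-b}, the lower bound $v(A')\ge (k-1)K$ whenever $A'$ is cyclic, which is what turns the $O(1/(v(A')-k))$ error term into the claimed $1/N$.
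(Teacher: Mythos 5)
Your proof is correct and follows essentially the same route as the paper's: part \ref{claim:acy_mk} is the direct consequence of \cref{prop:acyclic}\ref{itm:acyclic-a}, and for part \ref{claim:abs_mk} you, like the paper, split the absorber into its two Berge-acyclic (vertex-disjoint unions of loose paths) layers, bound the edge count of any subgraph via the acyclic inequality, and use $K$-sparseness together with \cref{prop:acyclic}\ref{itm:acyclic-b} to force $v(A')-k\ge K(k-1)-k$. The only cosmetic difference is that you bound $e(A')\le 2(v(A')-1)/(k-1)$ and finish by an algebraic identity, whereas the paper bounds the two terms by $m_k(F_1)+m_k(F_2)$; the content is identical.
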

\begin{proof}
	We begin by showing~\ref{claim:acy_mk}.
	To this end, observe that if~$F'$ is Berge acyclic $k$-graph, then $v(F')\geq(k-1)e(F')+1$ by {\cref{prop:acyclic}~\ref{itm:acyclic-a}},
	which implies that
	\[
		d_k(F') \leq \frac{e(F')-1}{(k-1)e(F')+1-k}=\frac1{k-1}.
	\]
	Thus $m_k(F)\leq1/(k-1)$ for any Berge acyclic~$F$,
	and~\ref{claim:acy_mk} is proved.

	Now suppose~$H'$ is a subgraph of an absorber~$A$ as
	in~\ref{claim:abs_mk} with $v(H')>k$. We have to show
	that~$d_k(H')=(e(H')-1)/(v(H')-k)\leq2/(k-1)+1/(K(k-1)-k)$. If~$H'$
	is acyclic, then this follows from~\ref{claim:acy_mk}.
	On the other hand, suppose that $C$ is a cycle of minimum length in $H'$.
	By the $K$-sparseness hypothesis, we
	have~$e(C)\geq K \geq 3$.
	Hence, by applying~\cref{prop:acyclic}~\ref{itm:acyclic-b}, it follows that $v(H')\geq v(C)\geq K(k-1)$.  Note that
	an absorber is a union of two Berge acyclic graphs, say~$F_1$
	and~$F_2$ (each~$F_i$ can be taken to be a vertex disjoint union of
	loose paths).  We may suppose that $V(F_1)=V(F_2)=V(H')$.  We then
	have
	\begin{align*}
		d_k(H') & \, {\leq} \, \frac{e(F_1)+e(F_2)-1}{v(H')-k} \\
		        & =\frac{e(F_1)-1}{v(H')-k}
		+\frac{e(F_2)-1}{v(H')-k}
		+\frac1{v(H')-k}                                       \\
		        & \leq m_k(F_1)+m_k(F_2)
		+\frac{1}{K(k-1)-k}                                    \\
		        & \leq\frac2{k-1}+\frac{1}{K(k-1)-k}\,,
	\end{align*}
	as required.
\end{proof}

We remark that the previous proof can be extended to a general bound for the $k$-density of the `amalgamation' of two graphs in terms of their $k$-densities and the girth of the whole graph.

\begin{proof}[Proof of \cref{lem:sparse-to-mkdensity}]
	Let $A_1,\dots,A_T$ be copies of a $K$-sparse
	$k$-uniform absorber sharing their root $(k-1)$-set~$X$ with
	$V(A_j)\cap V(A_{j'})=\emptyset$ for all~$j\neq j'$. Let~$H$ be the $k$-graph on
	\begin{equation*}
		X\cup\bigcup_{1\leq j\leq T}V(A_j)
	\end{equation*}
	whose edges are the edges that occur in the paths that define the
	absorbers~$A_j$.  We have to show that $m_k(H)\leq2/(k-1)+\gamma$,
	that is, we have to show that, for any subgraph~$H'$ of~$H$
	with $v(H')>k$, we have
	\begin{equation}
		\label{eq:3}
		d_k(H') = \frac{e(H') - 1}{v(H') - k} \leq \frac2{k-1} + \gamma.
	\end{equation}
	By Claim~\Ref{claim:m_k_abs}\ref{claim:acy_mk},
	inequality~\eqref{eq:3} holds if~$H'$ is acyclic.
	So we can assume that~$H'$ contains a Berge cycle.
	Let $C$ be a Berge cycle in $H'$ of minimum length.
	Recall that by definition of sparse absorbers (at the beginning of \cref{sec:absorber-lemma-sparse-proof}), each $K$-sparse absorber $A_j$ has girth at least $K$ even after adding the extra edge $X$.
	Hence there exists an absorber $A_j$ such that $C$ has at least $K-1$ edges in $A_j$.
	After relabelling, we may assume that $j=1$.

	In what follows, we write~$H'[Y]$ for~$H'[V(H')\cap Y]$ for
	any~$Y\subset V(H)$ for simplicity.
	Without loss of generality, assume that $A_1,\dots,A_t$ are the
	absorbers~$A_j$ for which $H'[X\cup V(A_j)]$ contains an edge (if
	$H'[X\cup V(A_j)]$ contains no edge, then we can consider
	$H'-V(A_j)$ instead of~$H'$).
	Let us further assume that
	$A_1,\dots,A_s$ are the absorbers~$A_j$ such that $H'[X\cup V(A_j)]$
	contains at least two edges.
	Note that $s\geq 1$ since $A_1$ contains at least $K-1$ edges.
	Let~$H''=H'\big[X\cup\bigcup_{1\leq j\leq s}V(A_j)]$.

	\begin{claim}
		\label{claim:H''}
		We have $d_k(H'')\leq2/(k-1)+{\gamma}/{2}$.
	\end{claim}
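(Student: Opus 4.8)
The plan is to decompose $H''$ into the pieces $H_j':=H''[X\cup V(A_j)]$ for $1\le j\le s$, to observe that these overlap pairwise only inside $X^\ast:=V(H'')\cap X$ (a set of at most $k-1$ vertices) and that each $H_j'$ sits inside the $k$-graph underlying the $K$-sparse absorber $A_j$, and then to sum the $k$-density bound for absorbers from \cref{claim:m_k_abs}~\ref{claim:abs_mk} over all pieces, carefully accounting for the overhead produced by amalgamating along $X^\ast$. That overhead will be absorbed into the error term $\gamma/2$ by exploiting that $H_1'$ is forced to be large, since it contains at least $K-1$ edges.

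First I would set up the bookkeeping. Since $|X|=k-1<k$, every edge of $H$ lies entirely in exactly one set $X\cup V(A_j)$; hence, writing $V_j:=V(A_j)\cap V(H'')$, the set $V(H'')$ is the disjoint union $X^\ast\cup\bigcup_{j=1}^{s}V_j$, we have $e(H'')=\sum_{j=1}^{s}e(H_j')$, and $V(H_j')=X^\ast\cup V_j$, so that $v(H_j')=|X^\ast|+|V_j|$. Put $\vartheta:=\tfrac{2}{k-1}+\tfrac{1}{K(k-1)-k}$. Since $A_j$ is a $K$-sparse $k$-uniform absorber, \cref{claim:m_k_abs}~\ref{claim:abs_mk} gives $m_k(A_j)\le\vartheta$; and as each $H_j'$ with $j\le s$ has at least two edges, it has at least $k+1$ vertices, so $d_k(H_j')\le m_k(A_j)\le\vartheta$, that is,
\[
e(H_j')-1\ \le\ \vartheta\bigl(|X^\ast|+|V_j|-k\bigr)\qquad(1\le j\le s).
\]

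Summing over $j$ and rearranging, with $x:=|X^\ast|\le k-1$, yields
\[
e(H'')-1=\sum_{j=1}^{s}\bigl(e(H_j')-1\bigr)+(s-1)\ \le\ \vartheta\bigl(v(H'')-k\bigr)+(s-1)\bigl(1-\vartheta(k-x)\bigr).
\]
Since $k-x\ge1$ and $\vartheta>0$, the bracket $1-\vartheta(k-x)$ is at most $1$, so the overhead $(s-1)\bigl(1-\vartheta(k-x)\bigr)$ is at most $s-1\le T-1$ (and it is non-positive once $\vartheta(k-x)\ge1$). It then suffices to show that $v(H'')-k$ is large enough that $\vartheta+\tfrac{T-1}{v(H'')-k}\le\tfrac{2}{k-1}+\tfrac{\gamma}{2}$. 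This is where the hypothesis $e(H_1')\ge K-1$ enters: $H_1'$ is a subgraph of the underlying $k$-graph of $A_1$, which is a union of two Berge-acyclic graphs (its active and passive states), so $H_1'$ contains a Berge-acyclic subgraph with at least $(K-1)/2$ edges, and \cref{prop:acyclic}~\ref{itm:acyclic-a} then forces $v(H_1')\ge\tfrac{k-1}{2}(K-1)+1$. Hence $v(H'')-k\ge|V_1|-k\ge v(H_1')-(k-1)-k\ge\tfrac{k-1}{2}(K-1)-2k$, which is linear in $K$. Since $1/K\ll1/T,\gamma,1/k$, this lower bound exceeds $4(T-1)/\gamma$ while at the same time $\tfrac{1}{K(k-1)-k}\le\gamma/4$; combining, $d_k(H'')\le\vartheta+\tfrac{T-1}{v(H'')-k}\le\tfrac{2}{k-1}+\tfrac{\gamma}{4}+\tfrac{\gamma}{4}=\tfrac{2}{k-1}+\tfrac{\gamma}{2}$, which is the claim.

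The step I expect to be the main obstacle is controlling the amalgamation overhead $(s-1)\bigl(1-\vartheta(k-x)\bigr)$: when $k\ge4$ and $x=k-1$ this term is genuinely positive, of order $s$, and since $\vartheta$ exceeds $2/(k-1)$ only by a quantity that tends to $0$ with $1/K$, there is no room to absorb an additive constant unless $v(H'')-k$ is itself forced to be large. The only lever available for that is the edge lower bound $e(H_1')\ge K-1$ obtained from the minimality of the Berge cycle $C$; converting it into a lower bound on $v(H_1')$ that is linear in $K$ via \cref{prop:acyclic}~\ref{itm:acyclic-a}, and then checking against the hierarchy $1/K\ll1/T,\gamma,1/k$ that this dominates $T$, is the heart of the argument.
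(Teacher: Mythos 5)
Your proof is correct and follows essentially the same route as the paper: decompose $H''$ into the pieces $H''[X\cup V(A_j)]$, apply \cref{claim:m_k_abs}~\ref{claim:abs_mk} to each, absorb the $O(s)$ amalgamation overhead using that the piece meeting the minimal Berge cycle has $\Omega(K(k-1))$ vertices, and finish via the hierarchy $1/K\ll 1/T,\gamma,1/k$. The only (harmless) deviation is that you lower-bound $v(H_1')$ by splitting its $\geq K-1$ edges into the two Berge-acyclic states and invoking \cref{prop:acyclic}~\ref{itm:acyclic-a}, whereas the paper uses \cref{prop:acyclic}~\ref{itm:acyclic-b} (the minimal cycle is loose) to get the slightly stronger bound $v_1\geq (K-1)(k-1)$; both suffice.
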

	\begin{proof}
		Fix~$j$ with~$1\leq j\leq s$.  Let $v_j=v(H'[X\cup V(A_j)])$ and
		$e_j=e(H'[X\cup V(A_j)])\geq2$.
		Note that, by definition of $s$, we have~$v_j>k$.
		Hence $d_k(H'[X\cup
			V(A_j)])=(e_j-1)/(v_j-k)\leq m_k(A_j)\leq2/(k-1)+1/(K(k-1)-k)$, where the last inequality follows from \cref{claim:m_k_abs}~\ref{claim:abs_mk}.

		Let~$e''=e(H'')$ and~$v''=v(H'')$.  We have
		$e'' = \sum_{1\leq j\leq s}e_j$ and
		$v''=\sum_{1\leq j\leq s}v_j-(s-1)k'$, where
		$k'=|X\cap V(H'')|<k$.  By
		Claim~\ref{claim:m_k_abs}~\ref{claim:acy_mk}, we may assume
		that~$H''$ is not acyclic.
		Furthermore, since $A_1$ contains at least $K-1$ edges of $C$, it follows by \cref{prop:acyclic}~\ref{itm:acyclic-b} that $v_1\geq (K-1)(k-1)$.

		Putting this together, we have
		\begin{align*}
			d_k(H'') & =\frac{e''-1}{v''-k}
			=\frac{\sum_{1\leq j\leq s}e_j-1}
			{\sum_{1\leq j\leq s}v_j-(s-1)k'-k}                \\
			         & =\frac{\sum_{1\leq j\leq s}(e_j-1)+s-1}
			{\sum_{1\leq j\leq s}(v_j-k)+ks-(s-1)k'-k}         \\
			         & =\frac{\sum_{1\leq j\leq s}(e_j-1)+s-1}
			{\sum_{1\leq j\leq s}(v_j-k)+(k-k')(s-1)}          \\
			         & \leq\frac{\sum_{1\leq j\leq s}(e_j-1)}
			{\sum_{1\leq j\leq s}(v_j-k)}
			+\frac{s-1}{\sum_{1\leq j\leq s}(v_j-k)}           \\
			         & \leq\frac2{k-1}+\frac1{K(k-1)-k}
			+\frac{s-1}{(K-1)(k-1)-k}                          \\
			         & \leq   \frac2{k-1}
			+\frac{T}{(K-2)(k-1)-1}                            \\
			         & \leq\frac2{k-1}+\frac{\gamma}{2},
		\end{align*}
		as long as $s\leq T$ and $K$ is large enough with respect to $T,1/\gamma$ and $k$.
		Hence, Claim~\ref{claim:H''} follows.
	\end{proof}

	We now deduce~\eqref{eq:3} from Claim~\ref{claim:H''}. Note that $H'$ contains at most $t-s \leq T$ edges that are not in~$H''$.
	Since $K$ was chosen sufficiently large with respect to $T,1/\gamma$ and $k$, it
	\begin{align*}
		d_k(H') & \leq \frac{e''+t-s-1}{v''-k} \leq \frac{e''-1}{v''-k} +  \frac{t-s}{v''-k} \leq\frac2{k-1} + \gamma. \qedhere
	\end{align*}
\end{proof}

Similar to the previous remark, we note that this proof can be extended to a general bound for the $k$-density of the `amalgamation' of two (same rooted) graphs in terms of their $k$-densities and the girth of the whole graph.

\section{Absorbers in the dense setting}\label{sec:absorber-lemma-dense-proof}
To finish the proof of \cref{thm:main}, we have to show
\cref{lem:absorber-dense}.  We begin with the following strengthening
of the minimum degree threshold for loose Hamilton cycles, which
ensures the existence of a loose Hamilton cycle under slightly more
general conditions and with some additional properties. Let us
  say that two vertices in a loose cycle $C$ are at distance $K$ if
  they are $K$ vertices apart with respect to the ordering of $C$.  So
  for instance, if $C$ is $k$-uniform then two consecutive vertices of
  degree $2$ are at distance $k-1$.  We say that a set $X \subset
  V(C)$ is \emph{$K$-spread} if the distance between any (distinct)
  pair of vertices of $X$ is at least $K$.

For $1 \leq d \leq k-1$, we define $\mu_d^*(k)$ as the least~$\mu\in[0,1]$ such that for all $\gamma>0$,
positive integers $t,\,K$ and $n$, where $n$ is divisible by $k-1$ and sufficiently large, the following holds:
if $G$ is an $n$-vertex $k$-graph and $X
	\subset V(G)$ is a $t$-set such that
$\delta_d(G-X)\geq (\mu+\gamma)\binom{n-d}{k-d}$ and
$\deg(x) \geq (\mu+\gamma)\binom{n-1}{k-1}$ for every
$x \in X$, then $G$ contains a loose Hamilton
cycle~$C$ in which $X$ is $K$-spread {and every vertex of $X$ has degree $2$.}
Note that we trivially have $\mu_d^*(k)\geq \mu_d(k)$.
The following lemma, whose proof is deferred to \cref{sec:exploiting-threshold-proof}, shows that the two thresholds actually coincide.

\begin{lemma}[Threshold exploitation]\label{lem:threshold-exploitation}
	We have $\mu_d(k) = \mu_d^*(k)$ for all $1 \leq d \leq k-1$.
\end{lemma}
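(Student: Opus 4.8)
The plan is to prove $\mu_d^*(k)\le\mu_d(k)$, the reverse inequality being trivial as noted in the excerpt. Write $\mu=\mu_d(k)$, fix $\gamma>0$ and positive integers $t,K$, set up a hierarchy $1/n\ll\eta\ll\alpha\ll 1/k,\,1/d,\,1/t,\,1/K,\,\gamma$, and let $G$ and $X=\{x_1,\dots,x_t\}$ be as in the definition of $\mu_d^*(k)$, so that $\delta_d(G-X)\ge(\mu+\gamma)\binom{n-d}{k-d}$ and $\deg_G(x_i)\ge(\mu+\gamma)\binom{n-1}{k-1}$ for every $i$. Since $t$ is a constant, $G-X$ has $d$-degree (and, by the inequality $\delta_1(H)/\binom{v(H)-1}{k-1}\ge\delta_d(H)/\binom{v(H)-d}{k-d}$ recalled in the excerpt, also $1$-degree) comfortably above the threshold, and this stays true after deleting any further $O(\alpha n)$ vertices; this is what lets me invoke the dense machinery, which I always apply to $G-X$ (or to $G-X$ minus a small extra set), never to $G$ itself. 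First I would build the \emph{spread gadgets}: set $\ell_0=2q(k-1)+1$ where $q$ is least with $q(k-1)\ge K$, and for each $i\in[t]$ greedily construct a loose path $Q_i$ of order exactly $\ell_0$ containing $x_i$ as its central link vertex, so that $x_i$ has degree $2$ in $Q_i$ and both endpoints $a_i,b_i$ are at distance at least $K$ from $x_i$ along $Q_i$, with the $Q_i$ pairwise vertex-disjoint and meeting $X$ only in $x_i$. This is possible because $x_i$ has large degree and every vertex of $G-X$ has large $1$-degree, while at every step of the greedy extension only $O(t\ell_0+\alpha n)=o\!\left(\binom{n-1}{k-1}\right)$ vertices are forbidden.

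Next comes the absorption skeleton. By the Dense Absorption Lemma (\cref{lem:absorption-dense}) applied to $G-X$ there is an absorbing loose path $A$ with endpoints $u,v$ and $|V(A)|\le\alpha n/2$ such that every $W\subset V(G-X)\setminus V(A)$ with $|W|\le\eta n$ divisible by $k-1$ satisfies: $G[V(A)\cup W]$ has a loose $(u,v)$-path covering $V(A)\cup W$. I would then choose the gadgets $Q_i$ to avoid $V(A)$, and use the Dense Connection Lemma (\cref{lem:connection-dense}), applied to $G-X$ with the $O(\alpha n)$ already-used vertices removed, to connect one at a time $v$ to $a_1$ and $b_i$ to $a_{i+1}$ for $i<t$ by loose paths $R_0,\dots,R_{t-1}$ of order $4(k-1)+1$, pairwise disjoint and avoiding previously chosen vertices. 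Concatenating $A,R_0,Q_1,R_1,\dots,R_{t-1},Q_t$ yields a loose path $A'$ from $u$ to $b_t$ with $|V(A')|\le\alpha n$, still absorbing (it contains $A$ as a subpath), with $V(A')\cap X=X$, with every $x_i$ of degree $2$, and with at least $K$ vertices between $x_i$ and either end of $Q_i$ inside $A'$.

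Now cover the rest and close up. Apply the Dense Cover Lemma (\cref{lem:cover-dense}) to $G-X$ with forbidden set $V(A')\setminus\{u,b_t\}$ (which contains $X$) and endpoints $u,b_t$ to obtain a loose $(u,b_t)$-path $P$ in $G-(V(A')\setminus\{u,b_t\})$ covering all but at most $\eta n$ vertices; let $W$ be the uncovered set. Then $A'\cup P$ is a loose cycle covering $V(G)\setminus W$, and since $k-1$ divides both $n$ and the order of any loose cycle, it divides $|W|$; moreover $W$ is disjoint from $V(A)$. Invoking the absorbing property of $A$ inside $A'\cup P$ — replace the subpath $A$ by a loose $(u,v)$-path on $V(A)\cup W$ — produces a loose cycle $C$ of $G$ covering $V(G)$, i.e.\ a loose Hamilton cycle. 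The gadgets $Q_i$ are untouched by the absorption, so each $x_i$ has degree $2$ in $C$; and from $x_i$ one must traverse at least $K$ vertices inside $Q_i$ in either direction before leaving $Q_i$, while the nearest vertex of $X$ in either direction lies in a different $Q_j$, so $X$ is $K$-spread in $C$. Hence $\mu_d^*(k)\le\mu_d(k)$, and combined with the trivial bound we get $\mu_d^*(k)=\mu_d(k)$.

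The main obstacle is not conceptual but a matter of careful bookkeeping: keeping all the spliced pieces ($A$, the $R_j$, the $Q_i$, $P$, and the activated absorber) pairwise internally disjoint except at the prescribed shared endpoints, so that their union is a genuine loose cycle; tracking divisibility by $k-1$ at the final step; and, above all, verifying that the $K$-spread property survives every concatenation — which is exactly why the $Q_i$ are built with at least $K$ vertices of slack on each side of $x_i$ and are kept pairwise disjoint and disjoint from $X$ elsewhere. A secondary subtlety, flagged above, is that the hypotheses only control $\delta_d(G-X)$ and $\deg_G(x)$ for $x\in X$, not $\delta_d(G)$, so every application of the dense absorption, connection, and cover lemmas must be set in $G-X$; this is legitimate precisely because $t=O(1)$, so the degree loss incurred is negligible.
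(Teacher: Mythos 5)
Your proposal is correct and takes essentially the same route as the paper's proof: apply the Dense Absorption Lemma to $G-X$, splice all of $X$ into a short auxiliary path in which each $x_i$ has degree two and the vertices of $X$ are kept at distance at least $K$, then cover the remaining vertices with the Dense Cover Lemma (avoiding the reserved structure) and absorb the leftover set, whose size is divisible by $k-1$. The only difference is cosmetic: the paper places the length-$K$ buffers in the connection-lemma paths between consecutive $x_i$'s, whereas you build them into greedily chosen gadgets $Q_i$ centred at the $x_i$ and use short connectors.
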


For the proof of \cref{lem:absorber-dense}, we require two further lemmas.
Let $G$ and $R$ be $k$-graphs and $X \subset V(G) \cap V(R)$.
We say that $G$ contains a \emph{copy of~$R$ rooted in~$X$}, if there is an
embedding of $R$ into $G$ which is the identity on $X$.
For any integer $m>0$, we also
denote by $R^\ast(m,X)$ the $k$-graph obtained from $R$ by blowing up
each vertex outside of $X$ by $m$ and replacing edges with complete $k$-partite $k$-graphs.

\begin{lemma}[Blow-up setup]\label{lem:blow-up-new}
  Let $1/n \ll 1/s \ll \gamma,\, 1/k,\, 1/d,\, 1/t$, $\mu \geq 0$
    and $1/n \ll 1/m$.  Let $G$ be an $n$-vertex $k$-graph and $X
  \subset V(G)$ be a $t$-set such that $\delta_d(G-X)\geq
  (\mu+\gamma)\binom{n-d}{k-d}$ and $\deg_G(x) \geq
  (\mu+\gamma)\binom{n-1}{k-1}$ for every $x \in X$.  Then there is an
  $s$-vertex $k$-graph $R$ with $X \subset V(R)$ such that
	\begin{enumerate}[\upshape(1)]
		\item \label{itm:R-good-degree} $\delta_d(R-X) \geq (\mu + \gamma/2) \binom{s-d}{k-d}$ and $\deg_R(x) \geq (\mu+\gamma/2)\binom{s-1}{k-1}$ for every $x \in X$;
		\item \label{itm:R-many-bu-copies} $G$ contains a copy of $R^\ast(m,X)$ rooted in $X$.
	\end{enumerate}
\end{lemma}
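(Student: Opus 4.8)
The plan is to read $R$ off from a regular partition of $G$: conclusion \ref{itm:R-good-degree} will follow because reduced graphs inherit degree conditions, and conclusion \ref{itm:R-many-bu-copies} because regular dense tuples behave like complete multipartite graphs under weak hypergraph regularity. First I would introduce constants $\lambda\ll 1/r_1\ll 1/r_0,\,\epsilon\ll\tau_0\ll 1/s,\,1/m$ (with $1/s\ll\gamma,\,1/k,\,1/d,\,1/t$ as given and $1/n$ smaller than all of them). Since $G-X$ and each link $L_x:=\operatorname{link}_G(x)$ ($x\in X$) are trivially $(\lambda,1,1+\lambda)$-upper-uniform, one may apply the dense case $p=1$ of \cref{lem:sparse-regularity} together with \cref{lem:prepartition-regularity} iteratively to produce a single equipartition $V_1,\dots,V_T$ of $V(G)\setminus X$ into $r_0\le T\le r_1$ parts that is $(\epsilon,1)$-regular for $G-X$ and for every $L_x$. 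Let $\mathcal R$ be the reduced $k$-graph on $\{V_1,\dots,V_T\}$ with threshold $\tau_0$, and for each $x$ let $\mathcal L_x$ be the $(k-1)$-graph on $\{V_1,\dots,V_T\}$ in which $\{V_{i_1},\dots,V_{i_{k-1}}\}$ is an edge whenever this tuple is $(\epsilon,1)$-regular with density $>\tau_0$ in $L_x$. By degree inheritance (\cref{lem:transfer-cluster-graph-partition} with $h=1$ and threshold $\tau_0$, and the analogous elementary averaging for the links), after absorbing the error terms $\tau_0+\sqrt\epsilon+k/r_0\ll\gamma$ and the $O(tn^{k-1})$ edges at each $x$ meeting $X$, all but $\sqrt\epsilon\binom Td$ of the $d$-sets of $\mathcal R$ have degree at least $(\mu+\tfrac{9}{10}\gamma)\binom{T-d}{k-d}$ and $e(\mathcal L_x)\ge(\mu+\tfrac{9}{10}\gamma)\binom T{k-1}$ for every $x$.

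For the construction of $R$, I would take $V(R)=X\cup\mathcal S$, where $\mathcal S$ is a uniformly random $(s-t)$-subset of $\{V_1,\dots,V_T\}$, and declare a set to be an edge of $R$ if it is either (a) a $k$-subset of $\mathcal S$ that is an edge of $\mathcal R$, or (b) of the form $\{V_{i_1},\dots,V_{i_{k-1}},x\}$ with $V_{i_1},\dots,V_{i_{k-1}}\in\mathcal S$, $x\in X$ and $\{V_{i_1},\dots,V_{i_{k-1}}\}\in E(\mathcal L_x)$. Applying \cref{lem:random-subset-degrees} to $\mathcal R$ — crucially with the \emph{shifted} parameters $\mu'=\mu+\tfrac45\gamma$ and $\eta'=\tfrac1{10}\gamma$, so that only a $\tfrac1{20}\gamma$-fraction of the excess is lost rather than half of it — together with a routine sampling estimate for $e(\mathcal L_x)$, one obtains with positive probability (union bounding over the $1+t$ events) a choice of $\mathcal S$ for which $\mathcal R[\mathcal S]$ has minimum $d$-degree at least $(\mu+\tfrac{17}{20}\gamma)\binom{(s-t)-d}{k-d}\ge(\mu+\tfrac12\gamma)\binom{s-d}{k-d}$ and each $x$ lies in at least $(\mu+\tfrac12\gamma)\binom{s-1}{k-1}$ edges of type~(b); the comparisons of binomial coefficients here use $1/s\ll\gamma,\,1/t,\,1/k$. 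Since $R-X=\mathcal R[\mathcal S]$ and the edges of $R$ at $x$ are exactly its type-(b) edges, this gives \ref{itm:R-good-degree}. Fix such an $\mathcal S$ and the resulting $R$.

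For \ref{itm:R-many-bu-copies} I would embed the fixed, bounded-size $k$-graph $R^\ast(m,X)$ into $G$ so that the blow-up of each cluster $V_i\in\mathcal S$ lands inside $V_i$ and each $x\in X$ is fixed. The key observation is that a type-(b) edge $\{V_{i_1},\dots,V_{i_{k-1}},x\}$, viewed as the $k$-tuple $(\{x\},V_{i_1},\dots,V_{i_{k-1}})$ of subsets of $V(G)$ with one part a singleton, is $(\epsilon,1)$-regular in $G$ with density $>\tau_0$ precisely because $\{V_{i_1},\dots,V_{i_{k-1}}\}$ is $(\epsilon,1)$-regular with density $>\tau_0$ in $L_x$. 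Hence $R$ is realized as a partial reduced graph of $G$ with respect to the family of parts $\{V_i:V_i\in\mathcal S\}\cup\{\{x\}:x\in X\}$, every edge of which is a regular tuple of density $>\tau_0$. Since $R^\ast(m,X)$ has at most $sm$ vertices and density-product at least $\tau_0^{e(R^\ast(m,X))}$, a positive constant, the embedding (counting) lemma for weak hypergraph regularity — the dense, non-linear analogue of \cref{lem:embedding}, applied with $\epsilon$ small relative to $s$ and $m$ — yields at least $\tfrac12\tau_0^{e(R^\ast(m,X))}\prod_{V_i\in\mathcal S}|V_i|^m>0$ rooted copies of $R^\ast(m,X)$ in $G$ (note $|V_i|=\Theta(n/T)\to\infty$, so there is ample room and the finitely many singleton parts do not interfere); in particular one such copy exists, which is \ref{itm:R-many-bu-copies}.

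The main obstacle is \ref{itm:R-many-bu-copies}: a complete $k$-partite configuration is a density-one object, so it is essential that $R$ be extracted from a genuine regular partition of $G$ and of the links of $X$ (rather than taken, say, as a random induced subgraph, which need not be blow-up-able at all), and one must take care that the singleton parts $\{x\}$ are correctly absorbed into the regularity picture so that the standard embedding lemma applies uniformly over the edges of $R$. A secondary, purely book-keeping, point is to set up the constant hierarchy so that the losses incurred in degree inheritance and in \cref{lem:random-subset-degrees} — both of which eat into the excess $\gamma$ — still leave the target $(\mu+\gamma/2)\binom{s-d}{k-d}$ intact, which is exactly why the sampling lemma must be invoked with a shifted value of $\mu$.
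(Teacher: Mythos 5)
There is a genuine gap, and it is exactly at the step you yourself flag as the main point, conclusion \ref{itm:R-many-bu-copies}. Your embedding of $R^\ast(m,X)$ invokes ``the dense, non-linear analogue of \cref{lem:embedding}'' for weak hypergraph regularity, but no such lemma exists: for $k\geq 3$, weak (vertex-partition) regularity supports counting and embedding only for \emph{linear} target hypergraphs, which is precisely why \cref{lem:embedding} is stated with that hypothesis. Your $R$ is dense (it has minimum relative $d$-degree at least $\mu+\gamma/2>0$, hence $\Omega(s^k)$ edges), so it necessarily contains many pairs of edges overlapping in at least two vertices, and already this two-edge pattern cannot be embedded from weak regularity alone. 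Concretely, take $k=3$, clusters $V_1,V_2,V_3,V_4$, colour each pair in $V_1\times V_2$ red or blue uniformly at random, and let $\{x,y,z\}$ with $x\in V_1$, $y\in V_2$, $z\in V_3$ be an edge iff $(x,y)$ is red, while $\{x,y,w\}$ with $w\in V_4$ is an edge iff $(x,y)$ is blue. Both tuples $(V_1,V_2,V_3)$ and $(V_1,V_2,V_4)$ are $(\epsilon,1)$-regular of density about $1/2$, yet there is no choice of $v_1\in V_1$, $v_2\in V_2$, $v_3\in V_3$, $v_4\in V_4$ with both $\{v_1,v_2,v_3\}$ and $\{v_1,v_2,v_4\}$ present, let alone an $m$-blow-up of this pattern. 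So the fact that every edge of your reduced graph $R$ (and of the link reduced graphs $\mathcal L_x$) corresponds to a regular tuple of density $>\tau_0$ does not let you choose the $m$-sets $S_i\subset V_i$ consistently across the edges of $R$; the per-edge supersaturation argument (Erdős's theorem inside a single dense tuple) does not glue across overlapping edges.

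The paper avoids regularity altogether for this lemma. It first proves (Lemma~\ref{lem:supersaturation}) that a uniformly random $s$-set $S\supset X$ satisfies the degree conditions with probability at least $1/2$ (via \cref{lem:random-subset-degrees} and a concentration inequality for the degrees of the $x\in X$), and then averages over the at most $2^{\binom{s}{k}}$ isomorphism types to find a single $R$ with at least $2^{-s^k}n^{s-t}$ rooted copies in $G$. It then forms the auxiliary $(s-t)$-uniform hypergraph $H$ on $V(G)\setminus X$ whose edges are the vertex sets of these rooted copies, and applies \cref{thm:blow-up} (Erdős's partite Turán theorem plus supersaturation) to $H$ to get a complete $(s-t)$-partite $(s-t)$-graph; a final colouring step, recording in which of the $s!$ orders each copy meets the parts and applying \cref{thm:blow-up} once more, makes the copies consistently aligned, which is exactly a copy of $R^\ast(m,X)$ rooted in $X$. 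Your part \ref{itm:R-good-degree} (degree inheritance to a reduced graph plus a shifted application of \cref{lem:random-subset-degrees}) is workable but unnecessary machinery; the real content of the lemma is \ref{itm:R-many-bu-copies}, and for that your route would need a counting lemma that is false, so the proposal does not constitute a proof.
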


\begin{lemma}[Absorber allocation]\label{lem:absorber-allocation}
	For $r\geq 1$ and $K>k\geq3$, let $K'=K(k-1)$, $m = 2^{2K'(2K'-1)}$ and $q=2^{2K'-1}rm!$.
	Let $X$ be a $(k-1)$-set.
	Let $C_2$ be a $k$-uniform loose cycle of order $q$ with $X\cap V(C_2) = \emptyset$.
	Let $C_1$ be a $k$-uniform loose cycle with $V(C_1) = V(C_2) \cup X$.
	Suppose that there is a vertex that has degree $2$ in $C_1$ and also in $C_2$.
	Let $R = C_1 \cup C_2$. Then
	\begin{enumerate}[\upshape (1)]
		\item\label{itm:aa-1} $R^\ast(2m,X)$ contains an absorber $A$ rooted in $X$.
		\item \label{itm:aa-2} Moreover, if $X$ is $K'$-spread in $C_1$, then $A$ is $K$-sparse.
	\end{enumerate}
\end{lemma}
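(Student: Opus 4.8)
The plan is to realise the absorber $A$ as a single pair $(P_1,P_2)$ of loose paths inside $R^\ast(2m,X)$. Write $\pi\colon R^\ast(2m,X)\to R$ for the projection collapsing each blown-up cluster, and for $v\in V(C_2)$ let $V(v):=\pi^{-1}(v)$ be the corresponding cluster of $2m$ vertices. We will build $P_2$ to be a loose path with $\pi(P_2)\subseteq C_2$ covering a fixed set $V(A)\subseteq\bigcup_{v\in V(C_2)}V(v)$ of cluster-vertices, and $P_1$ to be a loose path with $\pi(P_1)\subseteq C_1$ covering exactly $V(A)\cup X$, with the same two endpoints as $P_2$ (which then lie in $V(A)$, since $X\cap V(C_2)=\emptyset$). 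Granting such a pair, $A:=\{P_1,P_2\}$ satisfies the three conditions of \cref{def:absorber}: each state is a (vacuously pairwise disjoint) family of loose paths, $V(P_1)=V(P_2)\cup X$ with $X\cap V(P_2)=\emptyset$, and the endpoints agree. Hence $A$ is an absorber rooted in $X$ (of order $|V(A)|\le 2mq$), proving part~\ref{itm:aa-1}.

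To construct $P_2$, traverse the cyclic pattern of the loose cycle $C_2$ over roughly $2m$ consecutive \emph{laps}, realising on the $t$-th lap the vertex $v\in V(C_2)$ by a copy $\beta_2(t,v)\in V(v)$, and join consecutive laps at a shared copy of the distinguished degree-$2$ vertex $z$; if $t\mapsto\beta_2(t,v)$ is injective for each $v$ this is a single loose path, and $V(A)$ is the set of cluster-vertices it covers. Build $P_1$ the same way from the cyclic pattern of $C_1$, except that on exactly one lap one follows the detour of $C_1$ through $X$, so that the $k-1$ vertices of $X$ are picked up once; that this can be done with matching laps and overall endpoints uses precisely that $z$ has degree $2$ in both $C_1$ and $C_2$. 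After relabelling copies we arrange $V(P_1)=V(P_2)\cup X$ with common endpoints.

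The real content is part~\ref{itm:aa-2}. As $P_1,P_2$ are loose (hence Berge acyclic) and $X$ is a single edge, any Berge cycle $\mathcal C$ in $A+X$ is a cyclic concatenation of maximal arcs, each a sub-path of $P_1$, a sub-path of $P_2$, or the edge $X$, with consecutive arcs meeting at a switch vertex. The key is to choose the realisation so that the two linear orders induced by $P_1$ and $P_2$ on $V(A)$ are strongly \emph{desynchronised}: (a) the bijection sending the $P_2$-position of a vertex to its $P_1$-position is a ``spreading'' permutation of the $\approx 2mq$ positions, so that any $w\ne w'\in V(A)$ with $\dist_{P_1}(w,w')<K$ have $\dist_{P_2}(w,w')\ge K$ and conversely; and (b) the at most $2K'=2K(k-1)$ cluster-vertices within $P_1$-distance $K$ of $X$ are, moreover, placed on distinct blow-up copies so as to be pairwise at $P_2$-distance $\ge K$. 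Property~(a) follows from a transpose/bit-reversal-type permutation and needs only $2mq\gg K^2$; property~(b) — separating the $\binom{2K'}{2}$ relevant pairs by a dyadic allocation of copies — is the binding constraint and is exactly what forces $m=2^{2K'(2K'-1)}$, with the remaining factors $m!$, $2^{2K'-1}$ (and the harmless multiplier $r$) providing room for~(a) and the lap bookkeeping.

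Given the desynchronisation, the girth bound is a short case analysis. A Berge cycle contained in a single $P_i$ cannot exist. A Berge cycle using $X$ but no $P_2$-edge reduces to one sub-path of $P_1$ between two vertices of $X$, of length $\ge K$ because $X$ is $K'$-spread in $C_1$ and $K'=K(k-1)\ge 2K$ (as $k\ge 3$). Any other Berge cycle contains a maximal $P_2$-arc $Q$ with endpoints $w\ne w'\in V(A)$; either $|Q|\ge K$, or $\dist_{P_1}(w,w')\ge K$ by~(a), and then the rest of $\mathcal C$ (a walk from $w'$ to $w$ off $Q$) is long — either it contains a sub-path of $P_1$ of length $\ge\dist_{P_1}(w,w')\ge K$, or it passes through $X$, where~(b) together with the $K'$-spread forbids all pieces being short at once. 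So $e(\mathcal C)\ge K$ and $A$ is $K$-sparse. The main obstacle is precisely the desynchronisation step — exhibiting an allocation of blow-up copies realising both~(a) and~(b), in particular separating the $O(K'^2)$ $X$-sensitive pairs — which is what dictates the values of $m$ and $q$; the rest is bookkeeping on vertex sets.
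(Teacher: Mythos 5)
Your plan diverges from the paper's (which builds the absorber as $2m$ pairs of paths obtained from an $(m,q)$-double-strip of girth $\geq 2K'$, constructed in \cref{prop:strip} from high-girth regular bipartite graphs, and then proves the girth bound via \cref{cla:biggirth}), and as written it has genuine gaps. First, the construction of $P_1$ is not well defined: in $R^\ast(2m,X)$ the vertices of $X$ are \emph{not} blown up, and every edge of $C_1$ meeting $X$ lifts only to copies containing the actual vertices of $X$. So a loose path with $\pi(P_1)\subseteq C_1$ cannot run ``laps'' of the full cyclic pattern of $C_1$ and pick up $X$ ``on exactly one lap'': every lap would have to traverse the $X$-section of $C_1$ and hence reuse vertices of $X$, which a loose path forbids, and there is no $C_1$-edge that lets the other laps skip that section. (This is precisely why the paper uses $C_2$-laps for the bulk of the structure and inserts the $C_1$-pattern only once, via the modified cycle $S^1_1$, and why it needs the shared degree-$2$ vertex and the vertex-splitting in $A_1$.) Second, properties (a) and (b) are asserted rather than constructed; the claim that (b) ``forces'' $m=2^{2K'(2K'-1)}$ is not an argument, and in the paper the numerology comes from the Lazebnik--Ustimenko high-girth bipartite graphs and the chaining that makes the strips cyclical.

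More importantly, even granting (a) and (b), the girth case analysis does not close. A Berge cycle in $A$ plus the edge $X$ can alternate between $P_1$-arcs and $P_2$-arcs many times, and your pairwise desynchronisation only constrains, for each single pair $w,w'$, the distance in the \emph{other} path; it says nothing about a cycle built from three or more short arcs with distinct switch vertices (e.g.\ $w_1\to w_2$ short in $P_2$, $w_2\to w_3$ short in $P_1$, $w_3\to w_1$ short in $P_2$ is fully compatible with (a)). What is actually needed is a \emph{global} girth lower bound for the $2$-uniform union of the two path systems — this is exactly the role of the double-strip $D$ of girth $\geq 2K'$ in the paper, after which \cref{cla:biggirth} handles the arcs through $X$ using the $K'$-spread hypothesis. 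Without such a global statement (or a proof that your allocation of blow-up copies yields it), part~\ref{itm:aa-2} is not established; the ``bookkeeping'' you defer is in fact the core of the lemma.
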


We remark that the purpose of the constant $r$ in the above statement is to allow us to take $q$ arbitrarily large.
	Moreover, $q$ is divisible by $k-1$ as $m\geq k-1$.
The proofs of \cref{lem:blow-up-new,lem:absorber-allocation} are deferred to \cref{sec:blow-up-new,sec:absorber-allocation}.
Now we are ready to show the main result of this section.

\begin{proof}[Proof of \cref{lem:absorber-dense}]
	We can assume that $K>k$.
	Let $K'=K(k-1)$ and $m = 2^{2K'(2K'-1)}$.
	Introduce $r$ with $1/M \ll 1/r \ll \gamma$.
	Set $q=2^{2K'-1}rm!$, $s= q+k-1$ and $\mu=\mu_d(k)$.
	Given $G$ and $X$ as in the statement, we have to show that $G$ contains a $K$-sparse absorber $A$ rooted in $X$ of order at most $M$.

	We begin by selecting $S
		\subset V(G)$ uniformly at random among all
	$M$-sets that contain $X$. Let $G' = G[S]$. Then $v(G')={M}$.
	Note that with probability at least $2/3$ we have $\deg_{G'}(x)
		\geq(\mu + \gamma/2 ) \binom{M-1}{k-1}$ for every $x \in
		X$, which follows by a standard concentration inequality (see for instance~\cite[Corollary 2.2]{GIK+17}).
	Moreover, with probability at least $2/3$, we have $\delta_d(G'-X) \geq (\mu +
		\gamma/2 ) \binom{M-d}{k-d}$, which follows follow by~\cref{lem:random-subset-degrees}.
	Fix such a $k$-graph $G'$.  In the remainder, we show that $G'$ contains a $K$-sparse
	absorber $A$ rooted in $X$.

	Apply~\cref{lem:blow-up-new} with $k-1,\,2m,\,\gamma/2$ playing the role of $t,\,m,\,\gamma$ to
	$G'$ in order to obtain an $s$-vertex $k$-graph $T$ with $X
		\subset V(T)$ such that
	\begin{enumerate}[(i)]
		\item \label{itm:dal-mindegree} $\delta_d(T-X) \geq (\mu +
			      \gamma/4) \binom{s-d}{k-d}$ and $\deg_T(x) \geq
			      (\mu+\gamma/4)\binom{s-1}{k-1}$ for every $x \in X$;
		\item $G'$ contains a copy of $T^\ast(2m,X)$ rooted in $X$.
	\end{enumerate}

	By definition of $\mu$, it follows that $T-X$ contains a loose
        Hamilton cycle $C_2$.  Fix a vertex $y \in V(C_2)$ of degree
        $2$.  By \cref{lem:threshold-exploitation} {(applied with $X
          \cup \{y\}$ playing the role of $X$)} and
        property~\ref{itm:dal-mindegree} above it follows that $T$
        contains a loose Hamilton cycle $C_1$ such that $X$ is
        $K'$-spread in $C_1$, and $y$ has degree $2$ in
        $C_1$.

      Let $R = C_1\cup C_2$. Since $R^*(2m,X)$ is a subgraph of
      $T^*(2m,X)$, it follows that $G'$ also contains a copy of
      $R^\ast(2m,X)$ rooted in $X$, which
      by~\cref{lem:absorber-allocation}, contains an absorber $A$
      rooted in $X$. Finally, since $X$ is $K$-spread in $C_1$,
      \cref{lem:absorber-allocation} also tells us that the absorber
      $A$ is also $K$-sparse.
\end{proof}

It remains to prove \cref{lem:blow-up-new,lem:absorber-allocation}, which is done in the following two sections.

\subsection{Proof of \rf{lem:blow-up-new}}\label{sec:blow-up-new}

From \cref{lem:random-subset-degrees}, it is not difficult to get the following `weaker' version of
\cref{lem:blow-up-new}.

\begin{lemma}\label{lem:supersaturation}
	Let $1/n \ll 1/s \ll \gamma,\, 1/k,\, 1/d,\, 1/t$ and $\mu \geq 0$.
	Let $G$ be an $n$-vertex $k$-graph and $X \subset V(G)$ be a $t$-set such that $\delta_d(G-X)\geq (\mu+\gamma)\binom{n-d}{k-d}$ and $\deg_G(x) \geq (\mu+\gamma)\binom{n-1}{k-1}$ for every $x \in X$.
	Then there is an $s$-vertex $k$-graph $R$ with $X \subset V(R)$ such that
	\begin{enumerate}[\upshape(1)]
		\item\label{itm:supersat-R} $\delta_d(R-X) \geq (\mu + \gamma/2) \binom{s-d}{k-d}$ and $\deg_R(x) \geq (\mu+\gamma/2)\binom{s-1}{k-1}$ for every $x \in X$;
		\item\label{itm:supersat-many-copies} $G$ contains at least $ 2^{-s^k} n^{s-t} $ copies of $R$ rooted in $X$.
	\end{enumerate}
\end{lemma}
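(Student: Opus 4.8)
The plan is to combine a random restriction with a pigeonhole (supersaturation) step. Fix an abstract vertex set $V_R = X \sqcup Y$ of size $s$, where $X$ is the given $t$-set and $Y$ consists of $s-t$ new vertices. For every injection $\phi\colon Y \to V(G)\setminus X$, extend it by the identity on $X$ to an injection $\Phi\colon V_R \to V(G)$, and let $R_\phi$ be the $k$-graph on $V_R$ in which $e$ is an edge precisely when $\Phi(e) \in E(G)$; thus $R_\phi$ is the pull-back of $G[\Phi(V_R)]$ along $\Phi$, and $\Phi$ is an embedding of $R_\phi$ into $G$ that fixes $X$ pointwise. In particular $\Phi$ witnesses a copy of $R_\phi$ rooted in $X$, and distinct injections $\phi$ give distinct such copies. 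Call $\phi$ \emph{good} if $R_\phi$ satisfies~\ref{itm:supersat-R}; this depends only on the image $\phi(Y)$ and is equivalent to the induced subgraph $G[X\cup\phi(Y)]$ inheriting the degree bounds of~\ref{itm:supersat-R}.

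First I would show that all but a tiny fraction of injections are good. Apply \cref{lem:random-subset-degrees} to the $(n-t)$-vertex $k$-graph $G-X$, every $d$-set of which has degree at least $(\mu+\gamma)\binom{n-d}{k-d} \ge (\mu+\gamma)\binom{(n-t)-d}{k-d}$: taking a uniformly random $(s-t)$-subset $S' \subseteq V(G)\setminus X$ yields $\delta_d\big((G-X)[S']\big) \ge (\mu+\gamma/2)\binom{s-d}{k-d}$ with probability at least $1-\tfrac1{100}$, once $s$ is large. For each $x \in X$, the degree of $x$ in $G[X\cup S']$ is, up to a lower-order term coming from edges meeting $X\setminus\{x\}$, the number of edges $e\ni x$ with $e\cap X = \{x\}$ and $e\setminus\{x\} \subseteq S'$; there are at least $(\mu+3\gamma/4)\binom{n-1}{k-1}$ candidates, each lying in $S'$ with probability $\binom{(n-t)-(k-1)}{(s-t)-(k-1)}/\binom{n-t}{s-t}$, so the expected degree is at least $(\mu+3\gamma/4)\binom{s-1}{k-1}(1-o_s(1))$, and a standard concentration inequality for functions of a uniformly random subset (e.g.\ \cite[Corollary~2.2]{GIK+17}) gives $\deg_{G[X\cup S']}(x) \ge (\mu+\gamma/2)\binom{s-1}{k-1}$ for all $x \in X$ with probability at least $1-\tfrac1{100}$. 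Hence at least a $(1-\tfrac1{50})$-proportion of the $(s-t)$-subsets $S' \subseteq V(G)\setminus X$ are good, and therefore there are at least $(1-\tfrac1{50})\binom{n-t}{s-t}(s-t)! \ge \tfrac13 n^{s-t}$ good injections $\phi$, using that $n$ is large.

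Finally, I would pigeonhole on isomorphism type. There are at most $2^{\binom{s}{k}} \le 2^{s^k}$ distinct $k$-graphs on the fixed set $V_R$, and $R_\phi$ satisfies~\ref{itm:supersat-R} for every good $\phi$. Hence some $k$-graph $R$ on $V_R$ (so $X \subseteq V(R)$) satisfying~\ref{itm:supersat-R} has $R_\phi = R$ for at least $\tfrac13 n^{s-t}/2^{\binom{s}{k}} \ge 2^{-s^k} n^{s-t}$ good injections $\phi$, since $\binom{s}{k} \le s^k/2$ and so $3\cdot 2^{\binom{s}{k}} \le 2^{s^k}$ for $s$ large. Each such $\phi$ yields, through $\Phi$, a copy of $R$ rooted in $X$, and these copies are pairwise distinct; so $G$ contains at least $2^{-s^k} n^{s-t}$ of them, which is~\ref{itm:supersat-many-copies}, while~\ref{itm:supersat-R} holds by the choice of $R$.

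The only point requiring care is the constant bookkeeping in the second paragraph: passing from the host $G$ on $n$ vertices to an induced subgraph on $s$ vertices that still contains the $t$ frozen vertices of $X$ changes the relevant binomial coefficients (e.g.\ replacing $\binom{(s-t)-d}{k-d}$ by $\binom{s-d}{k-d}$) by a factor $1+o_s(1)$, and one must check that the slack between $(\mu+\gamma)$ and $(\mu+\gamma/2)$ suffices to absorb this together with the concentration error; this is precisely where the hierarchy $1/s \ll \gamma, 1/k, 1/d, 1/t$ enters. Everything else is routine and follows directly from \cref{lem:random-subset-degrees} and a standard concentration estimate, as anticipated by the remark preceding the lemma.
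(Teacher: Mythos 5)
Your proposal is correct and follows essentially the same route as the paper: a uniformly random choice of the $s-t$ vertices outside $X$, with \cref{lem:random-subset-degrees} giving the minimum $d$-degree of the restriction of $G-X$ and a standard concentration inequality handling the degrees of the vertices of $X$, followed by a pigeonhole over the at most $2^{\binom{s}{k}}$ possible graphs to extract a popular $R$. The only cosmetic difference is that you count rooted injections into a fixed labelled vertex set while the paper counts the random $s$-sets $S$ themselves, and the bookkeeping issue you flag (replacing $\binom{(s-t)-d}{k-d}$ by $\binom{s-d}{k-d}$) is indeed absorbed by the slack in $\gamma$, exactly as in the paper.
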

\begin{proof}
	Let $\cR$ be the set of all $s$-vertex $k$-graphs
	$R$ satisfying property~\ref{itm:supersat-R}.
	Now let $S \subset V(G)$ be an $s$-set chosen uniformly at random among all sets that contain $X$.\footnote{This means that $S\setminus X$ is an $(s-t)$-set chosen uniformly at random in $V(G)\setminus X$.}
	By applying~\cref{lem:random-subset-degrees}
	with $G-X$, $0$ and $\gamma$ playing the roles of $G$, $\delta$ and $\eta$, it follows, with probability at least $3/4$, that
	\begin{equation*}
		\delta_d(G[S-X]) \geq (\mu+\gamma/2)\tbinom{s-d}{k-d}.
	\end{equation*}
	Similarly, a standard concentration inequality (see for instance~\cite[Corollary 2.2]{GIK+17}) reveals that with probability at least $3/ 4$, we have that
		\begin{equation*}
			\deg_{G[S]}(x) \geq (\mu+\gamma/2)\tbinom{s-1}{k-1}\text{ for
				every }x\in X.
		\end{equation*}
	Hence $\PP(G[S]\in \cR) \geq 1/2$. This completes the proof of the lemma without part~\ref{itm:supersat-many-copies}.

	For part~\ref{itm:supersat-many-copies}, note that $\cR$ contains up to isomorphism at most $2^{\binom{s}{k}}$ elements.
	So by averaging, we conclude that there must be some $R\in \cR$, for which
	\[\PP(\text{$G[S]$ is a copy of $R$ rooted in $X$})\geq 2^{-\binom{s}{k}-1}.\]
	Since there are $\binom{n-t}{s-t}$ possibilities for choosing
        $S$, we obtain the desired estimate.
\end{proof}

In order to deduce \cref{lem:blow-up-new} from \cref{lem:supersaturation}, one
needs to prove that if
$G$ has positive density of copies of a graph $R$, then $G$ has a copy of a blow-up of $R$.
For that, we use the following result, which is a consequence of the
`supersaturation' phenomenon discovered by Erd{\H o}s and
Simonovits~\cite{ES83}, and the fact that the Turán density of partite
graphs is zero as proved by Erd{\H o}s~\cite{Erd64}.
\begin{theorem}\label{thm:blow-up}
	Let $1/n \ll \xi,\, 1/M, \, 1/q$.
	Then every $n$-vertex $q$-graph $G$  with $e(G) \geq \xi n^q$ contains a copy of the complete $q$-partite $q$-graph with parts of
	size $M$.
\end{theorem}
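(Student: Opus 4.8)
Write $K=K^{(q)}_{M,\dots,M}$ for the complete $q$-partite $q$-graph in which every one of the $q$ parts has size $M$, so that $v(K)=qM$ and $e(K)=M^q$. The plan is to reduce the statement to a Tur\'an-type estimate: I will show that $\ex(n,K)=o(n^q)$ as $n\to\infty$ for fixed $q$ and $M$. Granting this, the theorem follows at once, since the hierarchy $1/n\ll\xi,\,1/M,\,1/q$ ensures that $n$ is large enough that $\ex(n,K)<\xi n^q\le e(G)$, and therefore $G$ must contain a copy of $K$, which is precisely a complete $q$-partite $q$-graph with parts of size $M$.

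The estimate $\ex(n,K)=O_{q,M}\!\big(n^{\,q-M^{-(q-1)}}\big)$ is a classical theorem of Erdős~\cite{Erd64}; I would either cite it directly or reprove it by induction on $q$. The base case $q=1$ is trivial: a $1$-graph with $M$ edges contains $K^{(1)}_M$, so $\ex(n,K^{(1)}_M)\le M-1$. For the inductive step, given a $q$-graph $G$ on $n$ vertices I would work with the codegrees $d(W)$ of the $(q-1)$-sets $W\subset V(G)$ (that is, the number of vertices $v\notin W$ with $W\cup\{v\}\in E(G)$), which satisfy $\sum_{W}d(W)=q\,e(G)$. The idea is then to double count the pairs $(S,W)$ in which $S$ is an $M$-subset of $V(G)$ and $W$ is a $(q-1)$-set with $W\cup\{s\}\in E(G)$ for every $s\in S$ (note that this automatically forces $W\cap S=\emptyset$, as $|W\cup\{s\}|=q$ while $|W|=q-1$). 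On the one hand, $\#(S,W)=\sum_{W}\binom{d(W)}{M}\ge\binom{n}{q-1}\binom{\bar d}{M}$ by convexity of $x\mapsto\binom{x}{M}$, where $\bar d=q\,e(G)/\binom{n}{q-1}$; since $\bar d\to\infty$, this is of order $e(G)^{M}/n^{(q-1)(M-1)}$ whenever $e(G)\ge C_{q,M}\,n^{\,q-M^{-(q-1)}}$. On the other hand, for each fixed $S$ the admissible sets $W$ form a $K^{(q-1)}_{M,\dots,M}$-free $(q-1)$-graph whenever $G$ is $K$-free --- indeed, $q-1$ disjoint parts of size $M$ among the $W$'s together with $S$ would yield a copy of $K$ --- so the induction hypothesis bounds $\#(S,W)$ by $O_{q,M}\!\big(n^{M}\cdot n^{\,q-1-M^{-(q-2)}}\big)$. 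Comparing the two estimates and simplifying the exponents (using $(q-1)(M-1)+M+q-1=qM$) shows that $e(G)\ge C_{q,M}\,n^{\,q-M^{-(q-1)}}$ forces a copy of $K$; hence $\ex(n,K)=O_{q,M}\!\big(n^{\,q-M^{-(q-1)}}\big)$, closing the induction. The case $q=2$ of this scheme is exactly the K\H{o}v\'{a}ri--S\'{o}s--Tur\'{a}n bound.

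Since $M^{-(q-1)}>0$, the exponent $q-M^{-(q-1)}$ is strictly less than $q$, so $\ex(n,K)=o(n^{q})$ and the theorem follows as explained in the first paragraph. The only genuinely non-routine ingredient is Erdős's classical bound; the inductive argument above is then entirely standard, and as we only ever need the relevant exponent to be \emph{strictly} below $q$, no careful tracking of constants or of lower-order terms is required, so I do not anticipate a real obstacle. For the write-up I would most likely simply cite~\cite{Erd64}; alternatively, one can combine $\pi(K)=0$ (Erdős~\cite{Erd64}) with the supersaturation principle of Erdős and Simonovits~\cite{ES83}: since $\xi n^{q}\ge(\xi q!/2)\binom{n}{q}$ once $n$ is large, supersaturation applied with $\varepsilon=\xi q!/2$ produces $\Omega_{q,M}(n^{qM})$ copies of $K$ in $G$, in particular at least one.
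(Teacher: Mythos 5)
Your proposal is correct and rests on the same foundation as the paper, which proves nothing here but simply cites Erd\H{o}s~\cite{Erd64} (vanishing Tur\'an density of partite $q$-graphs) together with Erd\H{o}s--Simonovits supersaturation~\cite{ES83}. Your observation that the single-copy statement already follows from $\ex(n,K^{(q)}_{M,\dots,M})=O_{q,M}(n^{q-M^{-(q-1)}})<\xi n^q$ alone, plus your sketch of the standard codegree double-counting induction (whose exponent arithmetic checks out), is a sound and slightly leaner version of the same argument.
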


\begin{proof}[Proof of \cref{lem:blow-up-new}]
	Let $q=s-t$ and introduce $M,\,\xi$ with $1/n \ll 1/M \ll 1/m,\,1/s$ and $1/n \ll \xi,\, 1/M \ll 1/q$.
	Let $G$ be an $n$-vertex $k$-graph and $X \subset V(G)$ be a $t$-set such that $\delta_d(G-X)\geq (\mu+\gamma)\binom{n-d}{k-d}$ and $\deg_G(x) \geq (\mu+\gamma)\binom{n-1}{k-1}$ for every $x \in X$.
	We apply \cref{lem:supersaturation} to obtain an $s$-vertex $k$-graph $R$ with $X \subset V(R)$ that satisfies properties~\ref{itm:supersat-R} and~\ref{itm:supersat-many-copies}.

	Now let us define a $q$-uniform auxiliary $(n-t)$-graph $H$, with
	$V(H)= V(G)\sm X$ by
	adding an edge $Q$ of size $q$ to $E(H)$  whenever $G[Q\cup X]$ is a copy of $R$
	rooted in $X$.
	By \cref{lem:supersaturation}, the $q$-graph $H$ has at least $\xi n^q$ edges.
	It follows from \cref{thm:blow-up}, that $H$
	contains a copy of the complete $q$-partite $q$-graph $L$
	with parts of size $M$.

	Given a copy of $L$ in $H$, one can colour each edge of $L$ by one
		of~$s!$ colours, corresponding to
	which of the $s!$ possible orders the vertices of $R$ are mapped to the
	parts of $L$. {Using~\cref{thm:blow-up} once again}, we find a complete $q$-partite subgraph $L'\subset L$ with parts of size $m$, whose edges correspond to copies of $R$ that have their respective vertices in the same parts.
\end{proof}

\subsection{Proof of \rf{lem:absorber-allocation}}\label{sec:absorber-allocation}
Consider disjoint sets $\cB = \{B_1,\dots,B_\ell\}$ of size $m$.
An \emph{$(m,\ell)$-strip} $H$ is the union of matchings $M_1,\dots, M_\ell$, where $M_i$ matches $B_i$ into $B_{i+1}$ (index computation modulo $\ell$).
Note that one can obtain a permutation $\sigma_H$ of $B_1$ by following the path starting at $b \in B_1$ along the matching edges until it arrives again in a vertex $b' \in B_1$.
We say that $H$ is \emph{cyclical} if $\sigma_H$ is the identity.
In this case, $H$ is the union of $m$ vertex-disjoint cycles $C_1,\dots,C_m$ each of order $\ell$, whose orderings follow the indices of $\cB$.
Finally, given $(m,\ell)$-strips $H_1$ and $H_2$ on $\cB$, we say that $D=H_1 \cup H_2$ is an  \emph{$(m,\ell)$-double-strip} if both strips are cyclical and their edges are disjoint.

We remark that, given any $(m,\ell)$-strip $H$ on $\cB$ and $q = \ell
rm!$ with an integer $r \geq 1$, we can generate a cyclical
$(m,q)$-strip $H'$ by chaining together $rm!$ copies of $H$.  More
precisely, we obtain $H'$ by considering vertex-disjoint copies
$H_1,\dots,H_{rm!}$ of $H$ where $H_i$ has vertex set
$B_{i,1},\dots,B_{i,\ell}$.  For each $i \in [rm!]$, we then replace
each edge $uv$ of type $u\in B_{i,\ell}$ and $v\in B_{i,1}$ with an
edge $uv'$, where $v'$ is the copy of $v$ in $B_{i+1,1}$ (index
computation modulo $rm!$).  Note that $\sigma_{H'}$ is the $rm!$-times
product of $\sigma_H$ and hence the identity.  So $H'$ is indeed
cyclical. Moreover, the constant $r$ allows us to scale $q$ if
  necessary.

{The following proposition guarantees the existence of double-strips with large girth.}

\begin{proposition}\label{prop:strip}
  Let ${K'} \geq 1$, $m=2^{2{K'}(2{K'}-1)}$ and $q = 2^{2{K'}-1} rm!$
  for $r \in \NATS$.  Then there is an $(m,q)$-double-strip $D$ of
  girth at least $2{K'}$. 
\end{proposition}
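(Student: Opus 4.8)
The plan is to reduce, via the \emph{chaining} operation described just before the statement, to the construction of a ``base'': a pair of edge-disjoint (not necessarily cyclical) $(m,\ell)$-strips $H_1^0,H_2^0$ on parts $B_1^0,\dots,B_\ell^0$ of size $m$, where $\ell:=2^{2K'-1}$, subject to the following \emph{local acyclicity} condition: for every cyclic interval $I\subseteq\mathbb{Z}_\ell$ with $|I|\le K'$, the subgraph of $H_1^0\cup H_2^0$ induced on $\bigcup_{i\in I}B_i^0$ has no cycle of length at most $2K'-2$. Assuming such a base, chain $N:=rm!$ copies of each strip. The resulting $(m,q)$-strips $H_1,H_2$ on $q=\ell N=2^{2K'-1}rm!$ parts are precisely the $\ell$-periodic extensions of $H_1^0,H_2^0$ (the reconnecting matchings of the chaining are just the periodic continuation), so they remain edge-disjoint, and each is cyclical because $\sigma_{H_j}=(\sigma_{H_j^0})^{N}$ and the order of $\sigma_{H_j^0}\in S_m$ divides $m!\mid N$. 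Hence $D:=H_1\cup H_2$ is an $(m,q)$-double-strip, and it only remains to bound its girth.

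For the girth I would use the observation that every edge of $D$ joins $B_p$ to $B_{p+1}$ for some $p\in\mathbb{Z}_q$; hence along any cycle of $D$ of length $g$ the sequence of part-indices is a closed walk in $\mathbb{Z}_q$ with steps $\pm 1$. If $g<q$ the signed steps sum to $0$ over $\mathbb{Z}$ (their total is divisible by $q$ and smaller in absolute value than $q$), so $g$ is even and, by a one-line estimate on balanced $\pm1$-walks, the part-indices visited lie in a cyclic interval of length at most $g/2+1$. Thus any cycle of length at most $2K'-2$ is confined to at most $K'$ consecutive parts $B_a,\dots,B_{a+K'-1}$; the edges it can use come from the $K'-1$ matchings $M^0_{g(a)},\dots,M^0_{g(a+K'-2)}$ of $H_1^0$ and the corresponding ones of $H_2^0$ (with $g(p):=((p-1)\bmod \ell)+1$), and since $K'-1<\ell$ these indices are $K'-1$ distinct cyclically consecutive elements of $[\ell]$. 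Therefore such a cycle would be a cycle of length $\le 2K'-2$ inside a cyclic window of $K'$ parts of $H_1^0\cup H_2^0$, contradicting local acyclicity; as short cycles are even, this gives girth at least $2K'$. (When $K'\le 2$ the local acyclicity condition only forbids cycles of length $\le 2K'-2\le 2$, which are multi-edges and hence absent in any edge-disjoint double-strip, so any base works; we may thus assume $K'\ge 3$.)

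It remains to build the base, which I would do by a first moment argument. Identify each $B_i^0$ with $[m]$; for $i\in[\ell]$ choose $M_i^0\in S_m$ uniformly at random and then $M_i'^0$ uniformly at random among the permutations $\pi$ with $\pi(x)\ne M_i^0(x)$ for all $x$ (so that $H_1^0,H_2^0$ are edge-disjoint by construction), all choices independent over $i$. For any one of these $2\ell$ random matchings and any prescription of $t\le 2K'$ of its edges with distinct endpoints on each side, the probability that the matching realises all of them is at most $(C/m)^t$ for an absolute constant $C$ (this is $(m-t)!/m!$ up to a bounded derangement factor). A cycle of length $2h$ inside a window of $K'$ parts has its $2h$ edges spread among at most $2K'-2$ of these matchings, so it is realised with probability at most $(C/m)^{2h}$, and the number of candidate cycles of length $2h$ in a given window is at most $K'm\cdot 4^{2h}$ (a start vertex among $\le K'm$, then $\le4$ choices per step). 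Summing over the $\ell$ windows and over $2\le h\le K'-1$, the expected number of bad short cycles is at most
\[
	\ell\sum_{h=2}^{K'-1}K'm\cdot 4^{2h}\Big(\tfrac{C}{m}\Big)^{2h}\;=\;\ell K' m\sum_{h=2}^{K'-1}\Big(\tfrac{4C}{m}\Big)^{2h}\;<\;1,
\]
because $m=2^{2K'(2K'-1)}$ is astronomically larger than $\ell K'=2^{2K'-1}K'$ (each summand is below $(4C/m)^4$). Hence with positive probability the random base satisfies local acyclicity, completing the construction.

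The step I expect to be the main obstacle is the transfer in the second paragraph: one must check carefully that a short cycle of $D$ really sits in a non-wrapping window of at most $K'$ consecutive parts and that such a window of $D$ is faithfully a window of the \emph{periodic} base (so that the local acyclicity of the base on $\ell$ parts applies); the order computation for $\sigma_{H_j^0}$ and the first-moment bound for the base are then routine.
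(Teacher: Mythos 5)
Your chaining step, the cyclicality via the order of $\sigma_{H_j^0}$ dividing $m!$, and the winding/window argument confining any cycle of length at most $2K'-2$ to $K'$ consecutive parts that are faithfully identified with a cyclic window of the base are all correct (this part is, if anything, a more quantitative version of the transfer step in the paper's proof). The genuine gap is in the step you called routine: the first-moment construction of the base. Your count of ``at most $K'm\cdot 4^{2h}$ candidate cycles'' enumerates closed walks in the \emph{revealed} graph (where each vertex has at most $4$ neighbours in the window); a candidate cycle to which the probability bound $(C/m)^{2h}$ may be applied is a fixed set of vertex pairs, and specifying it requires choosing the partner vertex at each step, i.e.\ roughly $4m$ (not $4$) choices per step. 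With the correct count the expected number of $2h$-cycles in a window is of order $K'(4C)^{2h}$, independent of $m$, so taking $m$ astronomically large does not help. Indeed the failure is not just in the analysis: two independent random perfect matchings between the same pair of parts contain a $4$-cycle with probability bounded away from $0$ (the permutation $(M_i'^0)^{-1}M_i^0$ has a $2$-cycle with constant probability), so with $\ell$ windows the random base contains a short cycle with high probability, and no first-moment (or naive local-lemma) argument can produce it. Establishing the high-girth base is exactly the content of the proposition, so as written the proof is incomplete; rescuing the probabilistic route would require Poisson approximation/small-cycle counting for unions of random matchings or a switching/deletion repair, none of which is sketched.

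For comparison, the paper avoids randomness altogether: it takes an explicit $2^{2K'}$-regular bipartite graph on $m+m$ vertices of girth at least $2K'$ (Lazebnik--Ustimenko), decomposes it into edge-disjoint perfect matchings via Hall's theorem, places these matchings alternately between consecutive parts to obtain two edge-disjoint $(m,\ell)$-strips, chains them exactly as you do, and then notes that any cycle inside $\ell$ consecutive parts projects to a cycle of at most the same length in the high-girth graph. If you replace your random base by such an explicit decomposition, the rest of your argument goes through essentially unchanged.
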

\begin{proof}
	For $\ell=2^{2{K'}-1}$, consider a $2^{2{K'}}$-regular bipartite graph $G$ with colour classes $X,Y$ each of size $m$ with girth at least ${2{K'}-1} + 5\geq 2{K'}$.
	The existence of such a graph follows from the work of Lazebnik and Ustimenko~\cite{LU95} since $2{K'}-1$ is odd and $2^{2{K'}}$ is a power of a prime.
	Let $\cB= \{B_1,\dots,B_{\ell}\}$ where $B_i$ is copy of $X$ if $i$ is odd and a copy of $Y$ if $i$ is even.
	We obtain an $(m,\ell)$-strip $H$ by taking pairwise edge-disjoint {perfect} matchings $M_1,\dots,M_{\ell} \subset G$ {(using Hall's  theorem)} and placing $M_i$ between $B_i$ and $B_{i+1}$.
	Since $G$ is $2^{{2{K'}}}$-regular, we find another $(m,\ell)$-strip $H'$ that is edge-disjoint with $H$.
	By the above remark applied to $H$ and $H'$ separately, we may chain ourselves an $(m,q)$-double-strip $D$ whose partition we denote by $\cB'$.

	We claim that $D$ has the desired properties.  By
        construction, the union of the edges between any
        $\ell$ consecutive parts of $\cB'$ is isomorphic to a subgraph
      of $H \cup H'$.  {Moreover, every cycle in $H \cup H'$ of length
        $p$ can be associated with a cycle in $G$ of length at most
        $p$.}  Since $G$ has girth at least $\ell \geq {2{K'}}$,
      it follows that $D$ has girth at least $2{K'}$.
\end{proof}

{
\def\DS{D} 
\def\DSm{D^-} 
\def\DSX{D^+} 
\def\DSXsplit{D^+_{\spl}} 
\def\C#1#2{(#2)^{\scriptscriptstyle #1}} 
\def\DELETABLE#1{#1}

\begin{figure}

	\begin{tikzpicture}[x=1.5cm,y=1cm,scale=0.4]
		\draw [rotate around={90:(-14,-0.5)},line width=1pt] (-14,-0.5) ellipse (6.727904070582761cm and 1.7362871833205662cm);
		\draw [rotate around={90:(-9,-0.5)},line width=1pt] (-9,-0.5) ellipse (6.727904070582705cm and 1.736287183320552cm);
		\draw [rotate around={90:(-4,-0.5)},line width=1pt] (-4,-0.5) ellipse (6.7279040705826985cm and 1.7362871833205504cm);
		\draw [line width=3pt,color=myred] (-9,4)-- (-14,4);
		\draw [line width=3pt,color=myblue] (-9,4)-- (-4,2);
		\draw [line width=3pt,color=myblue] (-14,-4)-- (-9,-4);
		\draw [line width=3pt,color=myblue] (-4,-2)-- (-9,-4);
		\draw [line width=3pt,color=myblue] (-9,4)-- (-14,0);
		\draw [line width=3pt,color=myred] (-9,4)-- (-4,4);
		\draw [line width=3pt,color=myred] (-9,-4)-- (-14,-2);
		\draw [line width=3pt,color=myred] (-9,-4)-- (-4,0);
		\draw [rotate around={0:(-9,9)},line width=1pt] (-9,9) ellipse (2.0824024587000016cm and 0.58cm);
		\draw [line width=3pt,color=mygreen] (-9,4)-- (-11,5);
		\draw [line width=3pt,color=mygreen] (-9,4)-- (-7,4.5);
		\draw [line width=3pt,color=mygreen] (-10,9)-- (-11,8);
		\draw [line width=3pt,color=mygreen] (-10,9)-- (-10,8);
		\draw [line width=3pt,color=mygreen] (-8,9)-- (-7,8);
		\draw [line width=3pt,color=mygreen] (-8,9)-- (-9,8);
		\draw [line width=3pt,dotted,color=mygreen] (-9,3)-- (-11,4.5);
		\draw [line width=3pt,dotted,color=myred] (-9,-5)-- (-14,-2);
		\draw [line width=3pt,dotted,color=myblue] (-9,3)-- (-14,0);
		\draw [line width=3pt,dotted,color=myblue] (-9,-5)-- (-14,-4);
		\draw [line width=3pt,color=myblue] (-4,2)-- (-2,2);
		\draw [line width=3pt,color=myblue] (-4,-2)-- (-2,-2);
		\draw [line width=3pt,color=myblue] (-14,-4)-- (-16,-4);
		\draw [line width=3pt,color=myblue] (-14,0)-- (-16,0);
		\draw [line width=3pt,color=myred] (-4,4)-- (-2,4);
		\draw [line width=3pt,color=myred] (-14,4)-- (-16,4);
		\draw [line width=3pt,color=myred] (-14,-2)-- (-16,-2);
		\draw [line width=3pt,color=myred] (-4,0)-- (-2,0);
		\begin{scriptsize}

			\draw [fill=black] (-9,4) circle (4.0pt);
			\draw[color=black] (-7.8,5.0) node {\Large $u_1=v_1$};
			\draw [fill=black] (-9,3) circle (4.0pt);
			\draw[color=black] (-9,1.8) node {\Large $\bar{u}_1$};
			\draw [fill=black] (-9,-5) circle (4.0pt);
			\draw[color=black] (-9.05,-5.9) node {\Large $\bar{u}_m$};
			\draw [fill=black] (-9,-4) circle (4.0pt);
			\draw[color=black] (-9.1,-2.8) node {\Large $u_m$};
			\draw [fill=black] (-4,4) circle (4.0pt);
			\draw[color=black] (-4,4.7) node {\Large $v_1$};
			\draw [fill=black] (-4,2) circle (4.0pt);
			\draw [fill=black] (-14,-4) circle (4.0pt);
			\draw [fill=black] (-14,-2) circle (4.0pt);
			\draw [fill=black] (-14,4) circle (4.0pt);
			\draw[color=black] (-14,4.7) node {\Large $v_q$};
			\draw [fill=black] (-4,-2) circle (4.0pt);
			\draw [fill=black] (-14,0) circle (4.0pt);
			\draw [fill=black] (-4,0) circle (4.0pt);
			\draw [fill=black] (-10,9) circle (4.0pt);
			\draw [fill=black] (-8,9) circle (4.0pt);
			\draw[] (-13.78,7) node {\Large $A_{q}$};
			\draw[] (-3.78,7) node {\Large $A_{2}$};
			\draw[] (-8.78,7) node {\Large $A_1$};
			\draw[] (-5.84,9) node {\Large $X$};
			\draw[color=myred] (-1,4) node {\Large $\hat{S}_1^1$};
			\draw[color=myblue] (-1,2) node {\Large $S_1^2$};
			\draw[color=mygreen] (-11.6,5.3) node {\Large $S_1^1$};
			\draw[color=myred] (-1,0) node {\Large $S_m^1$};
			\draw[color=myblue] (-1,-2) node {\Large $S_m^2$};

		\end{scriptsize}
	\end{tikzpicture}
	\caption{The absorber constructed in the proof of \cref{lem:absorber-allocation}.}
	\label{fig:absorber}
\end{figure}
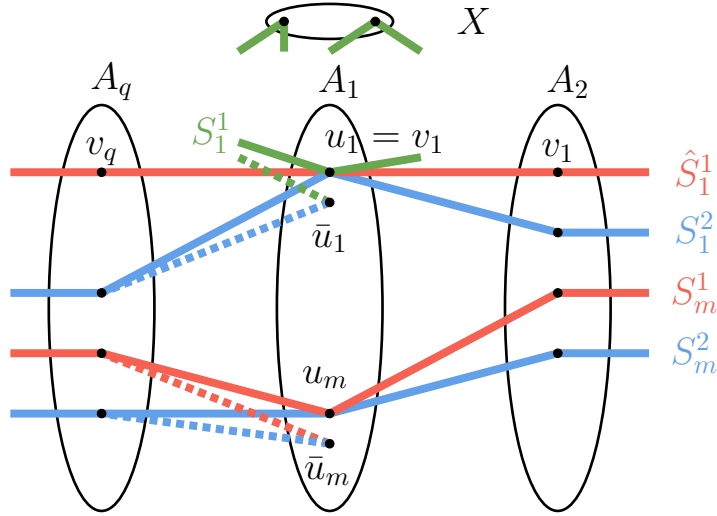

Hereafter, for $r$ divisible by $k-1$, we denote by $\C{k}{u_1,\dots,u_r}$ the $k$-uniform
loose cycle $C$ with $V(C)=\{u_i\}_{i=1}^r$, whose edges
follow the ordering $(u_1,\ldots,u_r,u_1)$.

\begin{proof}[Proof of \cref{lem:absorber-allocation}]
	Let us write $C_2=\C{k}{1,\dots,q}$ and $X=\{q+1,\dots,q+k-1\}$.  Furthermore, let
	$\sigma\colon [q+k-1] \to [q+k-1]$ be a permutation such that
	$C_1 = \C{k}{\sigma(1),\ldots,\sigma(q+k-1)}$ with $\sigma(1) = 1$.
	Without loss of generality, we can assume that the vertex $1$ has degree $2$ in both $C_1$ and $C_2$.

	We begin the construction of the absorber $A \subset R^\ast(2m,X)$ as follows.
	Denote the clusters of $R^\ast(2m,X)$ by $A_1,\dots,A_q$.
	Let $\cB=\{B_{1},\dots,B_{q}\}$,
	where $B_{i} \subset A_i$ is an arbitrary subset of $A_i$ of size $m$
	for every $i \in [q]$.
	Recall that $q =  2^{2K(k-1)-1} rm!$.
	So by \cref{prop:strip} applied with $K(k-1)$ playing the role of $K$, there is an $(m,q)$-double-strip $\DS$ with partition~$\cB$ and girth at least $2K(k-1)$.
	Denote the cycles of the two corresponding $(m,q)$-strips
	by $\widehat{S}^1_{1} ,S^1_2,\dots,S^1_{m}$
	and $S^2_{1},\dots,S^2_{m}$, respectively.

	Let us write
	$\widehat{S}^1_{1} = \C2{v_1,\dots,v_q}$
	with $v_i\in B_i$ for every $i\in [q]$.
	Moreover,
	set $v_x := x$ for every $x\in X$.
	Consider the cycle
	$S^1_{1} = \C2{v_{\sigma(1)},v_{\sigma(2)},\dots, v_{\sigma(q+k-1)}}$,
	which follows the order of $C_1$.
	Note that $S^1_{1}$ covers $X$ and exactly
	one vertex (namely $v_i$) of each cluster $B_i$ of $\cB$.
	Moreover $v(S^1_1)=v(\widehat{S}^1_1)+(k-1)$ and $v(S^1_1)$ is divisible by $k-1$.

	Let $\DSm$ be the
	$2$-graph obtained from $\DS$ by removing the
	edges of $\widehat{S}^1_1$.
	Define   $\DSX=\bigcup_{i=1}^m (S^1_i\cup S^2_i)$
	as the $2$-graph obtained from $\DSm$ by adding the vertices of $X$
	and the edges of $S^1_1$.
	Note that, in $\DSX$, every vertex of $V(D)$
	is contained in exactly two cycles, namely $S^1_j$ and  $S^2_i$ for some $i,j\in [m]$.
	Moreover, the $2m$ cycles $\{S^1_i,S^2_i\}_{i=1}^m$
	are pairwise edge-disjoint.

	Next, we
	turn the $2m$ cycles $\{S^1_i,S^2_i\}_{i=1}^m$ of $\DSX$ into
	$2m$ ($2$-uniform) paths $\{Q^1_i, Q^2_i\}_{i=1}^m$
	by splitting each vertex of $B_1$.
	More formally,
	we denote $B_1=\{u_i\}_{i=1}^m$ and $A_1\setminus B_1=\{\bar{u}_i\}_{i=1}^m$.
	Assume without loss of generality that, for every $i\in [m]$, the cycles
	$S^1_i$ and $S^2_i$ meet in $u_i\in B_1$.
	Hence, we can write
	$S^1_i = u_i L^1_i u_i$ and $S^2_i = u_iL^2_iu_i$ where $L^1_i,L^2_i$ are paths in $\DSX$.
	For every $i\in [m]$, we now define the desired $(u_i,\bar{u}_i)$-paths
	$Q^1_i := u_i L^1_i \bar{u}_i$ and
	$Q^2_i := u_i L^2_i \bar{u}_i$.
	Let $\DSXsplit=\bigcup_{i=1}^m \{Q^1_i, Q^2_i\}$ be
	the $2$-graph obtained by the union
	of those $2m$ edge-disjoint paths.
	By construction, $\DSXsplit$ satisfies the following properties:
	\begin{enumerate}[(a)]
		\item \label{itm:absorber-allocation-vx-set} $V(\DSXsplit) = X\cup A_1\cup B_2\cup\dots\cup B_q$;
		\item $\bigcup_{i=1}^m V(Q^1_i) = X\cup \bigcup_{i=1}^m V(Q^2_i)$ and $X \cap  \bigcup_{i=1}^m V(Q^2_i) = \es$;
		\item the paths $Q^j_1,\dots,Q^j_m$ are pairwise vertex-disjoint for $j \in [2]$;
		\item \label{itm:absorber-allocation-disjoint} $Q^1_i$ and $Q^2_i$ are both $(u_i,\bar{u}_i)$-paths for every $i\in [m]$.
	\end{enumerate}
	See also \cref{fig:absorber} for an illustration.
	Finally, we set $A$ to be the $k$-graph
	$\bigcup_{i=1}^m \{P^1_i, P^2_i\}$
	where $P^j_i$ is the $k$-uniform loose path with the same vertex set and vertex ordering
	as $Q^j_i$.
	Note that this is well defined since $v(Q^j_i)\equiv1\bmod{(k-1)}$.
	We proceed to show that $A$ satisfies the statement of the lemma.

	\medskip

	First, observe that $A$ satisfies the corresponding three
        properties of \cref{def:absorber} as a consequence of
        properties~\ref{itm:absorber-allocation-vx-set}--\ref{itm:absorber-allocation-disjoint}.
        Hence, $A$ is indeed an absorber rooted in $X$.  Secondly, it
        is easy to check that $A\subseteq R^*:=R^{*}(2m,{X})$.
        Indeed, define $f\colon V(R^*)\to [q+k-1]$ either by setting
        $f(w)=i$, if $w\in A_i$ for some $i\in [q]$; or by setting
        $f(w)=w$ if $w\in X$.  By construction, every edge
        $\{w_1,\dots,w_k\}\in E(P^1_1)$ arises in $R^*$ after blowing
        up the edge $\{f(w_1),\dots,f(w_k)\}\in E(C_1)$.  Similarly,
        every edge $\{w_1,\dots,w_k\}\in E(A)\sm E(P^1_1)$ arises in
        $R^*$ after blowing up the edge $\{f(w_1),\dots,f(w_k)\}\in
        E(C_2)$.  Note that at this point it is crucial that the
        vertex $1=\sigma(1)$ has degree $2$ in both $C_1$ and
        $C_2$.\footnote{This is a simple but sensitive point in
            the proof and the reason why
            \cref{lem:threshold-exploitation} is required.}  This
        shows that $A$ is an absorber rooted in $X$ and $A
          \subset R^\ast$ as desired.

	It remains to prove that if $X$ is $K'$-spread in $C_1$, then
	the absorber $A$ is $K$-sparse.
	The following claim  is the corresponding statement with respect to~$\DSXsplit$.

	\begin{claim}\label{cla:biggirth}
		Suppose $X$ is $K'$-spread in the $($$2$-uniform$)$ cycle $S^1_1$.
			Then, the girth of $\DSXsplit$ is at least $K'$,
			even after adding to $\DSXsplit$ the edges $E_X$ of any path
		$P_X$ satisfying $V(P_X)=X$.
	\end{claim}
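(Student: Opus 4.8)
The plan is to show that every cycle $Z$ in $\DSXsplit\cup E_X$ has at least $K'$ edges, for an arbitrary path $P_X$ with $V(P_X)=X$. Write $H:=\DSXsplit-E(Q^1_1)$, so that every edge of $\DSXsplit\cup E_X$ lies in $E(Q^1_1)$, $E(H)$ or $E_X$. The point of this split is that $H$ is precisely $\DSm=\DS-E(\widehat S^1_1)$ with the vertices of $B_1$ split (in particular $v_1=u_1$ split into $u_1$ and $\bar u_1$), while $Q^1_1\cup P_X$ is $S^1_1\cup P_X$ with the single non-$X$ vertex $v_1$ split. Since vertex splitting never decreases girth — a cycle in the split graph projects, via the edge‑bijective contraction map, to an edge‑simple closed walk, which contains a cycle of no greater length — and $\DSm\subseteq\DS$, one gets $\operatorname{girth}(H)\ge\operatorname{girth}(\DS)\ge 2K'$, using that $\DS$ was produced by \cref{prop:strip} with girth at least $2K(k-1)=2K'$.

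I would first dispose of the two pure cases. If $Z\subseteq H$, then $|Z|\ge 2K'$ by the above. If $Z\subseteq Q^1_1\cup P_X$, then since $P_X$ is acyclic $Z$ contains at least one maximal arc of the cycle $S^1_1$ whose two (distinct) endpoints lie in $X$; as $X$ is $K'$-spread in $S^1_1$, each of the two $S^1_1$-arcs between two distinct vertices of $X$ has at least $K'$ edges, so $|Z|\ge K'$.

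The heart of the argument is the mixed case, where $Z$ uses edges both of $H$ and of $Q^1_1\cup P_X$. I would take a maximal sub-path $\rho$ of $Z$ all of whose edges lie in $E(H)$, with endpoints $w,w'$. Each of $w,w'$ has, besides its $E(H)$-edge, a second incident edge of $Z$ lying in $E(Q^1_1)\cup E_X$; since it is incident to an edge of $H$, which avoids $X$, it does not lie in $X$, so that second edge cannot lie in $E_X$ and hence lies in $E(Q^1_1)$. Thus $w,w'\in V(Q^1_1)\setminus X=\{u_1,\bar u_1\}\cup\{v_2,\dots,v_q\}=:I$, and they are distinct (else $\rho$ is a cycle of $H$ and $Z=\rho$, contradicting the mixed case). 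It therefore suffices to prove the key estimate: \emph{every path $\rho$ in $H$ between two distinct vertices of $I$ has at least $K'$ edges}; this gives $|Z|\ge|\rho|\ge K'$. I would prove the estimate by combining two lower bounds. Assign to each vertex of $H$ its cluster index in $\mathbb Z/q$ (with $u_1,\bar u_1$, and every $u_i,\bar u_i$, receiving index $1$). Every edge of $\DS$, being a strip edge, joins consecutive clusters, so a path of length $\ell$ shifts the cluster index by at most $\ell$; hence $|\rho|\ge\delta$, where $\delta$ is the cyclic distance in $\mathbb Z/q$ between the indices of $w$ and $w'$. For the complementary bound, un-split $\rho$ (identifying $u_1$ and $\bar u_1$ with $v_1$) to obtain a walk in $\DSm$ between two vertices $v_a,v_b$ of $\widehat S^1_1$; extract a simple $v_a$--$v_b$ path $\rho^{*}$ in $\DSm$ with $|\rho^{*}|\le|\rho|$, and close $\rho^{*}$ with a shortest $\widehat S^1_1$-arc $\pi$ between $v_a$ and $v_b$ (of length $\delta$). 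Since $\rho^{*}\subseteq\DSm=\DS-E(\widehat S^1_1)$ and $\pi\subseteq\widehat S^1_1$ are edge-disjoint, $\rho^{*}\cup\pi$ is a nontrivial edge-simple closed walk in $\DS$ and thus contains a cycle; hence $|\rho|+\delta\ge|\rho^{*}|+\delta\ge\operatorname{girth}(\DS)\ge 2K'$. Combining, $|\rho|\ge\max(\delta,\,2K'-\delta)\ge K'$. (In the degenerate subcase $a=b$, i.e.\ $\{w,w'\}=\{u_1,\bar u_1\}$, the un-split walk is already a nontrivial closed walk in $\DSm$, so $|\rho|\ge\operatorname{girth}(\DSm)\ge 2K'$ directly.)

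The main obstacle is bookkeeping rather than any conceptual difficulty: one must keep track of the repeated vertex-splittings — above all that $u_1$ and $\bar u_1$ both descend to $v_1\in B_1$ — verify that the maximal-$E(H)$-arc decomposition of $Z$ behaves as claimed, and observe that edges of $E_X$ touch only $X$ and hence never occur at a transition between $H$ and $Q^1_1$, which is exactly why an arbitrary path $P_X$ causes no harm. All girth inputs for $\DS$ ultimately trace back to the bipartite graph of large girth used in \cref{prop:strip}.
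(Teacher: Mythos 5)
Your argument is correct and is essentially the paper's own proof: the same three-way case analysis, with the mixed case resolved by taking a maximal sub-path of the cycle whose edges avoid the distinguished cycle/path, bounding its length from below by the cyclic distance between the blocks of its endpoints, and closing it with a shortest arc of $\widehat{S}^1_1$ so that the girth bound $2K'$ for the double-strip $D$ from \cref{prop:strip} forces at least $K'$ edges, while the spreadness of $X$ handles the case of cycles inside $S^1_1\cup P_X$. The only difference is cosmetic: you work directly in the split graph $D^{+}_{\spl}$ (which creates the extra degenerate case $\{w,w'\}=\{u_1,\bar u_1\}$, settled via the girth of $D^-$), whereas the paper first passes to the unsplit graph $D^{+}\cup E_X$ using the same observation that vertex splitting cannot decrease girth.
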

	\begin{proof}
		Let $M$ be a cycle of $\DSXsplit\cup E_X$.
		We need to show that $v(M)\geq K'$.
		It is clear that $\DSXsplit\cup E_X$
		has girth not smaller than $\DSX\cup E_X$ and, therefore,
		we can assume that $E(M)\subseteq E(\DSX)\cup E_X
			= E(\DSm)\cup E(S^1_1)\cup E_X$.

		First, assume $E(M)\cap E(S^1_1)=\emptyset$.
		Since the edges of $\DSm$ do not intersect $X$,
		we get that $E(M)\subseteq E(\DSm)\subseteq E(D)$,
		which has girth at least $2K'\geq K'$, as required.

		Next, assume $E(M)\cap E(\DSm)=\emptyset$.  In that
                case, we have $E(M)\subseteq E(S^1_1)\cup E_X$.  If
                $E(M)\cap E_X=\emptyset$, then ${M=S^1_1}$, which has
                order $v(M)=v(S^1_1)>q \geq K'$.  If $E(M)$
                  contains an edge in $E_X$, then $M$ contains an
                  $(x_1,x_2)$-path $P\subseteq S^1_1$ where $x_1,x_2
                  \in X$.  But since $X$ is $K'$-spread in $S^1_1$,
                  the path $P$ has at least $K'$ edges.  Thus,
                  $v(M)\geq v(P)\geq K'$, as required.

		We can now assume that $E(M)$ contains edges from \emph{both}
		$E(S^1_1)$ and~$E(\DSm)$.
		Let $P\subseteq M$ be a $(u,w)$-path of maximum order
		satisfying~$E(P)\subseteq E(\DSm)$.
		Let $e\in E(M)\setminus E(P)$ be the (other) edge of $M$
		incident with~$u$.
		First, note that $e\notin E(\DSm)$, as
		otherwise $P$ would not have maximum order.
		Also, we have~$e\notin E_X$,
		since $ u\in V(P) \cap e$ and no
		vertex of $P$ is in~$X$.
		Therefore, we must have $e\in E(S^1_1)$, which
		implies $u=v_i\in B_i$, for some $i\in [q]$.
		Analogously we have $w=v_j\in B_j$,
		for some $j\in [q]$.

		Without loss of generality we can assume that~$i<j$.
		Recall that adjacent vertices of $P\subseteq \DSm$ must
		belong to adjacent blocks of~$\cB$. In particular,
		$e(P)\geq\dist(i,j)$,
		where $\dist(i,j):=\min\{j-i,q+i-j\}$.

		Finally, let $P' \subset
                \widehat{S}^1_1=\C2{v_1,\dots,v_q}$ be a
                $(v_i,v_j)$-path satisfying~$e(P')=\dist(i,j)$. Note
                that $P\cup P'$ is a cycle of $\DS$ and, therefore,
                must have order at least $2K'$.  Using that
                $e(P)\geq\dist(i,j) = e(P')$, it follows that the
              order of $P$ must be at least~$K'$, which
              implies~$v(M)\geq v(P)\geq K'$.
	\end{proof}

	Let us show how \cref{cla:biggirth} helps us to finish the
        proof.  Suppose that $X$ is $K$-spread in the ($k$-uniform)
        cycle $C_1$, and let $A'$ be the hypergraph obtained from $A$
        after adding the extra edge $X=\{q+1,\dots, q+k-1\}$.  Since
        $X$ is $K'$-spread in $C_1$, it follows that $X$ is
        $K'$-spread in the ($2$-uniform) cycle ${S^1_1}$, as in
          the hypothesis of\cref{cla:biggirth}.

          Now let $C_{\text{Berge}}$ be a Berge cycle in $A'$ of
          minimum length.  Hence we need to show that
          $v(C_{\text{Berge}})\geq K' = K (k-1)$.  Our strategy is to
          identify a $2$-uniform cycle $M$ in $\DSXsplit \cup P_x$,
          whose vertices are contained in $C_{\text{Berge}}$ where
          $P_X$ is a path as in \cref{cla:biggirth}. If successful,
        this shows that $K'\leq v(M)\leq v(C_{\text{Berge}})$ by
        \cref{cla:biggirth}, and we are done.

	To find such a cycle $M$, we proceed as follows. If
          $X\in C_{\text{Berge}}$, consider a path $P_X$ as in
          \cref{cla:biggirth} with the same endpoints of $X$ along
          $C_{\text{Berge}}$.  Next, we construct a ($2$-uniform)
        graph $M'$ by replacing each edge $e\in
        E(C_{\text{Berge}})$ either by the corresponding $(k-1)$ edges
        of $\DSXsplit$ that originated $e$ (if~$e\in A$); or by the
        $(k-2)$ edges of $P_X$ (if $e=X$).  Since $\DSXsplit$ is
          the union of pairwise edge-disjoint paths and $P_X$ is
          edge-disjoint from $\DSXsplit$, it follows
        that $M'$ must contain a cycle $M$, as desired.
\end{proof}

\section{Exploiting the threshold}\label{sec:exploiting-threshold-proof}

In this section, we show \cref{lem:threshold-exploitation}.
The proof uses largely the same strategy as the one of \cref{thm:main}.
But since we are in the dense setting, many of the building blocks are much simpler.
The main conceptual difference between the two proofs is the way we construct the absorbers.
We require the following two lemmas, which are dense versions of \cref{lem:absorption-sparse,lem:cover-sparse}.

\begin{lemma}[Dense Absorption Lemma]\label{lem:absorption-dense}
	Let $1/n \ll \eta \ll {\alpha},\, 1/k,\, 1/d,\, \gamma$.
	Let $G$ be an $n$-vertex $k$-graph with $\delta_d(G) \geq (\mu_d(k) + \gamma )  \binom{n-d}{k-d}$.
	Then there is a set $A \subset V(G)$ and two vertices $u,v \in A$ such that
	\begin{itemize}
		\item $|A| \leq \alpha n$;
		\item for any subset $W \subset V(G-A)$ with $|W| \leq \eta n$ divisible by $k-1$, the induced graph $G[A \cup W]$ has a loose $(u,v)$-path covering exactly $A \cup W$.
	\end{itemize}
\end{lemma}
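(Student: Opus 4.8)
The plan is to carry out the template absorption method, following the proof of the Sparse Absorption Lemma (\cref{lem:absorption-sparse}) almost line for line but in the dense regime, where the Weak Hypergraph Regularity machinery and all genuinely probabilistic estimates drop out, replaced either by elementary facts or by the $p=1$ specialisations of the sparse lemmas. Since enlarging $\alpha$ only weakens the conclusion, I would first assume $\alpha\ll 1/k,\,1/d,\,\gamma$ (shrinking the admissible range of $\eta$ accordingly), then fix constants $1/M\ll\alpha$ and $\nu$ with $\eta\ll\nu\ll 1/M,\,\alpha$, and apply \cref{lem:template} with $r=k-1$ and $z=\nu n$ to obtain a $(k-1,\nu n)$-template $T$ with $v(T),\,e(T)\le L\nu n$ and $v(T)\equiv 0\bmod(k-1)$, where $L=L(k)$.

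By \cref{lem:richness-property} applied with $p=1$ --- legitimate with $G$ itself as the spanning subgraph since $\delta_d(G)\ge\gamma\binom{n-d}{k-d}$ --- together with \cref{lem:random-subset-degrees}, a uniformly random $\nu n$-set $Z\subset V(G)$ has, with high probability, both of the following: every $W\subset V(G)\setminus Z$ with $|W|\le\eta n$ is covered by a matching in $G$ each edge of which meets $W$ in one vertex and $Z$ in $k-1$ vertices; and $\delta_d(G[Z])\ge(\mu_d(k)+\gamma/2)\binom{\nu n-d}{k-d}$. I fix such a $Z$, inject the flexible set of $T$ onto $Z$ (and $V(T)$ into $V(G)$ arbitrarily otherwise), and fix $u'\in Z$ and $v\in V(G)\setminus V(T)$. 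For every admissible $W$, covering $W':=W\cup\{v\}$ by the matching above and then chaining its edges into a single loose $(u',v)$-path via \cref{lem:connection-dense} --- applied repeatedly inside induced subgraphs of $Z$ with the at most $\sqrt\eta\,n$ already used vertices deleted, which still have minimum $d$-degree at least $(\mu_d(k)+\gamma/4)\binom{\cdot-d}{k-d}$ --- produces a loose $(u',v)$-path $P_{W'}$ of order at most $\sqrt\eta\,n$ with $V(P_{W'})\subset Z\cup W'$, exactly as in the sparse proof.

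The other ingredient is a \emph{simple dense absorber lemma}: in a $k$-graph of minimum $d$-degree at least $(\mu_d(k)+\gamma')\binom{\cdot-d}{k-d}$, every $(k-1)$-set $X$ is the root of an absorber (in the sense of \cref{def:absorber}) of order at most $M$, and this can be arranged inside the graph minus any sublinear set that $X$ avoids. I cannot quote the Dense Absorber Lemma (\cref{lem:absorber-dense}) here, because its proof rests --- via \cref{lem:threshold-exploitation} --- on the statement now being proved; instead this simple version has to be established by a self-contained simplification of that argument: pass to a bounded-order random induced subgraph (\cref{lem:random-subset-degrees}), blow up via \cref{lem:blow-up-new}, and apply \cref{lem:absorber-allocation}, producing the two loose Hamilton cycles required there directly from the definition of $\mu_d(k)$, without the $K$-spreadness and root-placement refinements that \cref{lem:threshold-exploitation} supplies. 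Granting this, for each $e\in E(T)$ I obtain an absorber $A_e$ of order at most $M$ rooted in $e$ inside $G$ minus the vertices already used (at most $2LM\nu n\le\alpha n$ of them, so the ambient minimum $d$-degree is still at least $(\mu_d(k)+\gamma/2)\binom{\cdot-d}{k-d}$), choosing the $A_e$ one at a time to keep them pairwise disjoint and disjoint from $V(T)$. Writing $\cP_e$ for the passive paths of $A_e$, I then fix $u$ outside everything constructed so far and use \cref{lem:connection-dense} repeatedly to join all paths of all the $\cP_e$ into one loose $(u,u')$-path $P_1$ of order at most $2(4(k-1)+1)ML\nu n\le\alpha n$ meeting $V(T)\cup\{v\}$ only in $u'$; finally I set $A:=V(P_1)\cup V(T)\cup\{v\}$, so that $|A|\le\alpha n$.

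The absorption property is then verified exactly as in the sparse proof: given $W\subset V(G)\setminus A$ with $|W|\le\eta n$ divisible by $k-1$, I put $W':=W\cup\{v\}$, concatenate $P_{W'}$ with $P_1$ at $u'$ to get a loose $(u,v)$-path $P$ covering $W$, and observe that $V(P)\cap V(T)=V(P_{W'})\setminus W'$ is a subset of $Z$ of size less than $|Z|/2$ and of cardinality $\equiv 0\bmod(k-1)$ (a loose path has order $\equiv 1$, and $|W'|\equiv 1\bmod(k-1)$); hence $v(T)-|V(P)\cap V(T)|\equiv 0\bmod(k-1)$, and since $Z$ is the flexible set of $T$ there is a perfect matching $\cM$ of $T-V(P)$. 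Activating each $A_e$ with $e\in\cM$ --- replacing $\cP_e$ inside $P$ by the active paths of $A_e$ --- and leaving the other absorbers passive produces a loose $(u,v)$-path $P'$ with $V(P')=A\cup W$, which lies in $G[A\cup W]$ and completes the argument. The only genuinely new work, and the main obstacle, is the simple dense absorber lemma: because of the circular dependency with \cref{lem:threshold-exploitation} one must redo a simplified version of the construction behind \cref{lem:absorber-dense}, and the delicate step there is to produce, inside the blown-up auxiliary graph of \cref{lem:blow-up-new}, a loose Hamilton cycle in which a single prescribed vertex has degree $2$, and to check that this is enough to drive \cref{lem:absorber-allocation} --- it being essential that the prescribed vertex have degree $2$ in \emph{both} of the cycles $C_1$ and $C_2$. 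Everything else is routine and parallels the sparse argument; the one further point needing attention is the bookkeeping that deleting up to $\alpha n$ vertices, with $\alpha$ small in terms of $\gamma$, never drags the minimum $d$-degree below $(\mu_d(k)+\gamma/2)\binom{\cdot-d}{k-d}$.
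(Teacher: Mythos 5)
The scaffolding of your argument is exactly the paper's: the paper proves \cref{lem:absorption-dense} by rerunning the proof of \cref{lem:absorption-sparse} verbatim once a deterministic analogue of \cref{cla:absorption-properties-random-graph} is in place (its \cref{lem:absorption-properties-dense}), and your treatment of the template, the richness property, the dense connections and the final activation step is in line with that and is fine. The genuine gap is in the one ingredient that is actually new, your ``simple dense absorber lemma''. You propose to obtain it by extracting a bounded-order subgraph, blowing up via \cref{lem:blow-up-new}, taking Hamilton cycles $C_1$ (in $T$) and $C_2$ (in $T-X$) straight from the definition of $\mu_d(k)$, and then feeding them to \cref{lem:absorber-allocation}. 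But \cref{lem:absorber-allocation} requires a vertex that has degree $2$ in \emph{both} $C_1$ and $C_2$, and the bare definition of $\mu_d(k)$ gives no control over which vertices of a Hamilton cycle end up with degree $2$; securing precisely that control is the whole point of \cref{lem:threshold-exploitation}, which, as you correctly note, is off-limits here because its proof relies on the lemma you are proving. A counting argument does not rescue this either: the degree-$2$ vertices form only a $1/(k-1)$-fraction of each cycle and the two sets may be disjoint. You flag this as ``the delicate step'' and ``the main obstacle'' but leave it unresolved, so the proof is incomplete at its crucial point.

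The paper closes this gap by a different construction that bypasses \cref{lem:absorber-allocation} entirely. Its \cref{lem:absorber-dense-simple} applies the blow-up with $m=2$ (via \cref{lem:blow-up-new-simple}) and builds the four paths of the absorber explicitly inside $T^\ast(2,X)$: $P_1^1$ is an ordered edge of $C_2$; $P_2^1$ keeps its first $k-1$ vertices, switches to the duplicate copy of its last vertex and runs around $C_2$ through the duplicates; $P_2^2$ is an edge of $C_1$ disjoint from $V(P_1^1)\cup X$; and $P_1^2$ runs around $C_1$ through the duplicates. This uses only the existence of the two Hamilton cycles --- no degree-$2$ coincidence, no spreadness, no $K$-sparseness --- which is exactly what makes the circular dependency harmless. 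Without supplying this construction (or some substitute argument for the absorber), your proposal does not go through.
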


The proof of \cref{lem:absorption-dense} is deferred to the next subsection.

\begin{lemma}[Dense Cover Lemma]\label{lem:cover-dense}
	Let {$1/n \ll \alpha \ll 1/k,1/d,\gamma$ and let $1/n \ll \eta$}.
	Let $G$ be an $n$-vertex $k$-graph with $\delta_d(G) \geq (\mu_d(k) + \gamma ) \binom{n-d}{k-d}$,  $Q \subset V(G)$ with $|Q| \leq \alpha n$ and $u,v \in V(G-Q)$.
	Then there is a loose $(u,v)$-path $P$ in $G-Q$ that covers all but
	$\eta n$ vertices of $G - Q$.
\end{lemma}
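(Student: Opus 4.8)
The plan is to cover almost all of $G-Q$ by one loose path obtained from the definition of $\mu_d(k)$, and then to graft the two prescribed vertices $u,v$ onto its two ends with the help of the Dense Connection Lemma (\cref{lem:connection-dense}).  Since shrinking $\eta$ only strengthens the conclusion, I would also fix a constant $\nu$ with $\nu\le\eta/3$ and $\nu\ll1/k,\,1/d,\,\gamma$, a constant $\rho\ll\nu$, and assume $n$ is large throughout.  A set $C$ of size $\nu n$ will be reserved purely for making connections; it is small enough relative to $\gamma$ not to spoil the degree condition, while the $\eta n$ vertices that the conclusion permits us to miss comfortably absorb it.

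First I would set $G_1=G-Q$, so that $n_1:=|V(G_1)|\ge(1-\alpha)n$ and, since $\alpha\ll\gamma,\,1/k$, deleting $Q$ costs every $d$-set at most $|Q|\binom{n-d-1}{k-d-1}\le(\gamma/10)\binom{n_1-d}{k-d}$ edges, whence $\delta_d(G_1)\ge(\mu_d(k)+2\gamma/3)\binom{n_1-d}{k-d}$.  Next I would choose $C\subseteq V(G_1)\setminus\{u,v\}$ with $|C|=\lfloor\nu n\rfloor$ uniformly at random; by \cref{lem:random-subset-degrees} (with $\delta=0$ and gap $2\gamma/3$), together with a standard concentration estimate for single vertices and a union bound, w.h.p.\ $C$ satisfies $\delta_d(G_1[C])\ge(\mu_d(k)+\gamma/2)\binom{|C|-d}{k-d}$ and $\deg_C(x)\ge(\gamma/2)\binom{|C|-1}{k-1}$ for every vertex $x$ of $G_1$; fix such a $C$.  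This yields a connection primitive: given vertices $x,y$ and any $R\subseteq C$ with $|R|\le\rho n$, one can pick disjoint edges $e_x\ni x$ and $e_y\ni y$ of $G_1$ with $(e_x\cup e_y)\setminus\{x,y\}\subseteq C\setminus R$, choose $x'\in e_x\setminus\{x\}$ and $y'\in e_y\setminus\{y\}$, and apply \cref{lem:connection-dense} inside $G_1[(C\setminus(R\cup e_x\cup e_y))\cup\{x',y'\}]$ — which still has minimum $d$-degree at least $(\mu_d(k)+\gamma/3)\binom{|C|-d}{k-d}$ — to obtain a loose $(x',y')$-path of order $4(k-1)+1$; concatenating $e_x$, this path and $e_y$ gives a loose $(x,y)$-path of order $6(k-1)+1$ whose internal vertices all lie in $C\setminus R$.

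Now I would build the long path and attach the ends.  Put $G_4=G_1-(C\cup\{u,v\})$; since $\nu\ll\gamma$ we still have $\delta_d(G_4)\ge(\mu_d(k)+\gamma/2)\binom{|V(G_4)|-d}{k-d}$, and deleting at most $k-2$ further vertices $R_0$ gives a graph $G_5$ with $|V(G_5)|$ divisible by $k-1$ and $\delta_d(G_5)\ge(\mu_d(k)+\gamma/3)\binom{|V(G_5)|-d}{k-d}$, so by the definition of $\mu_d(k)$ the graph $G_5$ has a loose Hamilton cycle $L$.  Deleting one edge $g$ of $L$ turns it into a loose path $L^0$ with two distinct endpoints $y_1,y_2$ (the degree-$2$ vertices of $g$), covering all of $V(G_5)$ except the $k-2$ vertices private to $g$.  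Applying the connection primitive first to the pair $(u,y_1)$, and then — with $R_u\subseteq C$ denoting the at most $6k$ reservoir vertices used by the first path — to the pair $(v,y_2)$ with $R=R_u$, I get loose paths $P_u$ from $u$ to $y_1$ and $P_v$ from $v$ to $y_2$ with $V(P_u)\subseteq C\cup\{u,y_1\}$ and $V(P_v)\subseteq(C\setminus R_u)\cup\{v,y_2\}$.  As $V(L^0)\subseteq V(G_1)\setminus(C\cup\{u,v\}\cup R_0)$, the three pieces $P_u,L^0,P_v$ are pairwise vertex-disjoint apart from the single shared vertices $y_1$ and $y_2$, so $P:=P_u\cup L^0\cup P_v$ is a loose $(u,v)$-path in $G-Q$; the only vertices it misses lie in $C$, in $R_0$, or are private to $g$, hence at most $\nu n+2(k-1)\le\eta n$ vertices of $G-Q$, as required.

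I expect the main obstacle to be precisely the point that forces the reservoir into the argument: a loose Hamilton cycle furnished by $\mu_d(k)$ comes with no control over its vertices, yet we need a loose path with the two \emph{prescribed} endpoints $u$ and $v$, and the endpoints of any sub-path cut out of a cycle are necessarily ``adjacent'' across the deleted edge, so one genuinely must reroute through vertices lying off the long path.  The accompanying care is (i) keeping the reservoir small compared with $\gamma$ so that removing it preserves the degree hypothesis — which is harmless because the conclusion already tolerates missing $\eta n$ vertices — and (ii) verifying that $G_1$ restricted to the reservoir still satisfies the hypothesis of \cref{lem:connection-dense}, which is exactly what the two degree properties secured when choosing $C$ are for.
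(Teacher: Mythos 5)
Your proposal is correct and follows essentially the same route as the paper's proof: reserve a small vertex set $C$ (of size a small fraction of $\eta n$) with inherited minimum $d$-degree inside $C$ and vertex degrees into $C$, obtain an almost-spanning loose path in $G-Q-C-\{u,v\}$ directly from the definition of $\mu_d(k)$ after a divisibility adjustment, and then attach $u$ and $v$ to its two ends via edges into $C$ combined with \cref{lem:connection-dense} applied inside $G[C]$. The only differences are cosmetic (your explicit reservoir parameter $\rho$ and the bookkeeping set $R_u$ versus the paper's terser "apply the observation twice"), so no further comment is needed.
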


\begin{proof}
	Let $G'=G-Q$, and note that $G'$ satisfies $\delta_d(G') \geq (\mu_d(k) + \gamma/2 ) \binom{n-d}{k-d}$ since $\alpha$ is very small in comparison to $\gamma$.
	For $\eta'=\eta/2$, we begin by selecting an $\eta'n$-set $C\subset V(G')$ such that $\delta_d(G'[C]) \geq (\mu_d(k) + \gamma/4) \binom{\eta'n}{k-d}$ and $\deg_{G'[C \cup \{w\}]}(w) \geq (\mu_d(k) + \gamma/4) \binom{\eta' n}{k-1}$ for every $w \in V(G')$.
	Such a set can be found using a standard concentration inequality (see for instance~\cite[Corollary 2.2]{GIK+17}).

	Let $G''=G'-C-u-v$, and note that $\delta_d(G'') \geq (\mu_d(k) + \gamma/4) \binom{n-d}{k-d}$.
	Hence $G''$ contains a loose cycle that covers all but
	$k-2$ of its vertices.
	From this we obtain a loose path $(u',v')$-path $P$ of order at least $(1-\eta/2)n$ with $u',v' \in V(G'')$.

	To finish, we apply \cref{lem:connection-dense} with $G'[C]$ playing the role of $G'$ to find a loose $(u,v)$-path that contains $P$ as a subpath.
	Indeed, by choice of $C$ any vertex $x \in V(G')$ is on an edge $e$ such that $|e \cap C| \geq k-1$.
	We can therefore connect $x$ to any vertex within $C$ using at most $5(k-1)$ further vertices.
	Applying this observation twice gives the desired connections.
\end{proof}

Now we are ready to show the main result of this subsection.

\begin{proof}[Proof of \cref{lem:threshold-exploitation}]
	Introduce $\gamma,\, K,\, t, \, \alpha,\, \eta,\, n$ with $1/n \ll \eta \ll \alpha \ll 1/t,\, 1/K,\, 1/k,\, 1/d, \,\gamma$. Set $\mu = \mu_d(k)$.
	Let $G$ be an $n$-vertex $k$-graph and $X=\{x_1,\dots,x_t\} \subset V(G)$ be a $t$-set such that $\delta_d(G-X)\geq (\mu+\gamma)\binom{n-d}{k-d}$ and $\deg_G(x) \geq (\mu+\gamma)\binom{n-1}{k-1}$ for every $x \in X$.
	We have to show that $G$ contains a loose Hamilton cycle~$C$ in which $X$ is $K$-spread.

	We begin by applying \cref{lem:absorption-dense} with $G-X$ playing the role of $G$ to obtain $A \subset V(G-X)$ and two vertices $u,v' \in A$ such that $|A| \leq \alpha n$ and for any subset $W \subset V(G-X-A)$ with $|W| \leq \eta n$ divisible by $k-1$, the induced graph $G[A \cup W]$ has a loose $(u,v')$-path covering exactly $A \cup W$.

	Let $v \in V(G-X-A)$.
	We cover $X$ by adding its vertices to a $(v',v)$-path $P$ of order at most $3Kt$ that shares with $A$ only the initial vertex $v'$.
	Moreover, the vertices of $X$ shall be at distance at least $K$ in $P$ and each have degree $2$.
	To this end, select pairwise disjoint $(v_i,v_i')$-paths $P_i \subset G-A$ of order $2$ such that $x_i$ has degree $2$ in $P_i$ for $1\leq i \leq t$.
	(This is possible due the minimum degree assumptions on $X$.)
	Set $v_0'=v'$ and $v_{t+1}=v$.
	For each $0 \leq i \leq t$, we (repeatedly) apply~\cref{lem:connection-dense} to obtain a loose $(v'_i,v_{i+1})$-path $P_i'$ of order at least $K$ and at most $K+4k$.
	Note that these paths can be chosen to be pairwise disjoint.
	(This is possible since $\delta_d(G-X)\geq (\mu+\gamma)\binom{n-d}{k-d}$.)
	It follows that the concatenation of the paths $P_i'$ and $P_i$ forms the desired path~$P$.

	Set $A' = (A \cup V(P)) \sm \{u,v\}$, and note that $|A'|\leq 2 \alpha n$.
	Next, we use  \cref{lem:cover-dense} with $A',\, 2\alpha$ playing the role of $Q,\, \alpha$ to find a loose $(v,u)$-path $P'$ in $G-A'$ that covers all but
	$\eta n$ vertices of $G - A'$.
	By choice of $A$, we may integrate the remaining vertices into a loose path which together with $P \cup P'$ forms a loose Hamilton cycle of $G$.
\end{proof}

We remark that the same argument also gives the following result, which is of independent interest.
More details on this follow in \cref{sec:conclusion}.
\begin{theorem}\label{thm:hamilton-connectedness}
	Let $1/n \ll 1/k,\,\gamma$ with $n-1$ divisible by $k-1$.
	Let $G$ be an $n$-vertex graph with $\delta_d(G) \geq (\mu_d(k) + \gamma) \binom{n-d}{k-d}$.
	Then $G$ contains a loose Hamilton path between any two distinct vertices.
\end{theorem}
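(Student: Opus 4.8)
The plan is to run the same absorption scheme as in the proof of \cref{lem:threshold-exploitation}, but to aim directly at a loose Hamilton $(a,b)$-path instead of a Hamilton cycle with spread endpoints. Fix the two prescribed distinct vertices $a,b$ and introduce constants with $1/n \ll \eta \ll \alpha \ll 1/k,\,1/d,\,\gamma$. First I would apply \cref{lem:absorption-dense} with $G-\{a,b\}$ in the role of $G$ (the minimum $d$-degree hypothesis survives the deletion of two vertices since $n$ is large), obtaining a set $A\subset V(G)\sm\{a,b\}$ with $|A|\le\alpha n$ and two distinct vertices $u,v'\in A$ such that $G[A\cup W]$ has a loose $(u,v')$-path covering exactly $A\cup W$ for every $W\subset V(G)\sm(A\cup\{a,b\})$ with $|W|\le\eta n$ divisible by $k-1$. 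Taking $W=\es$ records in particular that $|A|\equiv1\pmod{k-1}$.

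Next I would build the two ``ends'' of the path. Using \cref{lem:connection-dense} applied to $G-(A\sm\{v'\})-b$, whose minimum $d$-degree is still at least $(\mu_d(k)+\gamma/2)\binom{v(G^\circ)-d}{k-d}$ because $\alpha\ll\gamma$, I obtain a loose $(a,v')$-path $P_1$ of order $4(k-1)+1$ with $V(P_1)\cap(A\cup\{b\})=\{v'\}$. Setting $A'=(A\cup V(P_1))\sm\{u\}$, which has size at most $2\alpha n$ and misses both $u$ and $b$, I then invoke \cref{lem:cover-dense} with $A'$ and $2\alpha$ in the roles of $Q$ and $\alpha$ and with endpoints $u,b$, producing a loose $(u,b)$-path $P_2$ in $G-A'$ that covers all but $\eta n$ vertices of $G-A'$.

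Finally, let $W=V(G)\sm(V(P_1)\cup V(P_2)\cup A)$ be the still-uncovered set; then $|W|\le\eta n$. Because $P_2$ avoids $A'\supseteq(A\cup V(P_1))\sm\{u\}$, the sets $V(P_1),V(P_2),A$ are pairwise disjoint apart from $V(P_1)\cap A=\{v'\}$ and $V(P_2)\cap A=\{u\}$ (using $u\neq v'$), so $n=|V(P_1)|+|V(P_2)|+|A|-2+|W|$; since each of $|V(P_1)|,|V(P_2)|,|A|$ and $n$ is $\equiv1\pmod{k-1}$, this forces $|W|\equiv0\pmod{k-1}$, and clearly $W\subset V(G)\sm(A\cup\{a,b\})$. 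Hence the absorption property of $A$ yields a loose $(u,v')$-path $P_A$ covering exactly $A\cup W$, and concatenating $P_1$ (from $a$ to $v'$), $P_A$ (from $v'$ to $u$) and $P_2$ (from $u$ to $b$) — legitimate because the three vertex sets meet only in the junction vertices $v',u$, which become degree-$2$ vertices — produces the desired loose Hamilton $(a,b)$-path. I expect no serious obstacle: the combinatorics is entirely driven by \cref{lem:absorption-dense,lem:connection-dense,lem:cover-dense}, and the only delicate point is the divisibility bookkeeping guaranteeing $k-1\mid|W|$ (so that $W$ is an admissible input to the absorber) together with keeping the three pieces vertex-disjoint away from their shared endpoints.
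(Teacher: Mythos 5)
Your proposal is correct and follows exactly the route the paper intends: the paper derives this theorem by remarking that the argument for \cref{lem:threshold-exploitation} (absorber from \cref{lem:absorption-dense}, connections via \cref{lem:connection-dense}, near-spanning path via \cref{lem:cover-dense}, then absorption of the leftover set) applies verbatim, and your write-up is precisely that scheme adapted to the two prescribed endpoints, including the correct divisibility bookkeeping for $|W|$.
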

The rest of this section is dedicated to the proofs of \cref{lem:connection-dense,lem:absorption-dense}.

\subsection{Connectivity}
\label{sec:connection-lemma-dense-proof}

Here we prove \cref{lem:connection-dense}.
We require the following fact which follows from the Kruskal--Katona theorem.
(It is easiest to see using Lovász's formulation.)

\begin{proposition}\label{prop:mini-KK}
	For $1/n \leq \eps,\,\delta,\, 1/k$, let $G$ be a $k$-graph with at least $(\delta+\eps)\binom{n}{k}$ edges.
	Then the edges of $G$ span at least
		{$\delta^{1/k} n$} vertices.
\end{proposition}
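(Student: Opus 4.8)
The plan is to reduce the statement to the trivial fact that a $k$-graph on $m$ vertices has at most $\binom{m}{k}$ edges. Let $m$ be the number of vertices of $G$ that lie in some edge; after discarding the isolated vertices we may regard $G$ as a $k$-graph on $m$ vertices. Since the hypotheses force $n\ge k$ and $(\delta+\eps)\binom{n}{k}>0$, we have $e(G)>0$, hence $m\ge k$, and therefore
\[
(\delta+\eps)\binom{n}{k}\le e(G)\le\binom{m}{k}.
\]

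Next I would compare the two binomial coefficients. Writing $\binom{m}{k}\big/\binom{n}{k}=\prod_{i=0}^{k-1}\frac{m-i}{n-i}$ and using that $\frac{m-i}{n-i}\le\frac{m}{n}$ for every $0\le i\le k-1$ (which holds because $m\le n$), we obtain $\binom{m}{k}\le (m/n)^{k}\binom{n}{k}$. Combining this with the inequality above yields $(m/n)^{k}\ge\delta+\eps>\delta$, so $m>\delta^{1/k}n$, which is the claim. In fact this argument has slack to spare: neither the error term $\eps$ nor the constraint $1/n\le\eps,\delta$ is actually used.

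Alternatively, the same estimate can be extracted from the Lovász formulation of the Kruskal–Katona theorem: if $e(G)=\binom{x}{k}$ for a real $x\ge k$, then iterating the shadow bound shows that the set of vertices covered by $E(G)$ has size at least $\binom{x}{1}=x$, while $\binom{x}{k}\le (x/n)^{k}\binom{n}{k}$ again gives $x>\delta^{1/k}n$. I do not expect any genuine obstacle here, since the result is elementary; the only points requiring a little care are getting the direction of the binomial inequality right and keeping in mind that the ambient vertex count $n$ in the hypothesis is in general larger than the size of the support of $G$.
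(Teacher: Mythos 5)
Your argument is correct, and it is in fact more elementary than the route the paper has in mind: the paper states this proposition without proof, as a consequence of the Kruskal--Katona theorem in Lovász's formulation (iterate the shadow bound down to the $1$-shadow, i.e.\ the set of covered vertices, and then compare binomial coefficients). Your primary argument observes that for the $1$-shadow no shadow theorem is needed at all: if the edges of $G$ cover $m$ vertices then trivially $e(G)\le\binom{m}{k}$, and the same comparison $\binom{m}{k}\le (m/n)^{k}\binom{n}{k}$ that the Kruskal--Katona route would also end with gives $(m/n)^{k}\ge\delta+\eps$, hence $m>\delta^{1/k}n$. The estimate $\prod_{i=0}^{k-1}\frac{m-i}{n-i}\le(m/n)^{k}$ is valid since $k\le m\le n$ (and $m\ge k$ follows from $e(G)>0$, which your hypotheses guarantee), so there is no gap. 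Your side remark is also accurate: the $\eps$ and the lower bounds $1/n\le\eps,\delta$ give slack that the statement does not actually need. In short, you prove the same inequality the paper asserts, but you replace the appeal to Kruskal--Katona by the trivial bound on the number of edges spanned by $m$ vertices; what the paper's citation buys is only brevity, while your version is self-contained.
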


We also need the following simplified version of \cref{lem:blow-up-new}, whose proof we omit as it follows almost line by line the original argument.

\begin{lemma}[Simple blow-up setup]\label{lem:blow-up-new-simple}
	Let $1/n \ll 1/s \ll \gamma,\, 1/k,\, 1/d,\, 1/t$, $\mu \geq 0$ and $1/n \ll 1/m$.
	Let $G$ be an $n$-vertex $k$-graph with $\delta_d(G) \geq (\mu + \gamma) \binom{n-d}{k-d}$.
	Let $X \subset V(G)$ be a $t$-set.
	Then there is an $s$-vertex $k$-graph $R$ with $X \subset V(R)$ such that
	\begin{enumerate}[\upshape(1)]
		\item \label{itm:R-good-degree-simple} $\delta_d(R) \geq (\mu + \gamma/2) \binom{s-d}{k-d}$;
		\item \label{itm:R-many-bu-copies-simple} $G$ contains a copy of $R^\ast(m,X)$ rooted in $X$.
	\end{enumerate}
\end{lemma}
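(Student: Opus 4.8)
The plan is to follow the proof of \cref{lem:blow-up-new} essentially line by line, the only simplification being that here we carry the single minimum $d$-degree condition $\delta_d(\cdot)\geq(\mu+\gamma/2)\binom{s-d}{k-d}$ throughout, in place of the separate conditions on $R-X$ and on the vertices of $X$. First I would establish the analogue of \cref{lem:supersaturation}: for any $t$-set $X\subset V(G)$ there is an $s$-vertex $k$-graph $R$ with $X\subset V(R)$ such that $\delta_d(R)\geq(\mu+\gamma/2)\binom{s-d}{k-d}$ and $G$ contains at least $2^{-s^k}n^{s-t}$ copies of $R$ rooted in $X$. To see this, let $\cR$ be the finite set of isomorphism types of $s$-vertex $k$-graphs obeying this degree bound, and choose $S\subset V(G)$ uniformly at random among the $s$-sets containing $X$, so that $S\setminus X$ is a uniformly random $(s-t)$-set of $V(G)\setminus X$. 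Running the proof of \cref{lem:random-subset-degrees} for this slightly constrained random set — the only change being that the $d$-sets meeting $X$ are now partly deterministic, which is harmless since by hypothesis every $d$-set of $G$, and in particular every such $d$-set, already has degree at least $(\mu+\gamma)\binom{n-d}{k-d}$ — shows that $\delta_d(G[S])\geq(\mu+\gamma/2)\binom{s-d}{k-d}$, hence $G[S]\in\cR$, with probability at least $1/2$. Averaging over the at most $2^{\binom{s}{k}}$ types in $\cR$ gives an $R\in\cR$ with $\PP\big(G[S]\text{ is a rooted copy of }R\big)\geq 2^{-\binom{s}{k}-1}$, and multiplying by $\binom{n-t}{s-t}$ and using $1/n\ll 1/s$ yields the stated count.

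Second, I would carry out the blow-up step exactly as in the proof of \cref{lem:blow-up-new}. Put $q=s-t$ and introduce $M,\xi$ with $1/n\ll 1/M\ll 1/m,\,1/s$ and $1/n\ll\xi,\,1/M\ll 1/q$. Define the auxiliary $q$-uniform $(n-t)$-graph $H$ on $V(G)\setminus X$ by adding an edge $Q$ whenever $G[Q\cup X]$ is a copy of $R$ rooted in $X$; the count from the first step forces $e(H)\geq\xi n^{q}$, so \cref{thm:blow-up} produces a complete $q$-partite subgraph $L\subset H$ with parts of size $M$. Now colour each edge of $L$ by one of the $q!$ colours recording which part of $L$ receives which vertex of $V(R)\setminus X$, and apply \cref{thm:blow-up} once more to the densest colour class to obtain a monochromatic complete $q$-partite $L'\subseteq L$ with parts of size $m$. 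Identifying the $i$-th part of $L'$ with the $m$-fold blow-up of the corresponding vertex of $R$ and keeping $X$ fixed then exhibits a copy of $R^\ast(m,X)$ rooted in $X$, establishing \cref{itm:R-many-bu-copies-simple}, while \cref{itm:R-good-degree-simple} holds for $R$ by the choice of $\cR$.

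The only point I expect to require more than a transcription of the proof of \cref{lem:blow-up-new} is the degree estimate in the first step: one has to check that constraining the random $s$-set to contain $X$ does not disturb the concentration of $\deg_{G[S]}(D)$ for the constantly many $d$-sets $D$ that meet $X$. This is routine — those $d$-sets still have full degree in $G$, the forced vertices of $X\setminus D$ contribute only $O(1)$ edges, and the remaining count is a Lipschitz function of the genuinely random part $S\setminus X$, so McDiarmid's inequality gives concentration with failure probability $e^{-\Omega(s)}$, and a union bound over the $\binom{s}{d}$ choices of $D$ closes the argument. Everything else, including the two applications of \cref{thm:blow-up} and the constant bookkeeping, is identical to the proof of \cref{lem:blow-up-new}.
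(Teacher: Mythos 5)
Your proposal matches the paper's intent exactly: the paper omits the proof of \cref{lem:blow-up-new-simple}, stating that it follows almost line by line the argument for \cref{lem:blow-up-new} (i.e.\ the route via \cref{lem:supersaturation} and two applications of \cref{thm:blow-up}), which is precisely what you carry out, and your handling of the one genuinely new point---concentration of $\deg_{G[S]}(D)$ for the $d$-sets $D$ meeting the forced set $X$ when $S$ is constrained to contain $X$---is sound. One cosmetic remark: the edges containing $D$ and a vertex of $X\setminus D$ inside $G[S]$ number $O(s^{k-d-1})$ rather than $O(1)$, but since they can only increase the degree and are of lower order than $\gamma\binom{s-d}{k-d}$ (as $1/s\ll\gamma$), this does not affect your argument.
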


\begin{proof}[Proof of \cref{lem:connection-dense}]
	First assume that $d=1$.
	Introduce $m,s$ with $1/n \ll 1/m,\,1/s \ll \gamma,\, 1/k,\, 1/d,\, \mu$.
	By \cref{lem:blow-up-new-simple} applied with $\{u,v\}$ playing the role of $X$ there is an $s$-vertex $k$-graph $R$ with $X \subset V(R)$ such that $\delta_d(R) \geq (\mu_d(k) + \gamma/2) \binom{s-d}{k-d}$ and there is a copy $R^\ast$ of $R^\ast(m,\{u,v\})$ rooted in $\{u,v\}$ in $G$.
	Let $w \in V(R-u-v)$.

	We claim that there are distinct edges $e,f \in E(R)$ such that $u \in e \sm f$, $w \in f \sm e$ and $e \cap f \neq \es$.
	To see this, let $L(u)$ denote the link graph of $u$, which is the $(k-1)$-graph on $V(R)\sm \{u\}$ with a $(k-1)$-edge $e$ whenever $e \cup \{x\}$ is an edge in $R$.
	Recall from the introduction that $\mu_1(k) \geq 2^{-(k-1)}$.
	Thus applying \cref{prop:mini-KK} to $L(u)$ shows that the edges of {$u$} cover more than $n/2$ vertices.
	We define $L(v)$ analogously and obtain the same conclusion.
	It follows that $R$ contains the desired edges $e$ and $f$.
	By the same argument, $R$ contains edges $e',f'$ such that $w \in e' \sm f'$, $v \in f' \sm e'$ and $e' \cap f' \neq \es$.
	Hence we may easily construct the desired $(u,v)$-path of order $4(k-1)+1$ in $R^\ast$.

	Now assume that $d\geq 2$.
	In this case, we have $\delta_2(G) \geq (\mu_d(k)+ \gamma) \binom{n-2}{k-2}$.
	So we can greedily construct a loose $(u,v)$-path of order $4(k-1)+1$ by considering
	three distinct vertices $u',v',w'\in  {V(G)}\sm \{u,v\}$ and
	then selecting appropriate edges that contain $\{u,u'\},\{u',w'\},\{w',v'\}$ and $\{v',v\}$.
\end{proof}

\subsection{Absorption}
For the proof of \cref{lem:absorption-dense}, we require the following simplified version of \cref{lem:absorber-dense}.
\begin{lemma}[Simple Dense Absorber Lemma]\label{lem:absorber-dense-simple}
	Let $1/n \ll 1/q \ll \gamma, 1/k,1/d$.
	Let $G$ be an $n$-vertex $k$-graph with $\delta_d(G) \geq (\mu_d(k) + \gamma )  \binom{n-d}{k-d}$ and $X$ be a $(k-1)$-set of vertices of $G$.
	Then $G$ contains an absorber of order~$q+2k$ rooted in~$X$.
\end{lemma}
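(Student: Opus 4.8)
The plan is to follow the proof of \cref{lem:absorber-dense} but in the much more comfortable dense setting, dropping the sparseness requirement altogether — so any $K>k$, say $K=k+1$, will do — and, crucially, \emph{without} appealing to \cref{lem:threshold-exploitation}, which lies above the present lemma in the dependency chain. The tools available are the defining property of $\mu_d(k)$ (loose Hamilton cycles in sufficiently dense host graphs), \cref{lem:connection-dense}, \cref{lem:blow-up-new-simple}, \cref{lem:absorber-allocation}\ref{itm:aa-1}, \cref{lem:random-subset-degrees} and \cref{thm:blow-up}.

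A first reduction: it suffices to build a \emph{bounded-order} absorber rooted in $X$. Indeed, from any absorber $\{P_1^j,P_2^j\}_j$ of bounded order one reaches the prescribed order $q+2k$ by splicing onto one terminal pair $(P_1^{j_0},P_2^{j_0})$ a long \emph{common} loose path obtained by iterating \cref{lem:connection-dense} (padding with a bounded number of edges to hit the exact length, and choosing the number of starting pairs so that the total order lands in the correct residue class modulo $k-1$); the result is still an absorber rooted in $X$, now of the desired order. So I would aim only for a constant-size core absorber.

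For the core absorber I would use the blow-up route. Apply \cref{lem:blow-up-new-simple} with $t=k-1$, $X$ as the $t$-set, a large constant $m$ and an $s\equiv0\pmod{k-1}$ with $1/n\ll1/s\ll1/m,\,\gamma,\,1/k,\,1/d$, obtaining an $s$-vertex $k$-graph $R$ with $X\subset V(R)$, with $\delta_d(R)\ge(\mu_d(k)+\gamma/2)\binom{s-d}{k-d}$ — hence also $\deg_R(x)\ge(\mu_d(k)+\gamma/2)\binom{s-1}{k-1}$ for every $x\in X$ — and with $G$ containing a copy of $R^{\ast}(m,X)$ rooted in $X$. Since a copy of any subgraph $A_0\subset R$ rooted in $X$ already sits inside $R^{\ast}(m,X)\subset G$ (take the first copy of each blown-up vertex), it is enough to find a core absorber rooted in $X$ inside $R$. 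In $R$ I would take a loose Hamilton cycle $C_2$ of $R-X$ (which exists by the definition of $\mu_d(k)$, since $s-(k-1)$ is large and divisible by $k-1$ and $\delta_d(R-X)\ge(\mu_d(k)+\gamma/4)\binom{(s-k+1)-d}{k-d}$), fix a degree-$2$ vertex $y$ of $C_2$, and produce a loose Hamilton cycle $C_1$ of $R$ in which $y$ again has degree $2$; automatically $V(C_1)=V(R)=V(C_2)\cup X$, so $R':=C_1\cup C_2$ meets the hypotheses of \cref{lem:absorber-allocation}, and \cref{lem:absorber-allocation}\ref{itm:aa-1} (with $K=k+1$, so that the sparseness conclusion is moot) yields an absorber rooted in $X$ inside $R'^{\ast}(2m,X)\subset R^{\ast}(2m,X)\subset G$; the cycle lengths and the parameter $r$ of \cref{lem:absorber-allocation} are chosen so the whole construction fits inside $R$.

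The single real difficulty is producing $C_1$: a loose Hamilton cycle with a \emph{prescribed} degree-$2$ vertex is exactly the strengthened Hamiltonicity that \cref{lem:threshold-exploitation} supplies, and that lemma is off-limits here. I would instead argue by hand inside $R$. When $d=k-1$ the set $X$ has positive codegree, so there is an edge $\{y\}\cup X$; one builds $C_1$ by using at $y$ this edge together with a fresh edge through $y$ (making $y$ of degree $2$), threading out of $\{y\}\cup X$, and closing the remaining part of the cycle through $V(R)\setminus X$ by repeatedly invoking \cref{lem:connection-dense} — the freedom to grow $C_1$ and $C_2$ \emph{in tandem}, deciding their common non-$X$ vertex set only as they are built, is what avoids having to Hamilton-connect a prescribed induced subgraph. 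When $d<k-1$ the codegree of $X$ can vanish, so no edge meets $X$ in two vertices; there one spreads $x_1,\dots,x_{k-1}$ over $k-1$ short, pairwise-disjoint arcs of $C_1$ joined by \cref{lem:connection-dense}-paths, exactly as the loose cycles $F_i$ are used in the proof of \cref{lem:absorber-sparse}. Keeping $y$ of degree $2$ and reconciling $V(C_1)=V(C_2)\cup X$ throughout these surgeries is where the care lies; everything else — the blow-up reduction, the call to \cref{lem:absorber-allocation}, and the final splicing to reach order $q+2k$ — is routine bookkeeping.
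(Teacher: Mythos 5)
Your overall architecture (blow-up setup plus \cref{lem:absorber-allocation}) is the one the paper uses for the \emph{full} Dense Absorber Lemma (\cref{lem:absorber-dense}), and you correctly identify both the crux and the circularity: \cref{lem:absorber-allocation} needs a vertex of degree~$2$ in both $C_1$ and $C_2$, and \cref{lem:threshold-exploitation} — the only tool in the paper that prescribes a degree-$2$ vertex in a loose Hamilton cycle — sits above the present lemma in the dependency chain. The gap is that your substitute for it does not work. With only the definition of $\mu_d(k)$ and \cref{lem:connection-dense} at your disposal you cannot force a prescribed vertex $y$ to have degree $2$ in a loose Hamilton cycle of $R$ (this is exactly the obstruction the paper flags before the proof), and your ``by hand'' sketch does not supply a mechanism: in the $d=k-1$ case, ``closing the remaining part of the cycle by repeatedly invoking \cref{lem:connection-dense}'' only \emph{connects}, it does not \emph{cover}, so the resulting $C_1$ is not spanning; and once $C_1$ is not a Hamilton cycle of $R$ you must instead produce a loose cycle $C_2$ on \emph{exactly} $V(C_1)\setminus X$ with $y$ again of degree $2$ — a spanning-cycle problem on a prescribed vertex set whose induced subgraph inherits no degree condition. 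The ``grow $C_1$ and $C_2$ in tandem'' idea merely restates this difficulty: at every stage each cycle accumulates vertices that the other must later pass through, and with minimum $d$-degree only $\approx\mu_d(k)$ (codegrees may vanish for $d<k-1$) you cannot route a loose cycle through arbitrary prescribed vertices, so the ``debts'' cannot be settled. In short, the one step you defer — producing the pair $(C_1,C_2)$ with coinciding vertex sets off $X$ and a common degree-$2$ vertex — is precisely the content that \cref{lem:threshold-exploitation} exists to provide, and it is not recoverable from the tools you allow yourself. (Your preliminary reduction, padding a bounded-order absorber to order exactly $q+2k$ by appending a common path to one terminal pair, is fine modulo residue bookkeeping; the failure is only at the crux.)

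For comparison, the paper's proof of \cref{lem:absorber-dense-simple} abandons \cref{lem:absorber-allocation} altogether: it takes \emph{arbitrary} loose Hamilton cycles $C_1$ of $T$ and $C_2$ of $T-X$ (no degree-$2$ prescription, so only the definition of $\mu_d(k)$ is used), blows up $R=C_1\cup C_2$ by a factor of $2$, and writes down an explicit absorber consisting of just two path pairs: one pair uses an edge of $C_2$ versus a detour around $C_2$ through the twin vertices, the other uses an edge of $C_1$ versus a detour around $C_1$ through the twins. This construction needs no common degree-$2$ vertex and no girth control, which is exactly why it can sit at the bottom of the dependency chain. If you want to salvage your route, you would have to replace your hand-built $C_1$ by a construction of this ``twin-copy'' type rather than by connection arguments.
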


The proof of \cref{lem:absorber-dense-simple} is based on a similar idea as the one of \cref{lem:absorber-dense}.
However, since we cannot force a particular vertex in a Hamilton cycle to have degree $2$, we need to change our approach a little bit.

\begin{proof}[Proof of \cref{lem:absorber-dense-simple}]
	Set $t=k-1$, $s=q+t$ and $\mu = \mu_d(k)$.
	Apply \cref{lem:blow-up-new-simple} with $m=2$  to $G$ in order to obtain
	an $s$-vertex $k$-graph $T$ with $X \subset V(T)$ such that
	\begin{enumerate}[\upshape (1)]
		\item \label{itm:sdal-mindegree} $\delta_d(T) \geq (\mu + \gamma/2) \binom{s-d}{k-d}$;
		\item \label{itm:many-absorbers} $G$ contains a copy of $T^\ast(2,X)$ rooted in $X$.
	\end{enumerate}

	We claim that $T$ contains a loose Hamilton cycle $C_1$, and $T-X$ contains a loose Hamilton cycle $C_2$.
	The existence of $C_1$ follows by property~\ref{itm:sdal-mindegree} and the definition of $\mu=\mu_d(k)$.
	For $C_2$, observe that $\delta_d(T-X) \geq (\mu+\gamma/4)\binom{q-d}{k-d}$ due to the choice of constants.
	So we also find $C_2$ by the definition of $\mu=\mu_d(k)$.
	Let $R = C_1\cup C_2$. For every vertex $v$ of $C_2$, we denote by
	$\bar v$ the vertex that is in the same colour class of $v$
	in $R^\ast(2,X)$.

	Our goal is now to find an absorber $\{P_1^1, P_1^2, P_2^1,
		P_2^2\}$ of order $q+2k$ rooted in $X$.
	To begin, let $P_1^1=( v_1,\ldots, v_k)$ consist of the vertices corresponding to an (ordered) edge of $C_2$.
	We define
	$P_2^1$ as the path that starts with $(v_1, \ldots, v_{k-1}, \bar v_k)$, follows the orientation of
	$C_2$ via the vertices of type $\bar v$ until it reaches $\bar v_1$ and ends with $(\bar v_1,\ldots,\bar v_{k-1}, v_k)$.
	Next, let $P_2^2=(w_1,\ldots, w_k)$  consist of the vertices corresponding to an (ordered) edge of $C_1$, which is disjoint of the vertices of $P_1^1$ and $X$.
	Finally, the path $P_1^2$ starts with
	$(w_1,\bar w_2, \ldots,\bar w_{k})$, follows the
	orientation of $C_1$ via the vertices $\bar v$ until $\bar w_1$ and ends with
	$(\bar w_1, w_2,\ldots, w_k)$.
	One can easily check
	that these four paths form an absorber $A$ of order
	$q+2k$ rooted in $X$.
\end{proof}

It remains to show \cref{lem:absorption-dense}.
The proof follows line by line the one of \cref{lem:absorption-sparse} once a deterministic version of \cref{cla:absorption-properties-random-graph} is established.
This is done by the next lemma.
We omit the remaining details of the proof of \cref{lem:absorption-dense}.
\begin{lemma} \label{lem:absorption-properties-dense}
	Let $1/n \ll \eta \ll \nu,\,\alpha,\,1/M \ll 1/k,\, 1/d,\,
        \gamma$ and $\eta \ll \rho \ll \gamma$.
	Suppose $G$ is an $n$-vertex $k$-graph with $\delta_d(G) \geq (\mu_d(k) + \gamma ) \binom{n-d}{k-d}$, and let $Q \subset V(G)$ with $|Q| \leq \alpha n$.
	Then the following hold:
	\begin{enumerate}[\upshape(1)]\setcounter{enumi}{-1}
		\item \label{itm:absorption-connection0-dense}
		      Let~$C=V(G)\setminus Q$.  For every pair of distinct
		      vertices $u,\,v\in C$, there is a loose
		      $(u,v)$-path~$P$ of order $\ell=4(k-1)+1$ in~$G$ with
		      $V(P)\subset C$.
		\item \label{itm:absorption-connection-dense}
		      Let $C \subset V(G)\setminus Q$ be a set of size $\nu n$ taken
		      uniformly at random.
		      Then with probability at least $2/3$ the following holds.
		      For any $R \subset C$ with $|R| \leq \rho n$ and distinct $u,\,v \in V(G)\setminus (Q\cup R) $,
		      there is a loose $(u,v)$-path $P$ of order
		      $\ell=4(k-1)+1$ in~$G$ with $V(P)\setminus\{u,v\}
			      \subset C\sm R$.

		\item \label{itm:absorption-richness-dense}
		      Let $Z \subset V(G)$ be chosen uniformly at random among all $\nu n$-sets.
		      Then {with probability at least $2/3$} for any $W \subseteq
			      V(G)\setminus Z$ with $|W| \leq \eta n$, there is a matching in $G$
		      covering all vertices in $W$, each edge of which contains one
		      vertex of $W$ and $k-1$ vertices of $Z$.

		\item \label{itm:absorption-absorber-dense} For any $(k-1)$-set~$X$ in
		      $V(G) \sm Q$, there is an absorber~$A$ in~$G$ rooted in
		      $X$ that avoids~$Q$ and has order at most~$M$.\qed
	\end{enumerate}
\end{lemma}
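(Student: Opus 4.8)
The plan is to prove \cref{lem:absorption-properties-dense} by treating it as the dense analogue of \cref{cla:absorption-properties-random-graph}, where each of the four items now follows directly from a dense lemma stated earlier in the paper, with the probabilistic parts replaced by elementary random-set arguments (or removed altogether when they are not needed). Since the hypothesis is simply $\delta_d(G) \geq (\mu_d(k)+\gamma)\binom{n-d}{k-d}$ with $|Q| \leq \alpha n$ and $\alpha$ tiny compared to $\gamma$, we may work throughout with $G-Q'$ for whatever bad set $Q'$ of size at most $2\alpha n$ we accumulate, noting that $\delta_d(G-Q') \geq (\mu_d(k)+\gamma/2)\binom{|V(G)|-|Q'|-d}{k-d}$.

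For item~\ref{itm:absorption-connection0-dense}, I would apply \cref{lem:connection-dense} directly to $G[C] = G - Q$: since $\delta_d(G-Q) \geq (\mu_d(k)+\gamma/2)\binom{n-|Q|-d}{k-d}$, the dense connection lemma yields for any distinct $u,v \in C$ a loose $(u,v)$-path of order $4(k-1)+1$ with all vertices inside $C$. For item~\ref{itm:absorption-connection-dense}, the point is that a uniformly random $\nu n$-subset $C$ of $V(G)\setminus Q$ w.h.p.\ induces a subgraph with $\delta_d(G[C]) \geq (\mu_d(k)+\gamma/4)\binom{\nu n-d}{k-d}$ (by the same type of concentration used elsewhere, e.g.\ \cref{lem:random-subset-degrees} or a direct Chernoff bound) --- this holds with probability far exceeding $2/3$ for $n$ large. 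Fixing such a $C$, one deletes the at most $\rho n$ vertices of $R$ (legitimate since $\rho \ll \gamma,\nu$, so the degree condition still reads $\delta_d(G[C\setminus R]) \geq (\mu_d(k)+\gamma/8)\binom{\nu n - |R| - d}{k-d}$) and then applies \cref{lem:connection-dense} inside $G[C\setminus R]$, possibly after padding the path with a few vertices of $C\setminus R$ to reach the exact order $\ell = 4(k-1)+1$ (note that in the dense setting $\ell$ is $4(k-1)+1$, not $8(k-1)+1$). This is valid provided we also note that $u$ and $v$ each lie on an edge meeting $C\setminus R$ in $k-1$ vertices, which again follows from the minimum-degree hypothesis since the bad vertices number only $o(n)$ out of a density-$\Omega(1)$ neighbourhood.

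For item~\ref{itm:absorption-richness-dense}, the relevant statement is \cref{lem:richness-property}, but its sparse formulation invokes a random graph; in the dense case $p=1$ and the argument is much simpler. I would choose $Z$ uniformly at random among all $\nu n$-sets and show that w.h.p.\ every $W \subseteq V(G)\setminus Z$ with $|W| \leq \eta n$ can be matched into $Z$ with each edge using one vertex of $W$ and $k-1$ vertices of $Z$. This follows by a greedy argument: process the vertices of $W$ one at a time; when handling $w$, we need a $(k-1)$-set inside the unused portion of $Z$ that together with $w$ forms an edge. Since $\deg(w) \geq (\mu_d(k)+\gamma)\binom{n-1}{k-1}$ and at most $\eta n \cdot (k-1) \leq (k-1)\eta n$ vertices of $Z$ have been used so far, the link graph of $w$ restricted to the unused part of $Z$ still has positive density; the only subtlety is that a fixed $w$'s link graph might concentrate on a set missing $Z$, which is why one passes to a random $Z$ --- a union bound over the ${n \choose \leq \eta n}$ choices of $W$ together with a Chernoff estimate on $|Z \cap (\text{link of each partial edge})|$ gives the claim with probability $\geq 2/3$. (Alternatively one can cite \cref{lem:richness-property} with $p=1$ if one prefers, but stating the elementary argument keeps the dense section self-contained.)

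Finally, item~\ref{itm:absorption-absorber-dense} is exactly \cref{lem:absorber-dense-simple} applied to $G-Q$: since $\delta_d(G-Q) \geq (\mu_d(k)+\gamma/2)\binom{n-|Q|-d}{k-d}$, that lemma produces for any $(k-1)$-set $X \subset V(G)\setminus Q$ an absorber of order $q+2k \leq M$ rooted in $X$ and lying entirely in $G-Q$, hence avoiding $Q$. I would fix $q$ small enough in the hierarchy that $q+2k \leq M$. The main obstacle is purely bookkeeping: one has to chase the constant hierarchy $\eta \ll \nu,\alpha,1/M \ll 1/k,1/d,\gamma$ and $\eta \ll \rho \ll \gamma$ carefully to be sure that at each stage the accumulated deletions (of size $O((\alpha+\rho+\eta)n)$) leave the minimum $d$-degree above $(\mu_d(k)+\Omega(\gamma))\binom{\cdot}{k-d}$, so that the dense lemmas \cref{lem:connection-dense}, \cref{lem:absorber-dense-simple} remain applicable with room to spare. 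There is no genuine combinatorial difficulty here --- every hard step has been isolated into an earlier dense lemma --- so the write-up is essentially a verification that each item reduces to the appropriate cited result.
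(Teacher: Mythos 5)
Your overall reduction is the same as the paper's: part~\ref{itm:absorption-connection0-dense} is \cref{lem:connection-dense} applied to $G-Q$, part~\ref{itm:absorption-richness-dense} is a random $Z$ plus a greedy matching, and part~\ref{itm:absorption-absorber-dense} is \cref{lem:absorber-dense-simple} applied to $G-Q$. The gap is in part~\ref{itm:absorption-connection-dense}. The statement demands a loose $(u,v)$-path of order \emph{exactly} $4(k-1)+1$ whose internal vertices lie in $C\setminus R$, where $u,v$ range over $V(G)\setminus(Q\cup R)$ and in general do \emph{not} belong to $C$. Your plan --- apply \cref{lem:connection-dense} inside $G[C\setminus R]$ and then attach $u$ and $v$ via edges each meeting $C\setminus R$ in $k-1$ vertices --- cannot produce the required order: the connection lemma already outputs order $4(k-1)+1$, and gluing an attachment edge at each end yields order $6(k-1)+1$, while ``padding'' cannot help since there is no mechanism for shortening the connection. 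The uniform order is not cosmetic: the proof of \cref{lem:absorption-dense} is run line by line as that of \cref{lem:absorption-sparse}, whose bookkeeping (divisibility and vertex counts) relies on every connecting path having the same prescribed order $\ell$; this is precisely why the connection lemmas fix the order exactly.

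The repair is what the paper does: observe that a uniformly random $\nu n$-set $C\subset V(G)\setminus Q$ satisfies, with probability at least $2/3$, $\delta_d\bigl(G[C\cup\{u,v\}]\bigr)\ge(\mu_d(k)+\gamma/2)\binom{\nu n-d}{k-d}$ simultaneously for \emph{all} pairs of distinct $u,v\in V(G)\setminus Q$. Note that this is strictly stronger than your claim about $\delta_d(G[C])$: for $d\ge2$ one must also control the degrees of $d$-sets containing $u$ or $v$, and one needs a union bound over the $O(n^2)$ pairs (this is the same observation used in the proof of \cref{lem:threshold-exploitation}). Since $\rho\ll\nu,\gamma$, deleting the at most $\rho n$ vertices of $R$ barely affects these degrees, so \cref{lem:connection-dense} applies directly to $G[(C\setminus R)\cup\{u,v\}]$ and returns a loose $(u,v)$-path of order exactly $4(k-1)+1$ with $V(P)\setminus\{u,v\}\subset C\setminus R$. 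With this correction the remaining parts go through as you describe and coincide with the paper's argument.
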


\begin{proof}
	Part \ref{itm:absorption-connection0-dense} simply follows from \cref{lem:connection-dense} since $\delta_d(G-Q) \geq (\mu_d(k) + \gamma/2 ) \binom{n-|Q|-d}{k-d}$.
	Part \ref{itm:absorption-connection-dense} follows by first observing that a random $\nu$-set $C \subset V(G) \sm Q$ satisfies  {$\delta_d(G[C \cup \{u,v\}]) \geq (\mu_d(k) + \gamma ) \binom{\nu' n-d}{k-d}$} for every $u,\,v \in V(G)\sm Q$ with probability $2/3$.
	(The same observation was used in the proof of \cref{lem:threshold-exploitation}.)
	One can then apply \cref{lem:connection-dense} to obtain the desired path.
	Part \ref{itm:absorption-richness-dense} follows in the same way.
	The matching can be constructed greedily.
	Finally, part \ref{itm:absorption-absorber-dense} follows by \cref{lem:absorber-dense-simple}.
\end{proof}

\section{Conclusion}
\label{sec:conclusion}

In this paper, we investigated for which probabilities $p$ a subgraph of the binomial random graph $\oH_k(n,p)$ whose relative minimum $d$-degree is above the corresponding dense threshold contains a loose Hamilton cycle.
Our main result determines the optimal value for $p$ when $d > (k+1)/2$.
While we do provide bounds for $p$ for $d \leq (k+1)/2$, it is unlikely that our results are optimal in this range.
Hence a natural question is whether one can improve on this.
It would furthermore be very interesting to understand whether one can obtain similar results for `tighter' cycles.
A first step in this direction was undertaken by  Allen, Parczyk and Pfenninger~\cite{APP21} for $d=k-1$.
Note that in this situation, the problem of finding a tight Hamilton cycle in a dense graph is quite well-understood.
Going beyond this, one potentially challenging problem would be to
prove such a result for values of $d$ and $k$ for which we do not yet know the precise value of the (dense) minimum degree threshold.

Finally, we remark that our work is related to the corresponding `random robustness' problem mentioned in \cref{sec:introduction}.
In this setting, we are given a (deterministic) $n$-vertex $k$-graph $G$ with $\delta_d(G) \geq (\mu_d(k) + \gamma) \binom{n-d}{k-d}$ for some $\gamma>0$.
Now let $G'$ be a random sparsification of $G$, which is obtained by keeping every edge independently with probability $p$.
The challenge is then to determine the threshold $p$ for which $G'$ typically contains a loose Hamilton cycle.
A simple coupling argument shows that $p$ can be taken as small as in \cref{thm:main}.
It was shown independently by Kelly, Müyesser and Pokrosvkiy~\cite{KMP23} and Joos, Lang and Sanhueza-Matamala~\cite{JLS23} that \cref{thm:hamilton-connectedness} can be used to improve this to $p \geq C n^{-k+1} \log n$ for a constant $C=C(k)$, which is asymptotically optimal.

\bibliographystyle{amsplain}
\bibliography{extracted}

\end{document}